\newtheorem{thm}{Theorem}[section]
\newtheorem{theorem}[thm]{Theorem}
\newtheorem{lemma}[thm]{Lemma}
\newtheorem{corollary}[thm]{Corollary}
\newtheorem{prop}[thm]{Proposition}
\theoremstyle{definition}
\newtheorem{rem}[thm]{Remark}
\newtheorem{remark}[thm]{Remark}
\newcommand{\be}[1]{\begin{equation}\label{#1}}
\newcommand{\ee}{\end{equation}}
\newcommand{\ba}{\begin{array}}
\newcommand{\ea}{\end{array}}
\newcommand{\bal}{\begin{aligned}}
\newcommand{\eal}{\end{aligned}}
\newcommand{\R}{\mathbb{R}}
\newcommand{\C}{\mathbb{C}}
\newcommand{\N}{\mathbb{N}}
\newcommand{\Q}{\mathbb{Q}}
\newcommand{\Z}{\mathbb{Z}}
\newcommand{\E}{\mathbb{E}}
\newcommand{\p}{\mathbb{P}}
\newcommand{\calC}{\mathcal{C}}
\newcommand{\calM}{\mathcal{M}}
\newcommand{\calS}{\mathcal{S}}
\newcommand{\calU}{\mathcal{U}}
\newcommand{\cA}{\mathcal{A}}
\newcommand{\cC}{\mathcal{C}}
\newcommand{\cH}{\mathcal{H}}
\newcommand{\cI}{\mathcal{I}}
\newcommand{\cJ}{\mathcal{J}}
\newcommand{\cM}{\mathcal{M}}
\newcommand{\cS}{\mathcal{S}}
\newcommand{\cU}{\mathcal{U}}
\newcommand{\1}{1\hspace{-0.098cm}\mathrm{l}} % Franks Version 
\newcommand{\la}{\lambda}
\newcommand{\eps}{\varepsilon}
\newcommand{\om}{\omega}
\newcommand{\ra}{\rightarrow}
\renewcommand{\d}{\mathrm{d}}
\newcommand{\ssup}[1] {{\scriptscriptstyle{({#1}})}}
\newcommand{\sse}[1] {{\scriptscriptstyle{[{#1}}]}} %%%% for small superscript mit eckigen Klammern
 \newcommand{\supp}{{\rm supp}}
\newcommand{\tem}{\mathrm{tem}}
\renewcommand{\rho}{\varrho}
\newcommand{\bE}{\ensuremath{\mathbb{E}}}
\newcommand{\bN}{\ensuremath{\mathbb{N}}}
\newcommand{\bP}{\ensuremath{\mathbb{P}}}
\newcommand{\bQ}{\ensuremath{\mathbb{Q}}}
\newcommand{\bR}{\ensuremath{\mathbb{R}}}
\newcommand{\bZ}{\ensuremath{\mathbb{Z}}}
\newcommand{\ind}{\ensuremath{\mathbbm{1}}}
\newcommand{\abs}[1]{\left\vert \, #1 \, \right\vert}
\newcommand{\norm}[1]{\left\Vert \, #1 \, \right\Vert}
\newcommand{\ddx}[1][1]{\ifnum#1=1 \frac{d}{dx} \else \frac{d^{#1}}{dx^{#1}} \fi}
\newcommand{\ddy}[1][1]{\ifnum#1=1 \frac{d}{dy} \else \frac{d^{#1}}{dy^{#1}} \fi}
\newcommand{\ddt}[1][1]{\ifnum#1=1 \frac{d}{dt} \else \frac{d^{#1}}{dt^{#1}} \fi}
\newcommand{\bfx}{{\mathbf x}}
\newcommand{\bfy}{{\mathbf y}}
\newcommand{\bfz}{{\mathbf z}}
\newcommand{\bfu}{{\mathbf u}}
\newcommand{\bfv}{{\mathbf v}}
\newcommand{\bfX}{{X}}
\newcommand{\bfY}{{\mathbf Y}}
\newcommand{\bfI}{{\mathbf I}}
\begin{document}

\begin{center}
{\Large \bf A new look at duality for the  symbiotic branching model}%\\[1mm]
\\[5mm]
\vspace{0.7cm}
\textsc{Matthias Hammer\footnote{Institut f\"ur Mathematik, Technische Universit\"at Berlin, Stra\ss e des 17. Juni 136, 10623 Berlin, Germany.},
Marcel Ortgiese\footnote{
Department of Mathematical Sciences, University of Bath, Claverton Down, Bath, BA2 7AY,
United Kingdom.} 
and Florian V\"ollering\footnotemark[2]
} 
\\[0.8cm]
\fboxsep2mm
{\small \today} 
\end{center}

\vspace{0.3cm}

\begin{abstract}\noindent 
The symbiotic branching model is a spatial 
population model describing the dynamics of two interacting types
that can only branch if both types are present. 
A classical result for the underlying stochastic partial differential 
equation identifies moments of the solution via a duality to 
a system of Brownian motions with dynamically changing colors.
In this paper, we revisit this duality and give it a new interpretation. 
This new approach allows us to extend the duality to the limit as the branching rate 
$\gamma$
is sent to infinity. This limit is particularly interesting since it captures
the large scale behaviour of the system. 
As an application of the duality, we can explicitly identify the $\gamma = \infty$
limit  when the driving noises are perfectly negatively correlated. 
The limit is  a system of annihilating Brownian motions
with a drift that depends on the initial imbalance between the types.
  \par\medskip

  \noindent\footnotesize
  \emph{2010 Mathematics Subject Classification}:
  Primary\, 60K35,  \ Secondary\, 60J80, 60H15. 
  \end{abstract}

\noindent{\slshape\bfseries Keywords.} Symbiotic branching model, mutually catalytic
branching, stepping stone model, rescaled interface, moment duality, annihilating Brownian motions.

\section{Introduction}\label{intro}

%\subsection{The symbiotic branching model and its interface}

In \cite{EF04} Etheridge and Fleischmann introduce a spatial population model that 
describes the evolution of two interacting types. 
The dynamics follows locally a branching process,
where each type branches with a rate proportional to the frequency
of the other type. Additionally, types are allowed to migrate to neighbouring 
colonies. In the 
continuum space and large population limit, the symbiotic branching model is the process $(\bfu_t)_{t\ge0}=(u^\ssup1_t, u^\ssup2_t)_{t\ge0}$, where the frequencies $u^\ssup{1}_t(x)$ and $u^\ssup{2}_t(x)$
of the respective types are given by the 
nonnegative solutions  of
the stochastic partial differential equations
\begin{align}\bal
\label{eqn:spde}
 	{\mathrm{cSBM}(\varrho,\gamma)}_{ \bfu_0}: \quad
 		\begin{cases}
  			\frac{\partial }{\partial t}
u_t^\ssup{1}(x) & \!\!\!\!= \frac{\Delta}{2} u^\ssup{1}_t(x) + 
                     	\sqrt{ \gamma u^\ssup{1}_t(x) u^\ssup{2}_t(x)} \, \dot{W}^\ssup{1}_t(x),\\[0.3cm]
 			\frac{\partial }{\partial t}u_t^\ssup{2}(x) 
 & \!\!\!\!= \frac{\Delta}{2} u_t^\ssup{2}(x) + 
                     	\sqrt{ \gamma u^\ssup{1}_t(x) u^\ssup{2}_t(x)} \, \dot{W}^\ssup{2}_t(x),%\\[0.3cm]
 		\end{cases}
	\eal\end{align}
with suitable nonnegative initial condition $\bfu_0=(u^\ssup1_0, u^\ssup2_0)$, $u^\ssup{i}_0(x)\ge 0$, $x \in \R, i=1,2$.
Here, $\gamma > 0$ is the branching rate and $ (\dot{W}^\ssup{1},
\dot{W}^\ssup{2})$ is a pair of correlated standard Gaussian white noises on $\mathbb{R}_+ \times \mathbb{R}$ with correlation 
governed by a parameter $\varrho
\in [-1,1]$. 

There is also a discrete-space version of the model on the lattice $\Z^d$, 
where $\Delta$ is the discrete Laplacian and the white noises are replaced by an independent system $(W^1(x), W^2(x))$, $x\in\Z^d$, of
$\varrho$-correlated two-dimensional Brownian motions. We will refer to the discrete space model with initial condition $\bfu_0$ as ${\mathrm{dSBM}(\varrho,\gamma)}_{\bfu_0}$.
Existence (for $\rho \in [-1,1]$) and uniqueness (for $\rho \in [-1,1)$) in both continuous and discrete space was
proved in~\cite{EF04} for a large class of initial conditions.

The symbiotic branching model generalizes several well-known examples of spatial populations dynamics.
In particular, for the case $\varrho=-1$ of perfectly negatively correlated noises, the sum $u_t+v_t$ solves the (deterministic) heat equation, so we have $u^\ssup{1}_t+u^\ssup{2}_t=S_t(u_0+v_0)$ for all $t \geq 0$, with $(S_t)_{t\ge0}$ denoting the heat semigroup. In particular, for initial conditions $u_0 = 1- v_0$ summing up to one, we have $u_t:=u^\ssup{1}_t = 1- u^\ssup{2}_t$ for all $t \geq 0$, and
the system reduces to the continuous-space {\em stepping stone model}
\begin{equation}\label{eqn:stepping stone}
  			\tfrac{\partial }{\partial t}
u_t(x)  = \tfrac{\Delta}{2} u_t(x) + 
                     	\sqrt{ \gamma\, u_t(x) \left(1-u_t(x)\right)} \, \dot{W}_t(x)
\end{equation}
analysed e.g.\ in \cite{Shiga86} and \cite{T95}.
For $\varrho=0$, 
the system is known as the {\em mutually catalytic model} due to Dawson and Perkins \cite{DP98}. 

We are particularly interested in the case that initially both types are  spatially separated. In the simplest case, this corresponds to starting the system with
`complementary Heaviside initial conditions' $\bfu_0=(u_0^\ssup{1},u_0^\ssup{2})=(\1_{\R^-},\1_{\R^+})$.
Then one would like to understand the evolution of the \emph{interface} between the two types, which is defined by	
		\begin{eqnarray*}
			{\rm Ifc}_t = \mbox{cl} \big\{x\in\R : u^\ssup{1}_t(x) u^\ssup{2}_t(x) > 0 \big\},
		\end{eqnarray*}
where $\bfu_t=(u_t^\ssup{1},u_t^\ssup{2})$ is the solution of $\mathrm{cSBM}(\varrho,\gamma)_{\bfu_0}$ at time $t>0$, and $\mbox{cl}(A)$ denotes the closure of the set $A$ in $\R$.
Results on the growth of this interface due to \cite{EF04}, \cite{BDE11} seem to suggest diffusive behaviour for the interface. 
This conjecture is supported by
the following \emph{scaling property} of the model, see  \cite[Lemma~8]{EF04}: 
Let $(\bfu_t)_{t\ge0}$ denote the solution to $\mathrm{cSBM}(\rho, \gamma)_{\bfu_0}$. 
If we rescale time and space diffusively, i.e.\
if given $K > 0$ we define 
\[\bfv^\sse{K}_t(x) := \bfu_{K^2t}(K x),\qquad x \in \R,\, t \geq 0,\] 
then $(\bfv^\sse{K}_t)_{t\ge0}$ is a solution to $\mathrm{cSBM}(\rho, K \gamma)_{\bfv^\sse{K}_0}$, i.e.\ a symbiotic branching process with branching rate $K\gamma$ and correspondingly transformed initial  states $\bfv^\sse{K}_0(x)=\bfu_{0}(Kx)$, $x\in\R$. 
Thus provided that the initial conditions are invariant under diffusive rescaling (as is the case for complementary Heaviside), 
this rescaling of the system is equivalent (in law) to increasing the branching rate.
This observation suggests to  
investigate an infinite rate limit $\gamma \ra \infty$,
which can also be considered for more general initial conditions.

This program has been carried out first for the discrete-space model $\mathrm{dSBM}(\rho,\gamma)$. 
Also inspired by the scaling property
of the continuous model, 
\cite{KM10a, KM11b, KM11c,DM11a, DM12} construct a
non-trivial limiting process for $\mathrm{dSBM}(\rho,\gamma)$ as $\gamma \to \infty$ and study its
long-term properties. They also give a very explicit description of the limit 
in terms of an infinite system of jump-type stochastic differential equations (SDEs). The limit corresponds to a system where the two types remain separated, i.e. at each lattice point only one type is present. 
In our recent preprint \cite{HO15} we also consider initial conditions where the types are not separated initially, 
and for $\rho \in (-1,0)$ we show existence of an infinite rate limit and separation of types for positive times. 
We will refer to the limit as
the \emph{discrete-space infinite rate symbiotic branching model}, abbreviated as $\mathrm{dSBM}(\rho,\infty)$. 

For the continuous space model $\mathrm{cSBM}(\rho,\gamma)$, an infinite rate limit has been shown to exist recently in~\cite{BHO15} for the case of negative correlations $\rho\in(-1,0)$ and a large class of initial conditions. More precisely, it was proved that the measure-valued processes
obtained by taking the solutions of $\mathrm{cSBM}(\rho,\gamma)_{\bfu_0}$ as densities
converge in law as $\gamma\uparrow\infty$ to
a measure-valued process $(\mu_t^\ssup{1},\mu_t^\ssup{2})_{t \geq 0}$ also satisfying a certain separation-of-types condition, see \cite[Thm.\ 1.10]{BHO15}. 
We call the limit $(\mu_t^\ssup{1},\mu_t^\ssup{2})_{t \geq 0}$ the \emph{continuous-space
infinite rate symbiotic branching model} $\mathrm{cSBM}(\rho,\infty)$.
As for the discrete case, the convergence is generally in the Meyer-Zheng `pseudo-path' topology 
on $D_{[0,\infty)}$, which is strictly weaker than the standard Skorokhod topology. 
Under the more restrictive condition that $\bfu_0 = (\1_{\R^-},\1_{\R^+})$ and $\rho \in (-1,-\frac{1}{\sqrt{2}})$, 
we showed convergence in the stronger Skorokhod topology on $\calC_{[0,\infty)}$, see ~\cite[Thms.\ 1.5, 1.12]{BHO15}. Also, we showed that 
in this case the limiting measures $\mu_t^\ssup{1}$ and $\mu_t^\ssup{2}$ are absolutely continuous with respect to Lebesgue measure
and mutually singular, i.e.\ we have 
\begin{equation}\label{singularity1}
\mu_t^\ssup{1}(\cdot) \mu_t^\ssup{2}(\cdot)\equiv0\qquad \text{almost surely}.
\end{equation}
In \cite{BHO15}, we characterized  $\mathrm{cSBM}(\rho,\infty)$ in terms of a rather abstract martingale problem. 
In contrast to the discrete-space case (see \cite{KM11b}), a fully explicit characterization of the continuous-space limit is still missing so far. 
Recently, in \cite{HO15} the gap between the two infinite rate models has been narrowed somewhat by showing that for all $\rho\in(-1,0)$, $\mathrm{dSBM}(\rho,\infty)$ converges to $\mathrm{cSBM}(\rho,\infty)$ under diffusive rescaling.
This opens up the possibility to obtain results on the continuous model from analogous ones for the discrete model. 
For example, it was shown in \cite{HO15} that for complementary Heaviside initial conditions, a single interface exists almost surely, for each fixed time $t>0$.

In \cite{BHO15} we did not consider $\rho=-1$.
In this case, for complementary Heaviside initial conditions,
a diffusive scaling limit was already proved in~\cite{T95} for the continuum stepping stone model, 
as one of the steps of understanding the diffusively rescaled interface. 
Under this assumption it was shown that the measure-valued limit $(\mu_t^\ssup{1}, \mu_t^\ssup{2})_{t \geq 0}$ 
has the law of
\begin{equation}\label{eq:tribe} (\1_{\{x\le B_t\}}\, dx,\1_{\{x\ge B_t\}}\, dx)_{t \geq 0} , \end{equation}
for $(B_t)_{t \geq 0}$ a standard Brownian motion.
More generally, instead of just complementary Heaviside \cite{T95} considered also initial conditions with `multiple interfaces', but still under the assumption that they sum up to one so that the system reduces to the stepping stone model \eqref{eqn:stepping stone}. 
It was shown that these solutions with a finite number of interfaces converge to 
a system of annihilating Brownian motions.

The original motivation for this work was the question what happens for $\rho=-1$ and \emph{general} initial conditions. 
This includes the case where $\bfu_0$ is still complementary but does not satisfy $u_0^\ssup{1}+u^\ssup{2}_0\equiv1$, but also `overlapping' initial configurations where the two populations are no longer well-separated.
These scenarios correspond to a stepping stone model where we do not look at relative frequencies of types
and where we would like to understand how an initial imbalance in absolute numbers propagates.
These general initial conditions are not covered by the results in~\cite{T95}, and the problem is also open for the discrete model, see e.g.\ \cite{DM12}, p. 43. 

In fact, all tightness results obtained in \cite{BHO15} continue to hold for $\rho=-1$ as well. 
The problem is however uniqueness, since the self-duality approach employed for the case $\rho>-1$ in \cite{EF04}, \cite{KM11b} and \cite{BHO15} breaks down. The same  
problem arises for the finite rate model as well. Instead of using self-duality,  \cite{EF04} 
establish uniqueness for  $\mathrm{SBM}(-1,\gamma)$ using a new moment duality introduced there, which we recall below and
which replaces Shiga's \cite{Shiga86} moment duality involving coalescing Brownian motions.
 For $\rho=-1$ the process is characterized by its moments precisely because the sum $u_t^\ssup1+u_t^\ssup2$ still solves the deterministic heat equation.
This suggests a strategy to prove uniqueness for $\mathrm{SBM}(-1,\infty)$ by extending the moment duality of \cite{EF04} to the infinite rate limit. Indeed, this (rather non-trivial) extension is one of the main results in the present work. 

We now briefly recall the moment duality due to \cite{EF04}.
Let $\bfu_t = (u_t^\ssup{1},u_t^\ssup{2})$ denote the solution of $\mathrm{SBM}(\rho,\gamma)$ (in discrete or continuous space) with initial condition $\bfu_0=(u_0^\ssup{1},u_0^\ssup{2})$.
Let $\calS\in\{\Z^d,\R\}$. 
Fix $n\in\bN$. For $\bfx=(x_1,\ldots,x_{n})\in\calS^n$ and $c=(c_1,\ldots,c_n)\in\{1,2\}^n$,
one is interested in the (mixed) moments  
\[\E_{u_0^\ssup{1},u_0^\ssup{2}}\left[\Pi_{i=1}^n u_t^\ssup{c_i}(x_i)\right],\qquad t>0.\]
We can interpret the two types as `colors', thus we will call each element of $\{1,2\}^n$ a \emph{coloring}. 
In discrete resp.\ continuous 
space, the dual process is defined as follows:
At time $t=0$, we start with $n$ particles located at $\bfx\in\calS^n$ and colored according to $c\in\{1,2\}^n$, i.e. $c_i$ is the color of particle $i$ located at $x_i$. 
The particles move on paths given by a family of independent simple symmetric random walks
resp.\ Brownian motions $(X_t)_{t\ge0}$ starting at $\bfx$. 
When two particles meet, they start collecting collision local time.
If both particles are of the same color, each of them changes color when their collision local time 
exceeds an (independent) exponential time with parameter $\gamma/2$, resulting in a total rate of $\gamma$ per pair of particles for a change of color. 
Denote by $(C_t)_{t\ge0}$ the resulting coloring process of $X$, that is $C_t \in \{1,2\}^n$ prescribes the color of the $n$ particles at time $t$. 
Finally, denote by $L_t^=$ the total collision local time collected by all pairs of the same color up to time $t$, and let $L_t^{\neq}$ be the  collected local time of all pairs of different color up to time $t$.
For a more rigorous definition of the dual process and more details, we refer to \cite{EF04}, Sections 3.2 and 4.1.
The mixed moment duality function is given (up to an exponential correction involving $L_t^=$ and $L_t^{\neq}$) by
\begin{align}\label{defn:H}
H(\bfu; \bfx,c) := \bfu^{(c)}(\bfx) := \prod_{i=1}^n u^{(c_i)}(x_i).
\end{align}
Now the moment duality for $\mathrm{SBM}(\rho,\gamma)$ reads as follows \cite[Prop. 9, 12]{EF04}:
\begin{align}\label{moment duality finite gamma}
\bE_{\bfu_0}\left[ H(\bfu_t;\bfx,c)\right] 
= \bE_{\bfx,c}\left[H(\bfu_0;X_t,C_t)\, e^{\gamma(L^=_t+\rho L^{\neq}_t)}\right]. 
\end{align}
Although the moment duality is particularly important
for $\rho=-1$ since there it is needed to ensure uniqueness, it holds for all values of $\rho\in[-1,1]$. In \cite{BDE11}, it was used 
(together with the self-duality) to investigate how the long-term behavior of the moments depends on the parameter $\rho$. 
More precisely, define the \emph{critical curve} $p: [-1,1) \to (1, \infty]$ of the symbiotic branching model by 
	\begin{align}\label{criticalcurve}
 		p(\varrho)=\frac{\pi}{ \arccos(-\varrho)} ,
 	\end{align}
and denote its inverse by $\varrho(p) = - \cos(\frac{\pi}{p})$ for $p\in(1,\infty]$. 
As shown in \cite[Thm.\ 2.5]{BDE11},
this critical curve determines the long-term behavior of the moments of $\mathrm{SBM}(\rho,\gamma)$ for uniform initial conditions: We have
\begin{equation}\label{moment_behavior}
\varrho<\varrho(p) \quad \Rightarrow \quad  \E_{\1,\1}\big[u^\ssup{i}_t(x)^p\big] \mbox{ is bounded uniformly in all }t\ge 0,\, x\in\cS,\, i=1,2,
\end{equation}
and the above condition is sharp in the recurrent case, i.e.\ for $\cS\in\{\R,\Z,\Z^2\}$.
This suggests that the infinite rate model $\mathrm{SBM}(\rho, \infty)$ has finite $p$-th moments for $p<p(\rho)$, and indeed this holds true (see \cite[Prop.\ 2.8]{HO15}).

In this work, we extend the moment duality from \cite{EF04} to the infinite rate limit $\mathrm{SBM}(\rho,\infty)$ for all $n\in\N$ such that $\rho + \cos(\pi/n) < 0$, corresponding to integer moments below the critical curve.
As we will explain below, this is not straightforward and first requires a suitable reinterpretation of the moment duality \eqref{moment duality finite gamma}. 
Indeed, it is intuitive that for $\gamma\uparrow\infty$, the color change mechanism in the dual system of colored particles described above will happen instantaneously upon the collision of two particles. 
However, it is not at all clear what the exponential correction term in \eqref{moment duality finite gamma} will converge to.
For $\rho=-1$, the extension to $\gamma=\infty$ establishes the uniqueness of $\mathrm{SBM}(-1,\infty)$ for general initial conditions, which was so far open in both discrete and continuous space. 
As a further application, we investigate the continuous space model in more detail. 
In fact, assuming only boundedness of the initial condition $\bfu_0$, we provide a complete description of $\mathrm{cSBM}(-1,\infty)_{\bfu_0}$ in terms of annihilating Brownian motions with drift.

\section{Main results}

In this section, we give a precise statement of our main results. 
From now on, we will write $(\bfu_t^\sse{\gamma})_{t\ge0}$ for the solution to $\mathrm{SBM}(\rho,\gamma)_{\bfu_0}$ (in discrete or continuous space)
with initial condition $\bfu_0$ and finite branching rate $\gamma\in(0,\infty)$. 
The notation $(\bfu_t)_{t\ge0}$ will usually\footnote{An exception is Cor.\ \ref{cor:explicit-moment-computations} (ii).} be reserved for the infinite rate limit $\mathrm{SBM}(\rho,\infty)_{\bfu_0}$.
We recall from \cite[Thm.\ 1.10]{BHO15} resp.\ \cite[Thm.\ 2.2]{HO15} that the limiting process $(\bfu_t)_{t\ge0}$ takes values in $D_{[0,\infty)}(\cM_\tem(\cS)^2)$, where $\cM_\tem(\cS)$ denotes the space of tempered measures on $\cS\in\{\Z^d,\R\}$, see e.g.\ Appendix A.1 in \cite{BHO15} for a precise definition.
There we also recall the definition of the Meyer-Zheng topology that we use throughout. 

In order to simplify the proofs, in this paper we will consider only bounded initial densities $\bfu_0=(u_0^\ssup{1},u_0^\ssup{2})$. 
However, at the expense of more technicalities, everything should work also with tempered initial densities as used in \cite{BHO15} and \cite{HO15}.

\subsection{Moment duality for $\mathrm{SBM}(\rho,\infty)$}

We return to the classical moment duality \eqref{moment duality finite gamma} for $\mathrm{SBM}(\rho,\gamma)$ due to \cite{EF04},
which we would like to extend to $\gamma=\infty$. 
Recall that we call the elements of $\{1,2\}^n$ \emph{colorings}. Let $\calM(\{1,2\}^n)$ denote the space of measures on colorings, which can be identified with $(\R^+)^{(2^n)}$.
Given the paths $\bfX=(X^\ssup{1}_t,\ldots,X^\ssup{n}_t)_{t\geq0}$ of either simple symmetric random walk in $(\Z^d)^n$ or Brownian motion in $\R^n$ and a measure on colorings $M_0\in\cM\left(\{1,2\}^n\right)$,
let $M_t^\sse{\gamma}\equiv M_t^\sse{\gamma}(\bfX,M_0)$ be a process taking values in $\calM(\{1,2\}^n)$ with initial state $M_0$
and evolution given by
\begin{align}\label{eq:ODE_X_new}
 dM^\sse{\gamma}_t(b) 
&= \frac{\gamma\rho}{2} \sum_{ i,j =1 }^n \ind_{b_i\neq b_j} M^\sse{\gamma}_t(b)\,dL_t^{i,j} + \frac{\gamma}{2}
 \sum_{i,j=1}^n \ind_{b_i\neq b_j} M^\sse{\gamma}_t(\widehat{b}^i)\,dL_t^{i,j},\qquad b\in\{1,2\}^n.
\end{align}
Here $(L^{i,j}_t)_{t\ge0}$ are the pair local times of the motions $\bfX$ and $\widehat{b}^i$ is the coloring $b$ flipped at $i$. The process $M_t^\sse{\gamma}(\bfX,M_0)$ is well-defined almost surely w.r.t.\ the law of the random walk or Brownian motion.
\begin{thm}\label{thm:rewrite}
Fix $\rho\in[-1,1]$, and let $\calS\in\{\Z^d,\R\}$.
Given $n\in\N$ and a coloring $c\in\{1,2\}^n$, the original moment duality \eqref{moment duality finite gamma} can be rewritten as
\begin{align}\label{eq:moment duality finite gamma 2}
\bE_{\bfu_0}\big[ H(\bfu^\sse{\gamma}_t;\bfx,c)\big] = \bE_{\bfx}\bigg[\sum_{b\in\{1,2\}^n} M_t^\sse{\gamma}(\bfX,\delta_c)(b)\, \bfu_0^\ssup{b}(X_t) \bigg].
\end{align}
\end{thm}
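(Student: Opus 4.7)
The plan is to establish \eqref{eq:moment duality finite gamma 2} by rewriting the right-hand side of the classical duality \eqref{moment duality finite gamma} conditionally on the motion $\bfX$ via Feynman--Kac, and then taking expectation over $\bfX$. Fix a realization of $\bfX$ and its pair collision local times $(L^{i,j}_t)_{t\ge 0}$. Conditional on $\bfX$, the coloring process $(C_t)$ becomes a time-inhomogeneous pure-jump Markov chain on $\{1,2\}^n$: starting from state $b$, particle $i$ flips its color (transition $b\to\widehat b^i$) at cumulative rate $\tfrac{\gamma}{2}\sum_{j:\,b_i=b_j}dL^{i,j}_t$, because only same-color pairs contribute. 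Moreover,
\[
L^=_t = \sum_{i<j}\int_0^t\ind_{C_s(i)=C_s(j)}\,dL^{i,j}_s,\qquad L^{\neq}_t = \sum_{i<j}\int_0^t\ind_{C_s(i)\neq C_s(j)}\,dL^{i,j}_s,
\]
so $\exp(\gamma L^=_t+\gamma\rho L^{\neq}_t)$ is a multiplicative functional of $C$ with potential $dV_t(b) := \gamma\sum_{i<j}(\ind_{b_i=b_j}+\rho\,\ind_{b_i\neq b_j})\,dL^{i,j}_t$ depending only on the current coloring and on the drivers $dL^{i,j}$.

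The core step is a Feynman--Kac computation. Define the conditional kernel
\[
\tilde M_t(b) := \bE_c\bigl[\ind_{C_t=b}\,\exp(\gamma L^=_t+\gamma\rho L^{\neq}_t)\,\big|\,\bfX\bigr],\qquad b\in\{1,2\}^n,
\]
so that its forward Kolmogorov equation (driven by the Stieltjes measures $dL^{i,j}$) reads
\[
d\tilde M_t(b) = \tilde M_t(b)\,dV_t(b) + \sum_{i}\tfrac{\gamma}{2}\!\sum_{j:\,b_i\neq b_j}\!\tilde M_t(\widehat b^i)\,dL^{i,j}_t - \tilde M_t(b)\,\tfrac{\gamma}{2}\!\sum_{i,j:\,b_i=b_j}\!dL^{i,j}_t,
\]
where I have used $\ind_{(\widehat b^i)_i=(\widehat b^i)_j}=\ind_{b_i\neq b_j}$ to identify the rate of jumps into $b$ from $\widehat b^i$. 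The total out-rate from $b$ is $\gamma\sum_{i<j:\,b_i=b_j}dL^{i,j}_t$, which exactly cancels the same-color part of $dV_t(b)$. Rewriting the surviving unordered pair sums $\sum_{i<j}$ as $\tfrac12\sum_{i,j}$ produces precisely \eqref{eq:ODE_X_new} with initial condition $\tilde M_0=\delta_c$. Uniqueness of this linear Stieltjes integral equation (bounded indicator coefficients, continuous non-decreasing drivers $L^{i,j}$) yields $\tilde M_t=M_t^\sse{\gamma}(\bfX,\delta_c)$ almost surely.

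To conclude, multiply by $H(\bfu_0;X_t,b)=\bfu_0^\ssup{b}(X_t)$, sum over $b\in\{1,2\}^n$, and take expectation over $\bfX$:
\[
\bE_{\bfx,c}\bigl[H(\bfu_0;X_t,C_t)\,e^{\gamma(L^=_t+\rho L^{\neq}_t)}\bigr] \;=\; \bE_{\bfx}\Bigl[\sum_{b\in\{1,2\}^n}M_t^\sse{\gamma}(\bfX,\delta_c)(b)\,\bfu_0^\ssup{b}(X_t)\Bigr].
\]
The left-hand side equals $\bE_{\bfu_0}[H(\bfu_t^\sse{\gamma};\bfx,c)]$ by \eqref{moment duality finite gamma}, giving the claim.

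The main delicate point will be the bookkeeping behind the cancellation: one must interpret ``each of them changes color'' as a per-particle flip (so that the jump-out rate from $b$ accumulates $\gamma/2$ per same-color pair times their local time), carefully track factors of $\tfrac12$ when passing between $\sum_{i,j}$ and $\sum_{i<j}$, and verify term-by-term that the same-color part of $dV_t(b)$ matches the total jump-out rate from $b$. A secondary technicality is to justify the Feynman--Kac derivation pathwise in Stieltjes form, which in the continuous-space case requires working with the collision local times $L^{i,j}$ as continuous non-decreasing additive functionals that are singular with respect to Lebesgue measure, so that the symbolic ``$dL^{i,j}_t/dt$'' is replaced by direct Stieltjes integrals throughout.
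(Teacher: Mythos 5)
Your proposal is correct and takes essentially the same route as the paper: you condition on $\bfX$, identify the conditional expected exponential weight $\tilde M_t(b)=\bE_c\bigl[\ind_{C_t=b}e^{\gamma(L^=_t+\rho L^{\neq}_t)}\mid\bfX\bigr]$ as the unique solution of \eqref{eq:ODE_X_new}, and conclude from the original duality \eqref{moment duality finite gamma} by the tower property. The paper derives the same equation by a short-time incremental expansion (treating discrete and continuous space separately), where the cancellation of the same-color potential against the no-flip survival factor is exactly the cancellation you perform in the forward Kolmogorov equation.
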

\begin{rem}
Let us remark that we have a lot of control over the process $M^\sse{\gamma}$, in terms of eigenvectors and eigenvalues. The details of this are postponed to Section~\ref{sec:duality}. 
\end{rem}
Our first result, which is crucial for the extension of the moment duality to $\gamma=\infty$, is that 
the process $(M_t^\sse{\gamma}(\bfX,M_0))_{t\ge0}$ converges almost surely as $\gamma\uparrow\infty$, provided $\rho + \cos(\pi/n) < 0$.

\begin{thm}\label{thm:M_t-infty}
Suppose that $\rho + \cos(\pi/n) < 0$.  In the continuous-space case $\calS=\R$, assume also that the starting point $\bfx=(x_1,\ldots,x_n)\in\R^n$ of the Brownian motion $\bfX$ is such that 
no two coordinates are the same.
Then as $\gamma \ra \infty$,
the process $(M_t^\sse\gamma)_{t \geq 0}$ defined in \eqref{eq:ODE_X_new} converges almost surely 
to a l\`adc\`ag limiting process $(M_t^\sse{\infty})_{t\ge0}=(M_t^\sse{\infty}(\bfX,M_0))_{t\ge0}$. 
\end{thm}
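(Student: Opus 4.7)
The plan is to exploit the explicit spectral structure of the operators $A_{i,j}$ driving the linear ODE \eqref{eq:ODE_X_new}, as alluded to in the remark following Theorem~\ref{thm:rewrite}. First I would diagonalize each symmetrized pair operator $A_{i,j}+A_{j,i}$. A direct computation (using for instance the factorization $\mathbf{1}_{b_i\neq b_j}=\tfrac{1}{2}(1-(-1)^{b_i+b_j})$ together with the character basis on $\{1,2\}^n$) shows that on the $4$-dimensional ``pair subspace'' this operator has spectrum $\{0,0,\rho,\rho\}$, with explicit kernel and $\rho$-eigenspace. Tensoring with the identity on the remaining $n-2$ coordinates gives the full spectrum on $\mathbb{R}^{2^n}$. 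Consequently, for every fixed $\tau>0$,
\[ e^{\gamma(A_{i,j}+A_{j,i})\tau}\;\xrightarrow[\gamma\to\infty]{}\;\Pi_{i,j}, \]
where $\Pi_{i,j}$ is the (non-orthogonal) spectral projection onto $\ker(A_{i,j}+A_{j,i})$ along the $\rho$-eigenspace.

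Next I would establish uniform boundedness of $(M^\sse{\gamma}_t)_{\gamma>0}$ on compact time intervals. This is where the hypothesis $\rho+\cos(\pi/n)<0$ enters essentially: by a spectral analysis of the collective generator $\sum_{(i,j)\in\mathcal{P}}A_{i,j}$ (for any set $\mathcal{P}$ of simultaneously active pairs), one extracts a bound of the form $\mathrm{Re}\,\sigma\bigl(\sum_{\mathcal P}A_{i,j}\bigr)\leq \rho+\cos(\pi/n)$. The quantity $\cos(\pi/n)$ arises here as the top eigenvalue of a Temperley--Lieb / spin-chain type combinatorial operator naturally built from the pair operators, and can be identified via representation-theoretic arguments on $\mathbb{R}^{2^n}$. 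This allows the construction of a norm $\|\cdot\|_*$ in which every exponential $e^{\gamma(\sum_{\mathcal P}A_{i,j})\tau}$ is a contraction, yielding $\|M^\sse{\gamma}_t\|_*\leq\|M_0\|_*$ uniformly in $\gamma$ and $t$.

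Given the uniform bound, I would prove a.s.\ pathwise convergence. In the discrete case, $\bfX$ has a.s.\ only countably many transitions between configurations of same-site clusters, and on each constant-configuration interval the ODE reduces to a matrix exponential $e^{\gamma A(s-s')}$ for a fixed operator $A$, which converges as $\gamma\to\infty$ to the spectral projection onto $\ker A$; composing across intervals defines $M^\sse{\infty}$. In the continuous-space case, with distinct starting points, a.s.\ no three particles meet simultaneously, so at any given time at most one pair's local time is active; I would pass to the limit in the local-time time-scale of each pair, using the uniform bound together with the idempotency of the $\Pi_{i,j}$'s to handle the a.s.\ clustering of pair-excursions near each meeting time. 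The resulting process is easily checked to be l\`adc\`ag.

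The main obstacle I expect is the spectral bound involving $\cos(\pi/n)$, namely the explicit identification of the top real eigenvalue of the collective generator on $\mathbb{R}^{2^n}$ together with the construction of a compatible contraction norm $\|\cdot\|_*$; this step is where the specific threshold $\rho+\cos(\pi/n)<0$ enters sharply (cf.\ the analogous critical exponent $p(\rho)$ controlling moments of $\bfu^\sse\gamma$). A secondary technical difficulty is the pathwise control of the singular, local-time-driven ODE in the continuous case; this is addressed by working with time-ordered exponentials and using the uniform bound to interchange the $\gamma\to\infty$ limit with the composition of operators across (possibly uncountably many) pair-meeting events.
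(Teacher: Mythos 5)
Your overall skeleton is the same as the paper's: reduce the local-time-driven linear equation to piecewise matrix exponentials (in the discrete case on the intervals where the coincidence partition $\pi(\bfX_t)$ is constant, in the continuous case in the local-time clock of the unique active pair between the countably many times at which a \emph{new} pair collides), show that each exponential $e^{\gamma A \tau}$ converges as $\gamma\to\infty$ to an explicit projection, and compose these limits recursively. However, the core of the proof is exactly the step you defer as the ``main obstacle,'' and your sketch of it is not correct as stated. First, the bound $\mathrm{Re}\,\sigma\bigl(\sum_{\mathcal P}A_{i,j}\bigr)\le \rho+\cos(\pi/n)$ cannot hold: the constant colorings $(\underline 1)_n$ and $(\underline 2)_n$ are always invariant, so $0$ is always an eigenvalue of the cluster generator. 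What is actually needed is that, for a cluster of size $\ell\le n$, the generator $A_\rho^\sse{\ell}$ has eigenvalue $0$ with semisimple multiplicity two, with \emph{explicitly identified} eigenvectors (the $\sin(\la_\rho k)/\sin(\la_\rho \ell)$ vectors, resp.\ $k/\ell$ for $\rho=-1$), while all remaining eigenvalues have strictly negative real part when $\rho+\cos(\pi/\ell)<0$. A contraction norm and uniform boundedness in $\gamma$ (your Step 2) do not give this: boundedness of $e^{\gamma A\tau}$ does not imply its convergence, and for the later construction of $M^\sse{\infty}$ (and the duality) one needs the limiting rank-two projection in closed form. In the paper this lemma is proved by restricting to vectors constant on the level sets $\{m:\#_1m=k\}$, which reduces $A_\rho^\sse{\ell}$ to $DB$ with $B$ a tridiagonal Toeplitz matrix whose eigenvalues are $\rho+\cos(j\pi/\ell)$ (this is where $\cos(\pi/n)$ genuinely enters, as you guessed), and then transferring the negativity from this symmetric reduction to the full $2^\ell$-dimensional matrix via a Perron--Frobenius positivity argument; general partitions are handled by the Kronecker-sum decomposition $A_\rho^\pi=A_\rho^\sse{|\pi_1|}\oplus\cdots\oplus A_\rho^\sse{|\pi_k|}$. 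None of these ingredients (the level-set reduction, the Perron--Frobenius transfer, the explicit kernel) is supplied by your proposal, so the argument has a genuine gap precisely at the point where the hypothesis $\rho+\cos(\pi/n)<0$ does its work.

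Two smaller points. Uniform boundedness of $M^\sse{\gamma}$ is not needed for this theorem at all (it is used later, for the dominated-convergence step in the duality for $\gamma=\infty$), and in the paper it is obtained not from a spectral contraction estimate but from the observation that the two kernel vectors $v_1^\ssup{n},v_2^\ssup{n}$ are zero-eigenvectors of $A_\rho^\pi$ for \emph{every} partition $\pi$, combined with a coordinatewise comparison argument. Also, the continuous case is simpler than your last paragraph suggests: between successive new-pair-collision times only one pair accumulates local time, so on each such interval the solution is a single exponential $K_{\gamma(L_t^\ssup{\kappa_k}-L_{\tau_k}^\ssup{\kappa_k})}(\cdot,\pi(\bfX_{\tau_k}))$; since the local-time increment is a.s.\ strictly positive for $t>\tau_k$, no composition over the uncountably many meeting times of a given pair is required.
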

\begin{rem}
\begin{itemize}
 \item[a)]
The construction of the process $M^\sse{\infty}$ in Theorem \ref{thm:M_t-infty} is explicit, but also involved. 
Therefore we postpone its description to Section~\ref{sec:duality}, see in particular Propositions~\ref{prop:thm_M_infty_discrete} and~\ref{prop:thm_M_infty_continuous}.
 \item[b)]
In continuous space, the extra condition on the starting point of the Brownian motions that $x_i\ne x_j$ for $i\ne j$ is only technical and may be removed, albeit at the expense of a much more involved proof. 
But since we will need the convergence only for Lebesgue-almost all starting points $\bfx\in\R^n$, we will not prove this.
\end{itemize}
\end{rem}

Our second main result extends the moment duality \eqref{eq:moment duality finite gamma 2} to the infinite rate limit. 
Note that we exclude $\rho=-1$ since in this case the infinite rate limit has not yet been constructed for general initial conditions. 
In fact, in Section \ref{ssec:-1} we will use Theorems \ref{thm:rewrite} and \ref{thm:M_t-infty} to remedy this and obtain much more details about the case $\rho=-1$.
Since in continuous space the limiting process $\mathrm{cSBM}(\rho,\infty)$ is measure-valued, instead of the `pointwise' duality function employed in \eqref{eq:moment duality finite gamma 2} 
we state the duality in a weak formulation.

\begin{thm}\label{thm:moment_duality_infinite} 
Suppose $\rho\in(-1,0)$.  For $\calS\in\{\Z^d,\R\}$, consider nonnegative and bounded initial densities $\bfu_0 = (u_0^\ssup{1},u_0^\ssup{2})$. 
Let $(\mathbf{u}_t)_{t\ge0}\in D_{[0,\infty)}(\cM_\tem(\cS)^2)$ denote the solution to $\mathrm{SBM}(\rho,\infty)_{\bfu_0}$, 
and let $(M_t^\sse{\infty})_{t\ge0}$ be the limiting process from Theorem~\ref{thm:M_t-infty}. 
Then $(\mathbf{u}_t)_{t\ge0}$ satisfies the following moment duality: For all $n\in\N$ such that $\rho + \cos(\pi/n) < 0$, all $t>0$, all colorings $c=(c_1,\ldots,c_n)\in\{1,2\}^n$ and all nonnegative test functions 
$0\le\phi\in L^1(\cS^n)$,
we have
\begin{equation}\label{eq:moment duality infinite gamma}  
\E_{\bfu_0} \left[\int_{\cS^n} \phi(\bfx)\,\bfu_t^\ssup{c}(d\bfx)\right]  = \int_{\calS^n}\phi(\bfx)\, \E_{\bfx}\left[\sum_{b\in\{1,2\}^n}M_t^\sse{\infty}(\bfX,\delta_c)(b)\,\bfu_0^\ssup{b}(X_t)\right]\,d\bfx < \infty,
\end{equation}
where we write $\bfu_t^\ssup{c}(d\bfx):=\bigotimes_{i=1}^n u_t^\ssup{c_i}(dx_i)$ and the integral on the right hand side
is taken w.r.t.\ Lebesgue measue if $\calS=\R$ and w.r.t.\ counting measure if $\calS=\Z^d$.
\end{thm}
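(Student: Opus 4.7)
The plan is to deduce (\ref{eq:moment duality infinite gamma}) by passing to the limit $\gamma\to\infty$ in the finite-rate duality of Theorem \ref{thm:rewrite}, integrated against $\phi$. Since $\bfu_t^{\sse{\gamma}}$ has bounded densities for each finite $\gamma$, Theorem \ref{thm:rewrite} combined with Fubini yields
\begin{equation}\label{eq:finite-gamma-weak-plan}
\E_{\bfu_0}\!\left[\int_{\calS^n}\!\phi(\bfx)\, H(\bfu_t^{\sse{\gamma}};\bfx,c)\,d\bfx\right]
= \int_{\calS^n}\!\phi(\bfx)\,\E_{\bfx}\!\left[\sum_{b\in\{1,2\}^n}\! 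M_t^{\sse{\gamma}}(\bfX,\delta_c)(b)\,\bfu_0^{\ssup{b}}(X_t)\right]d\bfx.
\end{equation}
It then suffices to show that each side converges to the corresponding side of (\ref{eq:moment duality infinite gamma}) as $\gamma\to\infty$.

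For the left-hand side I would use the convergence $\bfu^{\sse{\gamma}}\Rightarrow\bfu$ in the Meyer--Zheng topology on $D_{[0,\infty)}(\cM_\tem(\cS)^2)$ established in \cite{BHO15, HO15}. Since the functional $\mu\mapsto \int \phi(\bfx)\mu^{\ssup{c}}(d\bfx)$ is continuous in the vague topology for suitable $\phi$, this gives convergence in distribution of the left-hand side of (\ref{eq:finite-gamma-weak-plan}) at Lebesgue-a.e.\ $t>0$. The hypothesis $\rho+\cos(\pi/n)<0$ places $n$ strictly below the critical curve $p(\rho)$ of (\ref{criticalcurve}), so by (\ref{moment_behavior}) (extended to bounded initial data, cf.\ \cite[Prop.\ 2.8]{HO15}) one can choose $\delta>0$ small enough that the $(n+\delta)$-th moments of $u_t^{\sse{\gamma},\ssup{i}}(x)$ are bounded uniformly in $\gamma,t,x$. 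Via H\"older's inequality this yields uniform integrability of $\int \phi(\bfx) H(\bfu_t^{\sse{\gamma}};\bfx,c)\,d\bfx$, so the distributional convergence upgrades to convergence of expectations. The case of general $\phi\in L^1_+(\calS^n)$ follows by monotone approximation, and the identity extends from a.e.\ $t$ to every $t>0$ by using right-continuity of $\bfu$ together with right-continuity in $t$ of the right-hand side, which is inherited from the construction of $M^{\sse{\infty}}$ in Section \ref{sec:duality}.

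For the right-hand side of (\ref{eq:finite-gamma-weak-plan}), Theorem \ref{thm:M_t-infty} supplies the almost-sure convergence $M_t^{\sse{\gamma}}(\bfX,\delta_c)\to M_t^{\sse{\infty}}(\bfX,\delta_c)$ for Lebesgue-a.e.\ starting point $\bfx$. The main obstacle is to obtain a uniform-in-$\gamma$ dominating bound on $\sum_b|M_t^{\sse{\gamma}}(\bfX,\delta_c)(b)|$; this is precisely where the spectral control of the generator of (\ref{eq:ODE_X_new}) announced in the remark after Theorem \ref{thm:rewrite} enters decisively. Under the condition $\rho+\cos(\pi/n)<0$ all relevant eigenvalues of the color dynamics are nonpositive, which should yield a deterministic pathwise bound depending only on $n$. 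Combined with $\|\bfu_0\|_\infty<\infty$ and $\phi\in L^1$, dominated convergence then applies both to the inner expectation $\E_{\bfx}$ and to the outer integral over $\bfx$. Finiteness of the two sides of (\ref{eq:moment duality infinite gamma}) is immediate from the same moment bound and the boundedness of $\bfu_0$.
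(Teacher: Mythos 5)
Your proposal follows the same overall strategy as the paper's proof of Theorem~\ref{thm:moment_duality_infinite}: rewrite the finite-$\gamma$ duality in weak form via Theorem~\ref{thm:rewrite} and Fubini, pass to the limit on the left-hand side using Meyer--Zheng convergence together with uniform integrability from the fact that $\rho+\cos(\pi/n)<0$ places $n$ strictly below the critical curve, pass to the limit on the right-hand side using Theorem~\ref{thm:M_t-infty} and dominated convergence, and then extend from a.e.\ $t$ to all $t$ by right-continuity and from product test functions to general $\phi$ by density. This is exactly how the paper proceeds.

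The one place where your reasoning is genuinely too coarse is the uniform-in-$\gamma$ bound on $M_t^{\sse\gamma}$, which is the technical heart of the argument (Proposition~\ref{prop:M_bounded}). You write that since all relevant eigenvalues of the colour generator are nonpositive under $\rho+\cos(\pi/n)<0$, a deterministic pathwise bound ``should'' follow. But $M_t^{\sse\gamma}$ is built from the time-ordered product of matrix exponentials $e^{sA_\rho^{\pi}}$ taken over the \emph{random} sequence of partitions $\pi(\bfX_{\tau_k})$ visited by $\bfX$, and these generators do not commute; a nonpositive spectrum for each individual $A_\rho^\pi$ does not control such a non-commuting product. The paper avoids this by producing an explicit positive vector $V_n^{a_1,a_2}=a_1 v_1^{\ssup{n}}+a_2 v_2^{\ssup{n}}$ which is \emph{simultaneously} a fixed point of $K_\infty(\cdot,\pi)$ for every partition $\pi$ (Lemmas~\ref{lemma:eigenvector-partition-simple}--\ref{lemma:eigenvector-partition}, proved by sine addition formulas), and then observing that the cooperative structure of the ODE (diagonal plus nonnegative off-diagonal entries) preserves the coordinatewise inequality $K_t\le V_n^{a_1,a_2}$ (Lemma~\ref{le:K_uniform_bound}). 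This order-comparison argument, not a spectral-radius estimate, is what gives the pathwise domination; you correctly identify where the bound is needed but do not supply the mechanism that actually yields it.

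A minor further remark: $M^{\sse\infty}$ is l\`adc\`ag rather than c\`adl\`ag, so the right-hand side of the duality is not obviously right-continuous in $t$ pathwise. The paper's extension from $t\in I$ to all $t>0$ relies only on right-continuity of $(\bfu_t)$, and the matching continuity of the right-hand side comes from the fact that the jump times $\tau_k$ of $M^{\sse\infty}$ have atomless laws under $\bP_\bfx$, so the inner expectation is continuous in $t$. Your appeal to right-continuity ``inherited from the construction'' should be replaced by this observation.
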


We will now state several  consequences of the new moment duality \eqref{eq:moment duality infinite gamma} resp.\ of the reformulation of the finite rate duality \eqref{eq:moment duality finite gamma 2}.
First of all, for the infinite rate model we can explicitly compute second mixed moments, where we recover a known identity (see \cite[Thm.\ 1.2]{DM11a} for discrete space and \cite{HO15} for the general case).
Secondly, for the finite rate model on $\bZ^d$, $d=1,2$, it is known that the critical curve $\rho+\cos(\pi/n)=0$ determines which moments remain bounded in time. In dimensions where the random walk is transient, the condition $\rho+\cos(\pi/n)<0$ is still sufficient for the uniform boundedness of $n$th moments, see \cite[Thm.\ 2.5]{BDE11}. However, as we will see, we can recover a result of~\cite{AD11} to see it is no longer sharp. 
\begin{corollary}\label{cor:explicit-moment-computations}
\begin{itemize}
\item[(i)] Let $\rho\in(-1,0)$, $\calS\in\{\bZ^d,\bR\}$ and $(\bfu_t)_{t\ge0}$ be the solution of  ${\rm SBM} (\rho, \infty)_{\bfu_0}$ for nonnegative and bounded initial condition $\bfu_0$. 
For second mixed moments, the duality \eqref{eq:moment duality infinite gamma} reads
\begin{equation}\label{eq:mixed_second_moment}
\E_{\bfu_0}\left[\langle u^\ssup{1}_t,\phi\rangle\langle u^\ssup{2}_t,\psi\rangle\right]=\int_{\cS^2}\phi(x)\psi(y)\,\E_{x,y}\left[u_0^\ssup{1}(X_t^\ssup{1})u_0^\ssup{2}(X_t^\ssup{2})\1_{t<\tau}\right]\,dx\,dy,\end{equation}
where $\phi,\psi\in L^1(\R)$ and $\tau$ denotes the first collision time of the two random walks resp.\ Brownian motions $(X^\ssup{1},X^\ssup{2})$.
\item[(ii)] Let $\rho\in[-1,1]$, $\cS = \Z^d$ for $d\ge3$ and $(\bfu_t)_{t\ge0}$ be the solution of ${\rm dSBM}(\rho,\gamma)_{\bfu_0}$ for $\gamma \in (0,\infty)$ and nonnegative bounded initial condition $\bfu_0$. 
Let $p_d$ be the return probability of a random walk to the origin.
If $\frac{\rho\gamma}{2(1-p_d)}<1$, then 
\[ \bE_{\bfu_0} \big[ u_t^\ssup{1}(x)u_t^\ssup{2}(x)  \big]\leq \norm{\bfu_0}_\infty^2\left(1-\frac{\rho\gamma}{2(1-p_d)}\right)^{-1} \quad\text{ for all $t\geq 0,\, x\in\bZ^d$}. \]
If $\frac{\rho\gamma}{2(1-p_d)}\geq1$, then $\lim_{t\to\infty}\bE_{(\1,\1)}\left[ u_t^\ssup{1}(x)u_t^\ssup{2}(x)\right]=\infty$.
\end{itemize}
\end{corollary}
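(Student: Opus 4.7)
Both parts follow by specializing the dualities \eqref{eq:moment duality finite gamma 2} and \eqref{eq:moment duality infinite gamma} to $n=2$ and the coloring $c=(1,2)$, and computing the dual process $M_t^\sse{\gamma}$ and its $\gamma\uparrow\infty$ limit explicitly. For $n=2$ the ODE \eqref{eq:ODE_X_new} simplifies drastically: the colorings $(1,1)$ and $(2,2)$ carry zero drift since $\ind_{b_1\neq b_2}$ vanishes, and the coloring $(2,1)$ only receives source from $(1,1)$ and $(2,2)$. Hence, starting from $\delta_{(1,2)}$, all three of these masses stay at zero, and only $M_t^\sse{\gamma}(1,2)$ evolves nontrivially, satisfying the scalar ODE $dM_t^\sse{\gamma}(1,2)=\gamma\rho\,M_t^\sse{\gamma}(1,2)\,dL_t^{1,2}$ with $M_0=1$. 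The explicit solution is $M_t^\sse{\gamma}(1,2)=\exp(\gamma\rho L_t^{1,2})$.

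For part (i), $\rho\in(-1,0)$ yields $\rho+\cos(\pi/2)<0$, so Theorem~\ref{thm:moment_duality_infinite} applies. As $\gamma\uparrow\infty$ with $\rho<0$, $\exp(\gamma\rho L_t^{1,2})$ decays pointwise to $\ind_{\{L_t^{1,2}=0\}}=\ind_{\{t<\tau\}}$, and by Theorem~\ref{thm:M_t-infty} this identifies $M_t^\sse{\infty}(1,2)$. Substituting the product test function $\phi(x_1)\psi(x_2)$ into \eqref{eq:moment duality infinite gamma} immediately yields \eqref{eq:mixed_second_moment}.

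For part (ii), the above ODE analysis is purely algebraic and works for any $\rho\in[-1,1]$, so \eqref{eq:moment duality finite gamma 2} applied at $\bfx=(x,x)$ gives
\begin{equation*}
\bE_{\bfu_0}\bigl[u_t^\ssup{1}(x)u_t^\ssup{2}(x)\bigr]=\bE_{(x,x)}\bigl[e^{\gamma\rho L_t^{1,2}}\,u_0^\ssup{1}(X_t^\ssup{1})u_0^\ssup{2}(X_t^\ssup{2})\bigr].
\end{equation*}
Bounding $u_0^\ssup{i}\leq\|\bfu_0\|_\infty$ and passing to the limit $t\uparrow\infty$ by monotone convergence (for $\rho\geq 0$; the case $\rho\leq 0$ is trivial), the problem reduces to evaluating the Laplace transform of the total collision time $L_\infty^{1,2}$ of two independent walks starting at the same site. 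Writing $Y=X^\ssup{1}-X^\ssup{2}$ (a continuous-time simple random walk on $\Z^d$ with jump rate $2$ under the Laplacian normalization of \eqref{eqn:spde} and return probability $p_d$) and decomposing $L_\infty^{1,2}$ into a geometric (success parameter $1-p_d$) sum of i.i.d.\ $\mathrm{Exp}(2)$ holding times at $0$, a routine geometric-of-exponentials calculation yields $\bE_0[e^{\lambda L_\infty^{1,2}}]=(1-\lambda/(2(1-p_d)))^{-1}$ for $\lambda<2(1-p_d)$. Specializing to $\lambda=\gamma\rho$ gives the stated upper bound, while the divergence assertion for $\gamma\rho\geq 2(1-p_d)$ follows from monotone convergence applied to the duality with $\bfu_0=(\ind,\ind)$, which makes the right-hand side equal $\bE_{(x,x)}[e^{\gamma\rho L_t^{1,2}}]\uparrow\bE_0[e^{\gamma\rho L_\infty^{1,2}}]=+\infty$. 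The only delicate point is tracking the Laplacian/local-time normalization conventions carefully enough so that the threshold emerges as $2(1-p_d)$ exactly as stated.
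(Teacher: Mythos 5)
Your proposal is correct and takes essentially the same route as the paper: for $n=2$ the paper likewise identifies $M_t^{\sse{\gamma}}(\bfX,\delta_{(1,2)})=e^{\gamma\rho L_t^{1,2}}\,\delta_{(1,2)}$ (via the eigenvector relation $K_t(\delta_{(1,2)},\{\{1,2\}\})=e^{\rho t}\delta_{(1,2)}$ and Lemma~\ref{prop:M-gamma}, which is the same computation as your direct solution of the two-colour ODE), obtains part (i) by letting $\gamma\to\infty$ so that the exponential becomes $\1_{\{t<\tau\}}$, and proves part (ii) by the same description of $L_\infty^{1,2}$ as a geometric number of $\mathrm{Exp}(2)$ diagonal sojourns, i.e.\ an $\mathrm{Exp}(2(1-p_d))$ variable. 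The only caveat is that your dismissal of $\rho\le 0$ as \emph{trivial} gives the bound $\norm{\bfu_0}_\infty^2$ rather than the literal (smaller) constant in the statement, but the paper's own proof glosses over exactly the same point, so this is not a gap relative to the paper.
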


\medskip
\begin{rem}
\begin{itemize}
\item[(a)] 
Note for part (ii) that $\frac{\rho\gamma}{2(1-p_d)}<1$ for all $\gamma>0$ only if $\rho\leq 0$. In particular $\rho=0$ has uniformly bounded second moments in $d\geq 3$. 
\item[(b)]Results on the second moments of ${\rm dSBM}(\rho,\gamma)$ have already been obtained in \cite[Prop.\ 2.3]{AD11}. 
We give an alternative proof using the explicit control on $M_t^\sse\gamma$, see
the representation \eqref{eq:M_t-2-simple} below. 
However, the latter is no longer true for higher moments
and instead $M_t^\sse\gamma$ is a random product of matrices (namely the matrices $K_{\tau_{k+1}-\tau_k}(\cdot,\pi(\bfX_{\tau_k}))$ in the terminology of Section \ref{sec:duality}). 
Since the leading eigenvalue and corresponding eigenvectors are known explicitly, there is some hope to obtain results. Nevertheless, since the random matrices are neither independent nor stationary, this is not trivial.
\end{itemize}
\end{rem}

\subsection{The case $\rho=-1$}\label{ssec:-1}
In Theorem \ref{thm:moment_duality_infinite}, we excluded $\rho=-1$ since in this case the infinite rate limit has not yet been constructed for general initial conditions. 
Our third main result remedies this situation and establishes the infinite rate symbiotic branching model $\mathrm{SBM}(-1,\infty)$ for both continuous and discrete space, and characterizes it via the moment duality \eqref{eq:moment duality infinite gamma} and the Markov property. 
In order to state the result properly, we introduce the following additional notation: For $\calS\in\{\Z^d,\R\}$, consider the space of all (equivalence classes of) nonnegative bounded measurable functions on $\cS$, which can be identified with the space of all Radon measures on $\cS$ having bounded densities w.r.t.\ Lebesgue measure if $\cS=\R$ resp.\ counting measure if $\cS=\Z^d$. Writing $\cM_b(\cS)$ for this space, we have $\cM_b(\cS)\subseteq\cM_\tem(\cS)$, and we topologize $\cM_b(\cS)$ by the subspace topology inherited from $\cM_\tem(\cS)$ (for which we refer the reader e.g.\ to Appendix A.1 in \cite{BHO15}).

\begin{thm}\label{thm:main1}
Let  $\rho=-1$. For $\calS\in\{\Z^d,\R\}$, consider initial conditions $\bfu_0\in \cM_b(\calS)^2$. 
For each $\gamma >0$, let $(\bfu_t^\sse{\gamma})_{t \geq 0}$ denote the solution to $\mathrm{SBM}(-1,\gamma)_{\bfu_0}$, considered as measure-valued process.
\begin{itemize}
\item[a)]
As $\gamma \ra \infty$, $(\bfu_t^\sse{\gamma})_{t\ge0}$ converges in law in $\cC_{[0,\infty)}(\calM_b(\calS)^2)$ w.r.t.\ the Skorokhod topology
to a Markov process $(\bfu_t)_{t \geq 0}\in \cC_{[0,\infty)}(\calM_{b}(\calS)^2)$ 
satisfying the moment duality \eqref{eq:moment duality infinite gamma} for all $n\in\N$, and this property uniquely determines the law of $(\bfu_t)_{t \geq 0}$. 
\item[b)]
The limiting process $(\bfu_t)_{t\ge0}$ from a) has the strong Markov property w.r.t.\  the usual augmentation of its natural filtration.
Moreover, for each fixed time $t>0$,
$\bfu_t$ satisfies the separation of types-property, i.e.\ $u_t^\ssup{1}$ and $u_t^\ssup{2}$ are mutually singular almost surely.
\end{itemize}
\end{thm}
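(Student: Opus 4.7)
The strategy combines tightness at $\rho=-1$ with the reformulated moment duality of Theorem~\ref{thm:rewrite} and its passage to the limit via Theorem~\ref{thm:M_t-infty}. A crucial simplification is that for $\rho=-1$ the condition $\rho+\cos(\pi/n)<0$ holds automatically for every $n\ge 1$, so both theorems apply without restriction.

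\textbf{Step 1: tightness.} The defining feature of $\rho=-1$ is that the two components of $\bfu_t^\sse\gamma$ sum to the deterministic function $S_t(u_0^\ssup{1}+u_0^\ssup{2})$, which yields the uniform bound $\|\bfu_t^\sse\gamma\|_\infty\le\|u_0^\ssup{1}+u_0^\ssup{2}\|_\infty$ independently of $\gamma$. Tightness of $(\langle\bfu_t^\sse\gamma,\phi\rangle)_{t\ge 0}$ in $\cC_{[0,\infty)}$ for Schwartz test functions $\phi$ then follows from the second-moment and modulus-of-continuity estimates of \cite{BHO15}, which (as explicitly noted there) remain valid at $\rho=-1$, giving tightness of $(\bfu_t^\sse\gamma)_{\gamma>0}$ in $\cC_{[0,\infty)}(\cM_b(\cS)^2)$.

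\textbf{Step 2: passing the duality to the limit.} For any subsequential limit $(\bfu_t)$ I would integrate the finite-rate identity \eqref{eq:moment duality finite gamma 2} against a nonnegative $\phi\in L^1(\cS^n)$. By the Skorokhod convergence and the $L^\infty$-bound the left-hand side converges to $\E_{\bfu_0}\bigl[\int\phi(\bfx)\,\bfu_t^\ssup{c}(d\bfx)\bigr]$. On the right, Theorem~\ref{thm:M_t-infty} supplies almost sure convergence $M_t^\sse\gamma\to M_t^\sse\infty$ for Lebesgue-a.e.\ $\bfx\in\R^n$ (resp.\ every $\bfx\in(\Z^d)^n$), while $\bfu_0^\ssup{b}(X_t)$ is uniformly bounded. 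To invoke dominated convergence one needs a uniform-in-$\gamma$ integrable majorant for the total dual mass $\sum_b M_t^\sse\gamma(\bfX,\delta_c)(b)$; this is where the spectral control of the generator of~\eqref{eq:ODE_X_new} developed in Section~\ref{sec:duality} enters, using that for $\rho=-1$ the leading eigenvalue of the coloring operator is nonpositive on every $n$-particle sector. I expect this uniform bound on $M_t^\sse\gamma$ to be the main technical hurdle of the proof.

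\textbf{Step 3: uniqueness, Markov and strong Markov.} Since each component is uniformly bounded, every functional $\langle u_t^\ssup{i},\phi\rangle$ with $\phi\in L^1$ is a bounded random variable and hence moment-determinate, so the duality \eqref{eq:moment duality infinite gamma} determines all finite-dimensional distributions of $(\bfu_t)$. This pins down the subsequential limit uniquely and promotes tightness to full convergence. The Markov property is inherited from finite $\gamma$: conditional joint moments of $(\bfu_{t+s})$ given $\cF_t$ are computed by applying the duality with initial condition $\bfu_t$, so the conditional law depends only on $\bfu_t$. Combined with path continuity and the continuous dependence of the dual expectation on the initial datum (a Feller-type property that again follows from the duality), standard arguments yield the strong Markov property in the usual augmentation of the natural filtration.

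\textbf{Step 4: separation of types.} For fixed $t>0$ I specialise the duality \eqref{eq:moment duality infinite gamma} to $n=2$ and $c=(1,2)$. The two-particle coloring dynamics for $\rho=-1$ admits the same simplification as for $\rho\in(-1,0)$ in Corollary~\ref{cor:explicit-moment-computations}(i): only the ``non-collision'' contribution survives, and the duality reduces to
\begin{equation*}
\E\bigl[\langle u_t^\ssup{1},\phi\rangle\langle u_t^\ssup{2},\psi\rangle\bigr]=\int_{\cS^2}\phi(x)\psi(y)\,\E_{x,y}\bigl[u_0^\ssup{1}(X_t^\ssup{1})u_0^\ssup{2}(X_t^\ssup{2})\,\ind_{t<\tau}\bigr]\,dx\,dy,
\end{equation*}
where $\tau$ is the first collision time of two independent motions. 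In the discrete case $\tau=0$ whenever $x=y$, so choosing $\phi=\psi=\delta_x$ directly gives $\E[u_t^\ssup{1}(x)u_t^\ssup{2}(x)]=0$ at every site. In the continuous case, taking $\phi$ and $\psi$ as approximate identities concentrated near the diagonal and using the continuity $\p_{x,y}[\tau>t]\to0$ as $|x-y|\to0$ for two Brownian motions yields $\E\bigl[\int u_t^\ssup{1}(x)u_t^\ssup{2}(x)\,dx\bigr]=0$. In either case $u_t^\ssup{1}u_t^\ssup{2}\equiv 0$ almost surely, establishing the separation of types.
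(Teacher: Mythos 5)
Your skeleton (tightness, pass the duality to the limit via uniform control of $M^\sse{\gamma}$, moment determinacy, separation of types via $n=2$) is in the same spirit as the paper's, and Steps~2 and~4 are correct in outline: you rightly flag the uniform-in-$\gamma$ bound on the dual mass as the key technical ingredient for Step~2 --- this is Proposition~\ref{prop:M_bounded} --- and Step~4 is the paper's argument via \eqref{eq:mixed_second_moment}.

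The genuine gap lies in Steps~1 and~3, and they are connected. In Step~1 you assert that the [BHO15] estimates give tightness of $(\bfu_t^\sse{\gamma})_{\gamma>0}$ directly in $\cC_{[0,\infty)}(\cM_b(\cS)^2)$. For general bounded initial data that is not available: the cited tightness results give tightness only in the strictly weaker Meyer--Zheng pseudo-path topology on $D_{[0,\infty)}(\cM_\tem(\cS)^2)$, while $\cC$-tightness is established in [BHO15] only for complementary Heaviside data and $\rho<-1/\sqrt{2}$. The uniform $L^\infty$ domination by $S_t w_0$ controls the state space but supplies no modulus of continuity in $t$ for $\langle u_t^\ssup{i},\phi\rangle$, so the upgrade from Meyer--Zheng to $\cC$-tightness cannot simply be cited. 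Correspondingly, in Step~3 you describe the Feller property of the limit semigroup as something that ``again follows from the duality.'' In the paper this is precisely the non-trivial technical core of the proof: Lemma~\ref{lemma:semigroup-continuous} establishes, via a $\delta$-smoothing argument using that $\bE_\bfx\left[M_t^\sse{\infty}(\bfX,\delta_m)(m')\mid \bfX_t=\cdot\right]$ is continuous away from the diagonal, that $\bfu\mapsto P_tf_{\phi,m}(\bfu)$ is continuous on each compact set $\cM_K$ and that $P_t^\sse{\gamma}f_{\phi,m}\to P_tf_{\phi,m}$ uniformly on $\cM_K$; Stone--Weierstrass then upgrades this to strong convergence of Feller semigroups (Corollary~\ref{cor:Feller}). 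It is this semigroup convergence, fed into [EK86, Thm.~4.2.5], that simultaneously delivers (i) uniqueness of the limit law, (ii) convergence in the Skorokhod topology on $\cC_{[0,\infty)}$ despite having only Meyer--Zheng tightness a priori, and (iii) the strong Markov property. Your proposal takes (ii) for granted from a tightness claim that is not justified and (iii) from a Feller property that you do not actually prove; both rest on the semigroup argument, which is the part of the proof your sketch leaves out.
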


\begin{rem}
We will refer to the limiting process as the solution of $\mathrm{SBM}(-1,\infty)$
(resp.\ $\mathrm{cSBM}(-1,\infty)$ for $\cS = \R$ and $\mathrm{dSBM}(-1,\infty)$ for $\cS = \Z^d$).
For continuous space, Theorem \ref{thm:main1} extends the results of \cite{BHO15} to the case $\rho=-1$, which was left open in that paper (however, 
$\rho=-1$ was erroneously included in the statement of \cite[Thm.\ 1.5]{BHO15}). 
The characterization of the limit in \cite[Thms.\ 1.10, 1.12]{BHO15} in terms of a martingale problem is replaced by a characterization via moments.
The latter is possible, since for $\rho = -1$ the noises are perfectly negatively correlated so that the sum of the solutions 
solves the heat equation, thus the solutions are dominated by a deterministic function. 
This fact in particular implies also the absolute continuity of the limit measures (if $\calS=\R$) and the boundedness of the densities, so that the limiting process takes indeed values in $\cM_b(\cS)^2$. 
Combining this with the identity \eqref{eq:mixed_second_moment} for second mixed moments, the separation of types-property of $\bfu_t$ can be derived as in \cite{BHO15}.
\end{rem}

We now proceed to a more explicit characterization of the limit in Theorem \ref{thm:main1} for the continuous-space case. First, we collect some additional notation:
Let $\cU$ denote the space of all pairs of absolutely continuous Radon measures $\bfu=(u^\ssup{1},u^\ssup{2})$ on $\R$ with bounded densities such that $u^{\ssup1}$ and $u^{\ssup2}$ are mutually singular and $u^\ssup{1}+u^\ssup{2}$ is equivalent to Lebesgue measure (equivalently, $u^\ssup{1}(x)u^\ssup{2}(x)=0$ and $ u^\ssup{1}(x)+u^\ssup{2}(x)>0$ for almost all $x\in\bR$).
Note that $\cU\subseteq\cM_b(\R)^2\subseteq\cM_\tem(\R)^2$, and again we topologize $\cU$ with the subspace topology inherited from $\cM_\tem(\R)^2$.
For $\bfu\in\cU$, we define
\begin{align}
\cI(\bfu) := \supp(u^\ssup{1})\cap \supp(u^\ssup{2}),
\end{align} 
where $\supp(u)$ denotes the measure-theoretic support of $u\in\cU$, i.e.
\[ \supp(u):=\{x\in\R :  u\left(B_\eps(x)\right)>0 \text{ for all }\eps>0\} . \]
We call the elements of $\cI(\bfu)$ \emph{interface points} or just \emph{interfaces}.
The configurations with exactly $n\in\bN$ interface points are denoted by $\cU_n$, i.e.
\[ \cU_n := \{ \bfu \in \cU \, : \, |\cI(\bfu)| = n \} . \]
We write $m(\bfu,x):=1$ if $x\in\supp(u^\ssup1)\setminus \cI(u)$ and $m(\bfu,x):=2$ if $x \in \supp(u^\ssup 2)\setminus \cI(u)$, while setting $m(\bfu,x) := 0$ if $x \in\cI(u)$. 

Throughout the rest of this section, given initial conditions $\bfu_0=(u_0^\ssup{1},u_0^\ssup{2})\in \cM_b(\R)^2$ 
for $\mathrm{cSBM}(-1,\infty)_{\bfu_0}$, we write 
\[w_t:=S_tw_0\]
for the solution to the deterministic heat equation with initial condition $w_0:=u^\ssup{1}_0+u^\ssup{2}_0$
and recall that since $\rho = -1$, we know $u_t^\ssup{1} + u_t^\ssup{2} = w_t$ for all $t \geq 0$.
Note that in view of Theorem \ref{thm:main1} b), for each fixed $t>0$ we have almost surely $\bfu_t\in\cU$, provided $w_0\ne0$. 

Our next result deals with initial conditions of `single interface type':

\begin{thm}\label{thm:annihilating-BM-_onepoint}
Assume $\bfu_0\in\cU_1$. Let $(\bfu_t)_{t\ge0}$ denote the solution of
$\mathrm{cSBM}(-1,\infty)_{\bfu_0}$.
Then we have, almost surely, 
\[  \bfu_t\in\cU_1\text{ for all }t \geq 0 . \]
Let $(I_t)_{t\ge0}$ denote the single interface process defined by the unique element of $\cI(\bfu_t)$, $t>0$. Then, 
  almost surely $(I_t)_{t \geq 0}$ is continuous and
  there exists a standard Brownian motion $(B_t)_{t \geq 0}$ 
 such that 
\begin{align}\label{eq:interface-movement3}
I_t = I_0 - \int_0^t \frac{w'_s(I_s)}{w_s(I_s)}\, ds +B_t, \quad t \geq 0,
\end{align}
where the integral on the right hand side exists as an improper integral, and the process $(I_t)_{t\ge0}$ is the unique (in law) weak solution of the SDE \eqref{eq:interface-movement3}.
Moreover, $\bfu_t$ can be recovered as
\begin{align}\label{representation-single-interface} u_t^\ssup{i} (dx) = \left\{ \begin{array}{ll} \1_{\{x \leq I_t\}}w_t(x)\,dx & \mbox{if } \lim_{x \ra -\infty} m(\bfu_0 ,x) = i, \\
\1_{\{x \geq I_t\}} w_t(x)\,dx & \mbox{if } \lim_{x \ra -\infty} m(\bfu_0 ,x) = 3-i, \end{array} \right. \end{align}
$i=1,2$.
\end{thm}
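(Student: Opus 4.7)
The plan is to proceed in three stages: first establish that $\bfu_t\in\cU_1$ for all $t\ge 0$ almost surely, then extract the continuous process $(I_t)$ and derive its SDE, and finally settle weak uniqueness and the representation~\eqref{representation-single-interface}. Since $\rho=-1$ forces $u_t^\ssup{1}+u_t^\ssup{2}=w_t$ deterministically, and Theorem~\ref{thm:main1}(b) gives separation of types, at each $t$ the set $\{w_t>0\}=\R$ already splits into the disjoint supports of $u_t^\ssup{1}$ and $u_t^\ssup{2}$; it only remains to rule out more than one interface point. Assuming WLOG that $u_0^\ssup{1}$ is supported in $(-\infty,I_0]$, I would apply the mixed second-moment identity~\eqref{eq:mixed_second_moment} to indicator test functions $\phi=\1_{[a,b]}$, $\psi=\1_{[c,d]}$ with $a>d$: the integrand $u_0^\ssup{1}(X_t^\ssup{1})u_0^\ssup{2}(X_t^\ssup{2})\1_{t<\t}$ can be nonzero only if $X_t^\ssup{1}\le I_0\le X_t^\ssup{2}$, but on $\{t<\t\}$ the two independent Brownian motions preserve their initial order, which for $x>y$ forces $X_t^\ssup{1}>X_t^\ssup{2}$, a contradiction. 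Hence $\E[u_t^\ssup{1}([a,b])u_t^\ssup{2}([c,d])]=0$ whenever $a>d$, so every point of $\supp(u_t^\ssup{1})$ lies weakly to the left of every point of $\supp(u_t^\ssup{2})$. Combined with $\supp(u_t^\ssup{1})\cup\supp(u_t^\ssup{2})=\R$ and the nonvanishing of both supports (inherited from the first-moment duality applied to compactly supported test functions, together with continuity), this forces $\bfu_t\in\cU_1$, with interface $I_t$ determined by the balance $\int_{-\infty}^{I_t}w_t\,dx=\langle u_t^\ssup{1},1\rangle$ (suitably localized). Continuity of $(I_t)$ then follows from $(\bfu_t)\in\cC_{[0,\infty)}(\cM_b(\R)^2)$ and strict positivity of $w_t$ for $t>0$.

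\textbf{Derivation of the SDE.} For the SDE I would exploit the martingale problem. At finite $\gamma$, for any $f\in C_c^2(\R)$, the process $\langle u_t^\ssup{1},f\rangle-\langle u_0^\ssup{1},f\rangle-\tfrac12\int_0^t\langle u_s^\ssup{1},f''\rangle\,ds$ is a continuous martingale (the first component of the SPDE's weak formulation), and this property carries over to the $\gamma\to\infty$ limit by Theorem~\ref{thm:main1}(a). Substituting $\langle u_s^\ssup{1},g\rangle=\int_{-\infty}^{I_s}g(x)w_s(x)\,dx$ and applying It\^o's formula to the right-hand side, with $I_s$ treated as a continuous semimartingale of unknown drift $b_s$ and diffusion $\sigma_s$, yields a decomposition whose drift and quadratic variation are pinned down by the martingale problem: after integration by parts using $\partial_s w_s=\tfrac12 w_s''$, matching coefficients forces $\sigma_s\equiv 1$ and $b_s=-w'_s(I_s)/w_s(I_s)$, i.e.\ exactly~\eqref{eq:interface-movement3}. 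An arguably cleaner route, and the one I would prefer, is to construct a weak solution $(J_t)$ of~\eqref{eq:interface-movement3} first, define a candidate $\tilde\bfu_t$ via~\eqref{representation-single-interface}, and verify directly that $\tilde\bfu_t$ satisfies~\eqref{eq:moment duality infinite gamma}: the uniqueness half of Theorem~\ref{thm:main1}(a) then identifies $\tilde\bfu_t$ in law with $\bfu_t$, so $J_t\stackrel{d}{=}I_t$ and the SDE holds for $(I_t)$ as well. This replaces the It\^o argument on a non-smooth functional by a direct moment computation that leverages the explicit form of $M^\sse{\infty}$ at $\rho=-1$ (annihilating Brownian motions) together with the action of the heat semigroup on $\bfu_0$.

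\textbf{Uniqueness, representation, and main obstacle.} For $s>0$, $w_s$ is smooth and strictly positive as a heat-kernel convolution of a nonzero bounded measure, so $-w'_s/w_s$ has locally bounded derivatives; hence~\eqref{eq:interface-movement3} enjoys pathwise strong uniqueness on each $[\e,T]$, and the improper integral near $s=0$ converges a.s.\ by the estimate $|w'_s|\le Cs^{-1/2}\|w_0\|_\infty$. Weak uniqueness on $(0,\infty)$ with $I_0$ as initial condition follows and, combined with the uniqueness-in-law of $(\bfu_t)$ from Theorem~\ref{thm:main1}(a), closes the identification of $(I_t)$; the representation~\eqref{representation-single-interface} is then immediate from the ordering of the supports and the identity $u_t^\ssup{1}+u_t^\ssup{2}=w_t$. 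The main technical obstacle I anticipate is the It\^o step in the first derivation of the SDE: $I\mapsto\int_{-\infty}^I fw_s\,dx$ is only Lipschitz in $I$, not $C^2$, so applying It\^o requires either mollifying the indicator $\1_{x\le I_s}$ (introducing residual error terms to control by dominated convergence) or bypassing It\^o altogether. This is precisely why the moment-duality route to verifying~\eqref{eq:interface-movement3} for $\tilde\bfu_t$ is attractive: it relies only on explicit moment formulas and on Theorem~\ref{thm:main1}(a), and avoids any non-smooth stochastic calculus entirely.
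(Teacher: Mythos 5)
Your argument for \emph{at most} one interface is a nice shortcut: the second-moment identity \eqref{eq:mixed_second_moment} with $\phi$ supported to the right of $\psi$, combined with order preservation of non-colliding Brownian motions, does show that $\supp(u_t^\ssup{1})$ stays weakly to the left of $\supp(u_t^\ssup{2})$ (for fixed $t$; you still need the rational-times-plus-right-continuity step to get ``for all $t$'' as in the paper's proof of Theorem~\ref{thm:interfacecount-decreasing}), and it even pins down the orientation, replacing Lemma~\ref{lemma:asymptotic-type}. The first genuine gap is the \emph{persistence} of the interface, i.e.\ that neither type dies out: you justify this by ``first-moment duality applied to compactly supported test functions, together with continuity'', but $\E_{\bfu_0}\langle u_t^\ssup{i},\phi\rangle=\langle S_tu_0^\ssup{i},\phi\rangle>0$ only rules out $u_t^\ssup{i}\equiv 0$ with probability one \emph{in expectation}; it is perfectly consistent with $\p_{\bfu_0}(u_t^\ssup{i}=0)>0$. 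The paper needs a genuinely different argument here: the strong Markov property of Theorem~\ref{thm:main1}~b) applied at the hitting time $\tau$ of $\cU_0$, combined with Lemma~\ref{lemma:alternating} (the $n=0$ alternating-coloring identity, which shows $P_{t-\tau}f_{\phi,m}(\bfu_\tau)=0$ on $\{\tau<t\}$), to force $\tau\geq t$.

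The second gap is the identification of the SDE. Your first route assumes from the outset that $(I_t)$ is a continuous semimartingale with unknown drift and diffusion — that is circular, and you yourself note that It\^o on the merely Lipschitz functional $I\mapsto\int_{-\infty}^I f w_s\,dx$ is not available. Your preferred route presupposes exactly the hard analytic ingredient: a weak solution of \eqref{eq:interface-movement3} for \emph{general} $\bfu_0\in\cU_1$ is not covered by standard theory (the paper's remark after the theorem stresses that $w_0(I_0)=0$ with $w_0'(I_0\pm)=\pm\infty$ can occur, so the drift need not even be locally bounded near $t=0$), and your bound $|w_s'|\le Cs^{-1/2}\|w_0\|_\infty$ ignores the denominator $w_s(I_s)$, which can be arbitrarily small near $t=0$; so neither existence of $(J_t)$, nor convergence of the improper integral, nor uniqueness from time $0$ is settled by what you write. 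Moreover, ``verify directly that $\tilde\bfu_t$ satisfies \eqref{eq:moment duality infinite gamma}'' for all $n$ is itself a substantial duality computation between the interface diffusion and the $n$-particle dual carrying the $K_\infty$ weights, not a routine check. The paper proceeds in the opposite direction: it computes the one-time law of $I_t$ from the duality (obtaining $\p_{\bfu_0}(I_t\le x)=S_t(w_0\1_{(I_0,\infty)})(x)/S_tw_0(x)$), derives from the Markov property the transition function and the time-dependent generator $L_sf=\tfrac12 f''-\tfrac{w_s'}{w_s}f'$, invokes the martingale-problem/Stroock--Varadhan theory on $[\delta,\infty)$ where the drift is continuous, and only then lets $\delta\downarrow0$, using right-continuity at $0$ to make sense of the improper integral and to get existence and weak uniqueness on $[0,\infty)$ simultaneously (Proposition~\ref{prop:sde-interface}). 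As it stands, your proposal would need this (or an equivalent) argument supplied before it closes.
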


\begin{rem}
If $\bfu_0\in\cU_1$ is such that $w_0:=u^\ssup1_0+u^\ssup2_0$ is continuously differentiable with $\sup_{x\in\R}|w_0'(x)|<\infty$ and $\inf_{x\in\R} w_0(x)>0$, then
the drift term $(s,x)\mapsto\frac{w_s'(x)}{w_s(x)}$ in equation~\eqref{eq:interface-movement3} is continuous and globally bounded on $\R^+\times\R$,
and existence and uniqueness of a weak 
solution to the SDE follow from the standard theory (see e.g.\ \cite{SV79} or \cite[Ch.\ 5.3]{KS98}).
In particular, in this case the integrand 
in~\eqref{eq:interface-movement3} 
is bounded and the integral is a proper integral.
For general $\bfu_0\in\cU_1$ however, we know only that $w_0$ is bounded and strictly positive almost everywhere, and standard theory does not cover existence and uniqueness for equation~\eqref{eq:interface-movement3}. In particular, it can occur that $w_0(I_0)=0$, $w'_0(I_0-)=-\infty$ and $w'_0(I_0+)=+\infty$ so that the integrand need not even be locally bounded. Our result shows that nevertheless the SDE \eqref{eq:interface-movement3} has a unique weak solution.
\end{rem}

Our next result covers the case that $\cI(\bfu_0)$ consists of more than a single point -- even infinitely many points --
as long as there are no accumulation points.
Informally, in this case the interfaces follow each the dynamics of a Brownian motion 
with drift as in~\eqref{eq:interface-movement3} and upon collision both motions annihilate.

To describe the limiting system formally, we introduce the following terminology:
We call a collection $\{ (Y_t^i)_{t \geq 0} \, : \, i \in J \}$ of c\`adl\`ag stochastic processes indexed by an at most countable set $J$ and
taking values in $\R \cup \{ \dagger\}$, where $\dagger$ is interpreted as a cemetery state, a \emph{regular annihilating system} if,
almost surely, the following holds:
\begin{itemize}
	\item The initial positions are distinct and $\{ Y_0^i \}_{i \in J}$ has no accumulation points.
		\item 	$Y_t^i = \dagger$ if and only if there exist $s \leq t$ and $j \in J\setminus \{ i \}$ such that 
	$Y_{s-}^i =  Y_{s-}^j \in \R$.
	\item $\tau_i := \inf\{t \geq 0 \, : \,   Y_t^i = \dagger \} > 0$.
	\item Each process $Y^i$ is continuous at any time $t < \tau_i$.
	\item There are no triple annihilations, i.e.\  there are no times $t$ such that $Y_{t-}^i = Y_{t-}^j = Y_{t-}^k \in \R$ for distinct $i,j,k \in J$. 
\end{itemize}

Now suppose we are given $\bfu_0 \in \cU$ such that $\cI(\bfu_0)$ has no accumulation points and a regular annihilating system $\{ (Y_t^x)_{t \geq 0} \, : \, x \in \cI(\bfu_0) \}$ indexed by and starting from $\cI(\bfu_0)$. 
Consider the partition of $[0,\infty) \times \R$ induced by the graphs of the annihilating paths:
We can `color' each component of the partition in a way that is consistent with the coloring $m(\bfu_0,\cdot)$ of $\R$.
More precisely, define a mapping $\hat m:[0,\infty)\times\R\to\{0,1,2\}$ 
as follows: Let $\hat m$ be equal to $0$ on the closure of the graphs of the annihilating paths, i.e.
 \[\hat m(t,x):=0\qquad \text{for } (t,x)\in \cJ := {\rm cl} \Big(\bigcup_{j \in J} \{ (t,Y^j_t)\, : \, t \in [0,\tau_j) \}\Big) . \]
Then setting
\[\hat m(0,x) := m(\bfu_0,x)\qquad \text{for } x\in\R,\]
$\hat m(\cdot,\cdot)$ is defined by the
requirement that it is locally constant on the complement $\cJ^c$, cf.\ Figure~\ref{fig:colouring-1}. 
We call $\hat m$ the \emph{standard colouring} of $[0,\infty) \times \R$ induced by $\bfu_0$ and $\{ (Y_t^x)_{t \geq 0} \, : \, x \in \cI(\bfu_0) \}$. 
Obviously, $\hat m(t,x)$ is the unique extension of $\hat m(0,x)=m(\bfu_0,x)$ from $\{0\}\times\R$ to $[0,\infty)\times\R$ which is continuous on $\cJ^c$, jointly in the variabes $(t,x)$.

\begin{figure}
\centering
\includegraphics{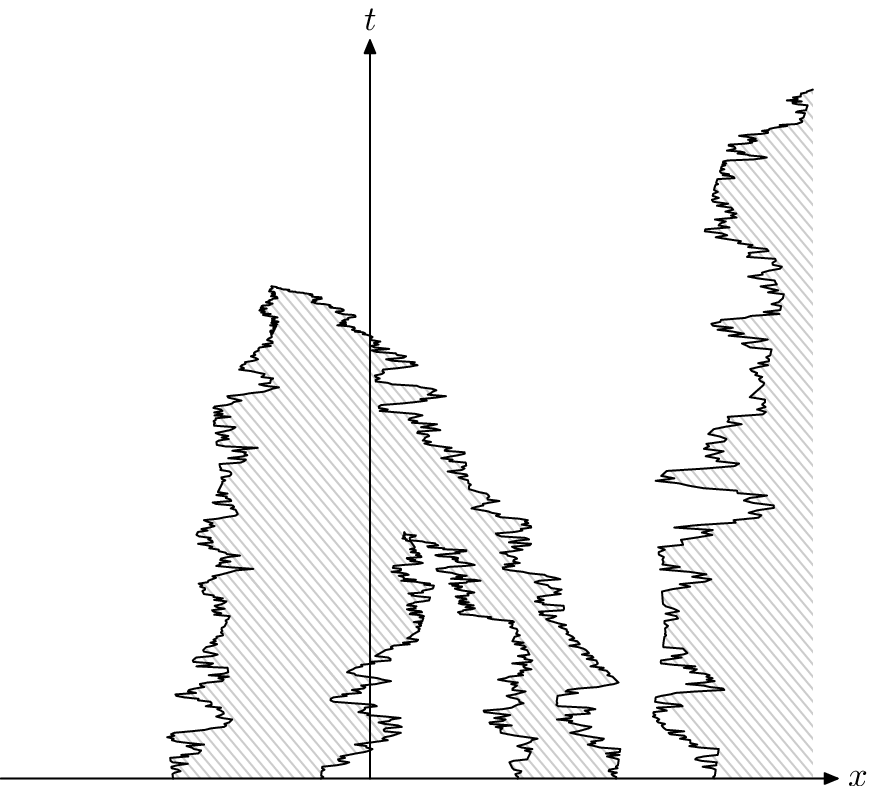}
\caption{
An illustration of the standard colouring $\hat m$ of $[0,\infty)\times\R$ induced by an initial configuration $\bfu_0$ with five interfaces and a regular annihilating system starting from $\cI(\bfu_0)$.
Type 1 is drawn in white and type 2 is shaded grey.
The corresponding standard element $(\hat\bfu_t)_{t\ge0}\in\calC_{[0,\infty)}(\cU)$ is such that $\hat\bfu^\ssup{1}_t(x)$ (resp.\ $\hat\bfu^\ssup{2}_t(x)$) agrees with $w_t(x)$ for all $(t,x)$ in the white (resp.\ shaded) area and is zero otherwise.
}
\label{fig:colouring-1}
\end{figure}

Using $\hat m$, we define 
\begin{equation}\label{eq:standard_element} 
\hat \bfu_t(x) := \left(w_t(x) \1_{\{\hat m(t,x) = 1 \}}, w_t(x) \1_{\{\hat m(t,x) = 2 \}} \right) , \quad   t \geq 0,\, x \in \R.
\end{equation}
It is easy to see that $\hat\bfu_t\in\cU$ for each $t>0$ and (by the properties of the regular annihilating system) that the process $(\hat\bfu_t)_{t\ge0}$ has continuous paths (recall that $\cU$ is topologized by the subspace topology inherited from $\cM_\tem(\R)^2$). 
We call $(\hat\bfu_t)_{t\ge0}$ the \emph{standard element} of $\calC_{[0,\infty)}(\cU)$ induced by $\bfu_0$ and $\{ (Y_t^x)_{t \geq 0} \, : \, x \in \cI(\bfu_0) \}$. 

\newpage

\begin{thm}\label{thm:annihilating-BM}
Assume that $\bfu_0 \in \cU$ and $\cI(\bfu_0)$ has no accumulation points. 
Let $(\bfu_t)_{t\ge0}$ denote the solution of $\mathrm{cSBM}(-1,\infty)_{\bfu_0}$.
Then there exists a regular annihilating system $\{(I_t^x )_{t \geq 0}\, : \, x \in \cI(\bfu_0)\}$ starting from $\cI(\bfu_0)$
such that each coordinate independently follows the law of the single-point interface process of Theorem~\ref{thm:annihilating-BM-_onepoint}
up to the first collision with another (surviving) motion, upon which both motions annihilate. 
Denote by $(\hat \bfu_t)_{t\ge0}$ the standard element of $\calC_{[0,\infty)}(\cU)$ induced by $\bfu_0$ and $\{ (I_t^x )_{t \geq 0}\, : \, x \in \cI(\bfu_0)\}$,
as defined in \eqref{eq:standard_element}.
Then we have
\[(\bfu_t)_{t \geq 0}\overset{d}=(\hat\bfu_t)_{t \geq 0} \qquad\text{on }\cC_{[0,\infty)}(\calM_b(\R)^2).\]
\end{thm}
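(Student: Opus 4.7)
The plan is to reduce the multi-interface case to the single-interface result of Theorem~\ref{thm:annihilating-BM-_onepoint} via localization and the strong Markov property from Theorem~\ref{thm:main1}(b). I first treat $|\cI(\bfu_0)|<\infty$ by induction on the number of interfaces, then extend to countable $\cI(\bfu_0)$ by spatial truncation. Uniqueness in law throughout is delivered by the moment duality \eqref{eq:moment duality infinite gamma}, which holds for $\rho=-1$ by Theorem~\ref{thm:main1}(a).

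\textbf{Finite case: independence before collision.} Write $n=|\cI(\bfu_0)|$ with interfaces $x_1<\cdots<x_n$, and let $\tau$ be the first collision time of $(\bfu_t)_{t\ge0}$. For each $i$ introduce a single-interface comparison initial condition $\bfu_0^{(i)}\in\cU_1$ having the same total density $w_0=u_0^\ssup{1}+u_0^\ssup{2}$ but a single interface at $x_i$ (with orientation inherited from $\bfu_0$ near $x_i$). Since $u_0^{(i),1}+u_0^{(i),2}=w_0$, the heat flow and drift in \eqref{eq:interface-movement3} for $\bfu_0^{(i)}$ coincide with those in Theorem~\ref{thm:annihilating-BM}; hence Theorem~\ref{thm:annihilating-BM-_onepoint} gives a single-interface process $I^{(i)}$ with exactly the prescribed drift. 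The key claim is that on $\{t<\tau\}$ the joint law of $(I_t^{x_1},\ldots,I_t^{x_n})$ agrees with that of $n$ independent copies $I^{(1)},\ldots,I^{(n)}$, driven by independent Brownian noises. I plan to establish this by computing joint moments of $\bfu_t$ against test functions supported in disjoint neighborhoods $J_i\ni x_i$ via \eqref{eq:moment duality infinite gamma}: when the $n$-particle dual system starts from disjoint $J_i$'s, the dual color process $M_t^\sse{\infty}$ decouples into $n$ independent single-particle contributions on the event that no two dual particles meet, while any contribution from dual paths meeting across intervals vanishes on $\{t<\tau\}$ by the $\rho=-1$ structure (which sends $L^{\neq}$-contributions in the exponential correction of \eqref{moment duality finite gamma} to zero in the infinite-rate limit). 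Matching this to the moment duality for the product of the single-interface systems $\mathrm{cSBM}(-1,\infty)_{\bfu_0^{(i)}}$ yields the factorization.

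\textbf{Annihilation and closing the induction.} At the collision time $\tau$, path continuity of $(\bfu_t)$ in $\cC_{[0,\infty)}(\cM_b(\R)^2)$ from Theorem~\ref{thm:main1}(a), combined with separation of types, forces two interfaces to disappear simultaneously: any alternative scenario (for instance a new interface appearing at an isolated point) would violate continuity of the standard colouring in time. The strong Markov property of Theorem~\ref{thm:main1}(b) applied at $\tau$ then reduces the problem to $\mathrm{cSBM}(-1,\infty)_{\bfu_\tau}$ with $|\cI(\bfu_\tau)|=n-2$, where the induction hypothesis applies.

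\textbf{Countable case and main obstacle.} For $\bfu_0$ with countable $\cI(\bfu_0)$, approximate by $\bfu_0^{(N)}$ whose interface set is $\cI(\bfu_0)\cap[-N,N]$, with the outside filled in by the locally dominant type so that $\bfu_0^{(N)}\to\bfu_0$ in $\cM_b(\R)^2$. Both sides of $\bfu_t\overset{d}{=}\hat\bfu_t$ pass to the limit: the left via continuity of the right-hand side of \eqref{eq:moment duality infinite gamma} in $\bfu_0$ (dominated convergence), the right since the lack of accumulation points ensures that on any bounded time-space window only finitely many interfaces are relevant, so truncation at $N$ has vanishing effect as $N\to\infty$. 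The principal obstacle is the factorization step in the finite case, namely rigorously proving that the $n$-particle moment duality decouples into $n$ independent single-particle contributions up to $\tau$. This requires careful control of $M_t^\sse{\infty}$ for well-separated dual particles together with quantitative estimates on the first meeting time of the $n$ dual Brownian motions started from the $x_i$, which must be coupled with the positive spacing $\min_i(x_{i+1}-x_i)$ that persists up to $\tau$.
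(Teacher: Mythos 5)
Your high-level architecture — handle the finite case by reduction at the first collision time via the strong Markov property, then truncate to $[-N,N]$ for the countable case — coincides with the paper's. However, two of the three load-bearing technical steps are either wrong as sketched or omitted.

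\textbf{The factorization step.} You propose to show that the interface positions are independent Brownian motions with drift up to $\tau$ by computing joint moments against test functions supported in disjoint $J_i$'s, and then arguing that contributions from dual paths that cross between intervals \emph{vanish on $\{t<\tau\}$}. This cannot work as stated: $\tau$ is a stopping time of the $\bfu$-process, while $\{$no dual collision before $t\}$ is an event on the dual Brownian motion side, and the duality identity \eqref{eq:moment duality infinite gamma} gives no mechanism to restrict either side to the other's event. Moreover, the killing of $M^{\ssup\infty}$ upon dual collision happens only for adjacent \emph{alternating} colours $c$; for a general $c$ the dual mass is redistributed rather than killed, so the dual does not ``decouple'' cleanly even conditionally on no dual meeting. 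The paper avoids this entirely: Lemma~\ref{lemma:anniBM-generator-eq} establishes a purely \emph{infinitesimal} generator identity (the $t$-derivative at $t=0$ of $\bE_{\bfu_0}f_{\phi,m}(\bfu_t)$ is unchanged if distant interfaces are deleted, and it factorizes over disjoint intervals), and Lemma~\ref{lemma:anniBM-single} then builds an iterative pathwise coupling, localizing each interface in a shrinking family of balls $B_i^{(k)}$ up to stopping times $\tau_k$. This yields independence and the correct drift up to $\tau$ without ever needing to condition the dual on an $\bfu$-side stopping time. You correctly flag this as your principal obstacle; the point is that no amount of ``quantitative estimates on the first meeting time of the $n$ dual Brownian motions'' will close it, because the obstruction is structural, not quantitative.

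\textbf{Existence of the countable annihilating system.} Your treatment of the countable case assumes that ``truncation at $N$ has vanishing effect as $N\to\infty$,'' but this is precisely what must be proved, and it is delicate because annihilation is \emph{not monotone} in the particle set: adding an interface at $z_N\to -\infty$ can flip the survival/killing status of every interface to its right. Without first constructing the infinite annihilating system you have nothing for the right-hand side $\hat\bfu_t$ to converge to. The paper's Lemma~\ref{lem:existence-aBM-infinite} resolves this by embedding the annihilating system into the monotone \emph{coalescing} system, tracking the parity of the number of coalesced ancestors, and showing via a Borel--Cantelli argument (using the estimate \eqref{eq:move-far} derived from the one-interface transition law \eqref{distribution_interface_fixed_time}) that the parity counts stabilize, so the limit is well defined. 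Some such construction is unavoidable and is missing from your proposal.

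\textbf{Minor point on the collision time.} Your argument that ``continuity of the standard colouring in time'' forces annihilation at $\tau$ is slightly circular — the standard colouring is defined in terms of an annihilating system you have not yet identified. The paper instead combines the non-proliferation result Theorem~\ref{thm:interfacecount-decreasing} with path continuity of $(\bfu_t)$ and of the coupled process $(\hat\bfu_t)$ to conclude $\bfu_\tau=\hat\bfu_\sigma\in\cU_{n-2}$; this is the cleaner route.

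\textbf{What does carry over.} Your truncation step for the countable case is in the same spirit as the paper's, and your intended use of dominated convergence on the dual side is essentially what the paper's Lemma~\ref{lemma:semigroup-continuous} achieves (Feller continuity of $P_t f_{\phi,m}$ on $\cM_K$). The use of the strong Markov property at $\tau$ and of separation of types is also aligned. But the two gaps above are the substance of the proof, and as sketched your route does not close them.
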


\begin{remark}
If $\cI(\bfu_0)$ is finite, then it is obvious how to construct the regular annihilating system. 
However, due to the lack of monotonicity, it becomes trickier to define the system for infinitely many particles. 
Our construction uses a coupling which embeds the annihilating system into a system of instantaneously\emph{ coalescing} Brownian motions with drift,
as also used (in a random walks context) by \cite{A81}.
\end{remark}

We now deal with the case of completely general initial densities, which do not have to be mutually singular anymore. 
Here the situation is more involved, as the set of interface points $\cI(\bfu_0)$ can be an interval or even $\bR$. 
Although we know already by Theorem \ref{thm:main1} b) that the measures $u_t^\ssup{1}$ and $u_t^\ssup{2}$ are mutually singular at each positive time, a priori the set $\cI(\bfu_t)$ might still be very complicated. Our final result states that for all $t>0$ the set of interface points is in fact discrete, and these points move as in the case of Theorem~\ref{thm:annihilating-BM}. 

\newpage
\begin{thm}\label{thm:overlapping}
Assume initial conditions $\bfu_0\in \cM_b(\R)^2$ such that $w_0:=u_0^\ssup{1}+u_0^\ssup{2}\ne0$.
Let $(\bfu_t)_{t\ge0}$ denote the solution of $\mathrm{cSBM}(-1,\infty)_{\bfu_0}$. Then, almost surely, $\cI(\bfu_t)$ contains no accumulation point for any $t>0$.
Moreover, for any $t_0>0$, the law of $(\bfu_t)_{t \geq t_0}$ is given as in Theorem~\ref{thm:annihilating-BM} when started in $\bfu_{t_0}$.
\end{thm}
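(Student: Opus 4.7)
The plan is to reduce the theorem to Theorem~\ref{thm:annihilating-BM} via the strong Markov property of Theorem~\ref{thm:main1}(b): if for every fixed $t_0>0$ the random element $\bfu_{t_0}$ lies almost surely in the subset of $\cU$ for which $\cI(\bfu_{t_0})$ has no accumulation point, then applying Theorem~\ref{thm:annihilating-BM} to the shifted process starting at $\bfu_{t_0}$ identifies the law of $(\bfu_t)_{t\ge t_0}$, and taking $t_0\downarrow0$ along a countable sequence gives both claims. Since $\bfu_{t_0}\in\cU$ is already granted by Theorem~\ref{thm:main1}(b), the task is to rule out accumulation points in $\cI(\bfu_{t_0})$, almost surely.

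My approach is to approximate $\bfu_0$ by configurations to which Theorem~\ref{thm:annihilating-BM} applies directly. Partition $\R$ into dyadic intervals $I_k^\sse{n}=[k2^{-n},(k+1)2^{-n})$ and, on each $I_k^\sse{n}$, redistribute the density $w_0=u_0^\ssup1+u_0^\ssup2$ so that the total type-$1$ mass $\int_{I_k^\sse{n}}u_0^\ssup1$ occupies the left sub-interval (carrying density $w_0$) and the total type-$2$ mass occupies the right sub-interval. The resulting $\bfu_0^\sse{n}\in\cU$ satisfies $u_0^{\sse{n},\ssup1}+u_0^{\sse{n},\ssup2}=w_0$ and has $\cI(\bfu_0^\sse{n})$ discrete, and by construction it obeys the Riemann-sum convergence
\[\int_{\R^n}g(\bfy)\prod_{i=1}^n u_0^{\sse{n},\ssup{c_i}}(y_i)\,d\bfy\longrightarrow\int_{\R^n}g(\bfy)\prod_{i=1}^n u_0^\ssup{c_i}(y_i)\,d\bfy\]
for every bounded continuous test function $g$ on $\R^n$. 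For $\rho=-1$ the process $M_t^\sse\infty(\bfX,\delta_c)$ of Theorem~\ref{thm:M_t-infty} collapses to $\delta_c\,\1_{\tau>t}$, where $\tau$ is the first pairwise collision time of the independent Brownian motions $\bfX$: any same-colour collision instantly flips to a different-colour configuration which is then instantly killed. Substituting this into the duality \eqref{eq:moment duality infinite gamma} and exploiting the Riemann-sum convergence yields convergence of all moments of $\bfu_t^\sse{n}:=\mathrm{cSBM}(-1,\infty)_{\bfu_0^\sse{n}}$ to those of $\bfu_t$; since for $\rho=-1$ the law is characterised by moments (Theorem~\ref{thm:main1}(a)), we obtain $\bfu_t^\sse{n}\Rightarrow\bfu_t$ in law on $\cM_b(\R)^2$.

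The hardest step is the transfer of discreteness to the limit, since the map $\bfu\mapsto|\cI(\bfu)|$ is not continuous under weak convergence of measures. To handle this I would establish the uniform-in-$n$ bound
\[\sup_{n\ge1}\E\bigl|\cI(\bfu_t^\sse{n})\cap[-R,R]\bigr|\le C(R,t,\|w_0\|_\infty)\]
using that, by Theorem~\ref{thm:annihilating-BM}, $(\cI(\bfu_s^\sse{n}))_{s>0}$ evolves as a regular annihilating Brownian system with drift $-w'_s/w_s$. The classical $O(s^{-1/2})$ density estimate for annihilating Brownian motions, obtained via the coupling with coalescing Brownian motions mentioned in the remark after Theorem~\ref{thm:annihilating-BM}, is uniform in the initial configuration, and the bounded drift on $[t_0,t]$ is absorbed by a Girsanov comparison (valid since $w$ and $w'$ are continuous and bounded away from $s=0$). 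Combining this uniform bound with an appropriate lower semicontinuity of $\bfu\mapsto|\cI(\bfu)\cap[-R,R]|$ along sequences in $\cU$ (which holds for weakly convergent sequences that respect the separation-of-types property guaranteed by Theorem~\ref{thm:main1}(b)) and Fatou's lemma yields $\E|\cI(\bfu_t)\cap[-R,R]|<\infty$ for every $R>0$. This rules out accumulation points in $\cI(\bfu_t)$ almost surely and completes the reduction to Theorem~\ref{thm:annihilating-BM}.
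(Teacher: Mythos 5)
Your overall reduction is the same as the paper's: show that for each fixed $t_0>0$ the set $\cI(\bfu_{t_0})$ is a.s.\ without accumulation points, then invoke the (strong) Markov property of Theorem~\ref{thm:main1} and Theorem~\ref{thm:annihilating-BM} started from $\bfu_{t_0}$. But your route to the key step has genuine gaps. First, the approximating configurations $\bfu_0^\sse{n}$ obtained by redistributing $w_0$ on dyadic intervals need not lie in $\cU$ at all: the hypothesis is only $w_0\ne 0$ bounded, so $w_0$ may vanish on sets of positive measure, and then $u_0^{\sse{n},\ssup1}+u_0^{\sse{n},\ssup2}=w_0$ is not equivalent to Lebesgue measure; Theorem~\ref{thm:annihilating-BM} is therefore not applicable to $\bfu^\sse{n}$, and the whole annihilating description you want to exploit is unavailable. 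Second, the crux of your argument, the uniform bound $\sup_n\E\bigl|\cI(\bfu_t^\sse{n})\cap[-R,R]\bigr|<\infty$, is asserted rather than proved: the drift $-w_s'/w_s$ is in general unbounded in space (e.g.\ for decaying $w_0$) and blows up as $s\downarrow0$, and a Girsanov comparison with the driftless annihilating system is not legitimate for the infinite particle system (the laws are mutually singular; even for finitely many particles the exponential density is not bounded, so expectation bounds do not transfer without additional moment estimates whose constants would depend on the number of particles — circular here). A further, smaller, error: for $\rho=-1$ the dual does \emph{not} collapse to $M_t^\sse{\infty}(\bfX,\delta_c)=\delta_c\1_{\tau>t}$; by Proposition~\ref{prop:general_partition}, a collision of two particles of the \emph{same} colour redistributes mass (for $n=2$, $K_\infty(\delta_{(1,1)},\{[2]\})$ puts weight $\tfrac12$ on $(1,2)$ and $(2,1)$); only different-colour collisions annihilate the measure, which is why the paper's Lemma~\ref{lemma:alternating} is restricted to alternating colourings. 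Finally, the lower-semicontinuity-plus-Fatou step needs a Skorokhod representation to convert convergence in law into a.s.\ convergence and a restriction to the a.s.\ event that all configurations lie in $\cU$; this is patchable but not addressed.

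For contrast, the paper avoids any approximation of the initial condition: Proposition~\ref{prop:from-infty} bounds, directly for the limit process started from the \emph{arbitrary} bounded $\bfu_0$, the expected number of ``good'' dyadic intervals of scale $2^{-n}$ in $[a,b]$ (intervals on which both normalized types carry comparable mass, Lemmas~\ref{lemma:interval-existence}--\ref{lemma:approximate-interface}), using only the mixed second-moment duality \eqref{eq:mixed_second_moment} together with the reflection-principle estimate $\bP_{0,2^{-n+1}}(\tau>t)=O(2^{-n})$; monotone convergence then shows $|\cI(\bfu_t)\cap[a,b]|<\infty$ a.s. The quantitative input you are missing (a uniform interface-density bound for the drifted system) is essentially equivalent to what this duality computation delivers for free, so if you want to salvage your approach the cleanest fix is to drop the approximation scheme and run the second-moment/dyadic-counting argument directly on $\bfu_t$.
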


\begin{remark}
\begin{itemize}
\item[a)] 
In \cite{T95}, Tribe considered the special case of complementary initial densities $\bfu_0$ where the types are separated by finitely many interfaces and $u_0^\ssup{1}+u_0^\ssup{2}\equiv1$.
In that case, Tribe proved that the interfaces move as annihilating Brownian motions (without drift).

In contrast, we allow for essentially arbitrary initial conditions. 
If $u_0^\ssup{1}+u_0^\ssup{2}\ne1$, the dynamics of interfaces is influenced by the relative `height' difference of the two populations at either side of the interfaces, 
yielding the additional drift term in \eqref{eq:interface-movement3}. Moreover, we require neither a finite number of interfaces nor even the initial separation of types.
As Theorem \ref{thm:overlapping} shows, the process `comes down from infinity' in the sense that for any positive time, locally there are only finitely many interfaces.
\item[b)] \emph{Entrance laws for annihilating Brownian motions.} 
For $u^\ssup{1}_0 + u^\ssup{2}_0 \equiv 1$, Theorem~\ref{thm:overlapping} allows us to characterize entrance laws for annihilating Brownian motions. 
As in \cite[Sect.\ 2.3]{TZ11}, 
a particularly interesting entrance law can be obtained by approximation from a system
of annihilating Brownian motions starting at the points of a Poisson point process with intensity $\la$ and letting $\la \ra\infty$. 
Informally, this entrance law corresponds to a system of annihilating Brownian motions `starting from every point' on the real line. 
However, as already observed in \cite{TZ11}, the examples in \cite[Sec.\ 3]{BG80} suggest that other approximations may lead to different entrance laws.
By using Theorem~\ref{thm:overlapping} and the relation to $\mathrm{cSBM}(-1,\infty)$, one can clearly see why those examples 
can lead to different entrance laws, and in fact we can obtain a complete classification of entrance laws for annihilating Brownian motions.
For details of this correspondence, we refer to~\cite{HOV16}.
\end{itemize}
\end{remark}

The remaining paper is structured as follows:
We start by showing in Section~\ref{se:reinterpretation} 
how to reinterpret the original moment duality and we prove Theorem~\ref{thm:rewrite}. 
Then, in Section~\ref{sec:duality} we can send $\gamma \ra \infty$
in the reinterpretation of the duality and we prove Theorems~\ref{thm:M_t-infty},
\ref{thm:moment_duality_infinite} and Corollary~\ref{cor:explicit-moment-computations}. 
In Section~\ref{sec:rhominus1} we specialize to the case $\rho = -1$, establishing first the existence, uniqueness and basic properties of $\mathrm{SBM}(-1,\infty)$ 
by proving Theorem \ref{thm:main1}.
We then proceed to the explicit description of the limiting process and prove Theorems~\ref{thm:annihilating-BM-_onepoint}, \ref{thm:annihilating-BM} and~\ref{thm:overlapping}.

\section{The reinterpretation of the moment duality}\label{se:reinterpretation}

\newcommand{\tM}{{\widetilde{M}}}

In this section we prove Theorem~\ref{thm:rewrite}. The proof is based on the original moment duality \eqref{moment duality finite gamma}.
In this entire section we assume $\gamma>0$, $\rho\in[-1,1]$ and $n\in\bN$ to be fixed. 

For $\tM_0\in\cM(\{1,2\}^n)$, define 
\begin{align}\bal\label{defn:M_t^gamma}
\tM_t(X,\tM_0)(b) := \sum_{c\in\{1,2\}^n} \tM_0(c)\,\bE_{c} \left[e^{\gamma(L^=_t+\rho L^{\neq}_t)} \ind_{C_t=b}
 \;\middle|\; X_{[0,t]} \right],\qquad b\in\{1,2\}^n,
\eal 
\end{align}
that is $\tM_t(\cdot,c)\in\cM(\{1,2\}^n)$ is the expected exponential correction term as a function of the  paths $\bfX=(X_t^\ssup{1},\ldots,X_t^\ssup{n})_{t\geq0}$ of either random walks or Brownian motions and interpreted as a measure on the colorings. Note that $\tM_t(\bfX,\tM_0)$ is measurable w.r.t. $\sigma(\bfX_s:0\le s\le t)$.
We will show that $\tM_t$ coincindes with $M_t^\sse\gamma$ from \eqref{eq:ODE_X_new}. 
The proof of Theorem~\ref{thm:rewrite} is then simply the original 
duality~\eqref{moment duality finite gamma}.

\begin{lemma}\label{lemma:M-Markov}
Fix $s,t\geq0$.  
Let $(\theta_t \bfX)_r:=\bfX_{t+r}$ be the shift operator.
Then
\begin{align}
\tM_{t+s}(\bfX,\tM_0) =\tM_{t+s}(\bfX_{[0,t+s]},\tM_0)= \tM_s\left((\theta_t \bfX)_{[0,s]}, \tM_t(\bfX_{[0,t]},\tM_0)\right)
\end{align}
\end{lemma}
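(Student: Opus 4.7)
The first equality $\tM_{t+s}(\bfX,\tM_0)=\tM_{t+s}(\bfX_{[0,t+s]},\tM_0)$ is immediate from the definition \eqref{defn:M_t^gamma}, since the conditional expectation only uses the paths up to time $t+s$. My plan is therefore to focus on the second equality, which is essentially a conditional Markov property for the coloring process $(C_r)_{r\ge0}$ given the motions $\bfX$.

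The plan is to split the exponential and the event at time $t$. Writing $L^=_{t+s}=L^=_t+(L^=_{t+s}-L^=_t)$ and similarly for $L^{\neq}$, and inserting the partition $\sum_{c'\in\{1,2\}^n}\mathbf{1}_{C_t=c'}$ under the conditional expectation, I would first rewrite
\begin{align}
\bE_c\Big[e^{\gamma(L^=_{t+s}+\rho L^{\neq}_{t+s})}\ind_{C_{t+s}=b}\,\Big|\,\bfX_{[0,t+s]}\Big]
=\sum_{c'}\bE_c\Big[&e^{\gamma(L^=_t+\rho L^{\neq}_t)}\ind_{C_t=c'}\\
&\cdot e^{\gamma((L^=_{t+s}-L^=_t)+\rho(L^{\neq}_{t+s}-L^{\neq}_t))}\ind_{C_{t+s}=b}\,\Big|\,\bfX_{[0,t+s]}\Big].
\end{align}
The key step is then to condition further on $\sigma(\bfX_{[0,t+s]})\vee\sigma(C_{[0,t]})$ inside each summand. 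Conditionally on $\bfX$, the process $(C_r)_{r\ge0}$ is Markovian: its jumps are driven by the collision local times of $\bfX$, so given $C_t=c'$ and the whole path $\bfX$, the evolution of $C$ on $[t,t+s]$ is independent of $C_{[0,t]}$ and has the same law as a coloring process started from $c'$ run along the shifted path $(\theta_t\bfX)_{[0,s]}$. The local time increments $L^=_{t+s}-L^=_t$ and $L^{\neq}_{t+s}-L^{\neq}_t$ become, under this shift, precisely the local times $L^=_s$ and $L^{\neq}_s$ associated with $(\theta_t\bfX)_{[0,s]}$ and the coloring process started at $c'$. Thus
\begin{align}
\bE_{c'}\Big[e^{\gamma((L^=_{t+s}-L^=_t)+\rho(L^{\neq}_{t+s}-L^{\neq}_t))}\ind_{C_{t+s}=b}\,\Big|\,\bfX_{[0,t+s]},\,C_{[0,t]}\Big]
=\bE_{c'}\Big[e^{\gamma(L^=_s+\rho L^{\neq}_s)}\ind_{C_s=b}\,\Big|\,(\theta_t\bfX)_{[0,s]}\Big].
\end{align}
Because the right-hand side is measurable with respect to $\sigma((\theta_t\bfX)_{[0,s]})$, I can pull it out of the outer conditional expectation in the preceding display, leaving
\begin{align}
\bE_c\Big[e^{\gamma(L^=_t+\rho L^{\neq}_t)}\ind_{C_t=c'}\,\Big|\,\bfX_{[0,t]}\Big]\cdot\bE_{c'}\Big[e^{\gamma(L^=_s+\rho L^{\neq}_s)}\ind_{C_s=b}\,\Big|\,(\theta_t\bfX)_{[0,s]}\Big].
\end{align}

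Finally I would multiply by $\tM_0(c)$, sum over $c$, and interchange the order of summation. The sum over $c$ of $\tM_0(c)$ times the first factor gives exactly $\tM_t(\bfX_{[0,t]},\tM_0)(c')$, so the remaining sum over $c'$ is of the form $\sum_{c'}\tM_t(\bfX_{[0,t]},\tM_0)(c')\cdot\bE_{c'}[\cdots\mid(\theta_t\bfX)_{[0,s]}]$, which by definition \eqref{defn:M_t^gamma} is $\tM_s((\theta_t\bfX)_{[0,s]},\,\tM_t(\bfX_{[0,t]},\tM_0))(b)$. The main obstacle is really just a careful bookkeeping of what is conditioned on at each step; once the coloring process is recognized as a pure-jump Markov chain driven by the paths $\bfX$, the factorization is automatic and the identity falls out.
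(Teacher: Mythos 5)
Your proof is correct and follows essentially the same route as the paper's: decompose the local times into increments at time $t$, partition on $C_t$, invoke the conditional (time-inhomogeneous) Markov property of the coloring process given $\bfX$ to factor the conditional expectation, identify the increment local times with those of the shifted path, and resum. The only step you gloss over slightly — and the paper also treats it only in passing — is that in the final display the first factor is conditioned on $\bfX_{[0,t]}$ rather than $\bfX_{[0,t+s]}$; this rests on the fact that $L^=_t$, $L^{\neq}_t$ and $C_t$ are functions of $\bfX_{[0,t]}$ (and of clocks independent of $\bfX_{(t,t+s]}$), so the two conditionings agree. With that noted, your bookkeeping matches the paper's argument exactly.
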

almost surely with respect to the law of $\bfX$.
\begin{proof}
By the definition of $\tM_{t+s}$,
\begin{align}
&\tM_{t+s}(\bfX,\tM_0)(b)= \sum_{c\in\{1,2\}^n}\tM_0(c)\,\bE_{c}\left[e^{\gamma(L^=_{t+s}+\rho L^{\neq}_{t+s})}\ind_{C_{t+s}=b} \;\middle|\; \bfX_{[0,t+s]} \right]	\\
&= \sum_{\mathclap{c\in\{1,2\}^n}}\tM_0(c)\,\bE_{c}\left[e^{\gamma(L^=_{t}+\rho L^{\neq}_{t})} \bE\left[ e^{\gamma(L^=_{t+s}-L^=_{t}+\rho (L^{\neq}_{t+s}-L^{\neq}_{t}))}\ind_{C_{t+s}=b} \;\middle|\; C_t, \bfX_{[0,t+s]}\right] \;\middle|\; \bfX_{[0,t+s]} \right].
\end{align}
The increments of the local times, $L^=_{t+s}-L^=_t, L^{\neq}_{t+s}-L^{\neq}_t$, are a function of $(\theta_t \bfX)_{[0,s]}$ and equal the local times $L^=_s, L^{\neq}_s$ of $\theta_t \bfX$. Furthermore, 
conditioned on $X$, the coloring process $(C_t)_{t\ge0}$ is a time-inhomogeneous Markov process, and the law of $C_{t+s}$ depends only on $C_t$ and the path $(\theta_t \bfX)_{[0,s]}$. 
Therefore
\begin{align}
\bE\left[ e^{\gamma(L^=_{t+s}-L^=_{t}+\rho (L^{\neq}_{t+s}-L^{\neq}_{t}))}\ind_{C_{t+s}=b} \;\middle|\; C_t, \bfX_{[0,t+s]}\right]
=
\bE_{C_t}\left[ e^{\gamma(L^=_s+\rho L^{\neq}_{s}}\ind_{C_{s}=b} \;\middle|\; (\theta_t\bfX)_{[0,s]}\right].
\end{align}
Summing over all possible values of $C_t$ and using again that $C_t$ and the local times $L^=_t, L^{\neq}_t$ depend only on $\bfX_{[0,t]}$, we get
\begin{align}
\tM_{t+s}(\bfX,\tM_0)(b)
&= \sum_{\mathclap{c,c'\in\{1,2\}^n}}\tM_0(c)\,\bE_{c}\left[e^{\gamma(L^=_{t}+\rho L^{\neq}_{t})}\ind_{C_{t}=c'}  \;\middle|\; \bfX_{[0,t]} \right]\bE_{c'}\left[ e^{\gamma(L^=_s+\rho L^{\neq}_{s}}\ind_{C_{s}=b} \;\middle|\; (\theta_t\bfX)_{[0,s]}\right]	\\
&=\sum_{\mathclap{c'\in\{1,2\}^n}} \tM_t(\bfX_{[0,t]},\tM_0)(c') \,\bE_{c'}\left[ e^{\gamma(L^=_{s}+\rho (L^{\neq}_{s})}\ind_{C_{s}=b} \;\middle|\; (\theta_t \bfX)_{[0,s]} \right]	\\
&=\tM_s\left((\theta_t \bfX)_{[0,s]},\tM_t(\bfX_{[0,t]},\tM_0)\right)(b).
\end{align}
\end{proof}

We now have to distinguish between the discrete and the continuous case, since collisions and local times of random walks and Brownian motions behave somewhat differently. In particular, in the discrete case collisions between more than two random walks can happen.

\begin{lemma}\label{lemma:M-ODE-collision}
Assume the discrete space case, where $\bfX=(X^\ssup{1},\ldots,X^\ssup{n})$ is the collection of $n$ independent random walks on $\bZ^d$. Let $\Gamma_t:=\{ (i,j) : X_t^\ssup{i}=X_t^\ssup{j}, 1\leq i < j \leq n \}$ be the set of collision pairs  at time $t$, and for $c\in\{1,2\}^n$ let $\Gamma_t^=(c) := \{ (i,j) \in \Gamma_t : c_i=c_j\}$ and $\Gamma_t^{\neq}:=\{ (i,j)\in \Gamma_t : c_i\neq c_j \}$ be the decomposition into same color collisions and different color collisions according to the coloring $c$. Then $\tM_t=\tM_t(\bfX, \tM_0)$ satisfies the following linear ODE:
\begin{align}\label{eq:ODE_X}
 \frac{d}{dt}\tM_t(b) &= \gamma\rho \abs{\Gamma^{\neq}_t(b)}\tM_t(b) + \frac{\gamma}{2} \sum_{\substack{c\in\{1,2\}^n: \\d(b,c)=1}} \abs{\Gamma^=_t(c)\cap \Gamma^{\neq}_t(b)}\tM_t(c),\qquad b\in\{1,2\}^n.
\end{align}
\end{lemma}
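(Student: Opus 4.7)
The plan is to view $\tM_t(\bfX,\tM_0)$ as a Feynman--Kac functional of the (conditionally Markovian) coloring process and derive the ODE from a pathwise Kolmogorov forward equation.

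First, I would identify, conditional on the random walk paths $\bfX$, the dynamics of $(C_t)$. In discrete space the local times are just occupation integrals, $L^{i,j}_t=\int_0^t\ind_{X^i_s=X^j_s}\,ds$, so $dL^{i,j}_t/dt=\ind_{(i,j)\in\Gamma_t}$. Hence $(C_t)$ is a time-inhomogeneous continuous-time Markov chain on $\{1,2\}^n$ with generator
\begin{equation*}
(\cL_t f)(c)=\sum_{i=1}^n r_i(t,c)\bigl(f(\hat c^i)-f(c)\bigr),\qquad r_i(t,c)=\frac{\gamma}{2}\,\bigl|\{j\ne i:(i,j)\in\Gamma_t,\,c_i=c_j\}\bigr|,
\end{equation*}
and the exponential factor in \eqref{defn:M_t^gamma} can be written as $\exp(\int_0^t V_s(C_s)\,ds)$ with potential $V_t(c)=\gamma|\Gamma_t^=(c)|+\gamma\rho|\Gamma_t^{\ne}(c)|$.

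Next, I would apply a standard Feynman--Kac argument (conditional on $\bfX$) to the unnormalized transition kernels $q_t(c,b):=\bE_c[e^{\int_0^tV_s(C_s)ds}\ind_{C_t=b}\mid\bfX]$. By a short-time expansion $q_{t+h}-q_t$, splitting into the events ``no flip in $[t,t+h]$'' (contributing $V_t(b)q_t(c,b)h-\sum_i r_i(t,b)q_t(c,b)h$) and ``exactly one flip'' (contributing $\sum_i r_i(t,\hat b^i)q_t(c,\hat b^i)h$), one gets
\begin{equation*}
\frac{d}{dt}q_t(c,b)=V_t(b)\,q_t(c,b)+\sum_i r_i(t,\hat b^i)\,q_t(c,\hat b^i)-\sum_i r_i(t,b)\,q_t(c,b).
\end{equation*}
Since $\tM_t(b)=\sum_c\tM_0(c)q_t(c,b)$, the same linear ODE holds for $\tM_t$.

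Finally, I would match terms with \eqref{eq:ODE_X}. Counting pairs from each side, $\sum_i r_i(t,b)=\gamma|\Gamma_t^=(b)|$, so $V_t(b)-\sum_i r_i(t,b)=\gamma\rho|\Gamma_t^{\ne}(b)|$, recovering the first summand. For the gain term, for $c=\hat b^i$ (so $d(b,c)=1$) one has $r_i(t,\hat b^i)=\tfrac{\gamma}{2}|\{j\ne i:(i,j)\in\Gamma_t,\,b_i\ne b_j\}|$, and a direct check shows this equals $\tfrac{\gamma}{2}|\Gamma_t^=(c)\cap\Gamma_t^{\ne}(b)|$ because a collision pair lies in $\Gamma_t^=(c)\cap\Gamma_t^{\ne}(b)$ precisely when it involves index $i$ and the partner has color $\ne b_i$. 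Summing over $i$ (equivalently over $c$ with $d(b,c)=1$) yields the second summand.

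The only delicate point is the pathwise interpretation of the derivative: $\Gamma_t$ is piecewise constant but has jumps at the random-walk jump times, so the ODE should be understood on each interval of constancy of $\Gamma_t$, with $\tM_t$ itself continuous across these jump times (since $L^{i,j}_t$ is continuous in $t$). This is precisely the sense in which \eqref{eq:ODE_X_new} defines $M^\sse\gamma_t$ pathwise, so the two processes satisfy the same driven linear system with the same initial condition, and a Gr\"onwall-type uniqueness argument identifies $\tM_t$ with $M^\sse\gamma_t$. I expect the main technical nuisance to be writing down the short-time expansion rigorously (handling the jump times of the underlying random walks), but this is standard once one conditions on $\bfX$.
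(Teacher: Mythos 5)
Your proposal is correct and follows essentially the same route as the paper: a short-time expansion conditional on the walk paths (the paper invokes its Lemma~\ref{lemma:M-Markov} for this, you re-derive the same identity via the conditional Markov property of $(C_t)$), followed by term-by-term matching. The Feynman--Kac/Kolmogorov-forward framing is a cleaner way to package the same calculation, and your counting checks ($\sum_i r_i(t,b)=\gamma|\Gamma^=_t(b)|$ and $r_i(t,\hat b^i)=\tfrac{\gamma}{2}|\Gamma^=_t(\hat b^i)\cap\Gamma^{\neq}_t(b)|$) are both correct.
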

\begin{proof}
Since $\bfX$ is right continuous, choose $\epsilon$ small enough that $\bfX_{[t,t+\epsilon]}\equiv \bfX_t$. Then by Lemma \ref{lemma:M-Markov}
\begin{align}
\tM_{t+\epsilon}(b)-\tM_t(b) &= \sum_{c\in\{1,2\}^n}\tM_t(c) \,\bE_{c}\left[e^{\gamma(L^=_{\epsilon}+\rho L^{\neq}_\epsilon)}\ind_{C_\epsilon=b} \;\middle|\; \theta_t\bfX \right] -\tM_t(b).
\end{align}
Since the total rate of color change in a coloring $c$ is given by $\gamma\abs{\Gamma^=_t(c)}< \gamma n^2$, the probability of two color changes in time $\epsilon$ is small and the above is equal to
\begin{align}
& \tM_t(b) \left(\bE_{b}\left[e^{\gamma(L^=_{\epsilon}+\rho L^{\neq}_\epsilon)}\ind_{C_\epsilon=b} \;\middle|\;\theta_t\bfX \right] -1\right)	\\
&+\sum_{c:d(c,b)=1}\tM_t(c) \,\bE_{c}\left[e^{\gamma(L^=_{\epsilon}+\rho L^{\neq}_\epsilon)}\ind_{C_\epsilon=b} \;\middle|\; \theta_t\bfX \right] +O(\epsilon^2)\\
&= \tM_t(b) \left(e^{\gamma(\abs{\Gamma^=_t(b)}\epsilon+\rho |\Gamma^{\neq}_t(b)|\epsilon)}e^{-\gamma \abs{\Gamma^=_t(b)}{\epsilon}} -1 \right) 	\\
&+\sum_{c:d(c,b)=1}\tM_t(c)\, e^{O(\epsilon)}\left(1-e^{-\frac\gamma2\abs{\Gamma_t^=(c)\cap \Gamma_t^{\neq}(b)}\epsilon}\right)   +O(\epsilon^2),
\end{align}
with $d(\cdot,\cdot)$ denoting the Hamming distance on $\{1,2\}^n$. In the last line we used the fact that if $d(c,b)=1$, there is one coordinate, say $i$, which changed color. The rate that this color change happens is $\gamma/2$ times the number of other particles at $X_t^\ssup{i}$ which have color $c_i$, and this number is exactly given by pairs of particles which have the same color before the change, and a different afterwards. The exact value of $\gamma(L^=_{\epsilon}+\rho L^{\neq}_\epsilon)$ depends on the time of the color change, but is of order $\epsilon$ since its absolute value is bounded by $n^2\gamma\epsilon$. By writing $e^x=1+x+O(x^2)$, we get that the above is
\begin{align}
\tM_t(b) \rho\gamma \abs{\Gamma^{\neq}_t(b)}\epsilon + \frac\gamma2\sum_{c:d(c,b)=1}\tM_t(c) \abs{\Gamma_t^=(c)\cap \Gamma_t^{\neq}(b)}\epsilon +O(\epsilon^2).
\end{align}
Dividing by $\epsilon$ and then sending $\epsilon$ to 0 completes the proof.
\end{proof}

\begin{lemma}[The discrete case]\label{lem:tM-ODE}
In the discrete case, we have
\begin{align}\label{eq:tM-ODE}
\frac{d}{dt}\tM_t(b)&= \frac{\gamma\rho}{2} \sum_{i,j =1}^n \ind_{b_i\neq b_j} \tM_t(b)\frac{dL_t^{i,j}}{dt} + \frac{\gamma}{2}
 \sum_{i,j=1}^n \ind_{b_i\neq b_j} \tM_t(\widehat{b}^i)\frac{dL_t^{i,j}}{dt},\qquad b\in\{1,2\}^n.
\end{align}
Here $L^{i,j}_t=\int_0^t \ind_{X^\ssup{i}_s=X^\ssup{j}_s}\,ds$ are the pair local times and $\widehat{b}^i$ is the coloring $b$ flipped at $i$.
\end{lemma}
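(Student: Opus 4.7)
The claim is just a rewriting of the ODE of Lemma~\ref{lemma:M-ODE-collision} in terms of the pair local times. Since each coordinate $X^{\ssup i}$ is a right-continuous process on $\Z^d$ with piecewise constant paths between jumps, the pair local time $L^{i,j}_t=\int_0^t\ind_{X^\ssup{i}_s=X^\ssup{j}_s}\,ds$ is absolutely continuous with derivative $\tfrac{dL^{i,j}_t}{dt}=\ind_{X^\ssup{i}_t=X^\ssup{j}_t}$ for almost every $t\geq0$, and on each time interval on which $\bfX$ is constant the collision structure is frozen. Thus on such intervals the statement reduces to a combinatorial identity, and this is the only thing to check.

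The plan is as follows. First I would handle the diagonal term. Writing out the definition of the set of different-coloured collision pairs gives
\[
|\Gamma^{\neq}_t(b)|=\sum_{i<j}\ind_{b_i\neq b_j}\ind_{X^\ssup i_t=X^\ssup j_t}=\tfrac12\sum_{i,j=1}^n\ind_{b_i\neq b_j}\tfrac{dL^{i,j}_t}{dt},
\]
where the contribution of $i=j$ vanishes because $\ind_{b_i\neq b_i}=0$. Multiplying by $\gamma\rho\,\tM_t(b)$ reproduces exactly the first term on the right hand side of~\eqref{eq:tM-ODE}.

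Next I would treat the off-diagonal sum. Every coloring $c$ at Hamming distance $1$ from $b$ is of the form $c=\widehat b^{\,i}$ for a unique $i\in\{1,\dots,n\}$, so I can reindex the sum in Lemma~\ref{lemma:M-ODE-collision} by $i$ instead of $c$. The key combinatorial step is to identify, for each $i$,
\[
|\Gamma^=_t(\widehat b^{\,i})\cap\Gamma^{\neq}_t(b)|=\sum_{j\neq i}\ind_{b_i\neq b_j}\ind_{X^\ssup i_t=X^\ssup j_t}.
\]
To see this I would argue that for a pair $\{k,\ell\}$ with $k\neq\ell$ to lie in $\Gamma^=_t(\widehat b^{\,i})\cap\Gamma^{\neq}_t(b)$, it must satisfy $b_k\neq b_\ell$ but $\widehat b^{\,i}_k=\widehat b^{\,i}_\ell$; since $\widehat b^{\,i}$ differs from $b$ only at position $i$, this forces exactly one of $k,\ell$ to equal $i$, and the other index $j\neq i$ must then satisfy $X^\ssup i_t=X^\ssup j_t$ and $b_i\neq b_j$. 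Conversely, any such pair $\{i,j\}$ lies in the intersection. Plugging this count into the second term of the ODE from Lemma~\ref{lemma:M-ODE-collision} gives
\[
\frac{\gamma}{2}\sum_{i=1}^n \tM_t(\widehat b^{\,i})\sum_{j\neq i}\ind_{b_i\neq b_j}\tfrac{dL^{i,j}_t}{dt},
\]
and allowing $j=i$ in the inner sum is harmless since $\ind_{b_i\neq b_i}=0$, yielding the desired second term on the right hand side of~\eqref{eq:tM-ODE}.

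There is no genuine obstacle here; the only thing requiring care is the bookkeeping of how the unordered pair counts in Lemma~\ref{lemma:M-ODE-collision} translate into the doubled sums in~\eqref{eq:tM-ODE}, in particular noting that $L^{i,j}=L^{j,i}$ and that each unordered pair $\{i,j\}$ contributes twice to $\sum_{i,j}$, which is consistent with the prefactors $\tfrac{\gamma\rho}{2}$ and $\tfrac{\gamma}{2}$ in~\eqref{eq:tM-ODE} versus $\gamma\rho$ and $\tfrac{\gamma}{2}$ in the ODE from Lemma~\ref{lemma:M-ODE-collision} (the latter already using an ordered-pair convention on $c$ via $d(b,c)=1$).
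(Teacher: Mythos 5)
Your argument is correct and follows the same route as the paper: you invoke Lemma~\ref{lemma:M-ODE-collision}, rewrite $|\Gamma^{\neq}_t(b)|$ and $|\Gamma^=_t(\widehat b^{\,i})\cap\Gamma^{\neq}_t(b)|$ as ordered double sums of indicators, and identify $\ind_{X^\ssup{i}_t=X^\ssup{j}_t}$ with $\tfrac{dL^{i,j}_t}{dt}$. Your reasoning that any pair in $\Gamma^=_t(\widehat b^{\,i})\cap\Gamma^{\neq}_t(b)$ must contain $i$ is the small combinatorial fact the paper states as ``simple counting,'' so the only difference is that you spell it out more explicitly.
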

\begin{proof}
By simple counting we get that  $\abs{\Gamma_t^{\neq}(b)}=\frac{1}{2} \sum_{i,j =1}^n \ind_{b_i\neq b_j}\ind_{X_t^\ssup{i}=X_t^\ssup{j}}$.
For each $c$ with $d(c,b)=1$ there is some index $i$ so that $c=\widehat{b}^i$. Then 
\[ \abs{\Gamma^=_t(c)\cap \Gamma^{\neq}_t(b)}=\sum_{j=1}^n \ind_{b_j\neq b_i}\ind_{X_t^\ssup{i}=X_t^\ssup{j}},\] 
and summing over all possible $i$ we get
\[
\sum_{c: d(c,b)=1} \abs{\Gamma^=_t(c)\cap \Gamma^{\neq}_t(b)} =  \sum_{i,j =1}^n \ind_{b_i\neq b_j}\ind_{X_t^\ssup{i}=X_t^\ssup{j}}
.\]
Now \eqref{eq:tM-ODE} follows from Lemma \ref{lemma:M-ODE-collision}.
\end{proof}
Note that since \eqref{eq:tM-ODE} and \eqref{eq:ODE_X_new} are the same equations, $\tM_t(\bfX,\delta_c)$ and $M_t^\sse\gamma(\bfX,\delta_c)$ agree, thus Theorem \ref{thm:rewrite} is now proven for the discrete case.

The proof for the continuous case follows along the same lines. It is in some parts simpler, as simultaneous collisions between more than two Brownian motions do not happen almost surely. Theorem \ref{thm:rewrite} for the continuous case follows from the following lemma, which is a version of Lemma \ref{lem:tM-ODE}.
\begin{lemma}[The continuous case]\label{lemma:M-ODE-collision-continuous}
Assume the continuous space setting, where  $\bfX=(X^\ssup{1},\ldots,X^\ssup{n})$ is the collection of $n$ independent Brownian motions.
Then $\tM_t=\tM_t(\bfX, \tM_0)$ satisfies the following linear ODE almost surely w.r.t.\ the law of $\bfX$:
\begin{align}
 d\tM_t(b)&= \frac{\gamma\rho}{2} \sum_{i,j =1}^n \ind_{b_i\neq b_j} \tM_t(b)\, dL_t^{i,j} 
 + \frac{\gamma}{2}
 \sum_{i,j=1}^n \ind_{b_i\neq b_j} \tM_t(\widehat{b}^i)\, dL_t^{i,j},\qquad b\in\{1,2\}^n.  
\end{align}
\end{lemma}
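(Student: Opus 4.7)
The plan is to adapt the template of the discrete case (Lemmas~\ref{lemma:M-ODE-collision} and~\ref{lem:tM-ODE}), replacing sums of collision indicators by Stieltjes integrals against the pair local times $L^{i,j}$. First I would invoke the Markov property of Lemma~\ref{lemma:M-Markov} to expand, for small $s>0$,
\[
\tM_{t+s}(b) - \tM_t(b) = \sum_{c\in\{1,2\}^n}\tM_t(c)\Bigl\{\bE_c\bigl[e^{\gamma(L^=_s+\rho L^{\neq}_s)}\ind_{C_s=b}\,\big|\,(\theta_t\bfX)_{[0,s]}\bigr] - \delta_{c,b}\Bigr\},
\]
where the local times on the right refer to those of the shifted path $\theta_t\bfX$ over $[0,s]$.

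Next, I would use that conditionally on $\bfX$, the coloring process is a pure-jump Markov chain on $\{1,2\}^n$ whose intensity for flipping coordinate $i$ at time $u$, when the current coloring is $c$, is the Stieltjes measure $(\gamma/2)\sum_{j:c_j=c_i}\,dL^{i,j}_u$. Splitting the conditional expectation by the number of colour changes in $[t,t+s]$, the zero-jump contribution survives only for $c=b$ and Taylor-expansion yields $\tM_t(b)\bigl[1+\gamma\rho\sum_{i<j}\ind_{b_i\neq b_j}\Delta L^{i,j}\bigr]$ up to $O((\Delta L)^2)$ (with $\Delta L^{i,j}:=L^{i,j}_{t+s}-L^{i,j}_t$); the one-jump contribution survives only for $c=\widehat b^i$ for some $i$, yielding $(\gamma/2)\sum_i\tM_t(\widehat b^i)\sum_{j:b_j\neq b_i}\Delta L^{i,j}$ up to the same order; events with two or more jumps produce an error of order $\bigl(\sum_{i,j}\Delta L^{i,j}\bigr)^2$. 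These are precisely the first-order terms appearing on the right-hand side of the target equation.

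To close the argument, I would sum the above expansion along a sequence of partitions of $[0,t]$ with mesh tending to zero. The error terms vanish in the limit because each $L^{i,j}$ is almost surely continuous and non-decreasing, hence has zero $2$-variation along such partitions; and $\tM_u$ is almost surely bounded on $[0,t]$ as a conditional expectation of an a.s.\ bounded quantity. This yields the integrated version of the claimed equation, which coincides with the defining SDE~\eqref{eq:ODE_X_new} for $M_t^\sse{\gamma}(\bfX,\tM_0)$; identifying the two processes through this common equation (they both solve the same linear equation driven by the continuous finite-variation processes $L^{i,j}$, which has a unique solution) completes the proof of Theorem~\ref{thm:rewrite} in the continuous case via the original moment duality~\eqref{moment duality finite gamma}.

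The main obstacle I anticipate is the rigorous justification of the short-time expansion together with uniform control of the remainder along a partition. A subtlety specific to the continuous setting is to rule out simultaneous collisions of three or more coordinates, so that at leading order at most one coordinate flips in each small interval: for independent Brownian motions this is standard, and more generally the set of times with triple coincidences carries no pair local time. Beyond this point the continuous-space argument is a natural analogue of the discrete one, with $dL^{i,j}_u$ playing the role that $\ind_{X^\ssup{i}_u=X^\ssup{j}_u}\,du$ played in Lemma~\ref{lem:tM-ODE}.
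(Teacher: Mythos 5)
Your proposal is correct and follows essentially the same approach as the paper's: invoke the Markov property of Lemma~\ref{lemma:M-Markov}, expand the conditional expectation over a short interval by the number of colour changes, and Taylor-expand the exponential to isolate the first-order terms in the local-time increments. The only notable difference is in the limiting step -- the paper localizes to a single colliding pair (using that a.s.\ at most one pair local time $L^{k,\ell}$ increases on a sufficiently small interval after any fixed $t$) and then sends $\epsilon \to 0$, whereas you sum the short-time expansion along a vanishing partition of $[0,t]$ and kill the remainder via the zero quadratic variation of the continuous, non-decreasing $L^{i,j}$, which is a somewhat more systematic passage to the integrated (Stieltjes) form and avoids relying on the single-pair localization.
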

\begin{proof}
Since $\bfX$ is right continuous and almost surely has no multiple collisions, choose $\epsilon>0$ small enough so that $L_{t+\epsilon}^{i,j}=L_{t}^{i,j}$ for all pairs $(i,j)$ except for possibly one, say $(k,\ell)$, where $\kappa:=L_{t+\epsilon}^{k,\ell}- L_{t}^{k,\ell}\geq0$. Then by Lemma \ref{lemma:M-Markov}
\begin{align}
\tM_{t+\epsilon}(b)-\tM_t(b) &= \sum_{c\in\{1,2\}^n}\tM_t(c)\, \bE_{c}\left[e^{\gamma(L^=_{\epsilon}+\rho L^{\neq}_\epsilon)}\ind_{C_\epsilon=b} \;\middle|\; \theta_t\bfX \right] -\tM_t(b).
\end{align}
Since the total rate of color change in a coloring $c$ is at most $\gamma$, the probability of two color changes in time $\epsilon$ is small and the above is equal to
\begin{align}
& \tM_t(b) \left(\bE_{b}\left[e^{\gamma(L^=_{\epsilon}+\rho L^{\neq}_\epsilon)}\ind_{C_\epsilon=b} \;\middle|\;\theta_t\bfX \right] -1\right)	\\
&+\sum_{c:d(c,b)=1}\tM_t(c)\, \bE_{c}\left[e^{\gamma(L^=_{\epsilon}+\rho L^{\neq}_\epsilon)}\ind_{C_\epsilon=b} \;\middle|\; \theta_t\bfX \right] +O(\epsilon^2)\\
&= \tM_t(b) \left(e^{\gamma(\kappa\ind_{b_k=b_\ell}+\rho \kappa \ind_{b_k\neq b_\ell})}e^{-\gamma\kappa \ind_{b_k=b_\ell}} -1 \right) 	\\
&+\ind_{b_k\neq b_\ell}\sum_{i\in\{k,\ell\}}\tM_t(\widehat{b}^i)\, e^{O(\kappa)}\left(1-e^{-\frac{\gamma}{2} \kappa }\right)   +O(\kappa^2).
\end{align}
By writing $e^x=1+x+O(x^2)$, we get
\begin{align}
&\frac{\gamma\rho}{2} \ind_{b_k\neq b_\ell} \tM_t(b) \kappa
 + \frac{\gamma}{2}
 \ind_{b_k\neq b_\ell} \sum_{i\in\{k,\ell\}}\tM_t(\widehat{b}^i) \kappa  +O( \kappa^2)\\
 &=\frac{\gamma\rho}{2} \sum_{i,j =1}^n \ind_{b_i\neq b_j} \tM_t(b)\, (L_{t+\epsilon}^{i,j} - L_t^{i,j}) \\
& \qquad + \frac{\gamma}{2}
 \sum_{i,j=1}^n \ind_{b_i\neq b_j} \tM_t(\widehat{b}^i)\, (L_{t+\epsilon}^{i,j} - L_t^{i,j}) +O((L_{t+\epsilon}^{k,\ell} - L_t^{k,\ell})^2)  
\end{align}
Sending $\epsilon$ to 0 completes the proof.
\end{proof}

\section{The moment duality for $\gamma = \infty$}\label{sec:duality}

In this section, we will derive the moment duality for the case $\gamma = \infty$
and thus prove Theorems~\ref{thm:M_t-infty} and~\ref{thm:moment_duality_infinite}. 
We start by looking at the discrete space case 
in Section~\ref{ssec:discret_M_gamma}. In particular, we introduce an auxiliary 
process $K$, whose asymptotics we analyze in Section~\ref{ssec:asymptotics_K}. 
We then complete the proof of Theorem~\ref{thm:M_t-infty} (discrete space) in 
Section~\ref{ssec:M_infinty_discrete}. The continuous space case is slightly different,
and we prove Theorem~\ref{thm:M_t-infty} in this setting in Section~\ref{ssec:continuous}. Finally, we 
combine Theorem~\ref{thm:M_t-infty} with a dominated convergence argument
to show the moment duality for $\gamma = \infty$, i.e.\ Theorem~\ref{thm:moment_duality_infinite},
in Section~\ref{ssec:proof_duality}.

Since the case $n =1$ is trivial, we will assume throughout that $n \geq 2$.
Also, we can treat the case $\rho =-1$ simultaneously with the case $\rho \in(-1,0)$. 

\subsection{Analysis of $M^\sse{\gamma}$ for the discrete space model}\label{ssec:discret_M_gamma}

In this section, we assume that $\bfX = (X^\ssup{1}, \ldots, X^\ssup{n})$ 
is a simple, symmetric random walk on $(\Z^d)^n$.
Remember that given the paths of $\bfX$, the process $M_t^\sse\gamma=M_t^\sse\gamma(X,M_0)$ is defined as the solution of
\begin{align}\label{eq:ODE1}
 \frac{d}{dt}M^\sse\gamma_t(m) &= \frac{\gamma\rho}{2} \sum_{i,j=1}^n \ind_{m_i\neq m_j} M^\sse\gamma_t(m)\frac{dL_t^{i,j}}{dt} + \frac{\gamma}{2}
 \sum_{i,j=1}^n \ind_{m_i\neq m_j} M^\sse\gamma_t(\widehat{m}^i)\frac{dL_t^{i,j}}{dt},   
\end{align}
for each coloring $m\in\{1,2\}^n$, with initial condition $M_0\in\mathcal{M}\left(\{1,2\}^n\right)$, where we recall that $L_t^{i,j}$ denote the pair local times and $\widehat{m}^i$ is 
the coloring $m$ flipped at $i$.

We notice that the right hand side of \eqref{eq:ODE1} is piecewise linear. 
To make this precise, we introduce the following notation: 
For a  partition $\pi$ of the set $[n] :=\{1, \ldots, n\}$ we
write $ \pi = \{ \pi_1, \pi_2, \ldots, \pi_k \}$ (in increasing order of the smallest element), 
where $\pi_i \subset [n]$ with size $|\pi_i|$ are the blocks of $\pi$, and we define $|\pi| := k$. 
Also, if $M$ is a measure on $\{ 1,2\}^n$ we interpret $M=(M(m))_{m\in\{1,2\}^n}$ as a $2^n$-dimensional vector by ordering the elements of $\{1,2\}^n$ in increasing
lexicographical order.

For a partition $\pi$ of $[n]$, let $A_\rho^\pi$ be defined as the $2^n$-dimensional matrix with nonzero entries given by
\begin{equation}\label{eq:matrixApi}
 A^\pi _\rho(m,m')  = \left\{ \begin{array}{ll} \frac{\rho}{2} \sum_{k=1}^{|\pi|} \sum_{i,j \in \pi_k} \ind_{m_i\neq m_j}
   & \mbox{if } m' = m,  \\
   \frac{1}{2}
\sum_{j \in \pi_k} \ind_{m_i\neq m_j}
 & \mbox{if } m' = \widehat{m}^i  \mbox{ for some } i \in \pi_k,\, k\in\{1,\ldots,|\pi|\}. \end{array} \right.
\end{equation}
Then we can rewrite \eqref{eq:ODE1} as (see also Lemma \ref{lemma:M-ODE-collision})
\begin{align}\label{eq:ODE2}
 \frac{d}{dt}M^\sse\gamma_t(m) &= \gamma A^{\pi(\bfX_t)}_\rho M^\sse\gamma_t(m),
\end{align}
where we take $\pi(\bfX_t)$ as the set of clusters of the random walks $\bfX_t$, i.e.\ $\pi(\bfX_t)$ is the partition of $[n]$ induced by the equivalence 
relation $i \sim j$ iff $X_t^\ssup{i} = X_t^\ssup{j}$. 

In the discrete space case, $\pi(\bfX_t)$ is piece-wise constant, which motivates us to first study the evolution under $A^{\pi}_\rho$ for a fixed partition $\pi$ of $[n]$. Let $K_t(K_0, \pi) =e^{tA_\rho^\pi}K_0$ be the unique solution of
\begin{align}\label{eq:ODE3}
 \frac{d}{dt}(K_t(K_0, \pi)) &= A^{\pi}_\rho K_t(K_0, \pi)
\end{align}
with initial value $K_0 \in \cM(\{1,2\}^n)$.
From~\eqref{eq:ODE2} it is clear that 
$M_t^\sse{\gamma}$ can be directly constructed from $K_t$, as follows:
\begin{lemma}\label{prop:M-gamma}
Let $\tau_0:=0$ and $\tau_k:=\inf\{t\geq \tau_{k-1} : \pi(\bfX_t)\neq \pi(\bfX_{\tau_{k-1}}) \}$, $k\in\N$, be the times when the partition induced by the random walk changes. Then $M_t^\sse{\gamma}$ is given recursively  
as $M_0^\sse{\gamma} = M_0$ and
for $k \in \N_0$,
\[ M_t^\sse{\gamma} = K_{\gamma(t-\tau_k)}\left(M^\sse{\gamma}_{\tau_k},\pi(\bfX_{\tau_k})\right) , \quad t \in [\tau_k, \tau_{k+1}] .\]
\end{lemma}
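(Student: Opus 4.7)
The plan is to read off the claim directly from the ODE~\eqref{eq:ODE2}, using the key feature of the discrete setting that the cluster partition $\pi(\bfX_t)$ stays constant between consecutive jump times $\tau_k$ and $\tau_{k+1}$. The whole argument is essentially an elementary application of variation of parameters for a piecewise constant linear ODE, so I do not expect genuine obstacles -- the only thing to pin down is how the time change by $\gamma$ enters and how $K$ glues together across the jump times.

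First, I would observe that the stopping times $\tau_k$ are almost surely increasing in $k$ (with the convention $\inf\emptyset=\infty$), since $\bfX$ is a continuous-time random walk and in particular right-continuous with only countably many jumps. Between $\tau_k$ and $\tau_{k+1}$, the matrix $A^{\pi(\bfX_t)}_\rho$ appearing in~\eqref{eq:ODE2} is constant and equal to $A^{\pi(\bfX_{\tau_k})}_\rho$. Hence on $[\tau_k,\tau_{k+1})$, $M^\sse{\gamma}$ satisfies the linear ODE with constant coefficients
\begin{equation}
\frac{d}{dt} M^\sse{\gamma}_t = \gamma A^{\pi(\bfX_{\tau_k})}_\rho M^\sse{\gamma}_t,
\end{equation}
whose unique solution with value $M^\sse{\gamma}_{\tau_k}$ at time $\tau_k$ is the matrix exponential
\begin{equation}
M^\sse{\gamma}_t = e^{\gamma(t-\tau_k)\, A^{\pi(\bfX_{\tau_k})}_\rho}\, M^\sse{\gamma}_{\tau_k}.
\end{equation}

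By the definition $K_s(K_0,\pi) = e^{s A^{\pi}_\rho} K_0$, the right hand side is precisely $K_{\gamma(t-\tau_k)}\!\left(M^\sse{\gamma}_{\tau_k}, \pi(\bfX_{\tau_k})\right)$, which gives the claimed formula on the open interval $(\tau_k,\tau_{k+1})$. To extend the formula to the closed interval $[\tau_k,\tau_{k+1}]$, I would use continuity of $M^\sse{\gamma}$ (clear from the integrated form of the ODE) together with continuity of $s\mapsto K_s(K_0,\pi)$ at the endpoints; this makes both sides well-defined and equal at $t=\tau_k$ (trivially) and at $t=\tau_{k+1}$ (as the common left limit), so the next step starts from the correct initial datum $M^\sse{\gamma}_{\tau_{k+1}}$. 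Proceeding inductively in $k$ starting from $M^\sse{\gamma}_0 = M_0$ then yields the recursive description in the statement.

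Since the argument is purely deterministic once the path $\bfX$ is fixed, and since $\pi(\bfX_\cdot)$ is a.s.\ piecewise constant in the discrete-space setting, there is no subtlety beyond this. I note that the continuous-space analogue would need a separate treatment, as there $\pi(\bfX_t)$ is no longer piecewise constant and the matrix $A^{\pi(\bfX_t)}_\rho$ only jumps at a (Lebesgue-null) set of local-time increase; but for this lemma only the discrete case is needed.
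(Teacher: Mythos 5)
Your proposal is correct and is essentially the argument the paper leaves implicit (the paper states the lemma with only the remark that it is ``clear'' from the ODE~\eqref{eq:ODE2}). You correctly observe that on $[\tau_k,\tau_{k+1})$ the cluster partition $\pi(\bfX_t)$ is constant, so~\eqref{eq:ODE2} becomes a constant-coefficient linear ODE whose solution is the matrix exponential $e^{\gamma(t-\tau_k)A_\rho^{\pi(\bfX_{\tau_k})}}M^\sse{\gamma}_{\tau_k}$, which by the definition $K_s(K_0,\pi)=e^{sA_\rho^\pi}K_0$ is exactly $K_{\gamma(t-\tau_k)}(M^\sse{\gamma}_{\tau_k},\pi(\bfX_{\tau_k}))$; continuity of $M^\sse{\gamma}$ at $\tau_{k+1}$ and induction in $k$ then give the claim.
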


\subsection{The asymptotic analysis of $K_t$}\label{ssec:asymptotics_K}

To obtain the limit as $\gamma\to\infty$, the explicit form of $M_t^\sse\gamma$ given by Lemma~\ref{prop:M-gamma} suggests that we need to understand the limit as $t\to\infty$ of $K_t(\cdot,\pi)$ first.
Surprisingly, we see that the critical curve \eqref{criticalcurve} for the moments also appears here via an independent derivation.

We begin by introducing some additional notation: For $n\in\N$ and a coloring $m\in \{1,2\}^n$, we write 
\[
\#_a m := \sum_{j=1}^n \ind_{m_j=a},\qquad a\in\{1,2\}
.\]
If in addition $\pi=\{\pi_1,\ldots,\pi_k\}$ is a partition of $[n]$ with length $|\pi|=k$, we let $m|_{\pi_i}\in \{1,2\}^{|\pi_i|}$ denote the restriction of $m$ to the block $\pi_i$, $i=1,\ldots,k$.
Moreover, given $c\in\{1,2\}^k$ we define $m^{\pi,c} \in \{1,2\}^n$ by 
\[m^{\pi,c}_j:=c_i \text{ iff }j\in \pi_i, \qquad j=1,\ldots,n.\]
\begin{prop}\label{prop:general_partition} Suppose $\rho + \cos(\pi/n) < 0$, and let $\pi$ be an arbitrary partition of $[n]$. Then there exists $K_\infty(K_0,\pi)\in \cM(\{1,2\}^n)$ such that for the unique solution $K_t(K_0, \pi)$ of~\eqref{eq:ODE3} we have
 \[ \lim_{t \to \infty} K_t(K_0,\pi) = K_\infty(K_0, \pi). \]
Moreover, the limit $K_\infty$ is given explicitly as follows: For $\rho \in (-1,0)$, we have
\begin{align}\label{eq:K-infty-explicit2}
K_\infty(K_0, \pi)(m) = \sum_{c\in \{1,2\}^{|\pi|}} K_0(m^{\pi,c})\prod_{i=1}^{|\pi|} \frac{\sin(\lambda_\rho\, \#_{c_i} m|_{\pi_i})}{\sin(\lambda_\rho |\pi_i|)},\qquad m\in\{1,2\}^n,
\end{align}
where $\lambda_\rho:=\arccos ( |\rho|)$.
For $\rho = -1$, the limit is given by 
\begin{align}\label{eq:K-infty-explicit2_minus1}
K_\infty(K_0, \pi)(m) = \sum_{c\in \{1,2\}^{|\pi|}} K_0(m^{\pi,c})\prod_{i=1}^{|\pi|} \frac{\#_{c_i} m|_{\pi_i}}{ |\pi_i|},\qquad m\in\{1,2\}^n.
\end{align}
\end{prop}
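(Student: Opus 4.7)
The plan is to exploit the block structure of $A^\pi_\rho$, reduce the analysis to a single block, identify explicit kernel vectors, and then establish a spectral gap via Perron--Frobenius and Collatz--Wielandt.

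First I would observe from \eqref{eq:matrixApi} that $A^\pi_\rho$ only couples coordinates lying in the same block of $\pi$. Hence, under the natural identification $\R^{\{1,2\}^n}\cong\bigotimes_{k=1}^{|\pi|}\R^{\{1,2\}^{\pi_k}}$ one has
\[
A^\pi_\rho \;=\; \sum_{k=1}^{|\pi|} I\otimes\cdots\otimes A^{(|\pi_k|)}_\rho \otimes\cdots\otimes I,
\]
where $A^{(s)}_\rho$ denotes the instance of $A^\pi_\rho$ for the trivial one-block partition of $[s]$. The summands commute (they act on different tensor factors), so $e^{tA^\pi_\rho}=\bigotimes_k e^{tA^{(|\pi_k|)}_\rho}$, reducing the problem to analysing the single-block operator $A^{(s)}_\rho$ for each $s\in\{2,\ldots,n\}$ (the case $s=1$ is trivial since $A^{(1)}_\rho=0$).

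Next I would identify the kernel of $A^{(s)}_\rho$. A key observation is that the rows of $A^{(s)}_\rho$ indexed by the monochromatic colorings $1^s$ and $2^s$ vanish, so $K_t(1^s)=K_0(1^s)$ and $K_t(2^s)=K_0(2^s)$ are conserved. Using the identity $\sin(\lambda_\rho(a-1))+\sin(\lambda_\rho(a+1))=2\cos(\lambda_\rho)\sin(\lambda_\rho a)$ together with $\cos\lambda_\rho=-\rho$, one verifies by direct computation that
\[
v_c(m):=\sin(\lambda_\rho\#_c m)/\sin(\lambda_\rho s),\qquad c\in\{1,2\},
\]
lie in the kernel of $A^{(s)}_\rho$, with boundary values $v_c(1^s)=\delta_{c,1}$ and $v_c(2^s)=\delta_{c,2}$. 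Any limit as $t\to\infty$ must therefore equal $K_0(1^s)v_1+K_0(2^s)v_2$, which is exactly the single-block version of \eqref{eq:K-infty-explicit2}. The case $\rho=-1$ is obtained as the limit $\rho\uparrow -1$, yielding $v_c(m)=\#_c m/s$; alternatively, this follows directly by noting that $A^{(s)}_{-1}$ generates a Markov chain on $\{1,2\}^s$ absorbed at $\{1^s,2^s\}$ for which $\#_1 X_t$ is a martingale, so the absorption probability at $1^s$ starting from a state with $a$ ones is $a/s$.

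The main obstacle is to show that the limit actually exists, equivalently that all eigenvalues of $A^{(s)}_\rho$ restricted to the non-absorbing states $V^*:=\{1,2\}^s\setminus\{1^s,2^s\}$ have strictly negative real part. The case $s=2$ is immediate as $A^{(2)}_\rho|_{V^*}=\rho I$. For $s\ge 3$, $A^{(s)}_\rho|_{V^*}$ has non-negative off-diagonal entries and is irreducible (the one-flip graph on $V^*$ is easily checked to be connected), so after shifting by a large constant the Perron--Frobenius theorem gives a simple largest real eigenvalue $\lambda^*$ whose real part dominates the spectrum and which admits a strictly positive eigenvector. Since $A^{(s)}_\rho$ commutes with the $\Sym_s$-action on coordinates, simplicity of $\lambda^*$ forces its eigenvector to be symmetric, so $\lambda^*$ is attained on the symmetric subspace, where $A^{(s)}_\rho$ acts as the birth--death-type operator
\[
(Lg)(a)=a(s-a)\bigl[\rho g(a)+\tfrac12(g(a-1)+g(a+1))\bigr],\qquad a\in\{1,\ldots,s-1\},\ g(0)=g(s)=0.
\]
Applying the Collatz--Wielandt characterization $\lambda^*\le\max_a(Lv)(a)/v(a)$ with the strictly positive test function $v(a)=\sin(\pi a/s)$, a short trigonometric calculation gives $(Lv)(a)=a(s-a)(\rho+\cos(\pi/s))v(a)$, and hence
\[
\lambda^* \;\le\; (s-1)\bigl(\rho+\cos(\pi/s)\bigr),
\]
which is strictly negative by the hypothesis $\rho+\cos(\pi/n)<0$ together with $\cos(\pi/s)\le\cos(\pi/n)$ for $s\le n$. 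The tensor-product decomposition then assembles the single-block spectral projections into the stated formula \eqref{eq:K-infty-explicit2} (and its $\rho=-1$ limit \eqref{eq:K-infty-explicit2_minus1}).
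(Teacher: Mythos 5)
Your proposal is correct, and its skeleton coincides with the paper's: the same Kronecker/tensor decomposition over the blocks of $\pi$, reduction of the single-block operator to the level-symmetric tridiagonal ``birth--death'' matrix, identification of the two explicit kernel vectors (your direct verification via $\sin(\lambda_\rho(a-1))+\sin(\lambda_\rho(a+1))=2\cos(\lambda_\rho)\sin(\lambda_\rho a)$ is just the closed-form solution of the recurrence the paper solves), and a Perron--Frobenius argument to control the non-absorbing spectrum. The genuine difference is in the spectral-gap step. The paper writes the lumped matrix as $\tilde A=DB$, uses similarity to the symmetric matrix $D^{1/2}BD^{1/2}$ and the known eigenvalues $\rho+\cos(j\pi/n)$ of the tridiagonal Toeplitz matrix $B$ to get negativity, and then \emph{lifts} the positive Perron eigenvector of $\tilde A$ to the full non-absorbing block to dominate all real parts; you instead apply Perron--Frobenius directly on $V^*$, use the $\Sym_s$-symmetry plus simplicity to \emph{project} the Perron vector down to the lumped space, and then bound the Perron root by Collatz--Wielandt with the test function $\sin(\pi a/s)$, which buys an explicit quantitative gap $\lambda^*\le (s-1)(\rho+\cos(\pi/s))$ that the paper does not state (and in fact you could skip the symmetry step by applying Collatz--Wielandt on $V^*$ directly with the lifted test function $m\mapsto\sin(\pi\,\#_1m/s)$). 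Your probabilistic treatment of $\rho=-1$ (the operator is a $Q$-matrix, $\#_1$ is a bounded martingale, absorption probabilities $a/s$) is also a nice self-contained alternative to the paper's double-root computation; rely on it rather than on the loosely justified ``limit as $\rho\to-1$'' of the sine formula. Two small points to tighten in a write-up: note that $\sin(\lambda_\rho s)>0$ under $\rho+\cos(\pi/s)<0$ so the kernel vectors are well defined, and make explicit that the $t\to\infty$ limit exists and lies in the kernel of $A^{(s)}_\rho$, which is exactly two-dimensional once the $V^*$-spectrum is shown to have negative real part, so the conserved monochromatic coordinates pin it down to $K_0(1^s)v_1+K_0(2^s)v_2$.
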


It turns out that we can reduce the case of a general partition to the case of the partition $\pi  = \{ [n]\}$.
Intuitively, the reason is that since there is no interaction between different blocks $\pi_i$ of the partition $\pi=\{\pi_1,\ldots,\pi_k\}$, the solution $K_t$ of \eqref{eq:ODE3} evolves independently on each block.
Assume for simplicity that the partition $\pi$ is given by consecutive intervals, that is
\[ \pi= \{\{1,\ldots,|\pi_1|\},\ldots,\{n-|\pi_k|+1, \ldots, n\}\}. \]

By our convention on representing the elements of $ \cM(\{1,2\}^n)$ as $2^n$-dimensional vectors, we can write 
\begin{align}\label{eq:decomposition} 
A^{\pi}_\rho = A_\rho^\sse{|\pi_1|} \oplus A_\rho^\sse{|\pi_2|} \oplus \cdots \oplus A_\rho^\sse{|\pi_k|}  .
\end{align}
Here $\oplus$ denotes the Kronecker sum for matrices (see e.g.\ \cite[Ch.\ 4.4]{HornJohnson}), 
and  $A_\rho^\sse{\ell}$, $\ell \in \N$, is the $2^\ell \times 2^\ell$ matrix with non-zero entries defined by
\begin{equation}\label{eq:defnAell} A_\rho^\sse{\ell} (m,m') = \left\{ \begin{array}{ll} \frac12 \rho \sum_{i,j=1}^\ell \1_{m_i \neq m_j}  & \mbox{if } m' = m, \\
 \frac12 \sum_{j=1}^\ell \1_{m_i \neq m_j } & \mbox{if } m' = \widehat{m}^i  \mbox{ for some } i \in [\ell	],
 \end{array} \right. 
\end{equation}
for $m, m' \in \{1,2\}^{\ell}$.
In general $\pi$ is not ordered as assumed above, so there is an additional permutation of coordinates involved, which however is only a change of basis and has no influence on the dynamics.

We start with the analysis in the simpler case. 

\begin{prop}\label{prop:simple_partition} Suppose $\rho + \cos(\pi/n) < 0$. Then there exists $K_\infty(K_0,\{[n]\})$ such that
 \[ \lim_{t \to \infty} K_t(K_0,\{[n]\}) = K_\infty(K_0, \{[n]\}). \]
Moreover, for $\rho \in (-1,0)$ the limit $K_\infty$  is given explicitly by
\begin{align}\label{eq:K-infty-explicit}
K_\infty(K_0,\{[n]\})(m) = K_0((\underline 1)_n)\frac{\sin(\lambda_\rho \,\#_1 m)}{\sin(\lambda_\rho n)} + K_0((\underline 2)_n)\frac{\sin(\lambda_\rho \,\#_2 m)}{\sin(\lambda_\rho n)},
\end{align}
where $\lambda_\rho=\arccos ( |\rho|)$ and $(\underline i)_n:=(\underbrace{i,\ldots,i}_{n})\in\{1,2\}^n$ for $i\in\{1,2\}$.
For $\rho = -1$, the limit is 
\begin{align}\label{eq:K-infty-explicit_minus1}
K_\infty(K_0,\{[n]\})(m) = K_0((\underline 1)_n)\frac{\#_1 m}{n} + K_0((\underline 2)_n)\frac{\#_2 m}{ n}.
\end{align}
\end{prop}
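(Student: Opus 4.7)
The plan is to prove Proposition~\ref{prop:simple_partition} in two stages: verify that \eqref{eq:K-infty-explicit}--\eqref{eq:K-infty-explicit_minus1} define stationary solutions of the ODE~\eqref{eq:ODE3}, then show the complementary part $K_t - K_\infty$ decays to zero.

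For the stationarity check, take the ansatz $\phi_\lambda(m) := \sin(\lambda\, \#_1 m)$. Splitting the off-diagonal sum in $(A_\rho^\sse n \phi_\lambda)(m)$ according to whether $m_i=1$ or $m_i=2$ and applying the identity $\sin(\lambda(k-1)) + \sin(\lambda(k+1)) = 2\cos\lambda\,\sin(\lambda k)$, one computes
\[
(A_\rho^\sse n \phi_\lambda)(m) \;=\; \#_1 m \cdot \#_2 m \cdot (\rho + \cos\lambda) \cdot \phi_\lambda(m).
\]
Hence $\phi_\lambda \in \ker A_\rho^\sse n$ exactly when $\lambda = \lambda_\rho := \arccos(-\rho)$, and the same is true for $\tilde\phi(m) := \sin(\lambda_\rho \#_2 m)$. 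The hypothesis $\rho + \cos(\pi/n) < 0$ forces $\lambda_\rho < \pi/n$, so $\sin(\lambda_\rho n) \ne 0$ and the normalization in \eqref{eq:K-infty-explicit} is admissible. Because the rows of $A_\rho^\sse n$ at the absorbing configurations $(\underline 1)_n, (\underline 2)_n$ vanish identically, the two coordinates $K_t((\underline 1)_n)$, $K_t((\underline 2)_n)$ are conserved, and the linear combination in \eqref{eq:K-infty-explicit} is precisely the one matching $K_0$ at both. For $\rho=-1$ one has $\lambda_\rho=0$, the characteristic roots of the underlying second-order recurrence coalesce, and the kernel is spanned instead by the linear profiles $m\mapsto \#_i m /n$ of \eqref{eq:K-infty-explicit_minus1}, directly verified to lie in $\ker A_{-1}^\sse n$.

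For the decay, set $L_t:=K_t-K_\infty$, so $L_0$ lies in the $A_\rho^\sse n$-invariant subspace $V_0:=\{L\,:\,L((\underline 1)_n)=L((\underline 2)_n)=0\}$; it suffices to show that $A_\rho^\sse n|_{V_0}$ has spectrum in the open left half plane. Exploiting that $A_\rho^\sse n$ commutes with the $S_n$-action by coordinate permutations, decompose $V_0$ into its $S_n$-isotypic components $V_{(n-j,j)}$, $0\le j\le n/2$. On the fully symmetric component $V_{(n,0)}\cap V_0$ (functions of $\#_1 m$ vanishing at $0$ and $n$), $A_\rho^\sse n$ reduces to the $(n-1)\times(n-1)$ matrix $B$ on $\{1,\ldots,n-1\}$ with $B(k,k)=\rho k(n-k)$ and $B(k,k\pm 1)=\tfrac12 k(n-k)$. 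Factoring $B=W^{-1}T$ with $W=\mathrm{diag}(1/(k(n-k)))$ positive definite and $T$ the symmetric tridiagonal with $\rho$ on the diagonal and $\tfrac12$ off-diagonal, the similar matrix $W^{1/2}BW^{-1/2}=W^{-1/2}TW^{-1/2}$ is symmetric and congruent to $T$; Sylvester's law of inertia then gives the eigenvalues of $B$ the same signs as those of $T$, namely $\rho+\cos(\pi j/n)$, $j=1,\ldots,n-1$, all strictly negative under the hypothesis. On each non-trivial isotypic component $V_{(n-j,j)}$ with $j\ge 1$, a parallel reduction yields an effective tridiagonal matrix of size $n-2j+1$; there the support consists of configurations with $\#_1 m\cdot \#_2 m\ge j(n-j)$, so the diagonal killing is stronger and the off-diagonal coupling weaker than in the symmetric case, producing eigenvalues that are even more strongly negative.

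The main obstacle lies in this non-symmetric part of the spectral analysis: while the symmetric reduction via Sylvester's law is painless, making the analogous reduction on each $V_{(n-j,j)}$ fully explicit requires the branching rules for $(S_k\times S_{n-k})\subset S_n$. In the special case $\rho=-1$ this obstacle disappears entirely: $A_{-1}^\sse n$ is then a genuine Markov generator on $\{1,2\}^n$ whose only absorbing states are $(\underline 1)_n$ and $(\underline 2)_n$, reached in finite time almost surely. A direct computation shows that $(\#_1 m_t)$ is a bounded martingale, so $\P_m(m_\infty=(\underline i)_n)=\#_i m/n$, and \eqref{eq:K-infty-explicit_minus1} follows from $K_t(m)=\E_m[K_0(m_t)]$ by dominated convergence; this shortcut is the main reason to treat $\rho=-1$ separately.
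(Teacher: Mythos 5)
Your stationarity check is correct (the identity $(A_\rho^\sse{n}\phi_\lambda)(m)=\#_1 m\,\#_2 m\,(\rho+\cos\lambda)\,\phi_\lambda(m)$ does hold, and it identifies the same two kernel vectors the paper finds by solving the recurrence), and your treatment of $\rho=-1$ via the absorbed Markov chain and the bounded martingale $\#_1 m_t$ is a valid, genuinely more probabilistic route to \eqref{eq:K-infty-explicit_minus1}. The problem is the decay step for $\rho\in(-1,0)$, which is the heart of the proposition. Writing $L_t=e^{tA_\rho^\sse{n}}L_0$ with $L_0\in V_0$, you need the entire spectrum of $A_\rho^\sse{n}$ restricted to $V_0$ (equivalently, of the matrix $A'$ obtained by deleting the rows and columns of the two monochromatic colorings) to lie in the open left half-plane. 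You establish this only on the $S_n$-symmetric part, i.e.\ for vectors constant on the level sets $\{\#_1 m=k\}$, where the tridiagonal reduction plus Sylvester/similarity gives negativity. For the remaining isotypic components you only assert that "stronger diagonal killing and weaker off-diagonal coupling" produce "even more strongly negative" eigenvalues; for non-symmetric matrices this is not a comparison theorem, no interlacing or symmetrization is supplied, and you yourself flag it as the main obstacle. As written, the convergence claim for $\rho\in(-1,0)$ is therefore not proved.

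The paper closes exactly this gap without any representation theory, and you could adopt the same device to repair your outline. After the same level-set reduction, let $\la_*<0$ be the top eigenvalue of the reduced $(n-1)\times(n-1)$ matrix; by Perron--Frobenius (applied after adding $cI$ to make the matrix nonnegative) its eigenvector is strictly positive, and lifting it to a vector constant on level sets gives a \emph{strictly positive} eigenvector of $A'$ with eigenvalue $\la_*$. A nonnegative matrix ($A'+cI_{2^n-2}$) possessing a strictly positive eigenvector has that eigenvalue as its spectral radius, so every eigenvalue $\lambda$ of $A'$ satisfies $\mathrm{Re}(\lambda)\le|c+\lambda|-c\le\la_*<0$, which is precisely the domination over all the components you were trying to analyze one by one. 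In other words, the symmetric block you did handle already controls the full spectrum, provided you use the positivity of its top eigenvector rather than attempting an explicit isotypic diagonalization.
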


In order to prove Proposition~\ref{prop:simple_partition}, we start with a lemma
that analyzes the eigenvalues of the matrix $A^\sse{n}_\rho$ defined in~\eqref{eq:defnAell}.

\begin{lemma}\label{le:simple_case} The matrix $A_\rho^\sse{n}$ has  eigenvalue $0$ of multiplicity (at least) $2$, and
the remaining $2^n-2$ eigenvalues have negative (resp.\ non-positive) real part iff  $\rho + \cos (\pi/n) < 0$ (resp.\ $\leq 0$).
\end{lemma}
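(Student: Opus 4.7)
The plan has three parts: exhibiting a two-dimensional kernel of $A^\sse{n}_\rho$, reducing the remaining spectrum to a $(2^n-2) \times (2^n-2)$ invariant submatrix $A^\sse{n}_\rho|_{V''}$, and analysing that submatrix via $S_n$-symmetry together with Perron--Frobenius.

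For the kernel, set $\lambda := \arccos|\rho|$ and define $f_i(m) := \sin(\lambda\, \#_i m)/\sin(\lambda n)$ for $i = 1,2$, with the obvious linear limit $\#_i m/n$ when $\rho = -1$. Each $f_i$ depends only on $k := \#_1 m$, and a short computation from the formula for $A^\sse{n}_\rho$ reduces the eigenvalue equation $A^\sse{n}_\rho f_i = 0$ to the three-term recursion $\rho f(k) + \tfrac12(f(k-1) + f(k+1)) = 0$, which $f(k) = \sin(\lambda k)$ satisfies thanks to the sum-to-product identity and the relation $\cos\lambda = -\rho$. Since $f_1((\underline 1)_n) = 1 = f_2((\underline 2)_n)$ and $f_1((\underline 2)_n) = 0 = f_2((\underline 1)_n)$, these null vectors are linearly independent, so $\dim \ker A^\sse{n}_\rho \geq 2$.

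Next define $V'' := \{M \in \C^{\{1,2\}^n} : M((\underline 1)_n) = M((\underline 2)_n) = 0\}$. The rows of $A^\sse{n}_\rho$ indexed by the two all-same-colour states are identically zero (each entry is $\tfrac12$ times a sum of indicators $\1_{a \neq a}$), so $V''$ is $A^\sse{n}_\rho$-invariant and $A^\sse{n}_\rho$ acts as zero on the two-dimensional quotient $V/V''$. Consequently the characteristic polynomial factors as $\lambda^2 \det(A^\sse{n}_\rho|_{V''} - \lambda I)$, identifying the remaining $2^n - 2$ eigenvalues of the lemma with those of $A^\sse{n}_\rho|_{V''}$. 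Conjugation by the diagonal similarity $\mu(m) := \sqrt{(k-1)!(n-k-1)!}$ (positive on $V''$) satisfies the detailed-balance identity $\mu(m)^2 A^\sse{n}_\rho(m, \widehat{m}^i) = \mu(\widehat{m}^i)^2 A^\sse{n}_\rho(\widehat{m}^i, m)$ for all pairs $m, \widehat{m}^i \in V''$, so $A^\sse{n}_\rho|_{V''}$ is similar to a real symmetric matrix and in particular has only real eigenvalues.

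For $n = 2$ the claim follows directly: $V'' = \mathrm{span}(\delta_{12}, \delta_{21})$ and $A^\sse{2}_\rho|_{V''} = \rho\, I$, matching the bound $\rho < 0 \Leftrightarrow \rho + \cos(\pi/2) < 0$. For $n \geq 3$, $A^\sse{n}_\rho|_{V''} + cI$ is non-negative and irreducible (any two states in $V''$ can be joined by a path of single flips that avoids the absorbing states), so Perron--Frobenius provides a largest real eigenvalue that is simple with a strictly positive eigenvector. By $S_n$-equivariance and simplicity, this eigenvector must be $S_n$-invariant; hence $\lambda_{\max}(A^\sse{n}_\rho|_{V''})$ equals the top eigenvalue of $A^\sse{n}_\rho$ restricted to the symmetric subspace $V''_{\mathrm{sym}} \cong \C^{n-1}$ of functions of $k \in \{1, \ldots, n-1\}$, on which a direct calculation gives $A^\sse{n}_\rho|_{V''_{\mathrm{sym}}} = KT$, where $K = \mathrm{diag}(k(n-k))$ and $T$ is the $(n-1) \times (n-1)$ symmetric tridiagonal matrix with constant diagonal $\rho$ and off-diagonals $\tfrac12$ (Dirichlet boundary). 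The eigenvalues of $T$ are the classical $\rho + \cos(\pi j/n)$, $j = 1, \ldots, n-1$, attained on $v_j(k) := \sin(\pi j k/n)$; since $K^{1/2} T K^{1/2}$ is both similar to $KT$ and, by Sylvester's law of inertia, shares its signature with $T$, the eigenvalues of $KT$ are all $< 0$ (resp.\ $\leq 0$) iff $T$ is negative (semi)definite iff $\rho + \cos(\pi/n) < 0$ (resp.\ $\leq 0$). The main subtlety to handle is the lack of global self-adjointness of $A^\sse{n}_\rho$: one cannot symmetrise on the whole space, and the argument goes through precisely because the obstruction is confined to the two-dimensional $V/V''$-direction that already carries the two zero eigenvalues.
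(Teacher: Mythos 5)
Your proposal is correct, and its key step is genuinely different from the paper's. The paper performs the same reduction to the $(n-1)\times(n-1)$ tridiagonal matrix $\tilde A=DB$ acting on vectors constant on the level sets $\{\#_1m=k\}$, but it symmetrizes \emph{only} that small matrix; since the full restricted matrix $A'$ (your $A_\rho^\sse{n}|_{V''}$) is left non-symmetric, the paper must separately control possibly complex eigenvalues: it shows via Perron--Frobenius applied to $\tilde A+cI$ that the top symmetric eigenvector is strictly positive, lifts it to a positive eigenvector of $A'$, and then uses that a nonnegative matrix ($A'+cI$) possessing a strictly positive eigenvector has that eigenvalue as its spectral radius, whence all eigenvalues of $A'$ have real part at most $\la_*$. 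You instead symmetrize $A'$ itself through the explicit reversible measure $\pi_k=(k-1)!\,(n-k-1)!$ (your detailed-balance identity is correct: $\pi_k(n-k)=\pi_{k-1}(k-1)$ and $\pi_k\,k=\pi_{k+1}(n-k-1)$), which makes the whole spectrum of $A'$ real and diagonalizable and removes the ``real part'' caveat altogether; Perron--Frobenius plus $S_n$-equivariance then places the top eigenvalue in the symmetric subspace, and the finish (the $KT$ computation, the eigenvalues $\rho+\cos(j\pi/n)$ of $T$, similarity of $KT$ to $K^{1/2}TK^{1/2}$ and Sylvester's inertia) coincides with the paper's. Your route buys a structurally stronger fact (reversibility of the colour-flip dynamics on non-constant colourings, hence a real spectrum for $A'$) at the cost of having to check irreducibility of $A'+cI$ on $V''$, which is indeed true for $n\geq 3$ by the flip-path argument you sketch, and you correctly treat $n=2$ separately, where $A'=\rho I$. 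Two harmless remarks: your trigonometric null vectors require $\sin(\la_\rho n)\neq 0$ and $\rho\le 0$, so they do not by themselves give the unconditional multiplicity-two statement, but your zero-row/quotient factorization of the characteristic polynomial already does; and Perron--Frobenius for irreducible nonnegative matrices needs no aperiodicity, so that appeal is fine.
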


\begin{proof}
For ease of notation we write $A := A_\rho^\sse{n}$, also since $n \geq2$ we note that $\rho \leq 0$.
Note that the first and last rows in $A$ are zero rows, in particular $A$ has eigenvalue $0$ of multiplicity at least two. 
Moreover, if we define $A'$ by removing
the first row and column and the last row and column, then the eigenvalues of $A$ are $0$ (twice) and 
those of $A'$.

For $k \in \{1, \ldots, n-1\}$, define $\cI^k := \{ m \in \{1,2\}^n : \, \#_1m = k\}$. 
We start by looking for eigenvectors of $A$ that are constant  for coordinates $m \in \cI^k$,
in which case will write $\tilde v_k = v(m)$ for $m \in \cI^k$.

Note that for $m \in \cI^k$
\[ A(m,m') = \left\{ \begin{array}{ll} \rho k(n-k) & \mbox{if } m = m',  \\
                      \frac 12 (n-k) & \mbox{if } m' = \widehat{m}^i \mbox{ and } m_i = 1,  \\
                      \frac 12 k & \mbox{if } m' = \widehat{m}^i \mbox{ and } m_i = 2 .
                     \end{array} \right. 
                    \]
Hence, by setting $\tilde v_0 := \tilde v_n := 0$ we get for $m \in \cI^k$,  $k \in \{1, \ldots, n-1\}$ that
\begin{equation}\label{eq:tildeA}
 \begin{aligned}
A v(m)  & =   \rho k(n-k) v(m) + \sum_{i\in [n]} A(m,\widehat{m}^i) v(\widehat{m}^i) \\
   & = \rho k(n-k) \tilde v_k + \frac 12 \sum_{i \in [n]} (n-k) \1_{m_i = 1} v(\widehat{m}^i) +\frac 12\sum_{i \in [n]} k \1_{m_i =2}  v(\widehat{m}^i) \\
   & = \rho k(n-k) \tilde v_k + \frac 12 \sum_{i \in [n]} (n-k) \tilde v_{k-1} \1_{m_i = 1}  + \frac 12 \sum_{i \in [n]} k \1_{m_i =2} \tilde v_{k+1 } \\
   & = \rho k(n-k) \tilde v_k + \frac 12 k (n-k) \tilde v_{k-1}   + \frac 12(n-k) k \tilde v_{k +1 } \\
   & = \tilde A \tilde v, 
\end{aligned} 
\end{equation}
where we define $\tilde A$ as the $(n-1)\times(n-1)$ matrix with non-zero entries
\[ \tilde A(k,\ell) := \left\{ \begin{array}{ll} \frac 12 k(n-k)  & \mbox{if } \ell = k-1, \\ \rho k(n-k) & \mbox{if } \ell = k ,\\
                               \frac 12 k(n-k) & \mbox{if } \ell = k+1 ,
                              \end{array} \right. \quad k, \ell\in [n-1].
\]
Clearly, $\tilde A$ can be written as $\tilde A = D B$, where $D$ is the diagonal matrix with entries $k(n-k)$ for $k \in [n-1]$, 
and $B$ is a tridiagonal Toeplitz matrix with diagonal entries $\rho$ and off-diagonal entries $1/2$.
It is well-known (and can be checked easily) that $B$ has eigenvalues $\rho + \cos (j \pi/n)$, $j \in [n-1]$. 

It is clear that $\tilde A = DB$ has the same eigenvalues as $D^{1/2} B D^{1/2}$ (in fact these two matrices are similar 
by a diagonal change of basis), and the latter is a symmetric matrix. 
Moreover, if $\rho + \cos( \pi/n) < 0$ (resp.\ $\leq 0$), then 
$B$ is negative (semi-)definite, therefore
for any vector $w\ne0$
\[ w^T D^{1/2} B D^{1/2} w = (D^{1/2} w)^T B (D^{1/2}w) < 0 \quad (\mbox{resp.} \leq 0) . \]
In this case, since $D^{1/2}B D^{1/2}$ is symmetric, all its eigenvalues are real and negative (resp.\ non-positive).
Hence, under this condition the same is true for $\tilde A$. 
In particular, its largest eigenvalue $\la_*$ is negative (non-positive). 

Let $\tilde v^*$ be an eigenvector corresponding to the eigenvalue $\la_*$ of $\tilde A$. We will use the Perron-Frobenius theorem to argue that $\tilde v^*$ has positive coordinates. 
Observing that only the  diagonal entries of $\tilde A$ are non-positive, we can choose some suitable constant $c>0$ such that $\tilde A^* := \tilde A + cI_{n-1}$ is a non-negative matrix that is irreducible and aperiodic. In particular, the Perron-Frobenius theorem applies to $\tilde A^*$. 
Moreover, w.l.o.g. the constant $c>0$ can be taken so large that the spectrum of $\tilde A^*$ is contained in $[0,\infty)$.
Then since $\lambda_*$ is the largest eigenvalue of $\tilde A$, we have that $c+\lambda_*$ is the largest eigenvalue and spectral radius of $\tilde A^*$.
Since $\tilde v^*$ is a corresponding eigenvector,
by Perron-Frobenius it must have positive coordinates. 

Coming back to the matrix $A'$ obtained from $A$ by removing the first row and column and the last row and column, we can define a $(2^n-2)$-dimensional vector $v'$ by setting $v'(m) := \tilde v^*_k$ if $m \in \cI^k$, $k \in [n-1]$.
Then by definition of $A'$ and by~\eqref{eq:tildeA}, we see that $v'$ is an eigenvector for $A'$ with 
eigenvalue $\la_*$.  
Thus it is also an eigenvector for $A^*:= A' + cI_{2^n-2}$ with eigenvalue $c+\la_*$, where $c>0$ is chosen as above. Since Perron-Frobenius applies to $A^*$ and the vector $v'$ is strictly positive, $c+\la_*$ must also be the spectral radius of $A^*$. Now let $\lambda\in\C$ be any eigenvalue of $A'$. Then $c+\lambda$ is an eigenvalue of $A^*$, and we get
\[\mathrm{Re}(\lambda)=\mathrm{Re}(c+\lambda)-c\le|c+\lambda|-c\le (c+\la_*) - c=\la_*.\]
Finally, since the eigenvalues of $A$ are $0$ (with multiplicity $2$) together with those of $A'$, we have
proved the statement of the lemma.
\end{proof}

We will now prove Proposition~\ref{prop:simple_partition} by identifying the eigenvectors 
of $A_\rho^\sse{n}$ corresponding to eigenvalue $0$.
For $k \in \{0,\ldots,n\}$, we consider again $\cI^k := \{ m \in \{1,2\}^n : \, \#_1m = k\}$. 
Then we define
two vectors $v^\ssup{n}_j = (v_j^\ssup{n}(m))_{m \in \{1,2\}^n}\in\bR^{2^n}$ $(j=1,2)$ as follows:
If $\rho = -1$, we set
\begin{equation}\label{eq:eigvec1} v_1^\ssup{n} (m) := \frac{k}{n} 
\quad \mbox{and}\quad v_2^\ssup{n} (m) := \frac{n-k}{n} \quad \mbox{ if } m \in \cI^k, \end{equation}
while for $\rho>-1$,  $\rho + \cos (\pi/n) < 0$ we set
\begin{equation}\label{eq:eigvec2} v_1^\ssup{n} (m) := \frac{\sin ( \la_\rho k )}{\sin (\la_\rho n)} \quad
\mbox{and}\quad v_2^\ssup{n} (m) := \frac{\sin ( \la_\rho (n-k) )}{\sin (\la_\rho n)}\quad  \mbox{ if } m \in \cI^k,  \end{equation}
where $\la_\rho := \arccos(|\rho|)$. Observe that by construction we have $v_1^\ssup{n}((\underline 1)_n) = v_2^\ssup{n}((\underline 2)_n) = 1$ and $v_2^\ssup{n}((\underline 1)_n) = v_1^\ssup{n}((\underline 2)_n) = 0$. Also note that the explicit form of the limit claimed in Proposition \ref{prop:simple_partition} can be restated as 
\begin{equation}\label{eq:limiting_matrix}
\lim_{t\to\infty } e^{tA^\sse{n}_\rho}=\left(v_1^\ssup{n},0,\ldots,0,v_2^\ssup{n}\right),
\end{equation}
where the RHS is a matrix consisting of the vectors $v_1^\ssup{n}$ resp.\ $v_2^\ssup{n}$ in the first resp.\ last column and zeros otherwise.

\begin{proof}[Proof of Proposition~\ref{prop:simple_partition}]
We will prove the proposition by showing that 
$v_1^\ssup{n}$ and $v_2^\ssup{n}$ are eigenvectors of $A_\rho^\sse{n}$ corresponding to eigenvalue $0$,
and also that $K_t := K_t(K_0, \{ [n]\})$ satisfies
\begin{equation}\label{eq:limit_simple} \lim_{t \ra \infty} K_t = K_0((\underline 1)_n) v_1^\ssup{n} + K_0((\underline 2)_n) v_2^\ssup{n} . \end{equation}

Let us assume that we have already shown that $ v_1^\ssup{n}$ and $ v_2^\ssup{n}$ are eigenvectors 
of $A := A^\sse{n}_\rho$ corresponding to eigenvalue $0$, so that in particular $e^{tA}v^\ssup{n}_{j}=v^\ssup{n}_{j}$, $j=1,2$.
Then, we first show that in this case~\eqref{eq:limit_simple} holds.

We let $A'$ be defined as in the proof of Lemma~\ref{le:simple_case}. Then, since by Lemma \ref{le:simple_case} all its eigenvalues have negative real part, $e^{tA'}\to0$ as $t\to\infty$. 
Let $w_3, \ldots, w_{2^{n}}$ be an orthogonal basis for $\R^{2^n-2}$. 
We define $v_k^\ssup{n}$ for $k\geq 3$ by extending the vector $w_k$ at either side by zero entries to obtain a $2^n$-dimensional vector
(so in particular $v^\ssup{n}_k((\underline 1)_n) = v^\ssup{n}_k((\underline 2)_n) = 0$, $k\ge3$). 
Together with the vectors $v^\ssup{n}_1,v^\ssup{n}_2$ this forms a basis of $\bR^{2^n}$. 
Hence, we write $K_0 = \sum_{i} \mu_i v_i^\ssup{n}$ for some coefficients $\mu_i$.
Furthermore, $A$ acts on $v^\ssup{n}_k$ as $A'$ acts on $w_k$, $k=3,...,2^n$, in the corresponding bases:
\[ \langle Av^\ssup{n}_k,v^\ssup{n}_j \rangle = \langle A'w_k,w_j\rangle,\quad j,k=3,...,2^n.\]
Therefore $\lim_{t\to\infty}e^{tA}v^\ssup{n}_k=0$ for $k\geq 3$ and 
\[ e^{tA } K_0 = \sum_i \mu_i e^{tA} v_i^\ssup{n}  \ra \mu_1 v_1^\ssup{n} + \mu_2 v_2^\ssup{n} \]
as $t\to\infty$. Moreover, notice that
since $v_1^\ssup{n}$ resp.\ $v_2^\ssup{n}$ is the only basis element with a non-zero entry in the first resp.\ last coordinate, we must have $\mu_1 = K_0((\underline 1)_n )$
and $\mu_2 = K_0((\underline 2)_n)$.

Finally, it remains to show that $v_1^\ssup{n}$ and $v_2^\ssup{n}$ are eigenvectors with eigenvalue $0$. 
This can either be verified by direct calculation or alternatively it can be derived as follows: 
In order to obtain $v_1^\ssup{n}$,
we look for an eigenvector $v$ that is constant for coordinates $m \in \cI^k$ 
and such that $v((\underline 1)_n) = 1$ and $v((\underline 2)_n) =0$. 
As in the proof of Lemma~\ref{le:simple_case}, we will write $\tilde v_k = v(m)$ for $m \in \cI^k$, and
$Av = 0$ reduces to 
the recurrence equation 
\[ 0  = \rho k(n-k) \tilde v_k +  \frac 12 k (n-k) \tilde v_{k-1}   + \frac 12 (n-k) k \tilde v_{k +1} , \]
see \eqref{eq:tildeA}.
Assuming that $\rho \in (-1,0)$, this equation has solution $\tilde v_k = c_1 t_1^k + c_2 t_2^k$, where $t_1$ and $t_2$ are the distinct roots of the equation
\begin{equation}\label{eq:char_eqn} t^2 + 2 \rho t + 1= 0 . \end{equation}
Thus since $\rho < 0$, $t_{1,2} = - \rho \pm i \sqrt{ 1 - \rho^2} = e^{\pm \la_\rho i}$, where $\la_\rho := \arctan ( \frac{\sqrt{1-\rho^2}}{|\rho|} ) = 
\arccos ( |\rho|)$. Simplifying further, we note that since $\tilde v_0 = 0$, we have $c_1 = - c_2$ and thus
\[ \tilde v_k = c_1 \, 2i \sin( \la_\rho k ). \]
Therefore, since $\tilde v_n = 1$, we get $c_1 = \frac{1}{2 i \sin (\la_\rho n)}$, so that
\[ \tilde v_k = \frac{\sin ( \la_\rho k )}{\sin (\la_\rho n)}. \]
Moreover, we note that this expression is positive for $k \in \{1, \ldots, n-1\}$, since
$\la_\rho n < \pi$ iff $ \rho +  \cos \pi/n < 0$.
 
In the case that $\rho = -1$, the characteristic equation~\eqref{eq:char_eqn} has the unique solution $t = 1$ and 
so the solution to the recurrence equation is given by $\tilde v_k = c_1  + c_2 k$ for suitable constants $c_1,c_2$, which by using the 
boundary conditions reduces to $\tilde v_k = k/n$. 

In either case, defining $v(m) := \tilde v_k$ for $m \in \cI^k$ gives the first eigenvector $v^\ssup{n}_1$ with eigenvalue $0$. 
The derivation of the second eigenvector $v^\ssup{n}_2$ is analogous.
\end{proof}

Using the decomposition~\eqref{eq:decomposition}, we can now use
Proposition~\ref{prop:simple_partition} to analyze the dynamics of $K_t$
in the case of a general partition.
                    
\begin{proof}[Proof of Proposition~\ref{prop:general_partition}]
Throughout we write $K_t = K_t(K_0, \pi)$. We first assume that $\pi = \left\{\{1, \ldots, n_1\},\{n_1 + 1,\ldots, n_2\}, \ldots, \{n_1+\ldots+ n_{k-1}+1, \ldots, n_k \}\right\}$
with $\sum_{i=1}^k n_i = n$.
Then, by the decomposition~\eqref{eq:decomposition} and the fact that $e^{t(A\oplus B)}=e^{tA}\otimes e^{tB}$, 
we have that 
\[ K_t = e^{t A_\rho^\pi} K_0 = \bigotimes_{i=1}^k e^{ tA_\rho^\sse{n_i} } K_0 . \]
Therefore, since $\rho + \cos (\pi/n) < 0$ implies $\rho + \cos( \pi/n_i)<0$ for all $i \in [k]$, 
we obtain from Proposition~\ref{prop:simple_partition} (see also \eqref{eq:limiting_matrix}) that
\begin{equation}\label{eq:limit} \begin{aligned} \lim_{t \ra \infty} K_t & = \lim_{t \ra \infty} e^{tA_\rho^\pi }K_0 = 
 \bigotimes_{i=1}^k \Big( v_1^\ssup{n_{i}}, 0, \ldots, 0, v_2^\ssup{n_{i}} \Big) K_0 \\
 & = 
 \sum_{c_1, \ldots , c_k \in \{1,2\}} K_0 \left( (\underline c_1)_{n_1},\ldots,  (\underline c_k)_{n_k}\right)\, v_{c_1}^\ssup{n_1} \otimes \cdots \otimes v_{c_k}^\ssup{n_k} , 
\end{aligned} \end{equation}
where we write
\[ (\underline c_1)_{n_1}\ldots  (\underline c_k)_{n_k} := ( \underbrace{c_1, \ldots, c_1}_{n_1 \text{ times}}, \ldots, \underbrace{c_k, \ldots, c_k}_{n_k \text{ times}} )\in\{1,2\}^n. \]
In view of the definition of the eigenvectors $v_1^\ssup{n_i}$ and $v_2^\ssup{n_i}$ (see~\eqref{eq:eigvec1} and~\eqref{eq:eigvec2}), 
this concludes the proof of Proposition~\ref{prop:general_partition} for the case of a partition consisting of consecutive intervals.

The case of general $\pi = \{\pi_1, \ldots, \pi_k\}$ (written in increasing order of smallest element) follows by permuting the entries in $m \in \{1,2\}^n$. 
\end{proof}

\subsection{Proof of Theorem~\ref{thm:M_t-infty} in the discrete case}\label{ssec:M_infinty_discrete}

In discrete space, Theorem~\ref{thm:M_t-infty} is a direct consequence 
of the following proposition.

\begin{prop}\label{prop:thm_M_infty_discrete}
Suppose $\rho + \cos(\pi/n) < 0$. 
Then, a.s.\ with respect to the random walk $\bfX=(X^\ssup{1},\ldots,X^\ssup{n})$, for any 
$t \geq 0$, 
$M_t^\sse\gamma$ converges to a limit $M_t^\sse\infty$ as $\gamma\to\infty$.
The limit $M^\sse{\infty}_t$ is given recursively as follows: 
Let $\tau_k$ be defined as in Lemma~\ref{prop:M-gamma},
i.e.\ $\tau_0 := 0$ and $\tau_k$, $k \geq 1$
are the times when the partion $\pi(\bfX_t)$ induced by the random walk changes.
Then $M^\sse{\infty}_t=K_\infty(M_0,\pi(\bfX_0))$ for $t\in[0,\tau_1]$ and
\[ M_t^\sse{\infty} = K_\infty\left(M_{\tau_k}^\sse{\infty}, \pi(\bfX_{\tau_k})\right)  \quad \mbox{for all } t\in (\tau_k, \tau_{k+1}],  \]
where $K_\infty(K_0, \pi)$ is given in Proposition~\ref{prop:general_partition}.
\end{prop}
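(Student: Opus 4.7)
The plan is to induct on the partition-change times $0=\tau_0<\tau_1<\tau_2<\cdots$, exploiting the piecewise description of $M^\sse\gamma$ from Lemma~\ref{prop:M-gamma} together with the large-time convergence $K_s(\cdot,\pi)\to K_\infty(\cdot,\pi)$ supplied by Proposition~\ref{prop:general_partition}. First I fix a realization of $\bfX$ drawn from the full-probability event on which $(\tau_k)_{k\ge0}$ has no finite accumulation point — this holds a.s.\ since consecutive $\tau_k$ are separated by at least the exponential waiting time for the next jump of one of the $n$ random walks, and a.s.\ no two walks jump simultaneously. On this event only finitely many partition changes occur on any bounded interval, so it suffices to propagate convergence inductively from $\tau_k$ to $\tau_{k+1}$.

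For the inductive step, suppose $M_{\tau_k}^\sse\gamma\to M_{\tau_k}^\sse\infty$ as $\gamma\to\infty$. For $t\in(\tau_k,\tau_{k+1}]$, Lemma~\ref{prop:M-gamma} reads
\[ M_t^\sse\gamma \;=\; e^{\gamma(t-\tau_k)A_\rho^{\pi(\bfX_{\tau_k})}}\,M_{\tau_k}^\sse\gamma, \]
and since $t>\tau_k$, we have $\gamma(t-\tau_k)\to\infty$. As $A_\rho^{\pi(\bfX_{\tau_k})}$ acts on a finite-dimensional space and $e^{sA_\rho^\pi}$ has a pointwise limit as $s\to\infty$ by Proposition~\ref{prop:general_partition}, the convergence automatically holds in operator norm, hence uniformly in the initial argument on bounded sets. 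Splitting
\[ M_t^\sse\gamma - K_\infty(M_{\tau_k}^\sse\infty,\pi(\bfX_{\tau_k})) \;=\; \bigl(e^{\gamma(t-\tau_k)A_\rho^{\pi(\bfX_{\tau_k})}} - K_\infty(\cdot,\pi(\bfX_{\tau_k}))\bigr)M_{\tau_k}^\sse\gamma \;+\; K_\infty\bigl(M_{\tau_k}^\sse\gamma - M_{\tau_k}^\sse\infty,\,\pi(\bfX_{\tau_k})\bigr), \]
the first term vanishes by the uniform convergence just noted (together with boundedness of the convergent sequence $(M_{\tau_k}^\sse\gamma)_\gamma$), while the second vanishes by continuity of the linear map $K_\infty(\cdot,\pi(\bfX_{\tau_k}))$. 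Evaluating at $t=\tau_{k+1}$ closes the induction and produces the recursion $M_{\tau_{k+1}}^\sse\infty = K_\infty(M_{\tau_k}^\sse\infty,\pi(\bfX_{\tau_k}))$.

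The base case $k=0$ is identical: $M_0^\sse\gamma\equiv M_0$ is $\gamma$-independent, so for every $t\in(0,\tau_1]$ one gets $M_t^\sse\gamma\to K_\infty(M_0,\pi(\bfX_0))$, matching the stated formula provided one \emph{defines} $M_0^\sse\infty := K_\infty(M_0,\pi(\bfX_0))$. Note that this differs from $M_0=\lim_\gamma M_0^\sse\gamma$ precisely because the dynamics equilibrates instantaneously on the fast time scale $1/\gamma$; in the $\gamma=\infty$ limit the initial value is the right-limit rather than the starting value, and an analogous instantaneous relaxation occurs at every subsequent $\tau_k$. The resulting process is constant on each half-open interval $(\tau_k,\tau_{k+1}]$ and has well-defined right-limits at each $\tau_k$, hence is l\`adc\`ag.

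I do not expect a serious obstacle here: once the spectral analysis behind Proposition~\ref{prop:general_partition} is in hand, the argument is essentially bookkeeping, reducing the time-inhomogeneous evolution to a countable composition of homogeneous ones. The only minor subtlety is the convention at $t=0$ (and by extension at every jump time of the partition process), which reflects the fact that as $\gamma\uparrow\infty$ the evolution on every interval of positive length contracts to the idempotent $K_\infty(\cdot,\pi)$.
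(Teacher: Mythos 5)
Your argument is correct and follows exactly the route the paper takes: use Lemma~\ref{prop:M-gamma} to write $M_t^\sse\gamma$ piecewise as $K_{\gamma(t-\tau_k)}(M_{\tau_k}^\sse\gamma,\pi(\bfX_{\tau_k}))$ on $[\tau_k,\tau_{k+1}]$, then propagate the $\gamma\to\infty$ limit inductively through the partition-change times via Proposition~\ref{prop:general_partition}. The paper's own proof is a single paragraph that says ``argue inductively''; you have filled in the routine details (the triangle-inequality splitting, the observation that pointwise convergence of the finite-dimensional matrix exponentials upgrades to operator-norm convergence, and the verification that $(\tau_k)$ has no finite accumulation point). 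You have also explicitly flagged the convention at $t=0$ (and more generally at each $\tau_k$), namely that $M_0^\sse\infty := K_\infty(M_0,\pi(\bfX_0))$ is a right-limit rather than $\lim_\gamma M_0^\sse\gamma = M_0$ when $\pi(\bfX_0)$ is nontrivial; this is a genuine subtlety the paper's statement glosses over, and your remark that the induction only needs the interior convergence on $(\tau_k,\tau_{k+1}]$ resolves it cleanly. No gaps.
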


\begin{proof}[Proof of Proposition~\ref{prop:thm_M_infty_discrete}]
We know that almost surely, the sequence $(\tau_k)_{k \geq 0}$ is strictly increasing. 
Also, by Lemma~\ref{prop:M-gamma} we know that the process $M_t^\sse\gamma$ is given by
\[ M_t^\sse{\gamma} = K_{\gamma(t-\tau_k)}\left(M^\sse{\gamma}_{\tau_k},\pi(\bfX_{\tau_k})\right) , \quad t \in [\tau_k, \tau_{k+1}] .\]
Then by Proposition~\ref{prop:general_partition}, we can argue inductively to see that for each $k \in \N_0$, as $\gamma \ra \infty$
\[ M_t^\sse{\gamma} \ra K_\infty(M_{\tau_k}^\sse\infty, \pi(\bfX_{\tau_k})) , \quad t\in (\tau_k, \tau_{k+1}]  . \qedhere\]
\end{proof}

\subsection{Proof of Theorem~\ref{thm:M_t-infty} in the continuous case}\label{ssec:continuous}

The continuous space case is slightly different from the discrete space case. However,
the assumption that the $n$-dimensional Brownian motion $\bfX = (X_t^\ssup{1}, \ldots, X_t^\ssup{n})_{t \geq 0}$
starts in $\mathbf{x}	= (x_1, \ldots, x_n)$ with $x_i \neq x_j$ for $i \neq j$
simplifies the situation somewhat, since we only have intersections of at most two Brownian motions
simultaneously.  
In order to formalize this, we need to consider the times of \emph{new} pair collisions
between the Brownian motions.
More precisely, as in Section \ref{ssec:discret_M_gamma} we write 
$\pi(\bfX_t)$ for the partition of $[n]$ induced by the equivalence relation $i \sim j$ iff $X_t^\ssup{i} = X_t^\ssup{j}$.
Then we define $\tau_0 := 0$ and $\tau_{k+1}$ as the first time $t\geq\tau_k$ so that the partition $\pi(\bfX_t)$ is not a refinement of the partition $\pi(\bfX_{\tau_k})$. The partition $\pi(\bfX_{\tau_k})$ contains for $k\geq 1$ a single pair, and otherwise singletons. Denote the pair by $\kappa_{k}=\{\kappa_{k,1},\kappa_{k,2}\}$. The only refinements of $\pi(X_{\tau_k})$ are the partition itself and the partition consisting only of singletons. So $\tau_{k+1}$ is the first time when there is a new collision with a collision pair $\kappa_{k+1}$ not identical to $\kappa_{k}$.
By the path properties of Brownian motion, almost surely we have $\tau_0 < \tau_1 < \cdots$.

With this notation, we can directly deduce Theorem~\ref{thm:M_t-infty} in the continuous space setting
from the following proposition.

\begin{prop}\label{prop:thm_M_infty_continuous}
Let $X = (X^\ssup{1}, \ldots, X^\ssup{n})$ be a Brownian motion started in $\mathbf{x}\in\R^n$
such that $x_i \neq x_j$ for $i \neq j$,
and consider $M^\sse\gamma := M^\sse\gamma(\bfX,M_0)$ for $M_0 \in\calM( \{1,2\}^n)$. Suppose $\rho + \cos(\pi/n) < 0$. 
Then, a.s.\ with respect to $X$, for any 
$t \geq 0$, 
$M_t^\sse\gamma$ converges to a limit $M_t^\sse\infty$.
The limit $M^\sse{\infty}_t$ is given recursively as 
$M^\sse{\infty}_t=M_0$ for $t\in[0,\tau_1]$ and
\[ M_t^\sse{\infty} = K_\infty\left(M_{\tau_k}^\sse{\infty}, \pi(\bfX_{\tau_k})\right) , \quad \mbox{for all } t\in (\tau_k, \tau_{k+1}],  \]
where $K_\infty(K_0, \pi)$ is given in Proposition~\ref{prop:general_partition}.
\end{prop}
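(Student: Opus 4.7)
The plan is to mirror the discrete-space argument of Proposition~\ref{prop:thm_M_infty_discrete} but to exploit the cleaner collision structure of $n$-dimensional Brownian motion started from distinct points. First I would verify that the sequence $0=\tau_0<\tau_1<\tau_2<\cdots$ is strictly increasing almost surely: since $x_i\ne x_j$ for $i\ne j$, standard potential-theoretic arguments (Brownian motion in $\R^d$ almost surely does not hit any affine subspace of codimension $\geq 2$ from a point outside it) rule out both triple collisions of the coordinates of $\bfX$ and simultaneous collisions of two disjoint pairs. Consequently, for each $k\geq 1$ the partition $\pi_k:=\pi(\bfX_{\tau_k})$ consists of exactly one pair $\kappa_k=\{\kappa_{k,1},\kappa_{k,2}\}$ together with singletons, and by the very definition of $\tau_{k+1}$ no pair other than $\kappa_k$ collides on $(\tau_k,\tau_{k+1}]$.

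Hence, on each interval $[\tau_k,\tau_{k+1}]$ only the pair local time $L_t^{\kappa_k}$ is non-constant, and Lemma~\ref{lemma:M-ODE-collision-continuous} reduces to the linear equation $dM_t^\sse{\gamma} = \gamma A^{\pi_k}_\rho M_t^\sse{\gamma}\,dL_t^{\kappa_k}$ with constant matrix. Since $A^{\pi_k}_\rho$ is constant, this has the explicit solution
\[ M_t^\sse{\gamma} = \exp\bigl(\gamma A^{\pi_k}_\rho(L_t^{\kappa_k}-L_{\tau_k}^{\kappa_k})\bigr)M_{\tau_k}^\sse{\gamma}, \qquad t\in[\tau_k,\tau_{k+1}]. \]
For $k=0$, $\pi_0$ consists only of singletons, so $A^{\pi_0}_\rho=0$ and $M_t^\sse{\gamma}\equiv M_0$ on $[0,\tau_1]$, which provides the base case. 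For $k\geq 1$, the Kronecker-sum decomposition~\eqref{eq:decomposition} identifies $A^{\pi_k}_\rho$ with a copy of $A^\sse{2}_\rho$ acting on the pair coordinates together with zero blocks on the singletons. Because $\rho+\cos(\pi/2)=\rho<0$ holds by assumption (using $n\geq 2$ and $\rho+\cos(\pi/n)<0$), Lemma~\ref{le:simple_case} guarantees that $A^{\pi_k}_\rho$ has $0$ as a (double) eigenvalue and all remaining eigenvalues in the open left half-plane. Hence $\exp(cA^{\pi_k}_\rho)$ converges as $c\to\infty$ to the spectral projection onto $\ker A^{\pi_k}_\rho$, whose action on an input measure $M$ is, by Proposition~\ref{prop:general_partition}, precisely $K_\infty(M,\pi_k)$.

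The final ingredient is strict positivity of the local-time increment: $L_t^{\kappa_k}-L_{\tau_k}^{\kappa_k}>0$ almost surely for every $t>\tau_k$. This follows from the classical fact that the local time at $0$ of a Brownian motion starting from $0$ begins to increase immediately, applied to the (rescaled) difference process $X^{\kappa_{k,1}}-X^{\kappa_{k,2}}$ shifted in time by $\tau_k$. Combining this with the spectral convergence above and the inductive hypothesis $M_{\tau_k}^\sse{\gamma}\to M_{\tau_k}^\sse{\infty}$ yields, for every $t\in(\tau_k,\tau_{k+1}]$,
\[ M_t^\sse{\gamma} \longrightarrow K_\infty\bigl(M_{\tau_k}^\sse{\infty},\pi_k\bigr)\quad\text{as }\gamma\to\infty, \]
which is the claimed value of $M_t^\sse{\infty}$. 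Induction on $k$ then completes the argument. The main point requiring care is the first step---the measure-zero arguments ruling out simultaneous multiple collisions, together with the strict positivity of the local-time increment immediately after a collision---but both are standard facts about multidimensional Brownian motion, and once they are in hand the spectral machinery developed in Section~\ref{ssec:asymptotics_K} transfers essentially verbatim from the discrete-space proof.
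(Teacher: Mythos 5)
Your argument is correct and follows exactly the paper's route: the closed-form representation $M_t^{\sse\gamma}=K_{\gamma(L_t^{\ssup{\kappa_k}}-L_{\tau_k}^{\ssup{\kappa_k}})}\bigl(M_{\tau_k}^{\sse\gamma},\pi(\bfX_{\tau_k})\bigr)$ on each $[\tau_k,\tau_{k+1}]$, the strict positivity of the local-time increment for $t>\tau_k$, and the limit $K_\infty$ from Proposition~\ref{prop:general_partition}; you merely make explicit the potential-theoretic and strong-Markov facts that the paper compresses into ``path properties of Brownian motion.'' One small slip worth noting: $0$ is a double eigenvalue of the $4\times4$ factor $A^{\sse{2}}_\rho$, but a $2^{n-1}$-fold eigenvalue of the full $2^n\times 2^n$ matrix $A^{\pi_k}_\rho$ (via the Kronecker sum with the zero singleton blocks); since it remains semisimple and every nonzero eigenvalue still has negative real part, this does not affect the convergence of $e^{cA^{\pi_k}_\rho}$ and the argument is unharmed.
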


\begin{proof}
We recall the equation~\eqref{eq:ODE1} for $M^\sse\gamma$,
\begin{align}\label{eq:ODE1_repeated}
 dM^\sse\gamma_t(m) &= \frac{\gamma\rho}{2} \sum_{i,j=1}^n \ind_{m_i\neq m_j} M^\sse\gamma_t(m)\,dL_t^{i,j} + \frac{\gamma}{2}
 \sum_{i,j=1}^n \ind_{m_i\neq m_j} M^\sse\gamma_t(\widehat{m}^i)\,dL_t^{i,j}. 
\end{align}
We notice that $M_t^\sse{\gamma}$ is constant equal to $M_0$ on $[0,\tau_1]$.
For $k \geq 1$, if we write
\[ L_t^\ssup{\kappa_k} := L_t^{\kappa_{k,1},\, \kappa_{k,2}} , \]
then on the interval $[\tau_k, \tau_{k+1}]$
equation~\eqref{eq:ODE1_repeated} simplifies to
\begin{align*}  dM^\sse\gamma_t(m) &= {\gamma\rho} \ind_{m_{\kappa_{k,1}}\neq m_{\kappa_{k,2}}} 
 M^\sse\gamma_t(m)\,dL_t^\ssup{\kappa_k}\\
 & \qquad + \frac{\gamma}{2} 
 \ind_{m_{\kappa_{k,1}}\neq m_{\kappa_{k,2}}} \left( M^\sse\gamma_t(\widehat{m}^{\kappa_{k,1}} ) + M^\sse\gamma_t(\widehat{m}^{\kappa_{k,2}} ) \right) dL_t^\ssup{\kappa_k}. 
 \end{align*}
Hence, the analogon of Lemma~\ref{prop:M-gamma}
 in the continuous space setting
is that $M^\sse{\gamma}$ solves 
 $M_t^\sse{\gamma} = M_0$ for $t \in [0,\tau_1]$ and
for $k \in \N$,
\begin{equation}\label{eq:representation_cts} M_t^\sse{\gamma} = K_{\gamma ( L_t^\ssup{\kappa_k} - L_{\tau_k}^\ssup{\kappa_k})}\left(M^\sse{\gamma}_{\tau_k},\pi(\bfX_{\tau_k})\right) , \quad t \in [\tau_k, \tau_{k+1}] ,\end{equation}
where $K_t$ is defined by~\eqref{eq:ODE3} and $\pi(X_{\tau_k})$ is the partition of $[n]$
consisting only of singletons, except for the pair $\kappa_k=\{ \kappa_{k,1}, \kappa_{k,2}\}$. 

Finally, we argue as in the proof of Proposition~\ref{prop:thm_M_infty_discrete}, noting that by the path properties of Brownian 
motion, almost surely, for any $t > \tau_k$, we have $L_t^\ssup{\kappa_k} - L_{\tau_k}^\ssup{\kappa_k} > 0$ .
\end{proof}

\subsection{Proof of Theorem~\ref{thm:moment_duality_infinite}}\label{ssec:proof_duality}

In order to show Theorem~\ref{thm:moment_duality_infinite} we use a 
dominated convergence argument, so that the main 
technical point is to show that the processes $M_t^\sse\gamma$ are uniformly bounded in $\gamma$.

\begin{prop}\label{prop:M_bounded} Assume $\rho+\cos(\pi/n)<0$. Fix $M_0\in\cM(\{1,2\}^n)$. In both the discrete and the continuous space setting, the process
$M^\sse\gamma = ( M_t^\sse\gamma(\bfX, M_0))_{t \geq 0}$ remains bounded uniformly in $\gamma$, $t$ and $\om$.
\end{prop}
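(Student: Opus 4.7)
The plan is to exploit the piecewise representation of $M_t^\sse\gamma$ developed in Sections \ref{ssec:discret_M_gamma} and \ref{ssec:continuous}. In both the discrete and continuous setting, we have (writing $s_k$ for the relevant time or local time increment)
\[ M_t^\sse\gamma \;=\; e^{\gamma s_{N(t)} A^{\pi_{N(t)}}_\rho} \cdots e^{\gamma s_2 A^{\pi_2}_\rho}\, e^{\gamma s_1 A^{\pi_1}_\rho}\, M_0, \]
a time-ordered product of matrix exponentials through the successive partition configurations $\pi_1,\pi_2,\ldots$ visited by $\bfX$. The first observation is that each factor $e^{sA^\pi_\rho}$ is individually norm-bounded as $s\to\infty$: by Lemma \ref{le:simple_case} together with the decomposition \eqref{eq:decomposition}, the spectrum of $A^\pi_\rho$ lies in $(-\infty,0]$, and the explicit limit $K_\infty(\,\cdot\,,\pi)$ furnished by Proposition \ref{prop:general_partition} shows that the $0$-eigenspace is semisimple. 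Since there are only finitely many partitions of $[n]$, this yields a uniform constant $C_0=C_0(n,\rho)$ with $\sup_{s\geq 0}\sup_\pi\|e^{sA^\pi_\rho}\|_{\mathrm{op}}\leq C_0$.

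The core difficulty is that iterating this bound naively gives $C_0^{N(t,\omega)}$, which is not uniform in $\omega$. To get around this, the plan is to construct a positive, coordinate-wise super-invariant function $v^*\colon\{1,2\}^n\to(0,\infty)$ satisfying $A^\pi_\rho v^*\leq 0$ (componentwise) \emph{simultaneously} for every partition $\pi$ of $[n]$. Given such a $v^*$, linearity and positivity of the flow (starting from $M_0\geq 0$) propagate the coordinate-wise bound $M_t^\sse\gamma\leq \big(\max_b M_0(b)/v^*(b)\big)\, v^*$ through every piece of the iteration, yielding the desired uniform bound in $\gamma,t$ and $\omega$.

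The candidate for $v^*$ will be built out of the positive Perron--Frobenius right eigenvectors already identified: for each partition $\pi$ and each block-coloring $c^*\in\{1,2\}^{|\pi|}$, the vector
\[ v^{\pi,c^*}(m) \;=\; \prod_{i=1}^{|\pi|}\frac{\sin(\lambda_\rho\,\#_{c^*_i}\,m|_{\pi_i})}{\sin(\lambda_\rho|\pi_i|)} \]
is a strictly positive null vector of $A^\pi_\rho$ whenever $\rho+\cos(\pi/n)<0$ (cf.\ the proofs of Propositions \ref{prop:simple_partition} and \ref{prop:general_partition}), and for $\rho=-1$ there is the analogous positive limit from \eqref{eq:K-infty-explicit2_minus1}. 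I would take $v^*:=\sum_{\pi,c^*} v^{\pi,c^*}$, or more refined weighted combinations, and show that the positivity of all entries together with the null-vector property for one summand (and a suitable cross-partition comparison for the remaining summands) force $A^\pi_\rho v^*\leq 0$ for every $\pi$.

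The main obstacle is this last step: verifying that the action of $A^{\pi'}_\rho$ on the eigenvectors $v^{\pi,c^*}$ for a \emph{different} partition $\pi\neq\pi'$ is non-positive (or at least can be controlled in the sum). A fallback, should the direct Lyapunov construction prove too rigid, is a probabilistic route via the identity $M_t^\sse\gamma(b)=\sum_c M_0(c)\,\bE_c\big[e^{\gamma(L^=_t+\rho L^{\neq}_t)}\ind_{C_t=b}\,|\,\bfX_{[0,t]}\big]$ from Section \ref{se:reinterpretation}: writing this as a conditional Feynman--Kac expectation for the coloring chain and tilting by $v^*$ reduces the uniform bound to the fact that the principal eigenvalue of $A^\pi_\rho$ is exactly $0$ under our assumption $\rho+\cos(\pi/n)<0$, which ensures that the exponential weight cannot outgrow the chain's relaxation.
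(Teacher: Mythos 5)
Your high-level strategy --- find a strictly positive vector $v^*$ with $A_\rho^\pi v^* \leq 0$ coordinate-wise for every partition $\pi$ and propagate $M_0 \leq C v^*$ through the piecewise flow --- is exactly the right one and is what the paper does (Lemma \ref{le:K_uniform_bound} is your coordinate-wise propagation step, noting that $A_\rho^\pi = D + B$ with $D$ diagonal and $B$ nonnegative). However, you stop short of identifying the correct $v^*$, and the step you flag as ``the main obstacle'' is precisely where your candidate breaks down.

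Two concrete problems. First, a minor one: the vectors $v^{\pi,c^*}$ are not strictly positive --- for instance $v_1^\ssup{n}((\underline 2)_n) = \sin(0)/\sin(\la_\rho n) = 0$. They are only nonnegative, so positivity of $v^*$ has to come from summing the two colorings for a fixed $\pi$ rather than from any single term. Second, and decisively: for your ansatz $v^* = \sum_{\pi,c^*} v^{\pi,c^*}$ you would need to control $A_\rho^{\pi'} v^{\pi,c^*}$ for $\pi\neq\pi'$, and you give no argument that these cross-terms are nonpositive or cancel. They need not be, and you explicitly say you don't know how to show it; the ``Feynman--Kac tilting'' fallback has the same issue, since the tilting vector must work simultaneously for all the partitions the walk passes through.

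The paper's resolution is simpler and avoids the sum over partitions altogether: the two vectors $v_1^\ssup{n}, v_2^\ssup{n}$ associated with the \emph{trivial} partition $\{[n]\}$ are themselves null vectors of $A_\rho^\pi$ for \emph{every} partition $\pi$. This is Lemma \ref{lemma:eigenvector-partition} (reduced via the Kronecker-product structure to Lemma \ref{lemma:eigenvector-partition-simple}), and the verification for a one-nontrivial-block partition is a direct application of the sine addition/subtraction formula: with $m=(m',m'')$ split along the block $\pi_1=[\ell]$,
\begin{align}
\frac{\sin(\la_\rho \#_1 m')}{\sin(\la_\rho \ell)}\cdot\frac{\sin(\la_\rho(\ell+\#_1 m''))}{\sin(\la_\rho n)} + \frac{\sin(\la_\rho(\ell-\#_1 m'))}{\sin(\la_\rho \ell)}\cdot\frac{\sin(\la_\rho \#_1 m'')}{\sin(\la_\rho n)} = \frac{\sin(\la_\rho(\#_1 m' + \#_1 m''))}{\sin(\la_\rho n)},
\end{align}
with the obvious linear replacement of $\sin(\la_\rho\,\cdot)$ by the identity when $\rho=-1$. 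Once this is in place, $v^* := a_1 v_1^\ssup{n}+a_2 v_2^\ssup{n}$ is strictly positive (since $\la_\rho n < \pi$ guarantees $v_1^\ssup{n}(m)+v_2^\ssup{n}(m)>0$), satisfies $A_\rho^\pi v^*=0$ for all $\pi$, and dominates $M_0$ for suitable $a_1,a_2\geq 0$; your propagation argument then closes the proof. So the gap is real: you name the crucial algebraic fact you would need but do not supply it, and the ansatz you propose is unlikely to work without the sharper observation that no sum over partitions is required.
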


We preceed the proof by three lemmas showing that the process $(K_t)_{t\ge0}$ defined in~\eqref{eq:ODE3}
remains bounded. 
For this, the crucial observation is the following (which we prove in the next two lemmas):
The vectors $v_1^\ssup{n}$ and $v_2^\ssup{n}$ from \eqref{eq:eigvec1}-\eqref{eq:eigvec2}, which we recall are eigenvectors with eigenvalue $0$ of the matrix $A_\rho^\sse{n}$ from \eqref{eq:defnAell}, corresponding to the trivial partition $\pi=\{[n]\}$, are in fact also eigenvectors with eigenvalue $0$ of $A_\rho^\pi$, for any partition $\pi$ of $[n]$.
\begin{lemma}\label{lemma:eigenvector-partition-simple}
Assume $\rho+\cos(\pi/n)<0$. Then, for any partition $\pi=\{\pi_1,\ldots,\pi_k\}$ of $[n]$ consisting purely of singletons except for one block $\pi_i$, $i\in\{1,\ldots,k\}$, we have
\begin{align}
K_\infty(v^\ssup{n}_j,\pi)=v^\ssup{n}_j,\qquad j=1,2.
\end{align}
\end{lemma}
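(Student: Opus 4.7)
The plan is to prove the lemma by showing directly that $A_\rho^\pi v_j^\ssup{n} = 0$ for $j=1,2$. Once this is established, $e^{tA_\rho^\pi}v_j^\ssup{n} = v_j^\ssup{n}$ for every $t\ge 0$, so passing to the limit $t\to\infty$ via Proposition~\ref{prop:general_partition} yields $K_\infty(v_j^\ssup{n},\pi) = v_j^\ssup{n}$, as desired.

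The key structural simplification is that for a partition $\pi=\{\pi_1,\ldots,\pi_k\}$ with exactly one non-singleton block $\pi_i$, inspection of \eqref{eq:matrixApi} shows that the singleton blocks contribute nothing: the diagonal sum contains only terms $\ind_{m_{i'}\neq m_{j'}}$ with $i'=j'$, while each off-diagonal sum $\sum_{j'\in\pi_l}\ind_{m_{i'}\neq m_{j'}}$ with $\pi_l=\{i'\}$ vanishes too. So $A_\rho^\pi$ effectively only acts on coordinates indexed by $\pi_i$. Fixing $m\in\{1,2\}^n$ and writing $a:=\#_1 m$, $p:=\#_1(m|_{\pi_i})$, $q:=|\pi_i|-p$, I would count ordered pairs: the diagonal contribution is $2pq\cdot\rho/2$ times $v_1^\ssup{n}(m)$, while flipping a coordinate $i'\in\pi_i$ with $m_{i'}=1$ (respectively $m_{i'}=2$) decreases (respectively increases) $a$ by one, giving
\begin{equation*}
(A_\rho^\pi v_1^\ssup{n})(m) \;=\; pq\,\bigl[\rho f(a) + \tfrac{1}{2}\bigl(f(a-1)+f(a+1)\bigr)\bigr],
\end{equation*}
where $f(k):=v_1^\ssup{n}(m)$ whenever $\#_1 m=k$, i.e.\ $f(k)=\sin(\lambda_\rho k)/\sin(\lambda_\rho n)$ for $\rho\in(-1,0)$ and $f(k)=k/n$ for $\rho=-1$.

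It then remains to verify the three-term recurrence $f(a-1)+f(a+1) = -2\rho f(a)$ for $a\in\{1,\ldots,n-1\}$ (the only range that occurs when $pq>0$). For $\rho\in(-1,0)$ this is the sine addition formula $\sin(\lambda_\rho(a-1))+\sin(\lambda_\rho(a+1)) = 2\cos(\lambda_\rho)\sin(\lambda_\rho a)$ combined with $\cos(\lambda_\rho)=|\rho|=-\rho$; for $\rho=-1$ it reduces to the trivial identity $(k-1)+(k+1)=2k$. The same argument handles $v_2^\ssup{n}$: it is a function of $\#_2 m=n-\#_1 m$, so swapping the roles of $p$ and $q$ gives an identical reduction, and the recurrence is unchanged. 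I do not expect a real obstacle here — the only delicate point is the pair-counting, which is a direct analogue of the bookkeeping in \eqref{eq:tildeA} used in the proof of Lemma~\ref{le:simple_case}.
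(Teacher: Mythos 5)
Your proposal is correct, and it takes a genuinely different route from the paper. The paper's proof starts from the explicit formula for $K_\infty(K_0,\pi)$ in Proposition~\ref{prop:general_partition}: assuming w.l.o.g.\ $\pi_1=[\ell]$ and decomposing $m=(m',m'')$, they plug $K_0 = v_1^\ssup{n}$ into the formula, which reduces the claim to a product-of-sines identity (verified by the addition/subtraction theorems), and similarly for $\rho=-1$ with $\sin$ replaced by the identity. You instead prove the \emph{stronger} fact that $v_j^\ssup{n}\in\ker A_\rho^\pi$, so that $e^{tA_\rho^\pi}v_j^\ssup{n}=v_j^\ssup{n}$ for every $t$ (not only in the $t\to\infty$ limit), which of course implies the claim about $K_\infty$. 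Your reduction of $A_\rho^\pi$ to the single non-singleton block (the singleton blocks contribute zero, as direct inspection of \eqref{eq:matrixApi} shows), the pair-counting producing the factor $pq$, the observation that nontrivial contributions force $1\le a\le n-1$, and the three-term recurrence $f(a-1)+f(a+1)=-2\rho f(a)$ (the sine addition formula with $\cos\lambda_\rho=|\rho|$, or trivial for $\rho=-1$) are all correct and closely parallel the bookkeeping in the paper's Lemma~\ref{le:simple_case} — so you are effectively reusing the computation that identified $v_1^\ssup{n},v_2^\ssup{n}$ as zero-eigenvectors of $A_\rho^\sse{n}$ and showing it extends to $A_\rho^\pi$. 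The symmetry argument for $v_2^\ssup{n}$ also holds. What your approach buys is a cleaner conceptual link (the vectors are invariant under the whole flow, not merely under the limiting projection) and it makes the subsequent Lemma~\ref{lemma:eigenvector-partition} transparent; what the paper's approach buys is that it leans on the already-proved formula \eqref{eq:K-infty-explicit2} without re-deriving any kernel structure. Both ultimately rest on essentially the same trigonometric identity.
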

\begin{proof}
Assume first $\rho>-1$. 
W.l.o.g.\ we can assume that $i=1$ and $\pi_1=[\ell]$, $1\le\ell<n$.
Let $m\in\{1,2\}^n$. Decomposing $m=(m',m'')$ with $m'\in\{1,2\}^\ell$ and $m''\in \{1,2\}^{n-\ell}$, we get by Proposition \ref{prop:general_partition} that for $j=1,2$
\begin{align*}
K_\infty(v_j^\ssup{n}, \pi)(m) & = \sum_{c\in \{1,2\}^{|\pi|}} v_j^\ssup{n}(m^{\pi,c})\prod_{i=1}^{|\pi|} \frac{\sin(\lambda_\rho \,\#_{c_i} m|_{\pi_i})}{\sin(\lambda_\rho |\pi_i|)}
\\
&= v_j^\ssup{n}\left((\underline 1)_\ell, m''\right)  \frac{\sin(\lambda_\rho \#_1 m')}{\sin(\lambda_\rho \ell)} + v_j^\ssup{n}\left((\underline 2)_\ell, m''\right) \frac{\sin(\lambda_\rho \#_2 m')}{\sin(\lambda_\rho \ell)},
\end{align*}
where we used that all blocks $\pi_i$, $i\ge2$, of the partition $\pi$ consist of singletons. 
For $j=1$, plugging in the definition of $v_1^\ssup{n}$ from \eqref{eq:eigvec2} we obtain that the above is equal to
\begin{align*}
 \frac{\sin(\lambda_\rho \#_1 m')}{\sin(\lambda_\rho \ell)} \frac{\sin(\lambda_\rho (\ell + \#_1 m''))}{\sin(\lambda_\rho n)} + \frac{\sin(\lambda_\rho (\ell-\#_1m'))}{\sin(\lambda_\rho \ell)} \frac{\sin(\lambda_\rho \#_1 m'')}{\sin(\lambda_\rho n)}.
\end{align*}
Now the claim is that the above equals $\frac{\sin(\lambda_\rho (\#_1m' + \#_1 m''))}{\sin(\lambda_\rho n)}$. After multiplying everything with $\sin(\lambda_\rho \ell)\sin(\lambda_\rho n)$, this follows directly from the addition and subtraction theorems for the sine function applied to the three sines with a sum or difference as argument. The proof for the second eigenvector $v_2^\ssup{n}$ is analogous.

For $\rho=-1$ the strategy is the same, but because of the different structure of the vectors $v_j^\ssup{n}$, the function $x\mapsto\sin(\lambda_\rho x)$ needs to be replaced by the identity.
\end{proof}

\begin{lemma}\label{lemma:eigenvector-partition}
Assume $\rho+\cos(\pi/n)<0$. Then for any partition $\pi=\{\pi_1,\ldots,\pi_k\}$ of $[n]$  we have
$K_\infty(v^\ssup{n}_j,\pi)=v^\ssup{n}_j$, $j=1,2$. 
\end{lemma}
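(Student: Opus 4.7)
The plan is to reduce the general-partition case to the single-non-singleton-block case handled in Lemma \ref{lemma:eigenvector-partition-simple}, by exploiting the fact that the limiting operator $K_\infty(\,\cdot\,, \pi)$ factorizes as a Kronecker product.

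Concretely, I would start from the decomposition \eqref{eq:decomposition}, which after a permutation of coordinates bringing the blocks of $\pi$ into consecutive positions (a harmless change of basis) gives $A_\rho^\pi = A_\rho^\sse{|\pi_1|} \oplus \cdots \oplus A_\rho^\sse{|\pi_k|}$. Using the identity $e^{t(A\oplus B)} = e^{tA}\otimes e^{tB}$, this yields
\begin{equation*}
T_\pi := K_\infty(\,\cdot\,, \pi) = \lim_{t\to\infty} e^{tA_\rho^\pi} = P_1 \otimes P_2 \otimes \cdots \otimes P_k
\end{equation*}
as an operator on the tensor decomposition $\R^{2^n} \cong \bigotimes_{i=1}^k \R^{2^{|\pi_i|}}$, where $P_i := \lim_{t\to\infty} e^{tA_\rho^\sse{|\pi_i|}}$ is the rank-two projection onto $\mathrm{span}(v_1^\ssup{|\pi_i|}, v_2^\ssup{|\pi_i|})$ identified in Proposition \ref{prop:simple_partition}.

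For each block $\pi_i$, I then consider the auxiliary partition $\tilde\pi_i$ of $[n]$ having $\pi_i$ as its unique non-singleton block, with all remaining elements forming singletons. By Lemma \ref{lemma:eigenvector-partition-simple}, the operator $\tilde T_i := K_\infty(\,\cdot\,, \tilde \pi_i)$ fixes both $v_1^\ssup{n}$ and $v_2^\ssup{n}$. The key observation is that in the $\pi$-tensor decomposition, $\tilde T_i$ acts as $P_i$ on the $i$-th factor and as the identity on every other factor: the singleton blocks of $\tilde \pi_i$ contribute only trivial summands $A_\rho^\sse{1} = 0$ to $A_\rho^{\tilde \pi_i}$, which in turn yield identity factors in the limit $t\to\infty$.

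Since the $\tilde T_i$ act on different tensor factors they pairwise commute, and a direct computation of the tensor product shows $\tilde T_1 \tilde T_2 \cdots \tilde T_k = P_1 \otimes P_2 \otimes \cdots \otimes P_k = T_\pi$. Applying Lemma \ref{lemma:eigenvector-partition-simple} once per block then gives
\begin{equation*}
K_\infty(v_j^\ssup{n}, \pi) = T_\pi v_j^\ssup{n} = \tilde T_1 \tilde T_2 \cdots \tilde T_k\, v_j^\ssup{n} = v_j^\ssup{n},
\end{equation*}
which is the desired identity. The step requiring the most care is the identification of $\tilde T_i$ inside the $\pi$-tensor decomposition; everything else is a direct consequence of the Kronecker-sum structure already used in the proof of Proposition \ref{prop:general_partition} together with the commutativity of the resulting projections.
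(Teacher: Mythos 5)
Your proposal is correct and follows essentially the same route as the paper: factorize $K_\infty(\cdot,\pi)$ as the Kronecker/tensor product of the single-block limits via \eqref{eq:decomposition} and $e^{t(A\oplus B)}=e^{tA}\otimes e^{tB}$, recognize this as the composition $K_\infty(\cdot,\tilde\pi_1)\circ\cdots\circ K_\infty(\cdot,\tilde\pi_k)$, and apply Lemma~\ref{lemma:eigenvector-partition-simple} to each factor. You have simply spelled out in more detail the tensor-factor identification that the paper states in one line.
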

\begin{proof}
The linear map $K_\infty(\cdot, \pi)$ is the Kronecker product of the $K_\infty(\cdot, \pi_i)$, $i=1,\ldots,k$, up to relabeling of the coordinates. This implies that
\begin{align*}
K_\infty(\cdot, \pi) = K_\infty(\cdot, \tilde\pi_1) \circ K_\infty(\cdot, \tilde\pi_2) \circ \cdots \circ K_\infty(\cdot, \tilde\pi_k),
\end{align*}
where $\tilde \pi_i$ is the partition of $[n]$ which consists of $\pi_i$ and otherwise singletons. The claim then follows from Lemma \ref{lemma:eigenvector-partition-simple}.
\end{proof}

\begin{lemma}\label{le:K_uniform_bound}
Assume $\rho+\cos(\pi/n)<0$. 
Given $a_1,a_2\geq 0$, define $V_n^{a_1,a_2} \in \cM(\{1,2\}^n)$ via $V_n^{a_1,a_2}:=a_1 v_1^\ssup{n} + a_2 v_2^\ssup{n}$. 
Then for any partition $\pi=\{\pi_1,\ldots,\pi_k\}$ of $[n]$ and any $K_0\in \cM(\{1,2\}^n)$ such that $0\leq K_0 \leq V_n^{a_1,a_2}$ coordinate wise, we have
\begin{align*}
K_t(K_0,\pi) \leq V_n^{a_1,a_2},\qquad t>0.
\end{align*}
\end{lemma}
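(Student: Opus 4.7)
The plan is to bound $K_t(K_0,\pi)$ from above by $V_n^{a_1,a_2}$ by combining two properties of the linear flow $t \mapsto e^{tA_\rho^\pi}$ on $\cM(\{1,2\}^n)$: first, that $V_n^{a_1,a_2}$ is invariant under the flow, and second, that the flow preserves the cone of coordinate-wise non-negative vectors. Granted both, set $W_0 := V_n^{a_1,a_2} - K_0$, which is $\ge 0$ by hypothesis; the positivity property yields $e^{tA_\rho^\pi} W_0 \ge 0$, while linearity together with invariance gives
\[ e^{tA_\rho^\pi} W_0 \;=\; V_n^{a_1,a_2} \;-\; K_t(K_0,\pi), \]
from which $K_t(K_0,\pi) \le V_n^{a_1,a_2}$ follows coordinate-wise.

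To establish the invariance, I would show that $V_n^{a_1,a_2} \in \ker A_\rho^\pi$. Proposition~\ref{prop:general_partition} produces the pointwise limit $K_\infty(\cdot,\pi) = \lim_{t\to\infty} e^{tA_\rho^\pi}$ as a linear operator on $\cM(\{1,2\}^n)$. Standard Jordan-form considerations then force every eigenvalue of $A_\rho^\pi$ to have non-positive real part and the zero eigenvalue to be semi-simple (otherwise $e^{tA_\rho^\pi}$ would exhibit polynomial growth in $t$ on a generalized eigenspace), and they identify $K_\infty(\cdot,\pi)$ as precisely the spectral projection onto $\ker A_\rho^\pi$. Since Lemma~\ref{lemma:eigenvector-partition} yields $K_\infty(v_j^\ssup{n},\pi) = v_j^\ssup{n}$ for $j=1,2$, both $v_1^\ssup{n}$ and $v_2^\ssup{n}$ lie in $\ker A_\rho^\pi$; hence $e^{tA_\rho^\pi} v_j^\ssup{n} = v_j^\ssup{n}$ for every $t \ge 0$, and by linearity the same identity holds for $V_n^{a_1,a_2} = a_1 v_1^\ssup{n} + a_2 v_2^\ssup{n}$.

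For the positivity property, a direct inspection of \eqref{eq:matrixApi} shows that the off-diagonal entries of $A_\rho^\pi$ are of the form $\tfrac12 \sum_{j \in \pi_k} \ind_{m_i \neq m_j} \ge 0$, i.e.\ $A_\rho^\pi$ is a Metzler matrix. Choosing $c>0$ large enough that $A_\rho^\pi + cI$ has all entries $\ge 0$, the series $e^{t(A_\rho^\pi + cI)} = \sum_{k\ge 0} t^k (A_\rho^\pi + cI)^k/k!$ is a non-negative matrix, so $e^{tA_\rho^\pi} = e^{-ct} e^{t(A_\rho^\pi + cI)}$ sends the non-negative cone into itself.

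The only slightly delicate point in the argument is the semi-simplicity of the zero eigenvalue used in Step~1; however, this is a free consequence of Proposition~\ref{prop:general_partition}, since the mere existence of the pointwise limit of $e^{tA_\rho^\pi}$ as $t \to \infty$ already rules out non-trivial Jordan blocks at $0$. Once this is noted, lifting the $t = \infty$ identity of Lemma~\ref{lemma:eigenvector-partition} to an identity at every finite $t$ is automatic, and no further work is required.
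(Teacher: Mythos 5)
Your proof is correct and follows essentially the same route as the paper's: both rest on (a) the fact that $V_n^{a_1,a_2}$ lies in $\ker A_\rho^\pi$, deduced from Lemma~\ref{lemma:eigenvector-partition}, and (b) the Metzler structure of $A_\rho^\pi$ (non-negative off-diagonal entries). The only cosmetic difference is in step (b): you make the order-preservation of the flow explicit by writing $e^{tA_\rho^\pi}=e^{-ct}e^{t(A_\rho^\pi+cI)}$ and applying it to $V_n-K_0\ge 0$, whereas the paper checks directly that the region $\{0\le W\le V_n\}$ is forward-invariant for the ODE by inspecting the sign of the derivative at the boundary; these are equivalent. One small simplification you could make: the Jordan-form discussion is unnecessary, since $e^{sA}v_j^\ssup{n}=e^{sA}\lim_{t\to\infty}e^{tA}v_j^\ssup{n}=\lim_{t\to\infty}e^{(s+t)A}v_j^\ssup{n}=v_j^\ssup{n}$ already gives $v_j^\ssup{n}\in\ker A_\rho^\pi$ directly from the semigroup property and Lemma~\ref{lemma:eigenvector-partition}.
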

\begin{proof}
Write $V_n:=V_n^{a_1,a_2}$ for short. By Lemma \ref{lemma:eigenvector-partition} we know that $A^\pi_\rho V_n=0$. 
Let $0\leq W \leq V_n$ coordinate wise, with $W(m)=V_n(m)$ for some fixed $m\in\{1,2\}^n$. Since $A_\rho^\pi=D+B$ is the sum of a diagonal matrix $D$ and a non-negative matrix $B$, we have
\begin{align*}
(A^\rho_\pi W)(m) = D(m,m) W(m) + (B W)(m) \leq D(m,m) V_n(m) +(B V_n)(m) = (A^\rho_\pi V_n)(m)=0.
\end{align*}
This implies that whenever $K_0\leq V_n$, the solution of \eqref{eq:ODE3} satisfies $K_t(K_0,\pi)(m)\leq V_n(m)$ for all $t>0$ and $m\in\{1,2\}^n$, since the derivative is always non-positive when $K_t(K_0,\pi)=V_n(m)$.
\end{proof}

\begin{proof}[Proof of Proposition~\ref{prop:M_bounded}]
Given $M_0\in\cM(\{1,2\}^n)$, we can always find $a_1,a_2\ge0$ such that $M_0\le a_1 v_1^\ssup{n} + a_2 v_2^\ssup{n}=V_n^{a_1,a_2}$ holds coordinate wise. 
Then we can combine Lemma~\ref{le:K_uniform_bound} with the representation of $M_t^\sse\gamma$ 
in terms of $K_t$, see Lemma~\ref{prop:M-gamma} (discrete space) and~\eqref{eq:representation_cts} (continuous space),
to get the bound $M_t^\sse{\gamma}\le V_n^{a_1,a_2}$ inductively on each time interval $[\tau_k,\tau_{k+1}]$, $k\in\N$.
\end{proof}

Equipped with these uniform bounds, we can now prove the moment duality for the infinite rate
models 
by combining Theorem~\ref{thm:M_t-infty} with a dominated convergence argument. 

\begin{proof}[Proof of Theorem~\ref{thm:moment_duality_infinite}]
We give the proof for the continuous-space case $\calS=\R$.
Suppose first that $\phi=\otimes_{i=1}^n\phi_i$ with $\phi_i\in\cC_c^+(\R)$, $i=1,\ldots,n$.
Rewriting the moment duality for finite $\gamma>0$ in weak form, by Theorem~\ref{thm:rewrite} and Fubini we have for all $t>0$ that
\begin{align}
\bE_{\bfu_0}\bigg[\int_{\R^n}\otimes_{i=1}^n\phi_i(\bfx)\, H(\bfu^\sse{\gamma}_t;\bfx,c)\,d\bfx\bigg] = \int_{\R^n}\otimes_{i=1}^n\phi_i(\bfx)\, \bE_{\bfx}\bigg[\sum_{b\in\{1,2\}^n} M_t^\sse{\gamma}(\bfX,\delta_c)(b)\, \bfu_0^\ssup{b}(\bfX_t) \bigg]d\bfx.
\end{align}
By Theorem~\ref{thm:M_t-infty}, we know that for Lebesgue-a.e.\ $\bfx\in\R^n$, almost surely under $\p_\bfx$, for any $t \geq 0$, $M^\sse\gamma_t(\bfX,\delta_c)$ converges 
to $M^\sse\infty_t(\bfX,\delta_c)$. Since $\bfu_0$ is bounded by assumption and $M^\sse\gamma_t$ is bounded
uniformly in $\gamma$ (and $\om$) by Proposition~\ref{prop:M_bounded}, we can deduce by dominated convergence that the right hand side converges to
\[  \int_{\R^n}\otimes_{i=1}^n\phi_i(\bfx)\, \bE_{\bfx}\bigg[\sum_{b\in\{1,2\}^n} M_t^\sse\infty(X,\delta_c)(b)\, \bfu_0^\ssup{b}(\bfX_t) \bigg] d\bfx , \qquad t>0.\]
For the left hand side, we know that $(\bfu_t^\sse\gamma)_{t\ge0}\to (\bfu_t)_{t\ge0}$ as $\gamma\uparrow\infty$ in $D_{[0,\infty)}(\calM_\tem(\R)^2)$ w.r.t.\ the Meyer-Zheng topology. 
By \cite[Thm.\ 5]{MZ84} (see also \cite[Cor.\ 1.4]{Kurtz91}), we may and do assume that there is a sequence $\gamma_k\uparrow\infty$ and a subset $I\subseteq\R^+$ of full Lebesgue measure such that for all $t\in I$ we have $\bfu_t^\sse{\gamma_k}\to\bfu_t$ as $k\to\infty$ in $\calM_\tem(\R)^2$, almost surely. 
Recalling the definition of the topology on $\cM_\tem(\R)$ (see e.g.\ Appendix A.1 in \cite{BHO15}), we then have also
\begin{equation}\label{proof:moment_duality_infinite}
\int_{\R^n}\otimes_{i=1}^n\phi_i(\bfx)\, H(\bfu^\sse{\gamma_k}_t;\bfx,c)\,d\bfx\xrightarrow{k\to\infty}\prod_{i=1}^n\langle u_t^\ssup{c_i},\phi_i\rangle
=\int_{\R^n}\otimes_{i=1}^n\phi_i(\bfx)\, \bfu_t^\ssup{c}(d\bfx),\end{equation}
$t\in I$, almost surely. Further, note that since $\rho+ \cos(\pi/n)<0$ we are strictly below 
the critical curve defined in~\eqref{criticalcurve}. In particular, we can find $\eps> 0$ such that the $(n+\eps)$-th moments
of $\mathrm{cSBM}(\rho,\gamma)_{\bfu_0}$ remain bounded uniformly in $\gamma$, cf.\ \cite[Cor.\ 3.8]{HO15}.
By uniform integrability, we conclude that the convergence \eqref{proof:moment_duality_infinite} holds also in expectation, 
proving the moment duality for all $t\in I$ and test functions of the considered form. 
In order to extend it to all $t>0$, choose a sequence $t_n\in I$ with $t_n\downarrow t$ and use the right-continuity of the paths of $(\bfu_t)_{t\ge0}$.
The extension to arbitrary $\phi\in L^1(\R^n)$ follows by a standard density argument.

The proof for the discrete-space case $\calS=\Z^d$ is analogous. In fact it is even simpler, since we can avoid using test functions and instead argue pointwise.
\end{proof}

\subsection{Proof of Corollary \ref{cor:explicit-moment-computations} (ii)}
\begin{proof}
Since $A^{\{\{1\},\{2\}\}}_\rho=0$ and hence $K_t(K_0,{\{\{1\},\{2\}\}})\equiv K_0$, we obtain from Lemma \ref{prop:M-gamma} that
\begin{align}\label{eq:M_t-2-simple}
M_t^\sse\gamma=K_{\gamma L_t^{1,2}}\left(M_0,\{\{1,2\}\}\right) , \qquad t\ge0.
\end{align}
Therefore, by the reformulation of the finite rate moment duality (Theorem \ref{thm:rewrite}) we have
\begin{align}
\bE_{\bfu_0}\left[ u_t^\ssup{1}(x)u_t^\ssup{2}(x) \right]
&= \bE_{(x,x)}\left[ \sum_{m\in \{1,2\}^2}  M_t^\sse{\gamma}\left(\bfX,\delta_{(1,2)}\right)(m)\, \bfu_0^\ssup{m}(\bfX_t) \right]\\
&= \bE_{(x,x)} \left[\sum_{m\in \{1,2\}^2} K_{\gamma L_t^{1,2}}\left(\delta_{(1,2)},\{\{1,2\}\}\right)(m) \,\bfu_0^\ssup{m}(\bfX_t)\right]
\end{align}
for all $x\in\Z^d$. By the transience of $Y_t:=X_t^\ssup{1}-X_t^\ssup{2}$, there is a geometric number of returns of $(Y_t)_{t\ge0}$ to $0$ with parameter $1-p_d$, where $p_d<1$ is the return probability of a random walk in $\bZ^d$. Whenever $Y_t$ arrives in $0$ it waits an exponentially distributed time with parameter 2 (since $X_t^\ssup{1}$ or $X_t^\ssup{2}$ could jump). Therefore $L_\infty^{1,2}:=\lim_{t\to\infty}L_t^{1,2}$ is a geometric number of independent exponentials, which is an exponential with parameter $2(1-p_d)$. 
Observing that
\begin{equation}
A^{\{\{1,2\}\}}_\rho=A^{[2]}_\rho=\left(\begin{matrix}
0&0&0&0\\
\tfrac{1}{2}&\rho&0&\tfrac{1}{2}\\
\tfrac{1}{2}&0&\rho&\tfrac{1}{2}\\
0&0&0&0
\end{matrix}\right),
\end{equation}
we obtain that $\rho$ is an eigenvalue of $A^{\{\{1,2\}\}}_\rho$ and $\delta_{(1,2)}$ a corresponding eigenvector. Hence $K_t\left(\delta_{(1,2)},\{\{1,2\}\}\right)=e^{\rho t}\,\delta_{(1,2)}$. Together with the description of the asymptotic local time as an exponential random variable $Z$ with parameter $2(1-p_d)$,
we obtain
\begin{align} 
\bE_{\bfu_0}\left[ u_t^\ssup{1}(x)u_t^\ssup{2}(x)\right] &\leq \|u_0^\ssup{1}\|_\infty\|u_0^\ssup{2}\|_\infty\,\bE\left[ e^{\rho\gamma Z}\right]\\
&=\begin{cases}
\|u_0^\ssup{1}\|_\infty\|u_0^\ssup{2}\|_\infty\left(1-\frac{\rho\gamma}{2(1-p_d)}\right)^{-1}, \quad &\frac{\rho\gamma}{2(1-p_d)}<1\\
\infty,\quad&\frac{\rho\gamma}{2(1-p_d)}\geq1 ,
\end{cases}
\end{align}
with the first inequality being asymptotically, as $t\to\infty$, an equality if $\bfu_0=(\1,\1)$.
\end{proof}

\section{The analysis of ${\rm SBM}(-1,\infty)$}\label{sec:rhominus1}

In this section, we prove all the results for the case $\rho = -1$. 
In particular, in Section~\ref{ssec:tightness} we show Theorem \ref{thm:main1}, establishing the existence of an infinite rate limit for $\rho=-1$ and general initial conditions which was so far open in both discrete and continuous space.
The limit is uniquely characterized by the moment duality \eqref{eq:moment duality infinite gamma} and the (strong) Markov property.
In the subsequent subsections, we move on to a more explicit description of the limit. 
In Section~\ref{ssec:notation}, we collect some preliminary lemmas which will be needed in the sequel.
We then show in Section~\ref{ssec:nonproliferation} that the number of interfaces is non-increasing. 
Then, we prove Theorem~\ref{thm:annihilating-BM-_onepoint}, the explicit characterization of the limit
in the single interface type,  in Section~\ref{ssec:single_interface}. The extension to at most countably many interfaces without accumulation point, 
Theorem~\ref{thm:annihilating-BM} is in Section~\ref{ssec:multiple_interfaces}, while
the proof for general initial conditions, Theorem~\ref{thm:overlapping} is in Section~\ref{ssec:overlapping}.

Recall that we write $w_t:=u_t^\ssup{1}+u_t^\ssup{2}$ and that since $\rho=-1$, the evolution of $(w_t)_{t\ge0}$ is governed by the heat equation. Thus denoting by $(S_t)_{t\ge0}$ the heat semi-group, we have that $w_t=S_tw_0$ and is deterministic, a fact we use frequently.

\subsection{Proof of Theorem~\ref{thm:main1}}\label{ssec:tightness}

By \cite[Prop.\ 3.8]{BHO15} resp.\ \cite[Prop.\ 3.1]{HO15}, we know that the family of processes $(\bfu_t^\sse{\gamma})_{t\ge0}$, $\gamma>0$, is tight w.r.t.\ the Meyer-Zheng topology on $D_{[0,\infty)}(\calM_\tem(\R)^2)$. 
(Note that these results were proved including the case $\rho=-1$.)

Let $(\bfu_t)_{t\ge0}\in D_{[0,\infty)}(\calM_\tem(\R)^2)$ be any subsequential limit point of $(\bfu_t^\sse{\gamma})_{t\ge0}$. 
By essentially the same proof as that of Theorem \ref{thm:moment_duality_infinite}, $(\bfu_t)_{t\ge0}$ satisfies the moment duality \eqref{eq:moment duality infinite gamma} for each $n\in\N$, $t>0$. Since $u_t^\ssup{1}$ and $u_t^\ssup{2}$ are dominated by $u_t^\ssup{1}+u_t^\ssup{2}=w_t=S_tw_0$ and $w_0=u_0^\ssup{1}+u_0^\ssup{2}$ is bounded, the moments uniquely determine the one-dimensional distributions of $(\bfu_t)_{t\ge0}$.
The same argument shows that almost surely under $\p_{\bfu_0}$, the limit measures $u_t^\ssup{1}$ and $u_t^\ssup{2}$ are absolutely continuous for all $t\ge0$ (in case $\calS=\R)$ with densities bounded uniformly by $\|w_0\|_\infty$. 
This shows in particular that $(\bfu_t)_{t\ge0}$ takes values in the subspace $\calM_b(\R)^2\subseteq\cM_\tem(\R)^2$.
Moreover, as in \cite[Lemma 4.4, Cor.\ 4.5]{BHO15} the mutual singularity of the densities $u_t^\ssup{1}$ and $u_t^\ssup{2}$ for fixed $t>0$ (separation of types) can be deduced from the identity \eqref{eq:mixed_second_moment} for second mixed moments which is a special case of the moment duality \eqref{eq:moment duality infinite gamma}. 

It remains to prove uniqueness of subsequential limit points, and the fact that the convergence is actually in the stronger Skorokhod topology so that the limit is also continuous. 
In fact, we will show strong convergence for the corresponding semigroups, which turn out to have the Feller property when restricted to suitable subspaces of $\cM_b(\R)^2$.
The Feller property in particular implies also the strong Markov property, thus the limit has all properties claimed in Theorem \ref{thm:main1}.

In order to carry out this program, let $(\bfu_t)_{t\ge0}\in D_{[0,\infty)}(\calM_b(\R)^2)$ be any subsequential limit point of the family $(\bfu_t^\sse{\gamma})_{t\ge0}$, $\gamma>0$. 
For $t>0$ and $\bfu_0\in \cM_b(\R)^2$, we define $P_t(\bfu_0,\cdot)$ as the law of $\bfu_t$ under $\p_{\bfu_0}$, i.e.
\[P_t(\bfu_0,f):=P_tf(\bfu_0):=\E_{\bfu_0}\left[f(\bfu_t)\right]
\]
for bounded measurable $f:\cM_b(\R)^2\to\R$.  
Since the one-dimensional distributions of $(\bfu_t)_{t\ge0}$ are uniquely determined by the moment duality, the definition of $P_t$ does not depend on the choice of limit point. 
Note that this defines a family of probability laws on $\cM_b(\R)^2$, but a priori we do not know that the family $(P_t)_{t\ge0}$ is a semigroup or even that $P_t(\cdot,\cdot)$ is a transition kernel for any fixed $t>0$.
 
For each $K>0$, we write $\calM_K$ for 
the space of all pairs of absolutely continuous Radon measures $\bfu=(u^\ssup{1},u^\ssup{2})$ on $\R$ such that the sum of the densities $u^\ssup{1}+u^\ssup{2}$ is bounded by the constant $K$.
Observe that $\bigcup_{K>0}\calM_K=\cM_b(\R)^2\subseteq\cM_\tem(\R)^2$, and again we endow each $\calM_K$ with the subspace topology inherited from $\calM_\tem(\R)^2$. 
It is easy to see that with this topology, $\cM_K$ is a compact space. 
Note that starting from $\bfu_0\in\cM_K$, almost surely $(\bfu_t)_{t\ge0}$ takes values in $\cM_K$, and the same holds for the finite rate
processes $(\bfu^\sse{\gamma}_t)_{t\ge0}$, $\gamma\in(0,\infty)$.

Let $(P_t^\sse{\gamma})_{t\ge0}$ denote the semigroup of $(\bfu_t^\sse{\gamma})_{t\ge0}$. We will now show that $(P_t^\sse{\gamma})_{t\ge0}$ is a Feller semigroup when restricted to the space $\cC(\cM_K)$ and that it converges strongly to $(P_t)_{t\ge0}$, whence the latter is also a Feller semigroup. 
The key is Lemma \ref{lemma:semigroup-continuous}, for which we need the following notation:
Given $n\in\N$, $m\in \{1,2\}^n$ and $\phi\in L^1(\R^n)$ we define a function  $f_{\phi,m}: \cM_b(\R)^2\to\R$ by
\begin{align}\label{defn:f_phi,m}
f_{\phi,m}(\bfu) := \int_{\R^n}\bfu^\ssup{m}(\bfx)\phi(\bfx)\,d\bfx, \qquad\bfu\in\cM_b(\R)^2,
\end{align}
where $\bfu^\ssup{m}(\bfx):=\prod_{i=1}^n u^\ssup{m_i}(x_i)$. Note that the function $f_{\phi,m}$ is bounded on each subset $\cM_K$, $K>0$. 
Moreover, if $\phi$ has the form $\phi(\bfx)=\prod_{i=1}^n\phi_i(x_i)$ with $\phi_i\in\calC_c(\R)$, $i=1,\ldots,n$, then by the definition of the topology in $\cM_\tem(\R)$ we see that $f_{\phi,m}$ is continuous.

\begin{lemma}\label{lemma:semigroup-continuous}
Let $n\in\bN$, $m\in\{1,2\}^n$, $\phi_i\in\calC_c(\R)$ for $i=1,\ldots,n$ and $\phi(\bfx):=\prod_{i=1}^n\phi_i(x_i)$. 
Then for all $t>0$, $\gamma\in(0,\infty)$ and $K>0$, the functions $P_tf_{\phi,m}$ and $P_t^\sse{\gamma}f_{\phi,m}$ are continuous on $ \calM_K$,
and we have
\begin{equation}\label{eq:convergence-special}
\sup_{\bfu\in\calM_K}|P_{t}f_{\phi,m}(\bfu) - P_t^\sse{\gamma}f_{\phi,m}(\bfu)|\to0\qquad\text{as } \gamma\to\infty.
\end{equation}
\end{lemma}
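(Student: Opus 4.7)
My plan is to exploit the moment duality of Theorem \ref{thm:rewrite} to rewrite $P_t^\sse{\gamma} f_{\phi,m}$ as a linear functional of $\bfu$ with an $L^1$ kernel, and then use the almost-sure convergence $M_t^\sse{\gamma}\to M_t^\sse{\infty}$ from Theorem \ref{thm:M_t-infty} together with the uniform bound from Proposition \ref{prop:M_bounded}. Note that for $\rho=-1$ the condition $\rho+\cos(\pi/n)<0$ holds for all $n\ge 2$, so both results apply; the trivial case $n=1$ (where $f_{\phi,m}(\bfu)=\langle u^\ssup{m_1},\phi\rangle$ is itself linear in $\bfu$) can be handled directly.

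By Theorem \ref{thm:rewrite} and Fubini, combined with Brownian-bridge conditioning on $\bfX_t=\bfy$, for any $\bfu\in\cM_b(\R)^2$,
\begin{align}\label{eq:Pt_repr_plan}
P_t^\sse{\gamma} f_{\phi,m}(\bfu) &= \sum_{b\in\{1,2\}^n}\int_{\R^n}\Psi_t^\sse{\gamma,b}(\bfy)\,\bfu^\ssup{b}(\bfy)\,d\bfy,
\end{align}
where
\begin{align}
\Psi_t^\sse{\gamma,b}(\bfy):=\int_{\R^n}\phi(\bfx)\prod_{i=1}^n p_t(x_i,y_i)\,\bE^t_{\bfx,\bfy}\bigl[M_t^\sse{\gamma}(\bfX,\delta_m)(b)\bigr]\,d\bfx,
\end{align}
with $p_t$ the one-dimensional Brownian transition density and $\bE^t_{\bfx,\bfy}$ the expectation under the $n$-dimensional Brownian bridge from $\bfx$ to $\bfy$ in time $t$. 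Repeating the dominated-convergence argument used in the proof of Theorem \ref{thm:moment_duality_infinite}, the analogous representation holds for $P_t$ with $M^\sse{\gamma}$ replaced by $M^\sse{\infty}$ (call the resulting kernel $\Psi_t^\sse{\infty,b}$); the required moment-duality identity for the subsequential limit is precisely what was established at the beginning of Section \ref{ssec:tightness}.

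By Proposition \ref{prop:M_bounded}, there is a constant $C_m$ with $M_t^\sse{\gamma}(\bfX,\delta_m)(b)\le C_m$ uniformly in $\gamma,t,\bfX,\bfx$. Since $\phi$ has compact support, we get the pointwise bound
\begin{align}
|\Psi_t^\sse{\gamma,b}(\bfy)|\le C_m\|\phi\|_\infty\int_{\supp(\phi)}\prod_{i=1}^n p_t(x_i,y_i)\,d\bfx =: G(\bfy),
\end{align}
and $G\in L^1(\R^n)\cap L^\infty(\R^n)$ by Gaussian decay. By Theorem \ref{thm:M_t-infty}, for Lebesgue-a.e.\ $\bfx\in\R^n$ (those with pairwise distinct coordinates) and almost surely under $\bP_\bfx$, $M_t^\sse{\gamma}(\bfX,\delta_m)(b)\to M_t^\sse{\infty}(\bfX,\delta_m)(b)$. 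Applying dominated convergence first under the bridge measure and then under Lebesgue integration in $\bfx$, we obtain pointwise convergence $\Psi_t^\sse{\gamma,b}(\bfy)\to\Psi_t^\sse{\infty,b}(\bfy)$ for a.e.\ $\bfy$; a final application of dominated convergence in $L^1(\R^n)$ (with dominator $2G$) yields $\|\Psi_t^\sse{\gamma,b}-\Psi_t^\sse{\infty,b}\|_{L^1(\R^n)}\to 0$.

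For $\bfu\in\cM_K$ we have the pointwise bound $\bfu^\ssup{b}(\bfy)\le K^n$; hence for any $\psi\in L^1(\R^n)$ the map $\bfu\mapsto\int\psi(\bfy)\bfu^\ssup{b}(\bfy)\,d\bfy$ is continuous on $\cM_K$. Indeed, approximating $\psi$ in $L^1$-norm by finite linear combinations of products $\prod_i\chi_i$ with $\chi_i\in\calC_c(\R)$ reduces the claim, via an $\varepsilon/2$-argument using the $K^n$ bound, to the obvious continuity for such product test functions (which follows from the definition of the topology on $\cM_\tem(\R)$). Applying this with $\psi=\Psi_t^\sse{\gamma,b}$ resp.\ $\Psi_t^\sse{\infty,b}$ gives the continuity of $P_t^\sse{\gamma}f_{\phi,m}$ and $P_tf_{\phi,m}$ on $\cM_K$. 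Finally, from \eqref{eq:Pt_repr_plan},
\begin{align}
\sup_{\bfu\in\cM_K}\bigl|P_t f_{\phi,m}(\bfu)-P_t^\sse{\gamma} f_{\phi,m}(\bfu)\bigr|\le K^n\sum_{b\in\{1,2\}^n}\|\Psi_t^\sse{\infty,b}-\Psi_t^\sse{\gamma,b}\|_{L^1(\R^n)}\to 0
\end{align}
as $\gamma\to\infty$, which is \eqref{eq:convergence-special}. The main technical point is the two nested dominated-convergence arguments needed to pass from the a.s.\ convergence of Theorem \ref{thm:M_t-infty} (which holds only for starting points with distinct coordinates) to $L^1$-convergence of the kernels $\Psi_t^\sse{\gamma,b}$; the uniform bound from Proposition \ref{prop:M_bounded} is what provides the common dominator and, simultaneously, the $K^n$ factor that turns the kernel convergence into the desired uniform convergence over $\cM_K$.
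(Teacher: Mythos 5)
Your proof is correct and takes a genuinely different route from the paper. Both arguments begin from the bridge representation
\[ P_t^\sse{\gamma} f_{\phi,m}(\bfu) = \sum_{b}\int_{\R^n}\Psi_t^\sse{\gamma,b}(\bfy)\,\bfu^\ssup{b}(\bfy)\,d\bfy , \]
but the paper and you handle the continuity assertion differently. The paper splits $P_t f_{\phi,m}$ into a $\delta$-smoothed functional $g^\ssup{\delta}_{t,\phi,m}$ (obtained by writing $p_t = p_{t-\delta}*p_\delta$) plus an error term, shows $g^\ssup{\delta}$ is continuous for each fixed $\delta>0$, and proves the error vanishes uniformly over $\cM_K$ as $\delta\downarrow 0$; the last step relies on the observation that $\bfz \mapsto \bE_{\bfx}[M_t^\sse{\infty}(\bfX,\delta_m)(m')\mid \bfX_t = \bfz]$ is continuous off the diagonal. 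You bypass the $g^\ssup{\delta}$ device and the off-diagonal continuity of the conditional expectation entirely: you extract the single fact that the kernels $\Psi_t^\sse{\gamma,b}$ and $\Psi_t^\sse{\infty,b}$ lie in $L^1(\R^n)$ with a common $L^1$ dominator $G$, and you observe that a linear functional with an $L^1$ kernel is automatically continuous on $\cM_K$ by approximating the kernel in $L^1$-norm by finite sums of products of $\calC_c$ functions and using the uniform density bound $K^n$. This same $L^1$ structure then delivers the uniform-in-$\bfu$ convergence at no extra cost, making the argument a bit more unified, whereas the paper handles the convergence \eqref{eq:convergence-special} by a separate (and, there, more direct) dominated-convergence estimate. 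One implicit step you rely on — passing the a.s.\ convergence $M_t^\sse{\gamma}\to M_t^\sse{\infty}$ from $\p_{\bfx}$ to the bridge measures $\p^t_{\bfx,\bfy}$ for a.e.\ $\bfy$ — is a standard disintegration fact and is fine, but deserves to be made explicit if this version were used in the paper.
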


\begin{proof}
Fix $t>0$. By the moment duality,
\begin{align}\label{proof:semigroup-continuous_1}\begin{aligned}
&P_t f_{\phi,m}(\bfu) = \sum_{m'\in\{1,2\}^n}\int_{\R^n}d\bfx\,\phi(\bfx)\,\bE_{\bfx}\left[ \bfu^\ssup{m'}(\bfX_t)\,M_t^\sse{\infty}(\bfX,\delta_m)(m')\right]\\
&= \sum_{m'\in\{1,2\}^n}\int_{\R^n}d\bfx\,\phi(\bfx)\, \int_{\bR^n} d\bfz \,p_t(\bfx - \bfz) \bfu^\ssup{m'}(\bfz)\, \bE_{\bfx}\left[ M_t^\sse{\infty}(\bfX,\delta_m)(m')\;\middle|\;\bfX_t=\bfz \right]\,	\\
&=g_{t,\phi,m}^\ssup{\delta}(\bfu) 
+\sum_{m'\in\{1,2\}^n}\int_{\R^n}d\bfx\,\phi(\bfx) \int_{\bR^n}d\bfy \int_{\bR^n}d\bfz\, p_{t-\delta}(\bfx - \bfy)p_\delta(\bfy-\bfz) \bfu^\ssup{m'}(\bfz) \times\\
& \hspace{3cm} \times\Big(\bE_{\bfx}\left[ M^\sse{\infty}_t(\bfX,\delta_m)(m')\;\middle|\;\bfX_t=\bfz \right] - \bE_{\bfx}\left[ M_t^\sse{\infty}(\bfX,\delta_m)(m')\;\middle|\;\bfX_t=\bfy \right]\Big),
\end{aligned}
\end{align}
where we define
\begin{align*} & g_{t,\phi,m}^\ssup{\delta}(\bfu):=\sum_{m'\in\{1,2\}^n}\int_{\R^n}d\bfx\,\phi(\bfx) \int_{\bR^n}d\bfy \, p_{t-\delta}(\bfx - \bfy)\times\\
& \qquad\qquad\qquad\qquad\qquad\times\bE_{\bfx}\left[ M_t^\sse{\infty}(\bfX,\delta_m)(m')\;\middle|\;\bfX_t=\bfy \right] \int_{\bR^n}d\bfz\,p_\delta(\bfy-\bfz) \bfu^\ssup{m'}(\bfz).\end{align*}
Now let $K>0$. We will show that $ g_{t,\phi,m}^\ssup{\delta}(\bfu)$ is continuous as a function of $\bfu\in\calM_K$ for each fixed $\delta>0$, while the difference in \eqref{proof:semigroup-continuous_1} tends to $0$ as $\delta\downarrow0$ uniformly in $\bfu\in\calM_K$. 

First, it is clear by the definition of the topology of $\calM_\tem(\R)$ that $\int_{\R^n}p_\delta(\bfy-\bfz)\,\tilde\bfu^\ssup{m'}(\bfz)\,d\bfz\to\int_{\R^n}p_\delta(\bfy-\bfz)\,\bfu^\ssup{m'}(\bfz)\,d\bfz$ as $\tilde\bfu\to\bfu$ in $\calM_K$, for each fixed $\bfy\in\R^n$. Since the process $M_t^\sse{\infty}$ is uniformly bounded by Proposition~\ref{prop:M_bounded} and $\bfu^\ssup{m'}(\bfz)\le K^n$ uniformly in $\bfu\in\calM_K$ and $\bfz\in\R^n$, by dominated convergence we get that $g_{t,\phi,m}^\ssup{\delta}(\tilde\bfu)\to g_{t,\phi,m}^\ssup{\delta}(\bfu)$ as  $\tilde\bfu\to\bfu$ in $\calM_K$.

Moreover, for each $\bfu\in\calM_K$ we have by \eqref{proof:semigroup-continuous_1} that
\begin{align}\label{proof:semigroup-continuous_2}\begin{aligned}
&|P_t f_{\phi,m}(\bfu) - g_{t,\phi,m}^\ssup{\delta}(\bfu)|\\
&\le K^n\sum_{m'\in\{1,2\}^n}\int_{\R^n}d\bfx\,\phi(\bfx) \int_{\bR^n}d\bfy \, p_{t-\delta}(\bfx - \bfy)\int_{\bR^n}d\bfz\,p_\delta(\bfy-\bfz) \times\\
& \hspace{2cm} \times\Big|\bE_{\bfx}\left[ M^\sse{\infty}_t(\bfX,\delta_m)(m')\;\middle|\;\bfX_t=\bfz \right] - \bE_{\bfx}\left[ M_t^\sse{\infty}(\bfX,\delta_m)(m')\;\middle|\;\bfX_t=\bfy \right]\Big|.
\end{aligned}\end{align}

Since $M_t^\sse{\infty}$ depends only on the number and order of collisions between the Brownian motions $\bfX$, we have that $\bE_{\bfx}\left[ M_t^\sse{\infty}(\bfX,\delta_m)(m')\;\middle|\;\bfX_t= \cdot \right]$ is continuous outside of points where $y_i=y_j$ for some $i\neq j$. 
In particular, for almost every $\bfx,\bfy\in\R^n$ the function 
\[\bfz\mapsto \Big(\bE_{\bfx}\left[ M^\sse{\infty}_t(\bfX,\delta_m)(m')\;\middle|\;\bfX_t=\bfz \right] - \bE_{\bfx}\left[ M_t^\sse{\infty}(\bfX,\delta_m)(m')\;\middle|\;\bfX_t=\bfy \right]\Big)\] 
is continuous at $\bfy$, where it takes the value $0$. Thus the integral $\int_{\R^n}d\bfz\cdots$ in \eqref{proof:semigroup-continuous_2} tends to $0$ for almost every $\bfx,\bfy\in\R^n$. 
Again using that the process $M_t^\sse{\infty}$ is uniformly bounded, by dominated convergence the whole integral on the RHS of \eqref{proof:semigroup-continuous_2} vanishes as $\delta\downarrow0$. In particular, the LHS vanishes uniformly in $\bfu\in\calM_K$.

Consequently, given $\eps>0$ we first choose $\delta>0$ such that $|P_tf_{\phi,m}(\bfu)-g_{t,\phi,m}^\ssup{\delta}(\bfu)|<\eps/3$ for all $\bfu\in\calM_K$ and then $\eta=\eta(\delta)>0$ such that $|g_{t,\phi,m}^\ssup{\delta}(\bfu)-g_{t,\phi,m}^\ssup{\delta}(\tilde\bfu)|<\eps/3$ for $d_{\cM_\tem}(\bfu,\tilde\bfu)<\eta$ to obtain
\[
\begin{aligned}
&|P_tf_{\phi,m}(\bfu) - P_tf_{\phi,m}(\tilde\bfu)|\\
&\le |P_tf_{\phi,m}(\bfu)-g_{t,\phi,m}^\ssup{\delta}(\bfu)| + |g_{t,\phi,m}^\ssup{\delta}(\bfu)-g_{t,\phi,m}^\ssup{\delta}(\tilde\bfu)| + |P_tf_{\phi,m}(\tilde\bfu)-g_{t,\phi,m}^\ssup{\delta}(\tilde\bfu)|<\eps
\end{aligned}\]
if $d_{\cM_\tem}(\bfu,\tilde\bfu)<\eta$.
Thus $P_tf_{\phi,m}$ is continuous on $\calM_K$. 
The proof for the continuity of $P_t^\sse{\gamma}f_{\phi,m}$ is analogous, replacing $M_t^\sse{\infty}$ by $M_t^\sse{\gamma}$. 

Moreover, again using the moment duality for finite resp.\ infinite branching rate we obtain
\begin{align}
&|P_{t}^\sse{\gamma}f_{\phi,m}(\bfu) - P_tf_{\phi,m}(\bfu)|\\
&=\left|\sum_{m'\in\{1,2\}^n}\int_{\R^n}d\bfx\,\phi(\bfx)\,\bE_{\bfx}\left[ \bfu^\ssup{m'}(\bfX_t)\left(M_t^\sse{\gamma}(\bfX,\delta_m)(m') - M_t^\sse{\infty}(\bfX,\delta_m)(m')\right)\right]\right|\\
&\le K^n\sum_{m'\in\{1,2\}^n}\int_{\R^n}d\bfx\,|\phi(\bfx)|\,\bE_{\bfx}\left[ \left|M_t^\sse{\gamma}(\bfX,\delta_m)(m') - M_t^\sse{\infty}(\bfX,\delta_m)(m')\right|\right]\\
\end{align}
uniformly in $\bfu\in\calM_K$, and by dominated convergence the RHS tends to $0$ as $\gamma\uparrow\infty$. 
\end{proof}

\begin{corollary}\label{cor:Feller}
Let $K>0$. 
For all $t>0$ and $\gamma\in(0,\infty)$, we have
\begin{equation}\label{eq:Feller-property}
P_t(\cC(\cM_K))\subseteq\cC(\cM_K) \qquad\text{and}\qquad P_t^\sse{\gamma}(\cC(\cM_K))\subseteq\cC(\cM_K). 
\end{equation}
Moreover, for each $f\in\cC(\cM_K)$ we have
\begin{equation}\label{eq:strong-convergence}
\sup_{\bfu\in\calM_K}|P_{t}f(\bfu) - P_t^\sse{\gamma}f(\bfu)|\to0\qquad\text{as } \gamma\to\infty.
\end{equation}
In particular, $(P_t)_{t\ge0}$ and $(P_t^\sse{\gamma})_{t\ge0}$ are Feller semigroups on $\cC(\cM_K)$.
\end{corollary}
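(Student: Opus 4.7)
The plan is to bootstrap from Lemma \ref{lemma:semigroup-continuous} to arbitrary $f\in\cC(\cM_K)$ via a Stone--Weierstrass argument. Let $\calA$ denote the collection of finite linear combinations of the constant function $1$ together with functions $f_{\phi,m}$, where $n\in\N$, $m\in\{1,2\}^n$ and $\phi=\bigotimes_{i=1}^n\phi_i$ with each $\phi_i\in\cC_c(\R)$. Each such $f_{\phi,m}$ lies in $\cC(\cM_K)$ (as noted right after \eqref{defn:f_phi,m}), and $\calA$ is a subalgebra: it contains constants, is a vector space by construction, and is closed under products via the identity
\[ f_{\phi,m}\cdot f_{\psi,m'}\,=\,f_{\phi\otimes\psi,\,(m,m')}, \]
which exhibits the product as another element of $\calA$ on a higher-dimensional product space. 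Moreover $\calA$ separates points: two distinct elements $\bfu,\tilde\bfu\in\cM_K$ differ as bounded Radon measures in at least one coordinate $i\in\{1,2\}$ and are then distinguished by some $f_{\phi,(i)}$ with $\phi\in\cC_c(\R)$. Since $\cM_K$ is compact metric, Stone--Weierstrass yields that $\calA$ is uniformly dense in $\cC(\cM_K)$.

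Given this density, the extensions \eqref{eq:Feller-property} and \eqref{eq:strong-convergence} from the generators to arbitrary $f\in\cC(\cM_K)$ become a routine three-$\eps$ argument exploiting that both $P_t$ and $P_t^\sse{\gamma}$ are contractions on bounded measurable functions. Given $f\in\cC(\cM_K)$ and $\eps>0$, choose $g\in\calA$ with $\|f-g\|_\infty<\eps/3$; by linearity and Lemma \ref{lemma:semigroup-continuous}, $P_tg$ and $P_t^\sse\gamma g$ lie in $\cC(\cM_K)$ and $\|P_tg-P_t^\sse\gamma g\|_\infty<\eps/3$ for $\gamma$ large enough. Contractivity then yields $\|P_tf-P_tg\|_\infty\le\eps/3$ and the analogous bound for $P_t^\sse\gamma$, establishing \eqref{eq:strong-convergence} and presenting $P_tf$ as the uniform limit (over the approximants $g$) of continuous functions $P_tg$, hence continuous, which is \eqref{eq:Feller-property}.

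The Feller semigroup claim follows with little extra work. The semigroup identity $P_{s+t}=P_sP_t$ descends from $P_{s+t}^\sse\gamma=P_s^\sse\gamma P_t^\sse\gamma$ applied to $f\in\cC(\cM_K)$ upon passing to the limit $\gamma\to\infty$ via \eqref{eq:strong-convergence} (used twice). Strong continuity $\|P_tf-f\|_\infty\to0$ as $t\downarrow0$ again reduces to generators $f_{\phi,m}$ by the same density argument; for those, the moment-duality representation combined with Proposition \ref{prop:thm_M_infty_continuous} (which ensures $M_t^\sse\infty=\delta_m$ up to the first new dual collision time $\tau_1>0$ a.s.) and standard continuity of the Brownian semigroup in $L^1$ yields the limit via dominated convergence. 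The main technical content has been carried out in Lemma \ref{lemma:semigroup-continuous}; everything in this corollary is functional-analytic bookkeeping on top of it, and I expect the only subtle point to be the uniformity in $\bfu_0\in\cM_K$ of the strong-continuity limit, which is controlled by the compactness of $\cM_K$ together with uniform integrability of the bounded dual representation.
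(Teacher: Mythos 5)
Your proof is correct and takes essentially the same route as the paper: Stone--Weierstrass on the algebra generated by the tensor-product $f_{\phi,m}$'s and constants, combined with contractivity of the (pre-)semigroups and Lemma~\ref{lemma:semigroup-continuous}, then passage to the limit to transfer the semigroup identity from $P^\sse\gamma$ to $P$. The only place you deviate is in the strong continuity at $t=0$: the paper simply observes that the processes have c\`adl\`ag (indeed continuous) paths, so $P_tf(\bfu)\to f(\bfu)$ pointwise by dominated convergence, and then invokes the standard fact that a pointwise-continuous-at-zero positive contraction semigroup mapping $\cC$ of a compact space into itself is automatically strongly continuous; you instead compute $\|P_t f_{\phi,m}-f_{\phi,m}\|_\infty\to 0$ directly from the moment duality. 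Your route is fine (the uniformity in $\bfu\in\cM_K$ actually comes from the bound $\bfu^\ssup{b}\le K^n$ and $\|S_t\phi-\phi\|_{L^1}\to 0$ rather than from compactness of $\cM_K$), but it is a bit more work than needed; the paper's shortcut via the standard Feller criterion is cleaner. Your explicit identity $f_{\phi,m}\cdot f_{\psi,m'}=f_{\phi\otimes\psi,(m,m')}$ is a nice way to make the algebra structure transparent, which the paper only asserts.
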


\begin{proof}
Fix $t>0$. 
Let $\cH\subseteq\cC(\cM_K)$ denote the system of all functions $f_{\phi,m}$ of the form \eqref{defn:f_phi,m} with $\phi=\otimes_{i=1}^n\phi_i$, where $\phi_i\in\cC_c(\R)$, $i=1,\ldots,n$. 
Note that $\cH$ is closed under multiplication and separates the points of $\cM_K$.
Writing $\cA\subseteq\cC(\cM_K)$ for the algebra generated by $\cH$ and the constant functions, we conclude 
using Lemma \ref{lemma:semigroup-continuous} that for all $f\in\cA$ we have $P_t^\sse{\gamma}f\in\cC(\cM_K)$, $P_tf\in \cC(\cM_K)$ and \eqref{eq:strong-convergence} holds. 
Since by the Stone-Weierstrass theorem $\cA$ is dense in $\cC(\cM_K)$ w.r.t.\ the uniform norm (recall that $\cM_K$ is compact), this extends easily to all $f\in\cC(\cM_K)$.

The fact that $P_tf\in\cC(\cM_K)$ for all $f\in\cC(\cM_K)$ implies in particular that $P_t(\cdot,\cdot)$ is a transition kernel for each $t>0$. Moreover, the semigroup property of $(P_t)_{t\ge0}$ follows from the convergence \eqref{eq:strong-convergence} and the semigroup property of $(P_t^\sse{\gamma})_{t\ge0}$.

Since $(\bfu_t^\sse{\gamma})_{t\ge0}$ and $(\bfu_t)_{t\ge0}$ have c\`adl\`ag paths, it is clear that
\begin{equation}\label{eq:continuity-semigroup}
P_t^\sse{\gamma}f(\bfu)\to f(\bfu)\quad\text{ and }\quad P_tf(\bfu)\to f(\bfu)\qquad \text{as }t\downarrow0,
\end{equation}
for each $f\in\cC(\cM_K)$ and $\bfu\in\cM_K$. Together with \eqref{eq:Feller-property}, this gives the Feller property of the semigroups $(P_t^\sse{\gamma})_{t\ge0}$ and $(P_t)_{t\ge0}$; in particular, these are strongly continuous on $\cC(\cM_K)$. 
\end{proof}

Now it is straightforward to finish the proof of Theorem \ref{thm:main1}: Let $\bfu_0\in\cM_K$, $K>0$. 
By standard theory (see e.g. \cite[Thm.\ 4.2.5]{EK86}), the strong convergence \eqref{eq:strong-convergence} of the Feller semigroups on $\cC(\cM_K)$ implies convergence of the family of processes $(\bfu^\sse{\gamma}_t)_{t\ge0}$ as $\gamma\uparrow\infty$ in $D_{[0,\infty)}(\cM_K)$ w.r.t.\ the Skorokhod topology, and the unique limit $(\bfu_t)_{t\ge0}$ is a Markov process with semigroup $(P_t)_{t\ge0}$. Since the approximating processes $(\bfu_t^\sse{\gamma})_{t\ge0}$ are continuous and the convergence is in the Skorokhod topology, the limit is in fact in $\cC_{[0,\infty)}(\cM_K)$. Also by standard theory, the strong Markov property of $(\bfu_t)_{t\ge0}$ w.r.t.\ the usual augmentation of its canonical filtration follow from the Feller property of $(P_t)_{t\ge0}$. 
This concludes the proof of Theorem \ref{thm:main1}.

\subsection{Some notation and preliminaries }\label{ssec:notation}
We continue to use all the notation introduced in Section \ref{ssec:-1}. Moreover,
we write $\bR^{n,\uparrow}:=\{\bfx\in\bR^n: x_1<x_2<\cdots<x_n\}$ for the space of increasing sequences of length $n$ in $\bR$, and analogously $\bQ^{n,\uparrow}$ or $[a,b]^{n,\uparrow}$ for sequences in $\bQ$ or $[a,b]$.

\begin{lemma}\label{lemma:U}
Let $\bfu=(u^\ssup{1},u^\ssup{2})\in\cU$. 
\begin{itemize}
\item[a)] Suppose that $a<b$ with $a\in\supp(u^\ssup{i})$ and $b\in\supp(u^\ssup{3-i})$, $i\in\{1,2\}$.
Then there must be an interface point between $a$ and $b$, i.e.\ $\cI(\bfu)\cap [a,b]\ne\emptyset$. 
\item[b)] Suppose that $a,b\in\cI(\bfu)$ with $a<b$ and that there are no interface points (strictly) between $a$ and $b$, i.e. $\cI(\bfu)\cap(a,b)=\emptyset$. 
Then there exists $i\in\{1,2\}$ with $(a,b)\subseteq\supp( u^\ssup{i})\setminus\supp( u^\ssup{3-i})$.
\end{itemize}
\end{lemma}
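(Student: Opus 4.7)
My plan rests on one preliminary topological observation: since $u^\ssup{1}+u^\ssup{2}$ is equivalent to Lebesgue measure, we have
\[\supp(u^\ssup{1}) \cup \supp(u^\ssup{2}) = \R.\]
Indeed, the complement of this union is open, and if nonempty it would have positive Lebesgue measure but zero $(u^\ssup{1}+u^\ssup{2})$-measure, contradicting the equivalence. Once this is noted, both parts reduce to the elementary fact that a nonempty connected set cannot be written as a disjoint union of two nonempty relatively closed subsets.

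For part a), I would argue by contradiction: assume $\cI(\bfu) \cap [a,b] = \emptyset$ and set $S_j := \supp(u^\ssup{j}) \cap [a,b]$ for $j=1,2$. Each $S_j$ is relatively closed in $[a,b]$. They are disjoint, since any common point would lie in $\supp(u^\ssup{1}) \cap \supp(u^\ssup{2}) \cap [a,b] = \cI(\bfu) \cap [a,b] = \emptyset$. By the preliminary, their union covers $[a,b]$. Finally both are nonempty: $a \in S_i$ and $b \in S_{3-i}$ by hypothesis. This partitions the connected set $[a,b]$ into two nonempty disjoint relatively closed subsets, giving the desired contradiction.

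For part b), the same idea applies on the open interval $(a,b)$. Put $A_j := (a,b) \cap \supp(u^\ssup{j})$ for $j=1,2$. These are relatively closed in $(a,b)$, disjoint by $\cI(\bfu) \cap (a,b) = \emptyset$, and cover $(a,b)$ by the preliminary. Since $(a,b)$ is connected and nonempty, one of $A_1, A_2$ equals all of $(a,b)$ and the other is empty. Unwinding, this says $(a,b) \subseteq \supp(u^\ssup{i})$ while $(a,b) \cap \supp(u^\ssup{3-i}) = \emptyset$ for some $i \in \{1,2\}$, which is exactly the claim.

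There is no real obstacle here; the only conceptual point is the preliminary observation, which translates the analytic hypothesis (equivalence with Lebesgue measure) into the topological statement that the two supports cover $\R$. After that, both parts follow from connectedness of intervals. Part b) could alternatively be deduced directly from part a) by contradiction, but the parallel connectedness argument seems cleanest.
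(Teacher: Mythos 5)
Your proof is correct, and it takes a genuinely different route from the paper's. For part a), the paper argues constructively: it defines the extremal points $r:=\max\{x\in[a,b]:x\in\supp(u^\ssup{1})\}$ and $\ell:=\min\{x\in[a,b]:x\in\supp(u^\ssup{2})\}$, shows $r\geq\ell$ using $\supp(u^\ssup{1})\cup\supp(u^\ssup{2})=\R$, and then verifies directly that $r$ (and $\ell$) lie in both supports by a limiting argument; part b) is then deduced from part a). You instead run a single abstract connectedness argument in both parts: after noting that the two supports cover $\R$ (a clean consequence of the equivalence with Lebesgue measure), you observe that absence of interface points would split $[a,b]$, respectively $(a,b)$, into two nonempty disjoint relatively closed sets. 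Your version is shorter, treats a) and b) symmetrically without one depending on the other, and isolates the topological core of the matter; the paper's version is more explicit and in particular exhibits a concrete interface point ($r$ or $\ell$), which could in principle be useful if one needed to locate it, though nothing downstream in the paper seems to require that extra information.
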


\begin{proof}
a) Suppose w.l.o.g. that $a\in\supp(u^\ssup{1})$ and $b\in\supp(u^\ssup{2})$. Define
\[r:=\max\left\{x\in[a,b]:x\in\supp(u^\ssup{1})\right\},\qquad \ell:=\min\left\{x\in[a,b]:x\in\supp(u^\ssup{2})\right\}.\]
Note that the maximum resp.\ minimum is attained since the support of a measure is necessarily closed, and we have $r\in\supp(u^\ssup{1})$, $\ell\in\supp(u^\ssup{2})$. If $r=b$ resp. $\ell=a$, then $b\in\cI(\bfu)$ resp. $a\in\cI(\bfu)$ and the proof is finished. Suppose now that $r<b$ and $\ell>a$. Then we must have $r\ge\ell$: Assume by way of contradiction that $r<\ell$. Then for all $x\in (r,\ell)$ we have by definition that $x\notin\supp(u^\ssup{1})\cup\supp(u^\ssup{2})=\supp(u^\ssup{1}+u^\ssup{2})$, which is a contradiction since $u^\ssup{1}+u^\ssup{2}$ is equivalent to the Lebesgue measure and thus $\supp(u^\ssup{1}+u^\ssup{2})=\R$. Consequently $r\ge\ell$, and moreover $r$ and $\ell$ are interface points: Let $r_n\in[a,b]$, $r_n\downarrow r$. Then $r_n\notin\supp(u^\ssup{1})$ for all $n\in\N$ by definition of $r$. Since $\supp(u^\ssup{1}+u^\ssup{2})=\R$, we must have $r_n\in\supp(u^\ssup{2})$ for all $n\in\N$ and thus $r=\lim_{n\to\infty}r_n\in\supp(u^\ssup{2})$. Since also $r\in\supp(u^\ssup{1})$ by 
definition of $r$, we have $r\in\cI(\bfu)$. An analogous argument shows that also $\ell\in\cI(\bfu)$. In any case, it follows that $\cI(\bfu)\cap[a,b]\ne\emptyset$. 

b) Since $\supp(u^\ssup{1})\cup\supp(u^\ssup{2})=\supp(u^\ssup{1}+u^\ssup{2})=\R$ and $(a,b)$ contains no interface points by assumption, each point $x\in(a,b)$ belongs to either $\supp(u^\ssup{1})$ or to $\supp(u^\ssup{2})$. However, if there were $x,y\in(a,b)$ with $x<y$, $x\in\supp(u^\ssup{1})$ and $y\in\supp(u^\ssup{2})$ (or vice versa), then by a) there would have to be an interface point in the interval $[x,y]\subseteq(a,b)$, contrary to assumption.
\end{proof}

\newpage
\begin{lemma}\label{lemma:interface-characterisation}
Let $\bfu\in\cU$ and $n\in\N$. The following are equivalent:
\begin{itemize}
\item[a)] $\bfu \in \bigcup_{k=0}^n \cU_k $.
\item[b)] For each $a\in \bQ^{2(n+2),\uparrow}$ and alternating coloring $m\in\{1,2\}^{n+2}$, we have 
\begin{equation}\label{eq:interface-characterisation}
\prod_{i=1}^{n+2} u^\ssup{m_i}([a_{2i-1},a_{2i}])=0 .
\end{equation}
\item[c)] For each $m\in\{1,2\}^{n+2}$ alternating, we have $\bfu^\ssup{m}(\bfx)=0$ for Lebesgue-a.e. $\bfx\in\R^{n+2,\uparrow}$.
\end{itemize}
\end{lemma}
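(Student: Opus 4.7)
My plan is to prove the chain (a) $\Leftrightarrow$ (b) $\Leftrightarrow$ (c), with (b) $\Leftrightarrow$ (c) being a Fubini exercise and (a) $\Leftrightarrow$ (b) relying crucially on Lemma~\ref{lemma:U}. For (b) $\Leftrightarrow$ (c), I would observe that by Fubini, for any rational box $B = \prod_i [a_{2i-1},a_{2i}]$ with $a \in \bQ^{2(n+2),\uparrow}$ one has $\int_B \bfu^\ssup{m}(\bfx)\,d\bfx = \prod_{i=1}^{n+2} u^\ssup{m_i}([a_{2i-1},a_{2i}])$, and since $\R^{n+2,\uparrow}$ is the countable union of such rational boxes (every strictly increasing $\bfx$ lies in one by density of $\bQ$) and $\bfu^\ssup{m}$ is nonnegative, the integrals vanishing over all such boxes is equivalent to $\bfu^\ssup{m}$ vanishing Lebesgue-a.e.\ on $\R^{n+2,\uparrow}$.

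Next, I would prove (a) $\Rightarrow$ (b) by contradiction. Assuming $|\cI(\bfu)| \le n$ and that $\prod_i u^\ssup{m_i}([a_{2i-1},a_{2i}]) > 0$ for some rational $a$ and alternating $m$, in each $[a_{2i-1},a_{2i}]$ I pick $y_i \in (a_{2i-1},a_{2i}) \cap \supp(u^\ssup{m_i}) \setminus \cI(\bfu)$; such a point exists because the positive $u^\ssup{m_i}$-mass on $[a_{2i-1},a_{2i}]$ forces $\supp(u^\ssup{m_i}) \cap [a_{2i-1},a_{2i}]$ to have positive Lebesgue measure, while the finite set $\cI(\bfu)$ has measure zero. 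The resulting $y_1 < y_2 < \cdots < y_{n+2}$ have alternating colors, and Lemma~\ref{lemma:U}a) applied to each consecutive pair produces an interface in the open interval $(y_i, y_{i+1})$ (since the endpoints avoid $\cI(\bfu)$). The disjointness of these $n+1$ open intervals then yields $n+1$ distinct interface points, contradicting $|\cI(\bfu)| \le n$.

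The main work lies in (b) $\Rightarrow$ (a), which I prove contrapositively: given $n+1$ distinct interfaces $z_1<\cdots<z_{n+1}$, I construct $n+2$ disjoint rational intervals with alternating colors having positive product. Pick $\delta>0$ so that the neighborhoods $(z_i-\delta,z_i+\delta)$ are pairwise disjoint, and set $\ell_i := \{k : |A_k\cap(z_i-\delta,z_i)|>0\}$ and $r_i := \{k : |A_k\cap(z_i,z_i+\delta)|>0\}$, where $A_k := \{u^\ssup{k}>0\}$. Both $\ell_i, r_i$ are non-empty (since $A_1\cup A_2$ is co-null) and $\ell_i\cup r_i = \{1,2\}$ (since $z_i\in\supp(u^\ssup{1})\cap\supp(u^\ssup{2})$). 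The key idea is a \emph{double-use} strategy: designate one interface $z_{i_0}$ to contribute two sub-intervals (one on each side of $z_{i_0}$), while each of the remaining $n$ interfaces contributes a single sub-interval, yielding $n+2$ disjoint sub-intervals in increasing order (thanks to disjointness of the neighborhoods). At $z_{i_0}$ I choose $c^-\in\ell_{i_0}, c^+\in r_{i_0}$ with $c^-\ne c^+$, which is always possible because $\ell_{i_0}\cup r_{i_0}=\{1,2\}$ excludes the only obstructing configurations $(\{1\},\{1\})$ and $(\{2\},\{2\})$. At each single-use interface $z_i$, any color in $\{1,2\}=\ell_i\cup r_i$ is realizable by placing the sub-interval on the appropriate side, so I propagate the alternating pattern outward from the forced pair at $z_{i_0}$. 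A standard density-point argument then produces rational sub-intervals $[a_{2j-1},a_{2j}]$ of each chosen sub-interval with $u^\ssup{c_j}([a_{2j-1},a_{2j}])>0$, yielding an alternating rational configuration that violates (b).

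The main obstacle is the combinatorial feasibility in the last step: checking that the forced pair at $z_{i_0}$ and the free color choices at the other $n$ interfaces compose into a globally alternating sequence. This reduces to the two observations above -- feasibility of the distinct pair at $z_{i_0}$, and unrestricted color choice at every single-use interface -- which together guarantee that the alternation extends consistently in both directions from $z_{i_0}$.
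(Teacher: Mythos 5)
Your proof is correct; the overall architecture (a)$\Leftrightarrow$(b)$\Leftrightarrow$(c) with Lemma~\ref{lemma:U} carrying the main weight matches the paper, but two of the three steps differ in a useful way. For (b)$\Leftrightarrow$(c) you replace the paper's two separate arguments -- Lebesgue differentiation of each measure $u^{\ssup{i}}$ for b)$\Rightarrow$c), and a product of positive-measure sets for c)$\Rightarrow$b) -- with a single clean Fubini observation: the integral of the nonnegative density $\bfu^{\ssup{m}}$ over a rational box equals the product of masses, and $\R^{n+2,\uparrow}$ is a countable union of such boxes, so vanishing on all boxes is equivalent to vanishing a.e. This is tighter than the published argument. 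Your (a)$\Rightarrow$(b) is a minor reorganization of the paper's (you pick one point per interval away from the null set $\cI(\bfu)$; the paper picks two per interval and uses the gap intervals $[x_{2i},x_{2i+1}]$) -- both invoke Lemma~\ref{lemma:U}a) on consecutive alternating-colored support points, and both are fine. For (b)$\Rightarrow$(a), however, your argument is more involved than it needs to be: the paper simply takes the leftmost interface as the ``double-use'' one, gets an alternating pair $(k,\ell)$ on the two thin sides of $x_1$, and then at every $x_i$, $i\geq 2$, uses the single \emph{wide} interval $[x_i-\delta,x_i+\delta]$, on which both colors automatically have positive mass since $x_i$ is an interface point -- no $\ell_i/r_i$ bookkeeping, no side placement, no outward propagation. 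Your version with an arbitrary pivot $z_{i_0}$ and one-sided sub-intervals everywhere is still correct (the feasibility of the pivot pair and the observation $\ell_i\cup r_i=\{1,2\}$ are sound), but the wide-interval shortcut makes the combinatorics trivial.
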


\begin{proof}
a) $\Rightarrow$ b):  
Fix $n\in\N$ and assume that $\bfu \in \bigcup_{k=0}^n \cU_k $, i.e.\ $\bfu$ has at most $n$ interface points. Suppose by way of contradiction that for some $a\in \bQ^{2(n+2),\uparrow}$ and alternating configuration $m$, we have $\prod_{i=1}^{n+2} u^\ssup{m_i}([a_{2i-1},a_{2i}])>0$. 
Since $u^\ssup{1},u^\ssup{2}$ are absolutely continuous, for each $i=1,\ldots,n+2$ we have $(a_{2i-1},a_{2i})\cap\supp(u^\ssup{m_i})\ne\emptyset$ and can pick two distinct points in $(a_{2i-1},a_{2i})\cap\supp(u^\ssup{m_i})$. Labeling these points in increasing order, we get $\bfx\in\R^{2(n+2),\uparrow}$ such that $x_{2i-1},x_{2i}\in(a_{2i-1},a_{2i})\cap\supp(u^\ssup{m_i}) $.
Then we have $x_{2i}<x_{2i+1}$, $x_{2i}\in\supp(u^\ssup{m_i})$ and $x_{2i+1}\in\supp(u^\ssup{m_{i+1}})$ for all $i=1,\ldots,(n+2)$. Since $m_i\ne m_{i+1}$, by Lemma \ref{lemma:U} there must be an interface point in the interval $[x_{2i},x_{2i+1}]$, $i=1,\ldots,n+1$. Since these intervals are all disjoint and there are $n+1$ of them, there must be at least $n+1$ interfaces, contradicting the hypothesis that $\bfu\in\cU_n$.

b) $\Rightarrow$ c): Assume that b) holds. Let $m\in\{1,2\}$ be alternating. Using continuity of the measures $u^\ssup{i}$ and the fact that $\Q$ is dense in $\R$, we see that \eqref{eq:interface-characterisation} holds in fact for all $a\in\R^{2(n+2),\uparrow}$. 
In particular, given $\bfx\in\R^{n+2,\uparrow}$ we put $a_{2i-1}:=x_i-\eps$, $a_{2i}:=x_i+\eps$ for $i=1,\ldots,n+2$. Then we have $a\in\R^{2(n+2),\uparrow}$ for all $\eps>0$ small enough, and thus by \eqref{eq:interface-characterisation}
\begin{equation}\label{proof:interface-characterisation_1}
\prod_{i=1}^{n+2} u^\ssup{m_i}([x_i-\eps,x_i+\eps])=0
\end{equation}
for all $\bfx=(x_1,\ldots,x_{n+2})\in\R^{2(n+2),\uparrow}$ and $\eps=\eps(\bfx)>0$ small enough. 
Moreover, by general differentiation theory for measures we have
\[u^\ssup{i}(x)=\lim_{\eps\downarrow0}\frac{1}{2\eps}u^\ssup{i}([x-\eps,x+\eps])\qquad\text{for Lebesgue-a.e. }x\in\R,\; i=1,2,\]
and consequently
\begin{equation}\label{proof:interface-characterisation_2}
\bfu^\ssup{m}(\bfx)=\prod_{i=1}^{n+2}u^\ssup{m_i}(x_i)=\lim_{\eps\downarrow0}\frac{1}{(2\eps)^{n+2}}\prod_{i=1}^{n+2}u^\ssup{m_i}([x_i-\eps,x_i+\eps])
\end{equation}
for Lebesgue-a.e. $\bfx=(x_1,\ldots,x_{n+2})\in\R^{n+2}$. Combining \eqref{proof:interface-characterisation_1} and \eqref{proof:interface-characterisation_2} shows that $\bfu^\ssup{m}(\bfx)=0$ for Lebesgue-a.e. $\bfx\in\R^{n+2,\uparrow}$.

c) $\Rightarrow$ b): Assume that c) holds. Suppose that $\prod_{i=1}^{n+2} u^\ssup{m_i}([a_{2i-1},a_{2i}])>0$ for some $a\in\Q^{2(n+2),\uparrow}$ and one of the alternating colorings. Then since the measures are absolutely continuous, there must be subsets $A_i\subseteq[a_{2i-1},a_{2i}]$ with positive Lebesgue measure $\mathrm{Leb}(A_i)>0$ such that $u^\ssup{m_i}(x)>0$ for all $x\in A_i$, $i=1,\ldots,n+2$. Then $A:=A_1\times A_2\times\cdots\times A_{n+2}\subseteq\R^{n+2,\uparrow}$ has positive Lebesgue measure and $\bfu^\ssup{m}(\bfx)>0$ for all $\bfx\in A$, contrary to assumption.

b) $\Rightarrow$ a): Assume that b) holds. Suppose by way of contradiction that $\bfu\notin \bigcup_{k=0}^n \cU_k$, i.e.\ there are at least $n+1$ interfaces $x_1<x_2<\cdots<x_{n+1}$. We have to show that there exists $a\in \bQ^{2(n+2),\uparrow}$ and an alternating configuration $m\in\{1,2\}^{n+2}$ such that $u^\ssup{m_i}([a_{2i-1},a_{2i}])>0$ for all $i=1,\ldots,n+2$. 

Let $\delta:=\frac12\min\{x_{i+1}-x_i: 1\leq i\leq n\}$. 
Since $x_1$ is an interface point and $u^\ssup{1}+u^\ssup{2}$ is equivalent to Lebesgue measure, there must be a pair of indices $(k,\ell)\in\{(1,2),(2,1)\}$ such that $u^\ssup{k}([x_1-\delta,x_1])>0$ and $u^\ssup{\ell}([x_1,x_1+\delta])>0$.
Let $m\in\{1,2\}^{n+2}$ be the alternating coloring with $m_1=k$. Using the fact that $u^\ssup{j}([x_i-\delta,x_i+\delta])>0$ for all $i=2,...,n+1$ and $j=1,2$, we can set $a'_1:=x_1-\delta$, $a'_2:=a'_3:=x_1$, $a'_4:=x_1+\delta$ and $a'_{2i+1}:=x_i-\delta$, $a'_{2i+2}:=x_i+\delta$ for $i=2,...,n+1$ in order to obtain
\begin{align}\label{eq:5.5b->a}
\prod_{i=1}^{n+2}u^\ssup{m_i}([a'_{2i-1},a'_{2i}])>0. 
\end{align}
Now we can choose $a\in\bQ^{2(n+2),\uparrow}$ so that $[a_{2i-1},a_{2i}]\subset(a'_{2i-1},a'_{2i})$ and $u^\ssup{m_i}([a_{2i-1},a_{2i}])>0$, $i=1,\ldots,n+2$. 
Then \eqref{eq:5.5b->a} holds with $a$ in place of $a'$, contradicting b).
\end{proof}

\subsection{Non-proliferation of interfaces}\label{ssec:nonproliferation}
In this subsection, we prove that the number of interfaces is non-increasing. 

\begin{theorem}\label{thm:interfacecount-decreasing}
Suppose that $\bfu_0\in\cU_n$. Then, 
\[ {\bP}_{\bfu_0}\left(\bfu_t \in \bigcup_{k=0}^n \cU_k \text{ for all }t\geq0\right)=1. \]
\end{theorem}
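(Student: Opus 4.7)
The strategy is to combine the moment duality for $\rho=-1$ (which holds for all $n\in\bN$ by Theorem~\ref{thm:main1}~a)) with the explicit dual dynamics to verify the characterization of Lemma~\ref{lemma:interface-characterisation}~b) for $\bfu_t$ simultaneously for all $t\ge 0$. Concretely, it suffices to show that almost surely, for every $t\ge 0$, every $a\in\bQ^{2(n+2),\uparrow}$ and every alternating $m\in\{1,2\}^{n+2}$,
\[
\prod_{i=1}^{n+2}u_t^\ssup{m_i}([a_{2i-1},a_{2i}])=0.
\]

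The algebraic key is that for $\rho=-1$ and an alternating $c\in\{1,2\}^{n+2}$, $M_t^\sse\infty(\bfX,\delta_c)=0$ for all $t>\tau_1$, where $\tau_1$ is the first pair-collision time of the $n+2$ driving Brownian motions. Indeed, by continuity of $1$D Brownian motion the motions remain in their initial spatial order on $\{t<\tau_1\}$, so the first collision involves a spatially adjacent pair $\kappa_1=\{k,k+1\}$. Applying formula~\eqref{eq:K-infty-explicit2_minus1} to the partition $\pi(\bfX_{\tau_1})$ (one block $\{k,k+1\}$, singletons otherwise) with $K_0=\delta_c$ yields $K_\infty(\delta_c,\pi(\bfX_{\tau_1}))=0$ since $c_k\ne c_{k+1}$; this zero then propagates via Proposition~\ref{prop:thm_M_infty_continuous}. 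Meanwhile $M_t^\sse\infty(\bfX,\delta_c)=\delta_c$ on $[0,\tau_1)$.

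Substituting into Theorem~\ref{thm:moment_duality_infinite} with test functions of product form $\phi(\bfx)=\prod_{i=1}^{n+2}\phi_i(x_i)$, where $\phi_i\in\cC_c(\bR)$ are nonnegative with pairwise disjoint, left-to-right ordered supports, gives
\[
\E_{\bfu_0}\Bigl[\prod_{i=1}^{n+2}\langle u_t^\ssup{c_i},\phi_i\rangle\Bigr]=\int_{\bR^{n+2}}\phi(\bfx)\,\E_\bfx\bigl[\bfu_0^\ssup{c}(\bfX_t)\,\1_{t<\tau_1}\bigr]\,d\bfx.
\]
On $\{t<\tau_1\}$ the Brownian motions keep their initial order, so $\bfX_t\in\bR^{n+2,\uparrow}$; since $\bfu_0\in\cU_n$, Lemma~\ref{lemma:interface-characterisation}~c) gives $\bfu_0^\ssup{c}=0$ Lebesgue-a.e.\ on $\bR^{n+2,\uparrow}$, and absolute continuity of the Brownian transition density forces the right-hand side to vanish. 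Hence at each fixed $t$ the product on the left is zero almost surely.

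To upgrade to ``for all $t$'' simultaneously, fix a countable family $\cF$ of such product test functions that approximate from below all rational product-indicators $\prod_i\1_{[a_{2i-1},a_{2i}]}$. Theorem~\ref{thm:main1}~a) provides path continuity of $\bfu$ in $\cC_{[0,\infty)}(\cM_b(\bR)^2)$, so each map $t\mapsto\langle u_t^\ssup{i},\phi_i\rangle$ is continuous; together with the a.s.\ vanishing at rational times this yields, on a single event of full probability, vanishing of these integrals for all $t\ge 0$, all $\phi\in\cF$, and both alternating $c$. Passing to the closed indicators (using that $u_t^\ssup{i}\le w_t$ is absolutely continuous) gives $\prod_i u_t^\ssup{m_i}([a_{2i-1},a_{2i}])=0$ for all $t\ge 0$, all rational $a$, and alternating $m$. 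Since $w_t>0$ everywhere on $\bR$ (assuming $w_0\ne0$, the case $w_0\equiv 0$ being trivial), this forces $u_t^\ssup{1}\perp u_t^\ssup{2}$ -- otherwise both densities would be simultaneously positive on a set of positive Lebesgue measure, and a suitable choice of $a,m$ would give a nonzero product -- so $\bfu_t\in\cU$ and Lemma~\ref{lemma:interface-characterisation}~b) yields $\bfu_t\in\bigcup_{k=0}^n\cU_k$. The main technical point is precisely this pathwise extension, as the interface count is not obviously continuous in $t$; the countable approximation scheme by continuous test functions bypasses this, while the vanishing $K_\infty(\delta_c,\cdot)=0$ for different-colored adjacent collisions is the feature unique to $\rho=-1$ that drives the whole argument.
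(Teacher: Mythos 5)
Your proposal is correct and follows essentially the same route as the paper: the key step is exactly the observation that for an alternating coloring $c$ the dual process $M_t^{\sse\infty}(\bfX,\delta_c)$ vanishes after the first Brownian collision (this is the content of Lemma~\ref{lemma:alternating}), which combined with the moment duality restricted to ordered test configurations, the combinatorial characterization of Lemma~\ref{lemma:interface-characterisation}, and a countable intersection over rational boxes and times followed by path (right-)continuity gives the result. The only minor deviation is that you re-derive the separation of types $u_t^{\ssup{1}}\perp u_t^{\ssup{2}}$ directly from the vanishing alternating products, whereas the paper invokes Theorem~\ref{thm:main1}~b); both are fine, and yours is marginally more self-contained.
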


\begin{remark}
The above result holds also for $\rho>-1$ as long as $\rho$ is sufficiently close to $-1$ w.r.t. $n$.
More precisely, the proof below uses the moment duality for $(n+2)$-th moments and thus needs $\rho<-\cos\left(\frac{\pi}{n+2}\right)$. For example, if $\rho<-\frac{1}{2}$, then starting from an initial configuration with a single interface ($n=1$), we obtain that almost surely there is at most one interface point at any time $t>0$.\footnote{Actually, there exists an alternative proof of this fact (using the approximation by the discrete-space model) which works for \emph{all} $\rho<0$, see \cite[Thm.\ 2.6]{HO15}.}
Moreover, the analogue of Theorem \ref{thm:interfacecount-decreasing} holds also for the discrete model.
\end{remark}

The key step to prove the theorem is the following lemma, for which we recall the notation $f_{\phi,m}$ from \eqref{defn:f_phi,m}.
\begin{lemma}\label{lemma:alternating}
Let $n\in\N_0$. 
Then for all $\bfu_0\in \calM_b(\R)^2$, 
each alternating coloring $m\in\{1,2\}^{n+2}$ and $\phi\in L^1(\R^{n+2})$ with $\supp(\phi)\subseteq\R^{n+2,\uparrow}$, we have
\begin{equation}\label{lemma:alternating_1}
P_tf_{\phi,m}(\bfu_0) = \bE_{\bfu_0}\left[f_{\phi,m}(\bfu_t)\right] = \int_{\R^{n+2,\uparrow}}\phi(\bfx)\,\E_{\bfx}\left[\bfu_0^\ssup{m}(\bfX_t)\ind_{t<\tau}\right]d\bfx,
\end{equation}
where 
\[\tau:=\inf\left\{t\geq 0: \exists 1\leq i<j\leq n \text{ with } X^\ssup{i}_t=X^\ssup{j}_t \right\}\]
is the first collision time of the Brownian motions $(\bfX_t)_{t\ge0}$. 
If in addition $\bfu_0\in\cU_n$, we have
\begin{align}\label{lemma:alternating_2}
P_tf_{\phi,m}(\bfu_0)=0.
\end{align}
\end{lemma}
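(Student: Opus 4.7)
The plan is to establish identity \eqref{lemma:alternating_1} by applying the moment duality with $c=m$ (valid for all $n \in \N$ when $\rho=-1$ by Theorem \ref{thm:main1}(a), or for $\rho \in (-1,0)$ with $\rho + \cos(\pi/(n+2))<0$ by Theorem \ref{thm:moment_duality_infinite}) and then showing that, on the support of $\phi$, the dual process collapses at the first collision time:
\begin{align}
M_t^\sse{\infty}(\bfX,\delta_m)(b) = \ind_{t<\tau}\,\ind_{b=m} \qquad \p_{\bfx}\text{-a.s., for all } \bfx\in\R^{n+2,\uparrow}.
\end{align}
Once this is shown, plugging it into the moment duality immediately yields \eqref{lemma:alternating_1}.

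For the collapse claim, I would proceed in two steps. First, before the first collision time $\tau$, the partition $\pi(\bfX_s)$ consists entirely of singletons; hence by the recursive construction in Proposition \ref{prop:thm_M_infty_continuous}, $M_s^\sse{\infty}(\bfX,\delta_m) = \delta_m$ for all $s \in [0,\tau)$. Second, at the collision time $\tau$, since the starting configuration $\bfx$ lies in $\R^{n+2,\uparrow}$ and Brownian paths are continuous, the order of particles is preserved up to $\tau$ and the first collision (a.s.\ a pair collision) must occur between two indices $i,i+1$ adjacent in the original labelling. Because $m$ is alternating we have $m_i \neq m_{i+1}$. Applying Proposition \ref{prop:general_partition} (formula \eqref{eq:K-infty-explicit2_minus1} for $\rho=-1$, or \eqref{eq:K-infty-explicit2} for $\rho \in (-1,0)$) to $\pi:=\pi(\bfX_\tau)$, which has the single non-trivial block $\{i,i+1\}$, we see that $K_\infty(\delta_m,\pi)(m')$ involves $\delta_m(m^{\pi,c})$, which forces $m_i = m_{i+1} = c_1$, contradicting $m_i\neq m_{i+1}$. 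Hence $K_\infty(\delta_m,\pi(\bfX_\tau)) = 0$, so $M_\tau^\sse{\infty}(\bfX,\delta_m) = 0$, and by the recursive formula in Proposition \ref{prop:thm_M_infty_continuous} together with the linearity of $K_\infty$ in its first argument, the dual process remains identically zero for all $t \geq \tau$.

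For \eqref{lemma:alternating_2}, I would use Lemma \ref{lemma:interface-characterisation}: since $\bfu_0 \in \cU_n \subseteq \bigcup_{k=0}^{n}\cU_k$, equivalence (c) gives $\bfu_0^\ssup{m}(\bfy) = 0$ for Lebesgue-a.e.\ $\bfy \in \R^{n+2,\uparrow}$. On the event $\{t < \tau\}$ the Brownian particles retain their initial ordering, so $\bfX_t \in \R^{n+2,\uparrow}$ almost surely on this event. Since the law of $\bfX_t$ under $\p_\bfx$ is absolutely continuous w.r.t.\ Lebesgue measure on $\R^{n+2}$, the null set $\{\bfy \in \R^{n+2,\uparrow}: \bfu_0^\ssup{m}(\bfy) \neq 0\}$ is hit with probability zero, and therefore $\bfu_0^\ssup{m}(\bfX_t)\ind_{t<\tau} = 0$ $\p_\bfx$-a.s. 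Inserting this into \eqref{lemma:alternating_1} gives $P_t f_{\phi,m}(\bfu_0) = 0$.

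The main (mild) obstacle is the vanishing of $K_\infty(\delta_m,\pi)$ at the collision: it is critical that the first collision pair is \emph{adjacent} in the initial ordering (which uses $\bfx \in \R^{n+2,\uparrow}$ and continuity of Brownian paths) and that \emph{adjacency together with alternation} forces the two colliding particles to have opposite colours — which is exactly what makes $\delta_m(m^{\pi,c})$ vanish for every $c$ in the formula of Proposition \ref{prop:general_partition}. No further estimates are needed because the dominated-convergence argument already built into Theorem \ref{thm:moment_duality_infinite} (and its extension in Theorem \ref{thm:main1}(a)) handles all integrability issues.
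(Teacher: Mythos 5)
Your proposal is correct and follows essentially the same argument as the paper: it uses the recursive description of $M^\sse{\infty}$ from Proposition~\ref{prop:thm_M_infty_continuous}, the explicit form of $K_\infty$ from Proposition~\ref{prop:general_partition}, the fact that the first collision from an ordered starting point is a.s.\ between \emph{adjacent} particles so that the alternating coloring forces $K_\infty(\delta_m,\pi(\bfX_\tau))=0$, and Lemma~\ref{lemma:interface-characterisation}(c) for \eqref{lemma:alternating_2}. The only trivial slips are writing $c_1$ instead of the $c_k$ corresponding to the pair block, and using $\ind_{t<\tau}$ where the recursive construction gives $\ind_{t\le\tau}$ (which is then corrected a.s.\ via $\p_\bfx(\tau=t)=0$, as the paper itself notes).
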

\begin{proof}
Let $n\in\N_0$ and $m\in\{1,2\}^{n+2}$ alternating. For $\bfx\in\R^{n+2,\uparrow}$, denote by $(i,i+1)$ the index pair of the two Brownian motions involved in the collision at time $\tau$, starting from $\bfx$. 
Then $M^\sse{\infty}_{t}(\bfX,\delta_m)=M^\sse{\infty}_0(\bfX,\delta_m)=\delta_m$ for $ t\le\tau$, and $\pi(\bfX_{\tau})=\{\{1\},\{2\},...,\{i-1\},\{i,i+1\},\{i+2\},...,\{n+2\}\}$. 
Since $m$ is alternating, we have $m_i\neq m_{i+1}$, hence by the explicit form of $K_\infty$ (see Prop. \ref{prop:general_partition}) 
\[K_\infty(M^\sse{\infty}_\tau,\pi(\bfX_\tau))\equiv0.\]
In view of the recursive definition of $M_t^\sse{\infty}$ (see Prop. \ref{prop:thm_M_infty_continuous}), this implies $M_t^\sse{\infty}(\bfX,\delta_m)\equiv0$ for all $t>\tau$, 
thus we have
\[M_t^\sse{\infty}(\bfX,\delta_m)=\1_{t\le\tau}\,\delta_m.\]

By the moment duality, see Theorem~\ref{thm:moment_duality_infinite}, since $\supp(\phi)\subseteq\R^{n+2,\uparrow}$ this implies 
\begin{align}\label{eq:alternating}
\bE_{\bfu_0}\left[f_{\phi,m}(\bfu_t)\right] &= \int_{\R^{n+2}}\phi(\bfx)\, \E_{\bfx}\left[\sum_{b\in\{1,2\}^{n+2}}M_t^\sse{\infty}(X,\delta_m)(b)\,\bfu_0^\ssup{b}(X_t)\right]\,d\bfx\\
&=\int_{\R^{n+2,\uparrow}}\phi(\bfx)\, \E_{\bfx}\left[\bfu_0^\ssup{m}(\bfX_t)\ind_{t<\tau}\right]\,d\bfx,
\end{align}
where we also used that $\p_\bfx(\tau=t)=0$ for each fixed $t>0$. Thus \eqref{lemma:alternating_1} is proved.

Moreover, if in addition $\bfu_0\in\calU_n$, then by Lemma \ref{lemma:interface-characterisation} we have $\bfu_0^\ssup{m}(\bfx)=0$ for almost every $\bfx\in\R^{n+2,\uparrow}$, 
and under $\bP_\bfx$ we have $\bfX_t\in\R^{n+2,\uparrow}$ a.s. on the event $\{t<\tau\}$. 
Thus in this case the integrand on the RHS of the previous display is zero, and \eqref{lemma:alternating_2} is established.
\end{proof}

\begin{proof}[Proof of Theorem \ref{thm:interfacecount-decreasing}]
Fix $t\ge0$, an increasing vector $a\in\bR^{2(n+2),\uparrow}$ and an alternating configuration $m\in\{1,2\}^{n+2}$.
Since $\bfu_0\in\calU_n$, by Lemma \ref{lemma:alternating} applied to the function $\phi(\bfx):=\prod_{i=1}^{n+2}\1_{[a_{2i-1},a_{2i}]}(x_i)$ with $\supp(\phi)\subseteq\R^{n+2,\uparrow}$ we obtain
\begin{align*}
\bE_{\bfu_0}\left[\prod_{i=1}^{n+2}u_t^\ssup{m_i}([a_{2i-1},a_{2i}])\right]&=\bE_{\bfu_0}\left[\langle \bfu_t^\ssup{m},\phi\rangle\right]=0 
\end{align*}
and consequently 
\[\prod_{i=1}^{n+2}u_t^\ssup{m_i}([a_{2i-1},a_{2i}])=0\qquad \bP_{\bfu_0}\text{-a.s.}\]
Now denote by $m^\ssup j\in\{1,2\}^{n+2}$ the alternating coloring starting with 
$m_1^\ssup j =j$ for $j=1,2$.
Again using Lemma \ref{lemma:interface-characterisation}, for each fixed $t\ge0$ we have
\[
\left\{\bfu_t \in \bigcup_{k=0}^n \cU_k \right\} = \bigcap_{j\in\{1,2\}}\bigcap_{a\in \bQ^{2(n+2),\uparrow}} \left\{ \prod_{i=1}^{n+2} u_{t}^{(m^\ssup{j}_i)}([a_{2i-1},a_{2i}])=0 \right\},
\]
which is a countable intersection of events of probability $1$. As a consequence, we get $\p_{\bfu_0}\left(\bfu_t \in  \bigcup_{k=0}^n \cU_k\right)=1$ for each $t\ge0$, 
and hence also 
\[\p_{\bfu_0}\left( \bfu_t \in\bigcup_{k=0}^n \cU_k \text{ for all } t \in [0,\infty)\cap\bQ \right)=1.\] To extend this result to all $t\geq 0$, we use the fact that $(\bfu_t)_{t\ge0}$ has right-continuous paths and hence $\lim_{s\downarrow t} \prod_{i=1}^{n+2} u_{s}^\ssup{m_i^\ssup{j}}([a_{2i-1},a_{2i}])= \prod_{i=1}^n u_{t}^{(m_i^\ssup{j})}([a_{2i-1},a_{2i}])$, which shows that
\begin{align}
&\left\{\bfu_t \in \bigcup_{k=0}^n \cU_k  
\text{ for all }t \in[0,\infty)\cap\bQ \right\}\\
&=\bigcap_{t\in[0,\infty) \cap \bQ}\bigcap_{j\in\{1,2\}} \bigcap_{a\in \bQ^{2(n+2),\uparrow}}  \left\{ \prod_{i=1}^{n+2} u_{t}^{(m_i^\ssup{j})}([a_{2i-1},a_{2i}])=0 \right\} \\
&= \bigcap_{t\in[0,\infty) }\bigcap_{j\in\{1,2\}}\bigcap_{a\in \bQ^{2(n+2),\uparrow}} \left\{ \prod_{i=1}^{n+2} u_{t}^{(m_i^\ssup{j})}([a_{2i-1},a_{2i}])=0 \right\} 
=\left\{\bfu_t \in \bigcup_{k=0}^n \cU_k \text{ for all }t\geq0 \right\}.
\end{align}
Hence we have shown that almost surely, there are at most $n$ interfaces at any one time.
\end{proof}

\subsection{Movement of a single interface}\label{ssec:single_interface}
In this subsection we will prove Theorem \ref{thm:annihilating-BM-_onepoint}, showing that a single interface moves according to \eqref{eq:interface-movement3}. 
Again the key point is that the sum $w_t:=u_t^\ssup{1}+u_t^\ssup{2}$ solves the deterministic heat equation $w_t=S_tw_0$, where $(S_t)_{t\ge0}$ denotes the heat semigroup. 
In particular, recalling the definition of $m(\bfu_t,x)$ from Section \ref{ssec:-1}, we have 
\begin{equation}\label{eq:re}
u^\ssup{i}_t(dx)=\1_{\{m(\bfu_t,x)=i\}}\,w_t(x)\,dx,\qquad i=1,2,
\end{equation}
a fact we will use repeatedly.
\begin{lemma}\label{lemma:asymptotic-type}
Assume $\bfu_0\in\cU_n$ for some $n\in\N$. Then for each fixed $t>0$ we have $\lim_{x\to\pm\infty} m(\bfu_t,x)=\lim_{x\to\pm\infty} m(\bfu_0,x)$ almost surely.
\end{lemma}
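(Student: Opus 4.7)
The plan is a first-moment Markov bound, exploiting the fact that $w_t = S_t w_0$ is deterministic when $\rho = -1$. Write $i := \lim_{x \to -\infty} m(\bfu_0, x)$ and $j := 3 - i$. Since $\bfu_0 \in \cU_n$, there is $x_0 \in \R$ with $u_0^\ssup{j} = 0$ a.e.\ on $(-\infty, x_0)$ and $u_0^\ssup{i} = w_0 > 0$ a.e.\ there (the latter using $\bfu_0 \in \cU$). By Theorem~\ref{thm:interfacecount-decreasing}, $\bfu_t$ has at most $n$ interfaces a.s., so $\lim_{x\to-\infty} m(\bfu_t, x)$ exists a.s.; the task is to exclude the bad event $B$ that this limit equals $j$.

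The key observation is that $B = \bigcup_{a>0} B_a$ with $B_a := \{u_t^\ssup{i}((-\infty,-a)) = 0\}$, an increasing family of events, and on $B_a$ the relation $u_t^\ssup{1} + u_t^\ssup{2} = w_t$ forces $u_t^\ssup{j}((-\infty,-a)) = w_t((-\infty,-a))$, a deterministic positive quantity. Markov's inequality, the first-moment identity $\bE[u_t^\ssup{j}(y)] = S_t u_0^\ssup{j}(y)$ (which for $\rho=-1$ follows by passing to the limit from the finite-rate SPDE), and Fubini give
\[
\p(B_a) \leq \frac{\bE[u_t^\ssup{j}((-\infty,-a))]}{w_t((-\infty,-a))} = \frac{\int_{x_0}^\infty u_0^\ssup{j}(y)\, \Phi^c((a+y)/\sqrt{t})\,dy}{\int_\R w_0(y)\, \Phi^c((a+y)/\sqrt{t})\,dy},
\]
where $\Phi^c$ denotes the standard normal tail. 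By monotonicity of $B_a$, it suffices to show this ratio vanishes as $a \to \infty$; the limit at $+\infty$ is symmetric.

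For the numerator, the Mills estimate $\int_A^\infty \Phi^c(v)\,dv \leq \Phi^c(A)/A$ gives a bound of order $\Phi^c((a+x_0)/\sqrt{t})/a$. For the denominator, since $w_0 > 0$ almost everywhere, one can find a constant $c > 0$ and a measurable set $E \subset (x_0 - 1, x_0)$ of positive measure with $w_0 \geq c$ on $E$ (exhausting the level sets $\{w_0 \geq 1/k\} \cap (x_0-1,x_0)$ as $k \to \infty$); restricting the integral to $E$ and using that $\Phi^c$ is decreasing lower-bounds the denominator by $c|E|\,\Phi^c((a+x_0)/\sqrt{t})$. The $\Phi^c$ factors cancel and the ratio is $O(1/a) \to 0$, whence $\p(B) = 0$. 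The delicate point is the placement of $E$: it must lie within distance one of $x_0$ so that its $\Phi^c$ lower bound matches the numerator's $\Phi^c$ upper bound at the same leading Gaussian exponent $-(a+x_0)^2/(2t)$; pushing $E$ further left into $(-\infty, x_0)$ would introduce an exponentially growing mismatch and destroy the argument.
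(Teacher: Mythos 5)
Your proof is correct, and it reaches the conclusion by a genuinely different probabilistic route than the paper, although the analytic core (a heat-kernel first-moment estimate with the deterministic normalization $w_t = S_t w_0$) is the same. The paper's proof computes, via the $n=1$ moment duality, the expected ``density'' $\E_{\bfu_0}\big[\1_{\{m(\bfu_t,x)=i\}}\big] = \big(1 + \tfrac{S_t u_0^\ssup{3-i}(x)}{S_t u_0^\ssup{i}(x)}\big)^{-1}$, shows the pointwise ratio $S_t u_0^\ssup{3-i}(x)/S_t u_0^\ssup{i}(x) \to 0$ as $x\to-\infty$, and then passes to the a.s.\ limit of $m(\bfu_t,\cdot)$ through a sequence of translated test functions $\phi_n(\cdot)=\phi(\cdot+n)$ plus bounded convergence. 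Your proof replaces the test-function limiting argument with a Markov/union bound: you isolate the bad events $B_a=\{u_t^\ssup{i}((-\infty,-a))=0\}$, use that on $B_a$ the full mass $w_t((-\infty,-a))$ is carried by $u_t^\ssup{j}$, and estimate the integrated ratio $\int_{-\infty}^{-a} S_t u_0^\ssup{j}\,dy / \int_{-\infty}^{-a} S_t w_0\,dy$. This is a valid alternative; the Markov formulation makes the ``bad-event'' structure cleaner and avoids the small bookkeeping needed to turn convergence of test-function expectations into an a.s.\ statement (and it does still rely on Theorem~\ref{thm:interfacecount-decreasing} to know the limits $\lim_{x\to\pm\infty}m(\bfu_t,x)$ exist, just as the paper does).

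One small correction to your closing remark: the placement of $E$ near $x_0$ is not actually delicate. Any set $E\subset(-\infty,x_0)$ of positive measure on which $w_0$ is bounded below would work, because $\Phi^c$ is decreasing: for $z\le x_0$ one has $\Phi^c((a+z)/\sqrt t)\ge\Phi^c((a+x_0)/\sqrt t)$, so moving $E$ to the left only \emph{improves} the lower bound on the denominator. The supposed ``exponentially growing mismatch'' goes in the favorable direction. Choosing $E\subset(x_0-1,x_0)$ is of course sufficient, so this does not affect the validity of your proof.
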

\begin{proof} 
Fix $t>0$. First observe that since by Theorem \ref{thm:interfacecount-decreasing} there is a finite number of interfaces at time $t$, the limits $\lim_{x\to\pm\infty} m(\bfu_t,x)$ exist in $\{1,2\}$.  

Let $\phi\in\cC_c(\R)$. Since $w_t$ is deterministic, we can use \eqref{eq:re} and the moment duality to obtain
\begin{align}\label{proof:asymptotic-type1}
\E_{\bfu_0}\left[\int_\R\phi(x)\, \1_{m(\bfu_t,x)=i}\,dx\right]&=\E_{\bfu_0}\left[\left\langle\frac{\phi}{{w_t}}, u_t^\ssup{i}\right\rangle\right]=\left\langle \frac{\phi}{{w_t}}, S_tu_0^\ssup{i}\right\rangle\\
&=\left\langle\phi, \left(1 + \frac{S_t u_0^\ssup{3-i}}{S_t u_0^\ssup{i}}\right)^{-1}\right\rangle,\qquad i=1,2.
\end{align}
Now assume w.l.o.g.\ that $\lim_{x\to-\infty} m(\bfu_0,x)=1$ and $\min \cI(\bfu_0)=0$. Then we have $S_t u_0^\ssup{2}(x) \leq \norm{u^\ssup{2}_0}_\infty \int_0^\infty (2\pi t)^{-\frac12}e^{-(y-x)^2/2t} dy$, and since $u_0^\ssup{1}([-2\delta,-\delta])>0$ for $\delta>0$, we have for $x<-2\delta$
\[ S_t u_0^\ssup{1}(x) \geq u_0^\ssup{1}([-2\delta,-\delta]) (2\pi t)^{-\frac12}e^{-(-\delta-x)^2/2t}. \]
Therefore, for some $C=C(\bfu,\delta)>0$
\begin{align}
0\le\frac{S_t u_0^\ssup{2}(x)}{S_t u_0^\ssup{1}(x)} &\leq C \int_0^\infty e^{(-y^2 + 2yx +2\delta x)/2t} dy 
\leq C e^{\delta x/t} \int_0^\infty e^{-y^2/2t} dy,
\end{align}
which converges to 0 as $x\to-\infty$. Thus
\begin{equation}\label{proof:asymptotic-type2}
\left(1 + \frac{S_t u_0^\ssup{2}(x)}{S_t u_0^\ssup{1}(x)}\right)^{-1}\xrightarrow{x\to-\infty} 1.
\end{equation}
Now choose $\phi\in\cC_c(\R)$ nonnegative with $\int_\R\phi(x)\,dx=1$ and define $\phi_n(\cdot):=\phi(\cdot+n)$, $n\in\N$. Then \eqref{proof:asymptotic-type1} and \eqref{proof:asymptotic-type2} show that
\[\E_{\bfu_0}\left[\int_\R\phi_n(x)\, \1_{m(\bfu_t,x)=1}\,dx\right]\to1,\qquad\text{as } n\to\infty.\]
Since the limit $\lim_{x\to-\infty} m(\bfu_t,x)$ exists, this implies that it must equal $1$ a.s.
The result for $x\to+\infty$ is analogous.
\end{proof}

\begin{prop}\label{prop:single-interface}
Assume $\bfu_0\in\cU_1$. 
Then we have
\[\p_{\bfu_0}\left(\bfu_t\in\cU_1\text{ for all }t\ge0\right)=1.\]
Let $(I_t)_{t\ge0}$ denote the single interface process defined by the unique element of $\cI(\bfu_t)$, $t\ge0$. 
Then if $\lim_{x\to-\infty} m(\bfu_0,x)=1$ (resp.\ $=2$), we have
\[(u_t^\ssup{1}(dx),u_t^\ssup{2}(dx))=(\ind_{x< I_t} w_t(x)\,dx,\ind_{x> I_t} w_t(x)\,dx) \]
resp.
\[(u_t^\ssup{1}(dx), u_t^\ssup{2}(dx))=(\ind_{x> I_t}w_t(x)\,dx,\ind_{x< I_t} w_t(x)\,dx),\]
and the interface $(I_t)_{t\geq0}$ is a continuous Markov process.
\end{prop}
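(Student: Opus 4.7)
The plan is to combine the non-proliferation of interfaces (Theorem~\ref{thm:interfacecount-decreasing}) with the preservation of asymptotic colours at $\pm\infty$ (Lemma~\ref{lemma:asymptotic-type}) to reduce $\bfu_t$ at each time to a single random point $I_t\in\R$ together with the deterministic data $w_t=S_tw_0$, and then to transfer the Markov and path-continuity properties from $(\bfu_t)_{t\ge0}$ (Theorem~\ref{thm:main1}) to $(I_t)_{t\ge0}$ via the resulting measurable bijection. To see that the interface persists at all times, Theorem~\ref{thm:interfacecount-decreasing} with $n=1$ yields $\bfu_t\in\cU_0\cup\cU_1$ for all $t\ge0$ almost surely. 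The case $\cI(\bfu_t)=\emptyset$ is excluded via Lemma~\ref{lemma:asymptotic-type}: since $\bfu_0\in\cU_1$, both asymptotic colours $\lim_{x\to\pm\infty}m(\bfu_0,x)$ exist and disagree (any pair of differently-coloured points in one of the open half-lines $(-\infty,I_0)$, $(I_0,+\infty)$ would force an extra interface by Lemma~\ref{lemma:U}(a)), and Lemma~\ref{lemma:asymptotic-type} propagates this to $\bfu_t$ at each fixed $t>0$. A countable intersection over rational $t$ plus right-continuity of paths then extends this to all $t\ge0$.

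On the event $\bfu_t\in\cU_1$, the same application of Lemma~\ref{lemma:U}(a) to the two open half-lines of $I_t$ shows that each is monochromatic, the side-to-type assignment being fixed by Lemma~\ref{lemma:asymptotic-type}. Combined with the deterministic identity $u^\ssup{1}_t+u^\ssup{2}_t=w_t$ this yields the explicit representation claimed in the proposition. Since $w_t$ is deterministic, $\bfu_t\leftrightarrow I_t$ is a measurable bijection at each fixed time, so the natural filtrations of the two processes coincide, and the Markov property of $(I_t)_{t\ge0}$ is inherited from that of $(\bfu_t)_{t\ge0}$ given in Theorem~\ref{thm:main1}(b).

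Finally, continuity of $t\mapsto I_t$ will follow from path-continuity of $(\bfu_t)_{t\ge0}$ in $\cM_b(\R)^2$ (Theorem~\ref{thm:main1}(a)). Let $c_-:=\lim_{x\to-\infty}m(\bfu_0,x)$. Suppose $t_n\to t_0>0$ and, by contradiction, $I_{t_n}\to I_\infty\ne I_{t_0}$; w.l.o.g.\ $I_\infty<I_{t_0}$. Pick $\phi\in\cC_c^+(\R)$ with compact support inside $(I_\infty,I_{t_0})$ and $\int\phi\,w_{t_0}\,dx>0$; this is possible because $w_{t_0}>0$ everywhere, by the smoothing property of the heat semigroup. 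For large $n$ the support of $\phi$ lies to the right of $I_{t_n}$, so $\langle\phi,u_{t_n}^\ssup{c_-}\rangle=0$, whereas the same support lies to the left of $I_{t_0}$, giving $\langle\phi,u_{t_0}^\ssup{c_-}\rangle=\int\phi\,w_{t_0}\,dx>0$, contradicting weak continuity. Accumulation of $(I_{t_n})$ at $\pm\infty$ is excluded analogously, by testing against a fixed $\phi\in\cC_c^+(\R)$ supported in the appropriate half-line of $I_{t_0}$. The main technical subtlety lies precisely here: the $\cM_\tem$-topology is rather weak, so convergence of measures need not imply convergence of interfaces in general, and the crucial ingredient is the strict positivity of $w_t$ for $t>0$, which prevents the measures $u_t^\ssup{i}$ from concealing the interface location in a region of vanishing total density.
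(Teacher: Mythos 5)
You take a genuinely different route to the persistence of the interface. The paper introduces the stopping time $\tau := \inf\{t\ge 0 : \bfu_t\in\cU_0\}$ and argues via the strong Markov property and Lemma~\ref{lemma:alternating} (with $n=0$, applied at $\bfu_\tau\in\cU_0$) that $\tau = \infty$ a.s.; you instead invoke Lemma~\ref{lemma:asymptotic-type} to see that the asymptotic colours at $\pm\infty$ remain distinct at each fixed $t>0$, which directly forces $\p(\bfu_t\in\cU_1)=1$ for every fixed $t$. Using the asymptotic-type lemma at this stage is a clean idea (the paper only uses it later, to decide which half-line carries which type). Your remaining steps --- the explicit form via Lemma~\ref{lemma:U}, the $\sigma$-algebra identification $\sigma(I_s)=\sigma(\bfu_s)$, and the transfer of the Markov property --- match the paper, and your path-continuity argument is correct and usefully more explicit than the paper's one-line assertion; the remark that strict positivity of $w_t$ is what lets weak convergence of the measures control the interface location is exactly the right structural point.

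The gap is in ``a countable intersection over rational $t$ plus right-continuity of paths then extends this to all $t\ge 0$''. Right-continuity does not close this step: configurations in $\cU_1$ with interface $I\to\pm\infty$ converge in $\cM_\tem(\R)^2$ to elements of $\cU_0$, so $\cU_1$ is not closed in the state space, and one cannot conclude $\bfu_{t_0}\in\cU_1$ at an irrational $t_0$ merely from $\bfu_q\in\cU_1$ for all rational $q\downarrow t_0$ without first ruling out escape of the interface to infinity --- which is precisely what is at stake. (Your later continuity argument cannot be recycled here, since it already presupposes that $I_{t_0}$ exists.) What is needed is the absorbing property of $\cU_0$: by the first-moment duality, if $u_s^\ssup{i}=0$ then
\begin{equation}
\E\bigl[\langle u_t^\ssup{i},\phi\rangle\bigm|\cF_s\bigr]=\langle S_{t-s}u_s^\ssup{i},\phi\rangle=0 \quad\text{for all }t\ge s,
\end{equation}
so $u^\ssup{i}_\cdot\equiv 0$ from time $s$ onward. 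Applying the strong Markov property of Theorem~\ref{thm:main1}\,b) at the hitting time $\tau_i := \inf\{t : u_t^\ssup{i}=0\}$ (a hitting time of a closed set, so $u^\ssup{i}_{\tau_i}=0$ on $\{\tau_i<\infty\}$) gives $\{\tau_i<\infty\}\subseteq\bigcup_{n\in\N}\{u_n^\ssup{i}=0\}$ up to a null set, and each event on the right is null by your fixed-time argument. This is essentially the paper's stopping-time step in disguise; once it is inserted, your proof is complete.
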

\begin{proof}
By Theorem \ref{thm:interfacecount-decreasing}, almost surely there is at most one interface for all $t\geq0$.

To show that there is at least one interface point for all $t$, let $\tau:=\inf\{t\geq 0: \bfu_t\in\cU_0 \}$. 
Let $\phi\in L^1(\R^2)$ with $\supp(\phi)\subseteq\R^{2,\uparrow}$ and $m=(1,2)$ (or $m=(2,1)$). 
Applying Lemma \ref{lemma:alternating} (with $n=0$), we have on $\{\tau<t\}$ that $P_{t-\tau} f_{\phi,m} (\bfu_\tau)=0$. By the strong Markov property in Theorem \ref{thm:main1} b), we get
\begin{align}
0 < \bE_{\bfu_0}\left[ f_{\phi,m} (\bfu_t) \right] 
&=\bE_{\bfu_0}\left[   \ind_{\tau<t}\, P_{t-\tau} f_{\phi,m} (\bfu_\tau)\right] + \bE_{\bfu_0}\left[ \ind_{ \tau\geq t}\,f_{\phi,m} (\bfu_t)  \right]
= \bE_{\bfu_0}\left[ \ind_{\tau\geq t}\, f_{\phi,m} (\bfu_t)\right],
\end{align}
which implies $\{\tau\ge t\}$ a.s.

Now that we have $\p_{\bfu_0}\left(\bfu_t \in \cU_1 \text{ for all }t\geq0 \right)=1$, we can identify $\bfu_t=(u_t^\ssup{1},u^\ssup{2}_t)$ with $(I_t,w_t)$ as in the statement of this proposition.
More precisely, 
by Lemma \ref{lemma:U} the function $x\mapsto m(\bfu_t,x)$ must be constant on $(-\infty,I_t)$ and on $(I_t,\infty)$. Using Lemma \ref{lemma:asymptotic-type}, we see that $\bfu_t$ is of the claimed form, depending on the form of $\bfu_0$.

The continuity of the paths of $(I_t)_{t\ge0}$ follows from the corresponding property of $(\bfu_t)_{t\ge0}$.
For the Markov property, observe that since $w_s$ is deterministic for each $s$ and the pair $(I_s,w_s)$ 
uniquely determines $\bfu_s=(u_s^\ssup{1},u_s^\ssup{2})$ and vice versa, we have
\begin{align}\label{proof:single-interface_1}\begin{aligned}
&\sigma(I_s)=\sigma(I_s,w_s)=\sigma(\bfu_s),\\
&\sigma(I_r:0\le r\le s)=\sigma(I_r,w_r:0\le r\le s)=\sigma(\bfu_r:0\le r\le s).
\end{aligned}\end{align}
Therefore the Markov property of $(I_t)_{t\ge0}$ follows from the Markov property of $(\bfu_t)_{t\ge0}$.
\end{proof}

Having established the existence of a single interface process, we proceed to identifying its law. In a first step, we compute the distribution of $I_t$ for fixed $t>0$.

\begin{lemma}
Assume $\bfu_0\in\cU_1$, so that there is a single interface process $(I_t)_{t\ge0}$ by Proposition \ref{prop:single-interface}.
Then for each fixed $t>0$ we have
\begin{equation}\label{distribution_interface_fixed_time}
{\bP}_{\bfu_0}(I_t\le x)=\frac{S_t(w_0\ind_{(I_0,\infty)})(x)}{S_tw_0(x)},\qquad x\in\R.
\end{equation}
In particular, the distribution of $I_t$ under $\p_{\bfu_0}$ is absolutely continuous with a smooth density.
\end{lemma}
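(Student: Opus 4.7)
The strategy is to exploit the fact that for $\rho=-1$ the first moment of $u_t^\ssup{1}$ evolves deterministically under the heat semigroup, and to combine this with the single-interface representation from Proposition~\ref{prop:single-interface} to solve for the law of $I_t$.

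\textbf{Step 1 (First moment identity).} Apply the moment duality \eqref{eq:moment duality infinite gamma} (which extends to $\rho=-1$ by Theorem \ref{thm:main1}) with $n=1$ and $c=(1)$. Since a single Brownian particle never collides with itself, $M_t^\sse{\infty}(\bfX,\delta_1)\equiv\delta_1$, so for every $\phi\in L^1(\R)$,
\[
\bE_{\bfu_0}\left[\langle u_t^\ssup{1},\phi\rangle\right]
=\int_\R\phi(y)\,\bE_y[u_0^\ssup{1}(X_t)]\,dy
=\langle S_t u_0^\ssup{1},\phi\rangle.
\]
Assume WLOG $\lim_{x\to-\infty}m(\bfu_0,x)=1$ (the opposite orientation is analogous). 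By Proposition~\ref{prop:single-interface}, $u_t^\ssup{1}(dy)=\ind_{y<I_t}\,w_t(y)\,dy$, and since $\bfu_0\in\cU_1$, $u_0^\ssup{2}(dy)=\ind_{y>I_0}\,w_0(y)\,dy$. Plugging in and applying Fubini,
\[
\int_\R\phi(y)\,w_t(y)\,\bP_{\bfu_0}(I_t>y)\,dy
=\int_\R\phi(y)\,S_t u_0^\ssup{1}(y)\,dy\quad\text{for all }\phi\in L^1(\R),
\]
whence $w_t(y)\,\bP_{\bfu_0}(I_t>y)=S_t u_0^\ssup{1}(y)$ for Lebesgue-a.e.\ $y\in\R$.

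\textbf{Step 2 (Upgrading to all $y$ and conclusion).} Because $w_0\not\equiv 0$ and the heat kernel is strictly positive, $w_t=S_t w_0>0$ everywhere for $t>0$; moreover both $w_t$ and $S_t u_0^\ssup{1}$ are smooth, so the ratio $S_t u_0^\ssup{1}/w_t$ is continuous. On the other hand $y\mapsto\bP_{\bfu_0}(I_t>y)=1-F_t(y)$ is monotone, and its discontinuities occur precisely at the atoms of $I_t$. Agreement a.e.\ with a continuous function therefore forces $y\mapsto\bP_{\bfu_0}(I_t>y)$ to be continuous, so $I_t$ has no atoms and the identity from Step~1 holds for every $y\in\R$. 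Taking complements and using $S_t w_0=S_t u_0^\ssup{1}+S_t u_0^\ssup{2}$,
\[
\bP_{\bfu_0}(I_t\le y)
=1-\frac{S_t u_0^\ssup{1}(y)}{S_t w_0(y)}
=\frac{S_t u_0^\ssup{2}(y)}{S_t w_0(y)}
=\frac{S_t(w_0\ind_{(I_0,\infty)})(y)}{S_t w_0(y)},
\]
which is \eqref{distribution_interface_fixed_time}. The right-hand side is a ratio of smooth functions with strictly positive denominator, hence smooth in $y$, so the distribution of $I_t$ is absolutely continuous with a smooth density. In the opposite orientation, the same calculation yields $\bP_{\bfu_0}(I_t<y)=S_t(w_0\ind_{(I_0,\infty)})(y)/S_t w_0(y)$ directly, and continuity of the right-hand side again rules out atoms, so this coincides with $\bP_{\bfu_0}(I_t\le y)$.

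\textbf{Main obstacle.} The computation itself is essentially a single application of the first-moment duality; the only delicate step is promoting the Lebesgue-a.e.\ identity in $y$ to an everywhere identity, equivalently ruling out atoms in the law of $I_t$. This is a brief continuity argument driven by the smoothing property of the heat semigroup, and requires no pathwise analysis of the interface dynamics.
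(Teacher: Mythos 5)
Your proof is correct and follows essentially the same route as the paper: the first-moment duality (with $n=1$ the dual process is trivial, so the first moment just evolves by the heat semigroup), combined with the single-interface representation from Proposition~\ref{prop:single-interface}, yields the identity $w_t(y)\,\bP_{\bfu_0}(I_t>y)=S_t u_0^{(1)}(y)$ for a.e.\ $y$, which both you and the paper then upgrade to all $y$ by continuity. The only cosmetic differences are that you work with $u^{(1)}$ and take complements where the paper works with $u^{(2)}$ directly, and that you use the general fact that a monotone function agreeing a.e.\ with a continuous function must be continuous, whereas the paper passes to the pointwise identity by testing against Gaussian mollifiers and then invoking left-continuity of $x\mapsto\bP(I_t<x)$.
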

\begin{proof}
Fix $t>0$ and assume w.l.o.g.\ that $\lim_{x\to-\infty}m(\bfu_0,x)=1$, i.e.\ $u_0^\ssup{1}=\1_{(-\infty,I_0)}w_0$. Then by Proposition \ref{prop:single-interface}, we have $\1_{\{I_t<x\}}\,dx=\frac{1}{w_t(x)}\,u^\ssup{2}_t(dx)$. By Fubini and the moment duality, we obtain since  $w_t$ is deterministic that for all test functions $\phi$
\begin{align}
\int_\R\phi(y)\, {\bP}_{\bfu_0}(I_t<y)\,dy&=\bE_{\bfu_0}\left[\int_\R\phi(y)\,\1_{\{I_t<y\}}\,dy\right] 
= \E_{\bfu_0}\left[\left\langle u_t^\ssup{2}, \frac{\phi}{w_t}\right\rangle\right]
\\&=\left\langle S_tu_0^\ssup{2}, \frac{\phi}{w_t}\right\rangle=\int_\R\phi(y)\frac{S_t(\ind_{(I_0,\infty)}w_0)(y)}{S_tw_0(y)}\,dy.
\end{align}
Now choose $\phi(y):=\phi^x_\delta(y):=p_\delta(x-y)$ for $x\in\R$ and let $\delta\downarrow0$: Then 
the RHS of the previous display tends to $\frac{S_t(\ind_{(I_0,\infty)}w_0)(x)}{S_tw_0(x)}$ 
for all $x\in\R$ since this expression is continuous in $x$, while the LHS tends to ${\bP}_{\bfu_0}(I_t<x)$ for almost all $x\in\R$. So we have
\[{\bP}_{\bfu_0}(I_t<x)=\frac{S_t(\ind_{(I_0,\infty)}w_0)(x)}{S_tw_0(x)}\qquad\text{for Lebesgue-a.e. }x\in\R,\]
where the RHS is continuous in $x$. 
But since $x\mapsto {\bP}_{\bfu_0}(I_t<x)$ is left-continuous, we must have equality everywhere, and \eqref{distribution_interface_fixed_time} follows.
Thus the distribution of $I_t$ under $\p_{\bfu_0}$ is absolutely continuous with a smooth density. 
\end{proof}

In order to finish the proof of Theorem \ref{thm:annihilating-BM-_onepoint}, we now show that the single interface process solves the SDE \eqref{eq:interface-movement3}. We restate this as a proposition:

\begin{prop}\label{prop:sde-interface}
Assume $\bfu_0\in\cU_1$, so that there is a single interface process $(I_t)_{t\ge0}$ by Proposition \ref{prop:single-interface}. 
Then $(I_t)_{t\ge0}$ is the unique (in law) weak solution of the SDE
\begin{align}\label{eq:interface-movement2}
I_t = I_0 - \int_0^t\frac{w'_s(I_s)}{w_s(I_s)} \,ds + B_t ,\qquad t\ge0,
\end{align}
where $(B_t)_{t\ge0}$ is a standard Brownian motion, $w_t=S_tw_0$ and the integral in \eqref{eq:interface-movement2} exists as an improper integral.
\end{prop}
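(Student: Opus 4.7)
The strategy is to extract~\eqref{eq:interface-movement2} from a measure-valued martingale problem already implicit in the construction of $\bfu$, and then obtain uniqueness from the combination of the Markov property (Proposition~\ref{prop:single-interface}) and the explicit one-dimensional law from~\eqref{distribution_interface_fixed_time}.

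\textit{Step 1 (Measure-valued martingale property).} For every $\phi\in C_c^\infty(\R)$ I will show that
$$N_t^\phi := \langle u_t^\ssup{1},\phi\rangle - \langle u_0^\ssup{1},\phi\rangle - \tfrac{1}{2}\int_0^t \langle u_s^\ssup{1},\phi''\rangle\,ds, \qquad t\ge0,$$
is a continuous martingale. At finite $\gamma$ this is immediate from the SPDE~\eqref{eqn:spde} (the drift is the Laplacian and the noise integrated against $\phi$ is a martingale), and since both components of $\bfu^\sse\gamma$ are pointwise dominated by the deterministic bounded function $w_s$ uniformly in $\gamma$, the Skorokhod convergence $\bfu^\sse\gamma\to\bfu$ in $\cC_{[0,\infty)}(\cM_b(\R)^2)$ from Theorem~\ref{thm:main1} together with uniform integrability transfers this property to $\bfu$.

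\textit{Step 2 (Itô extraction of the SDE).} Assume without loss of generality that $\lim_{x\to-\infty}m(\bfu_0,x)=1$, so Proposition~\ref{prop:single-interface} gives $u_t^\ssup{1}(dx)=\ind_{\{x<I_t\}}w_t(x)\,dx$. Introduce
$$G_\phi(y,t) := \int_{-\infty}^{y}\phi(x)w_t(x)\,dx, \qquad \mbox{so that } \langle u_t^\ssup{1},\phi\rangle=G_\phi(I_t,t).$$
For $t>0$, $w_t$ is smooth; using the heat equation $\partial_t w_t=\tfrac{1}{2}w_t''$ and two integrations by parts one checks
$\partial_t G_\phi(y,t)=\tfrac{1}{2}[\phi(y)w_t'(y)-\phi'(y)w_t(y)]+\tfrac{1}{2}\langle u_t^\ssup{1},\phi''\rangle$, together with $\partial_y G_\phi=\phi w_t$ and $\partial_{yy}G_\phi=\phi'w_t+\phi w_t'$. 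Granting that $I$ is a continuous semimartingale with $d\langle I\rangle_t = dt$, an application of Itô's formula to $G_\phi(I_t,t)$ causes the $\phi'$ terms to cancel, and comparison with Step~1 yields
$$\phi(I_t)w_t(I_t)\,dI_t + \phi(I_t)w_t'(I_t)\,dt \;=\; dN_t^\phi.$$
Writing $dI_t=b_t\,dt+dB_t$ and varying $\phi$ to localize around $I_t$, the martingale property of $N^\phi$ then forces $b_t=-w_t'(I_t)/w_t(I_t)$, which is~\eqref{eq:interface-movement2}. To upgrade $I$ to a semimartingale with $d\langle I\rangle_t=dt$, I will choose $\phi$ equal to $1$ on a neighborhood of $I_t$, so that locally $y\mapsto G_\phi(y,t)$ is a smooth strictly increasing bijection; the semimartingale decomposition of $\langle u_t^\ssup{1},\phi\rangle$ supplied by Step~1 pulls back to the desired decomposition of $I$.

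\textit{Step 3 (Uniqueness and main obstacle).} For any $s_0>0$, heat-kernel smoothing and $w_0\not\equiv 0$ imply that $w_s=S_sw_0$ is $C^\infty$ and strictly positive on $[s_0,\infty)\times\R$, hence the coefficient $(s,x)\mapsto w_s'(x)/w_s(x)$ is smooth and locally bounded there. Standard Stroock--Varadhan theory then yields well-posedness of the martingale problem for $\cL_s f:=\tfrac{1}{2}f''-(w_s'/w_s)\,f'$ on $[s_0,\infty)$, and combined with the explicit distribution of $I_{s_0}$ from~\eqref{distribution_interface_fixed_time} this pins down the law of $(I_t)_{t\ge s_0}$; sending $s_0\downarrow 0$ and invoking the path-continuity from Proposition~\ref{prop:single-interface} transfers uniqueness to $[0,\infty)$ and simultaneously shows that the drift integral in~\eqref{eq:interface-movement2} converges as an improper integral at $t=0$. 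The main obstacle will be Step~2: the possible blow-up of $w_t'(I_t)/w_t(I_t)$ as $t\downarrow 0$ (since $w_0(I_0)$ may vanish, as in the complementary Heaviside case) forces one to first run the Itô argument on $[s_0,t]$ and then pass to $s_0\downarrow 0$ via continuity, which is the technically delicate part of the argument.
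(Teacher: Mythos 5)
Your proposal is mathematically sound but takes a genuinely different route from the paper's proof. The paper starts from the explicit one-time law of $I_t$ (formula~\eqref{distribution_interface_fixed_time}, obtained via the moment duality) and, using the Markov property of $\bfu$, writes down the transition kernel $P_{s,t}$ explicitly, differentiates it by a somewhat lengthy integration-by-parts computation to identify the time-dependent generator $L_sf=\frac12 f''-\frac{w_s'}{w_s}f'$, and then invokes Stroock--Varadhan. Your Step~1/Step~2, by contrast, never touches the explicit law of $I_t$: you transfer the heat-equation martingale property $N^\phi$ from finite $\gamma$ to the limit (which is legitimate, since the finite-$\gamma$ martingales are uniformly bounded by $\|w_0\|_\infty$ and the Skorokhod convergence from Theorem~\ref{thm:main1} preserves the property), and then apply It\^o to $G_\phi(I_t,t)$ and match finite-variation parts. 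This is more structural and conceptually parallel to how one would recognise a Girsanov-type drift in the stepping stone setting. It buys independence from the explicit formula~\eqref{distribution_interface_fixed_time}, at the cost of having to handle the semimartingale bookkeeping for $I$ directly.

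One imprecision worth flagging in Step~2: as written, the ``upgrade'' establishes only that $I$ is a continuous semimartingale (by inverting the smooth, strictly increasing map $y\mapsto G_\phi(y,t)$ on a localised time interval); it does \emph{not} by itself give $d\langle I\rangle_t = dt$, since Step~1 supplies the drift of $\langle u^\ssup1_t,\phi\rangle$ but says nothing about the quadratic variation of $N^\phi$. The correct conclusion is that, once one knows $I$ is a semimartingale with decomposition $I_t=I_0+A_t+M_t$, the vanishing of the finite-variation part of $dN^\phi_t$ for every $\phi$, after isolating the coefficients of $\phi(I_t)$ and $\phi'(I_t)$ (which requires running the argument over a countable dense family of test functions), forces simultaneously $d\langle M\rangle_t=dt$ and $w_t(I_t)\,dA_t + w_t'(I_t)\,dt=0$. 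So the quadratic variation is not an input but an output of the same ``vary $\phi$'' argument you already invoke for the drift; the logic should be reordered accordingly. With that correction, Step~2 goes through on $[s_0,\infty)$ for each $s_0>0$, and the passage $s_0\downarrow0$ in Step~3 matches what the paper does.
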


\begin{proof}
We already know by Proposition \ref{prop:single-interface} that $(I_t)_{t\ge0}$ is a continuous Markov process.
Moreover, for all test functions $f\in\calC_c^\infty(\R)$ we have by \eqref{distribution_interface_fixed_time} that
\begin{align}\label{proof:sde-interface_1}
\bE_{\bfu_0}\left[f(I_t)\right] = \int_\R f(x)\,d{\bP}_{\bfu_0}(I_t\le x)
=-\int_\R f'(x)\frac{S_t(\ind_{(I_0,\infty)}w_0)(x)}{S_tw_0(x)} \,dx,\qquad t>0.
\end{align}

1) In the first step, we compute the transition function of the (time-inhomogeneous) Markov process $(I_t)_{t\ge0}$. Define $g:\calU_1\to\R$ such that $g(\bfu)$ is the unique interface point of $\bfu\in\calU_1$. Let $0\le s<t$. For $f\in\calC_c^\infty(\R)$, we have by \eqref{proof:single-interface_1} and the Markov property of the (time-homogeneous!) process $(\bfu_t)_{t\ge0}$ that
\begin{align}
\bE_{\bfu_0}[f(I_t)\,|\,\sigma(I_r:0\le r\le s)]&=\bE_{\bfu_0}[ f\circ g(\bfu_t)\,|\,\sigma(\bfu_r):0\le r\le s]\\
&=\bE_{\bfu_s}[ f\circ g(\bfu_{t-s})]=\bE_{\bfu_s}[f(I_{t-s})].
\end{align}
Applying \eqref{proof:sde-interface_1} with $\bfu_s$, $w_s$ and $t-s$ in place of $\bfu_0$, $w_0$ and $t$ respectively, we obtain
\[\bE_{\bfu_0}[f(I_t)\,|\,\sigma(I_r:0\le r\le s)]=-\int_\R f'(x)\frac{S_{t-s}(\ind_{(I_s,\infty)}w_s)(x)}{S_{t-s}w_s(x)}\,dx.\]
Writing $P_{s,t}(a;dy):=\p_{\bfu_0}(I_t\in dy\,|\,I_s=a)$ for the transition function of the 
Markov process $(I_t)_{t\ge0}$, we have thus shown that it acts on test functions $f\in\calC_c^\infty(\R)$ as
\[P_{s,t}f(a)=\bE_{\bfu_0}[f(I_t)]\,|\,I_s=a)
=-\int_\R f'(x)\frac{S_{t-s}(\1_{(a,\infty)}S_sw_0)(x)}{S_tw_{0}(x)}\,dx,\qquad 0\le s<t.\] 

2) Now we can compute the generator of $(I_t)_{t\ge0}$: For $f\in\calC_c^\infty(\R)$ and $0< s<t$, we have
\begin{align}\label{proof:sde-interface_2}
&\partial_tP_{s,t}f(a) = -\partial_t\int_\R f'(x)\frac{S_{t-s}(\1_{(a,\infty)}w_s)(x)}{w_{t}(x)}\,dx\\
&=-\int_\R \frac{f'(x)}{w_t(x)}\,\partial_tS_{t-s}(\1_{(a,\infty)}w_s)(x)\,dx + \int_\R \frac{f'(x)S_{t-s}(\1_{(a,\infty)}w_s)(x)}{w_t(x)^2}\,\partial_tw_t(x).
\end{align}
We observe that $\partial_t w_t(x)=\frac{1}{2}w_t''(x)$ and
\begin{align}
&\partial_tS_{t-s}(\1_{(a,\infty)}w_s)(x)
=\int_a^\infty\partial_t p_{t-s}(x-y)w_s(y)\,dy=\frac{1}{2}\int_a^\infty p_{t-s}''(x-y)w_s(y)\,dy\\
&=\frac{1}{2}\left(p'_{t-s}(x-a)w_s(a) + p_{t-s}(x-a)w_s'(a) + \int_a^\infty p_{t-s}(x-y)w_s''(y)\,dy\right),
\end{align}
where we used integration by parts for the last equality. (Note that all appearing derivatives exist because $s>0$.) 
Plugging this into \eqref{proof:sde-interface_2} and again integrating by parts, we obtain
\begin{align}
&\partial_tP_{s,t}f(a)\\
&=-\frac{1}{2}w_s(a)\int_\R \frac{f'(x)}{w_t(x)}\,p'_{t-s}(x-a)\,dx - \frac{1}{2}w_s'(a)\int_\R \frac{f'(x)}{w_t(x)} p_{t-s}(x-a)\,dx\\
&\qquad - \frac{1}{2} \int_\R  \frac{f'(x)}{w_t(x)}\int_a^\infty p_{t-s}(x-y)w_s''(y)\,dy\,dx + \frac{1}{2}\int_\R \frac{f'(x)S_{t-s}(\1_{(a,\infty)}w_s)(x)}{w_t(x)^2}\,w''_t(x)\\
&=\frac{1}{2}w_s(a)\int_\R \frac{f''(x)w_t(x)-f'(x)w_t'(x)}{w_t(x)^2}\,p_{t-s}(x-a)\,dx - \frac{1}{2}w_s'(a)\int_\R \frac{f'(x)}{w_t(x)} p_{t-s}(x-a)\,dx\\
&\qquad - \frac{1}{2} \int_\R  \frac{f'(x)}{w_t(x)}\int_a^\infty p_{t-s}(x-y)w_s''(y)\,dy\,dx + \frac{1}{2}\int_\R \frac{f'(x)S_{t-s}(\1_{(a,\infty)}w_s)(x)}{w_t(x)^2}\,w''_t(x).
\end{align}

Letting $t\downarrow s>0$, we obtain
\begin{align}\label{proof:sde-interface_3}\begin{aligned}
\partial_t P_{s,t}f(a)|_{t=s}&=\frac{1}{2}w_s(a)\frac{f''(a)w_s(a)-f'(a)w_s'(a)}{w_s(a)^2}-\frac{1}{2}w_s'(a)\frac{f'(a)}{w_s(a)}\\
&\qquad-\frac{1}{2}\int_a^\infty \frac{f'(y)}{w_s(y)}w_s''(y)\,dy+\frac{1}{2}\int_a^\infty \frac{f'(y)w_s(y)}{w_s(y)^2}w_s''(y)\,dy\\
&=\frac{1}{2}f''(a)-\frac{w_s'(a)}{w_s(a)}f'(a)\\
&=L_sf(a),
\end{aligned}\end{align}
where the time-dependent generator $L_s$ is defined as
\[L_sf(a):=\frac{1}{2}f''(a) - \frac{w_s'(a)}{w_s(a)}\,f'(a),\qquad a\in\R,\,s>0.\]
(Note that we cannot let $s\downarrow0$ here without imposing stronger conditions on $w_0$, see step 3) below.)
By Chapman-Kolmogorov, we then have also the forward equation
\[\partial_t P_{s,t}f(a)=P_{s,t}(L_tf)(a),\qquad 0\le s<t.\]
Consequently,
\[P_{s,t}f(a)-f(a)=\int_{s}^tP_{s,r}L_rf(a)\,dr,
\qquad 0< s<t.\]
This implies that 
\begin{equation}\label{proof:sde-interface_4}
M_t^\ssup{s}(f):=f(I_t)-f(I_s)-\int_s^t L_rf(I_r)\,dr,\qquad t\ge s
\end{equation}
is a martingale under $\p_{\bfu_0}$, for each $s>0$. 
That is, $(I_t)_{t\ge s}$ satisfies the martingale problem for the generator $L_t$ on $\cC_{[s,\infty)}(\R)$.

3) In order to finish the proof, we now assume first that $w_0$ is smooth and strictly positive.  
Note that under this assumption the drift $(t,x)\mapsto \frac{w_t'(x)}{w_t(x)}$ is continuous (in particular, locally bounded) on $\R^+\times\R$, and \eqref{proof:sde-interface_3} 
and \eqref{proof:sde-interface_4} can be extended to $s=0$. 
Then by standard theory (see e.g.\ \cite[Ch.\ 5.3]{EK86}) or \cite[Ch. 5.4]{KS98} 
\[B_t := I_t - I_0 + \int_0^t\frac{w'_r(I_r)}{w_r(I_r)} \,dr ,\qquad t\ge0\] 
is a Brownian motion, and
$(I_t)_{t\ge 0}$ is the unique weak solution to equation \eqref{eq:interface-movement2}.
Note that in this case, the integral in the previous display exists as a proper integral since the integrand is locally bounded. 
Weak uniqueness for equation \eqref{eq:interface-movement2} follows from local boundedness of the drift coefficient, see e.g. \cite[Cor.\ 10.1.2]{SV79}.

4) Now we remove the additional smoothness and strict positivity assumptions on $w_0$ and require only that $\bfu_0\in\cU_1$. 
Then for each fixed $\delta>0$, the drift term $(t,x)\mapsto \frac{w_t'(x)}{w_t(x)}$ is continuous (thus locally bounded) on $[\delta,\infty)\times\R$.
Applying the arguments of the previous step on the time interval $[\delta,\infty)$ instead of $\R^+$ shows that 
the process $\left(I_t - I_\delta + \int_\delta^t\frac{w'_r(I_r)}{w_r(I_r)} \,dr\right)_{t\ge \delta}$
is a Brownian motion starting from $0$ at time $\delta$. Thus for each $\delta>0$ we have 
\[\left(I_t - I_\delta + \int_\delta^t\frac{w'_r(I_r)}{w_r(I_r)} \,dr\right)_{t\ge \delta}\overset{d}=\left(B_t-B_\delta\right)_{t\ge\delta},\]
where $(B_t)_{t\ge0}$ denotes a standard Brownian motion. Now we can use the right-continuity of $(I_t)_{t\ge0}$ at $t=0$ to conclude that the integral $\int_\delta^t\frac{w'_r(I_r)}{w_r(I_r)} \,dr$ converges as $\delta\downarrow0$ 
and that \eqref{eq:interface-movement2} holds, with the integral interpreted in the improper sense. Also, uniqueness in law on $\cC_{[\delta,\infty)}(\R)$ again follows from \cite[Cor.\ 10.1.2]{SV79},
for each $\delta>0$, which by right-continuity can be extended to uniqueness in law on $\cC_{[0,\infty)}(\R)$.
\end{proof}

\subsection{Multiple interfaces}\label{ssec:multiple_interfaces}

In this section, we prove Theorem \ref{thm:annihilating-BM}. The proof consists of a series of lemmas and propositions. 
\begin{lemma}\label{lemma:anniBM-generator-eq}
Assume $\bfu_0\in\cU$.
\begin{enumerate}
\item Fix $\epsilon>0$, a closed interval $[a,b]\subset \bR$ and assume that $\cI(\bfu_0)\cap [a,b] = \cI(\bfu_0)\cap [a-\epsilon,b+\epsilon]$. Write $\tilde\bfu_0$ for the version of $\bfu_0$ where interfaces outside of $[a,b]$ have been removed, i.e.\ $\tilde u_0^\ssup{1}+\tilde u_0^\ssup{2}=u_0^\ssup{1}+u_0^\ssup{2}$ and $m(\tilde\bfu_0,x)=m(\bfu_0,x)$ for $x\in [a,b]$, $m(\tilde\bfu_0,x)=m(\bfu_0,a-\epsilon)$ for $x<a$ and $m(\tilde\bfu_0,x)=m(\bfu_0,b+\epsilon)$ for $x>b$.
Then, for any $k \in \bN$, any test function $\phi$ with $\supp(\phi)\subset [a,b]^k$ and $m\in\{1,2\}^k$, we have
\begin{align}
\frac{d}{dt}\left.\left(\bE_{\bfu_0}f_{\phi,m}(\bfu_t)\right)\right|_{t=0}= \frac{d}{dt}\left.\left(\bE_{\tilde\bfu_0}f_{\phi,m}(\bfu_t)\right)\right|_{t=0}.
\end{align}
\item Let $A,B$ be two disjoint closed intervals in $\bR$, $k,k'\in\bN$ and $\phi,\phi'$ with $\supp(\phi)\subset A^k, \supp(\phi')\subset B^{k'}$ and $m\in\{1,2\}^k,m'\in\{1,2\}^{k'}$. Then 
\begin{align}
&\frac{d}{dt}\left.\left(\bE_{\bfu_0}f_{\phi,m}(\bfu_t)f_{\phi',m'}(\bfu_t)\right)\right|_{t=0}	\\
&= \frac{d}{dt}\left.\left(\bE_{\bfu_0}f_{\phi,m}(\bfu_t)\right)\right|_{t=0}f_{\phi',m'}(\bfu_0) + f_{\phi,m}(\bfu_0)\frac{d}{dt}\left.\left(\bE_{\bfu_0}f_{\phi',m'}(\bfu_t)\right)\right|_{t=0}.
\end{align}
\end{enumerate}
\end{lemma}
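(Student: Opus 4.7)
The strategy for both parts is to apply the moment duality from Theorem~\ref{thm:moment_duality_infinite} to rewrite the expectations in terms of the coloured particle system $(\bfX,M_t^\sse\infty)$, and then to show that the difference between the two sides of each stated identity decays faster than any polynomial in $t$ as $t\downarrow 0$. The claimed equality of right derivatives at $t=0$ then follows immediately, since both sides trivially agree at $t=0$. Throughout, the key tools are the uniform bound on $M_t^\sse\infty$ from Proposition~\ref{prop:M_bounded}, the boundedness of the densities, and reflection-principle estimates controlling the probability that Brownian motion travels too far in short time.

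For part (1), the moment duality yields
\begin{align*}
\bE_{\bfu_0}f_{\phi,m}(\bfu_t)-\bE_{\tilde\bfu_0}f_{\phi,m}(\bfu_t)=\int_{[a,b]^k}\!\phi(\bfx)\,\bE_\bfx\Big[\sum_{b'\in\{1,2\}^k}\!M_t^\sse{\infty}(\bfX,\delta_m)(b')\big(\bfu_0^\ssup{b'}(\bfX_t)-\tilde\bfu_0^\ssup{b'}(\bfX_t)\big)\Big]d\bfx.
\end{align*}
The assumption $\cI(\bfu_0)\cap[a,b]=\cI(\bfu_0)\cap[a-\epsilon,b+\epsilon]$ combined with Lemma~\ref{lemma:U} ensures that $m(\bfu_0,\cdot)$ is constant on each of $[a-\epsilon,a)$ and $(b,b+\epsilon]$, matching the extension defining $\tilde\bfu_0$; since $w_0$ is preserved, $\bfu_0$ and $\tilde\bfu_0$ agree as measures on $[a-\epsilon,b+\epsilon]$. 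Hence the integrand above vanishes on the event $\{\bfX_s\in[a-\epsilon,b+\epsilon]^k\text{ for all }s\in[0,t]\}$, and by the reflection principle we obtain $|\bE_{\bfu_0}f_{\phi,m}(\bfu_t)-\bE_{\tilde\bfu_0}f_{\phi,m}(\bfu_t)|\le Cke^{-\epsilon^2/(2t)}=o(t)$.

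For part (2), one concatenates to form $(m,m')\in\{1,2\}^{k+k'}$ and $\phi\otimes\phi'$, so that $f_{\phi,m}f_{\phi',m'}=f_{\phi\otimes\phi',(m,m')}$; the moment duality then represents $\bE_{\bfu_0}[f_{\phi,m}(\bfu_t)f_{\phi',m'}(\bfu_t)]$ through a single $(k+k')$-dimensional Brownian motion $(\bfX,\bfY)$ started on $A^k\times B^{k'}$. Let $\mathcal{E}_t$ denote the event that no $\bfX$-coordinate meets any $\bfY$-coordinate within $[0,t]$. On $\mathcal{E}_t$ the partition $\pi((\bfX_s,\bfY_s))$ contains no block that mixes an $\bfX$-index with a $\bfY$-index, so by~\eqref{eq:decomposition} the matrices $A_\rho^{\pi((\bfX_s,\bfY_s))}$ decompose as Kronecker sums, and the recursive constructions in Propositions~\ref{prop:thm_M_infty_discrete}--\ref{prop:thm_M_infty_continuous} inductively yield
\begin{align*}
M_t^\sse\infty\big((\bfX,\bfY),\delta_{(m,m')}\big)(b_X,b_Y)=M_t^\sse\infty(\bfX,\delta_m)(b_X)\,M_t^\sse\infty(\bfY,\delta_{m'})(b_Y).
\end{align*}
Together with the factorization $\bfu_0^\ssup{(b_X,b_Y)}((\bfX_t,\bfY_t))=\bfu_0^\ssup{b_X}(\bfX_t)\bfu_0^\ssup{b_Y}(\bfY_t)$ and the independence of $\bfX$ and $\bfY$, the contribution of $\mathcal{E}_t$ to $\bE_{\bfu_0}[f_{\phi,m}(\bfu_t)f_{\phi',m'}(\bfu_t)]$ matches that to $\bE_{\bfu_0}f_{\phi,m}(\bfu_t)\cdot\bE_{\bfu_0}f_{\phi',m'}(\bfu_t)$. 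The discrepancy coming from $\mathcal{E}_t^c$ is bounded, again by the reflection principle, by $Ckk'e^{-\dist(A,B)^2/(8t)}=o(t)$.

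The main technical obstacle is the tensor-product factorization of $M_t^\sse\infty$ on $\mathcal{E}_t$, which requires unwinding the recursive construction of $M^\sse\infty$ in Section~\ref{sec:duality} and using that (as follows from the explicit form in Proposition~\ref{prop:general_partition}) the map $K_\infty(\cdot,\pi_X\sqcup\pi_Y)$ acts as the Kronecker product $K_\infty(\cdot,\pi_X)\otimes K_\infty(\cdot,\pi_Y)$ whenever $\pi_X$ and $\pi_Y$ are partitions of disjoint index sets.
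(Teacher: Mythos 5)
Your proof is correct and follows essentially the same strategy as the paper's: apply the moment duality \eqref{eq:moment duality infinite gamma} and show that the discrepancy between the two sides is supported on an event whose probability is exponentially small in $1/t$, hence $o(t)$. Minor stylistic difference in part (a): the paper uses the weaker endpoint event $D_t=\bigcap_{i}\{|X_t^{\ssup i}-X_0^{\ssup i}|<\epsilon\}$, which already suffices since the integrand $\tilde\bfu_0^\ssup{m'}(\bfX_t)-\bfu_0^\ssup{m'}(\bfX_t)$ depends on $\bfX_t$ alone while $M_t^\sse{\infty}$ is identical in both terms, whereas you control the whole path; and for part (b), which the paper dispatches in one line as ``similar to a)'', you supply the actual tensor-product factorization of $M_t^\sse{\infty}$ on the no-$\bfX$-$\bfY$-collision event via the Kronecker structure from \eqref{eq:decomposition} and Proposition~\ref{prop:general_partition} (implicit in the paper's proof of Lemma~\ref{lemma:eigenvector-partition}), which correctly fills in the detail the paper leaves to the reader.
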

\begin{proof}
a) The proof uses the moment duality \eqref{eq:moment duality infinite gamma}. 
For each $t>0$, define the event $D_t:=\bigcap_{i=1}^{k}\left\{ \abs{X^\ssup{i}_t -X_0^\ssup{i}}<\epsilon \right\} ,
$
where no Brownian motion ends up too far from its initial position.
Then, since $\bfu_0$ and $\tilde\bfu_0$ locally agree,
\begin{align}
&\bE_{\bfu_0}\big[ f_{\phi,m}(\bfu_t)\big] = \int_{[a,b]^k} \phi(\bfx)\ \bE_{\bfx}\bigg[\sum_{{m'\in\{1,2\}^n}}\bfu_0^\ssup{m'}(\bfX_t)  M^\sse{\infty}_t(\bfX,\delta_m)(m')\bigg] d\bfx\\
&=  \bE_{\tilde\bfu_0}\left[ f_{\phi,m}(\bfu_t)\right] - \int_{[a,b]^k}\!\!\phi(\bfx)\ \bE_{\bfx}\bigg[\sum_{{m'\in\{1,2\}^n}}(\tilde\bfu_0^\ssup{m'}(\bfX_t) -\bfu_0^{(m')}(\bfX_t) ) M^\sse{\infty}_t(\bfX,\delta_m)(m')\ind_{D_t^c}\bigg] d\bfx.
\end{align}
Since the probability of $D_t^c$ converges to 0 faster than $t$ as $t\to0$, we have proven a).

Similar to a) we obtain b) by using the fact that Brownian motions started in $A$ are sufficiently unlikely to meet Brownian motions started in $B$.
\end{proof}

The next lemma shows that starting from an initial condition with 
finitely many interfaces, up to their first `collision time' these interfaces move as independent Brownian motions with drift \eqref{eq:interface-movement2}. 
\begin{lemma}\label{lemma:anniBM-single}
Assume $\bfu_0\in\cU_n$, $n\ge2$. 
Let $\{ (I_t^x )_{t\geq0}\, : \, x \in \cI(\bfu_0)\}$ denote a system of $n$ independent Brownian motions with drift starting in $\cI(\bfu_0)$ and each moving according to \eqref{eq:interface-movement2},
and let $\sigma$ denote their first collision time. Moreover, define
\[d_t:=\inf\{|y-z|:y,z\in\cI(\bfu_s),y\ne z,0\le s\leq t\}\]
and
\begin{equation}\label{eq:tau}
\tau:=\inf\{t>0:d_t=0\}.
\end{equation}
(Observe that $\sigma$ is defined in terms of the Brownian motions with drift, while $\tau$ is defined in terms of the solution $(\bfu_t)_{t\ge0}$ to $\mathrm{cSBM}(-1,\infty)_{\bfu_0}$.) 
Denote by $\hat m$ the 
standard colouring on $[0,\sigma)\times \bR$ induced by $\bfu_0$ and $\{ (I_t^x )_{0\leq t\leq \sigma}\, : \, x \in \cI(\bfu_0)\}$ as defined in the paragraph before Theorem \ref{thm:annihilating-BM}, and
\[ \hat \bfu_t(x) := \left(w_t(x) \1_{\{\hat m(t,x) = 1 \}}, w_t(x) \1_{\{\hat m(t,x) = 2 \}} \right) , \quad   0 \leq t< \sigma,\ x \in \R.\]
Then 
$\big(\tau, (\bfu_t)_{0\leq t< \tau}\big)$ has the same law as $\big(\sigma,(\hat\bfu_t)_{0\leq t< \sigma}\big)$.
\end{lemma}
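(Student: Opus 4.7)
\textbf{Proof plan for Lemma \ref{lemma:anniBM-single}.} The strategy is to identify the interface configuration $(\cI(\bfu_t))_{t<\tau}$ with a system of $n$ independent diffusions solving \eqref{eq:interface-movement2} (stopped at their first pairwise meeting), by reducing to the single-interface case through the localization afforded by Lemma \ref{lemma:anniBM-generator-eq}. Once the joint law of the interfaces is identified, the configurations $\bfu_t$ and $\hat\bfu_t$ can be reconstructed from the interfaces and $w_t$, giving the claim.

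First I would track the interfaces as continuous curves. By Theorem \ref{thm:interfacecount-decreasing} the number of interfaces is non-increasing, and a single interface cannot simply disappear (Proposition \ref{prop:single-interface}); combined with the continuity of $(\bfu_t)_{t\ge0}$ in $\cM_b(\R)^2$ from Theorem \ref{thm:main1} and Lemma \ref{lemma:U}, one can label the $n$ interfaces as continuous processes $(J^x_t)_{0\le t<\tau}$ with $J^x_0=x$ for $x\in\cI(\bfu_0)$, where $\tau$ is the first time two of them meet as in \eqref{eq:tau}.

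Next I would set up a martingale problem for the joint process $(J^x_t)_{x\in\cI(\bfu_0)}$ up to $\tau$. For $\eta>0$ let $\tau_\eta:=\inf\{t:d_t\le\eta\}$, so $\tau_\eta\uparrow\tau$. On $[0,\tau_\eta]$ one can enclose each $J^x$ in a closed interval $A^x$ of width $\eta/2$, with the $A^x$ pairwise disjoint and with no other interface within distance $\eta/4$ of $A^x$. Representing functions of $J^x_t$ via distribution-function identities of the type used in Proposition \ref{prop:sde-interface} (writing $f(J^x_t)$ as an integral against the indicator $\1_{\{J^x_t\le y\}}$, which is recovered from $\bfu_t$ through the local type fractions $f_{\phi,m}$), I would apply part (a) of Lemma \ref{lemma:anniBM-generator-eq} to replace the full initial condition by the single-interface version on each $A^x$, and part (b) to factorize the generator over the disjoint intervals. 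The single-interface calculation of Proposition \ref{prop:sde-interface} then shows that on $[0,\tau_\eta]$ the generator acts on product functions $\prod_x f_x(J^x_t)$ as $\sum_x L^{(x)}_t$ with $L^{(x)}_t f(a)=\tfrac12 f''(a)-\tfrac{w'_t(a)}{w_t(a)}f'(a)$, which is exactly the generator of $n$ independent copies of \eqref{eq:interface-movement2}. Together with the strong Markov property of $(\bfu_t)_{t\ge0}$ (Theorem \ref{thm:main1}), this yields the corresponding martingale problem on $[0,\tau_\eta]$, and letting $\eta\downarrow 0$ propagates it up to $\tau$.

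Finally, uniqueness in law for the single-interface SDE \eqref{eq:interface-movement2} (Proposition \ref{prop:sde-interface}), applied coordinatewise, gives uniqueness in law for the $n$-dimensional martingale problem up to the first meeting. Consequently the process $(J^x_t)_{x,\,t<\tau}$ coincides in law with $(I^x_t)_{x,\,t<\sigma}$ together with the stopping time, so in particular $\tau\stackrel{d}{=}\sigma$. The identification $\bfu_t=\hat\bfu_t$ for $t<\tau$ then follows from Lemma \ref{lemma:U} and Lemma \ref{lemma:asymptotic-type}, which determine the type assignment on each interval between consecutive interfaces (and in the two unbounded components) from the initial colouring $m(\bfu_0,\cdot)$, matching the definition of the standard colouring $\hat m$. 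The delicate step will be the localization in the second paragraph: carefully expressing the (distributional) interface positions as limits of the smooth functionals $f_{\phi,m}$ so that the generator identity from Proposition \ref{prop:sde-interface} transfers, using the boundedness of the densities by $w_t$ and the smoothness of $w_t$ for $t>0$ to control the passage to the limit.
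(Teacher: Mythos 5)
Your proposal shares the key ingredients with the paper's proof — the localization Lemma~\ref{lemma:anniBM-generator-eq}~(a)~and~(b), the single-interface Proposition~\ref{prop:sde-interface}, and a limiting argument over stopping times increasing to $\tau$ — but the paper constructs an explicit pathwise coupling of the interface positions with the independent drifting Brownian motions, whereas you set up an $n$-dimensional martingale problem and invoke uniqueness in law. Both routes are viable, and they are close in spirit: the coupling in the paper is in effect an implementation of the uniqueness coming from Proposition~\ref{prop:sde-interface}, applied block by block via part~(a), with independence across blocks via part~(b). Your route is arguably more standard and hides the coupling inside martingale-problem machinery; the paper's route is more hands-on and produces a pathwise identification, which is what makes the later passage $\bfu_{\tau-}=\hat\bfu_{\sigma-}$ in Proposition~\ref{thm:annihilating-BM-finite} immediate.

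There is, however, a genuine gap in the localization step of your argument. You stop at $\tau_\eta := \inf\{t : d_t \le \eta\}$ and then assert that ``on $[0,\tau_\eta]$ one can enclose each $J^x$ in a closed interval $A^x$ of width $\eta/2$, with the $A^x$ pairwise disjoint.'' That is false as stated: $\tau_\eta$ only controls the pairwise \emph{distances} between interfaces, not their absolute positions, so all interfaces can drift arbitrarily far while staying $\ge\eta$ apart, and in particular $J^x$ need not remain in any fixed interval of width $\eta/2$. Lemma~\ref{lemma:anniBM-generator-eq} requires $\phi$ supported in a \emph{fixed} product of intervals containing the interfaces, so the generator identity cannot be applied uniformly on $[0,\tau_\eta]$ with fixed $A^x$. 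The paper handles this by iterating: it fixes balls $B_i^{(k)}$ of radius $\delta_k = \tfrac13 d_{\tau_k}$ around the positions at time $\tau_k$ and stops at the \emph{exit} time $\tau_{k+1}$ from $\bigcup_i B_i^{(k)}$; crucially it then has to argue $\tau_\infty := \lim_k \tau_k$ equals $\tau$ (and $\sigma$), using that $\tau_{k+1}-\tau_k \to 0$ forces $\delta_k\to 0$ because the SDE \eqref{eq:interface-movement2} has a continuous global solution. You would either need the same iterated stopping scheme with exit times from fixed neighborhoods, or reformulate the localization so the test function $\phi$ is recentered (via the Markov property) at each time — in which case the claim about fixed $A^x$ must be dropped and the passage from the pointwise generator identity to the martingale property re-justified.
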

\begin{proof}
Let $\delta_1:=\frac13d_0$ be one third of the minimal distance between two interface points in $\cI(\bfu_0)$, let $B_i^\ssup1$ be open balls around the interface points $x_i\in\cI(\bfu_0)$ of radius $\delta_1$, $i=1,\ldots,n$, and let 
\[\tau_1:=\inf\Big\{t>0:\cI(\bfu_t)\not\subset\bigcup_{i=1}^n B_i^\ssup1\Big\}.\]
By Lemma \ref{lemma:anniBM-generator-eq} part a), the evolution of $\bfu_t|_{B_i^\ssup1}$ agrees with the evolution of the corresponding single interface process, which by Prop. \ref{prop:sde-interface} is given by a Brownian motion with drift \eqref{eq:interface-movement2}. 
Hence it is possible to couple the interface position in $B_i^\ssup1$ with $I_t^{x_i}$. We can do this for all interface points simultaneously, and by Lemma \ref{lemma:anniBM-generator-eq} part b) the different motions are independent. Hence we can couple the solution $\bfu_t$ and the system of independent Brownian motions with drift so that 
\begin{equation}\label{eq:coupling_interface}
\cI(\bfu_t)=\{ I_t^x \, : \, x \in \cI(\bfu_0)\}
\end{equation}
and $\bfu_t = \hat\bfu_t$ for all $0\leq t<\tau_1$ a.s. 

We can repeat the argument starting from $\bfu_{\tau_1}$ up to the time 
\[\tau_2:=\inf\Big\{t>\tau_1:\cI(\bfu_t)\not\subset\bigcup_{i=1}^n B_i^\ssup{2}\Big\}\]
by looking at new balls $B_i^\ssup{2}$ with radius $\delta_2:=\frac 13 d_{\tau_1}$. This allows us to extend the coupling up to time $\tau_2$.

Iterating, we obtain a sequence $0< \tau_1 < \tau_2 < \cdots$ as well as a random sequence $\delta_k>0$. Let $\tau_\infty:=\lim_{k\to\infty}\tau_k\in(0,\infty]$. Up to time $\tau_\infty$, the coupling  of $\bfu_t$ with $\hat\bfu_t$ is valid. Furthermore, the first collision time $\sigma$ of the system $\{ (I_t^x )_{t\geq0}\, : \, x \in \cI(\bfu_0)\}$ is clearly bigger than $\tau_k$ for any $k$ and hence $\sigma\geq \tau_\infty$. On the event $\{\tau_\infty=\infty\}$, we thus have $\sigma=\infty$ and $\bfu_t=\hat\bfu_t$ for all $t\ge0$, and the assertion is proved. On $\{\tau_\infty<\infty\}$, $\tau_{k+1}-\tau_{k}$ converges to 0. But this difference is the time it takes one of the Brownian motions with drift to leave the ball of radius $\delta_k$, hence $\delta_k$ must converge to 0 as well. 
Note that here we use that \eqref{eq:interface-movement2} has a continuous global solution.
Therefore 
\[d_{\tau_\infty}= \inf\left\{|y-z| : y,z\in\cI(\bfu_t),y\neq z, 0\leq t\le\tau_\infty\right\}=0  \]
and $\sigma=\tau_\infty=\tau$. 
\end{proof}

Now we prove a version of Theorem \ref{thm:annihilating-BM} for initial conditions with finitely many interfaces. We show that in this case the solution $(\bfu_t)_{t\ge0}$ to $\mathrm{cSBM}(-1,\infty)$ is described in law by a finite regular annihilating system of Brownian motions with drift \eqref{eq:interface-movement2}. In the finite case, the existence of such a system is straightforward.

\begin{prop}\label{thm:annihilating-BM-finite}
Assume that $\bfu_0\in \cU_n$ for some $n\geq 2$.  
Let $\{ (I_t^x )_{t\geq0}\, : \, x \in \cI(\bfu_0)\}$ denote a regular annihilating system
starting from $\cI(\bfu_0)$ such that each coordinate independently follows the dynamics \eqref{eq:interface-movement2}
up to the first collision with another motion, upon which both motions annihilate. 
Denote by $(\hat \bfu_t)_{t\ge0}$ the standard element of $\calC_{[0,\infty)}(\cU)$ induced by $\bfu_0$ and $\{ (I_t^x )_{t \geq 0}\, : \, x \in \cI(\bfu_0)\}$, as defined in \eqref{eq:standard_element}. 
Then we have
\[(\bfu_t)_{t \geq 0}\overset{d}=(\hat\bfu_t)_{t \geq 0} \qquad\text{on }\cC_{[0,\infty)}(\calM_b(\R)^2).\]
\end{prop}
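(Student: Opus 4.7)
The plan is to proceed by induction on $n$, the number of interfaces in $\bfu_0 \in \cU_n$, using Lemma \ref{lemma:anniBM-single} together with the strong Markov property of $(\bfu_t)_{t\ge0}$ from Theorem \ref{thm:main1} b) and the non-proliferation result in Theorem \ref{thm:interfacecount-decreasing}. The base cases $n \in \{0,1\}$ are trivial or covered by Theorem \ref{thm:annihilating-BM-_onepoint}. For the inductive step from $n-2$ to $n$, the goal is to construct the required regular annihilating system interface-by-interface, one `annihilation epoch' at a time, and to identify its standard element with $(\bfu_t)_{t\ge0}$ in law.

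Concretely, apply Lemma \ref{lemma:anniBM-single}: there is a coupling of $(\bfu_t)_{0\le t<\tau}$ (with $\tau$ as in \eqref{eq:tau}) with the standard element $(\hat\bfu_t)_{0\le t<\sigma}$ induced by $\bfu_0$ and a family of $n$ independent solutions $\{(\tilde I_t^x)_{t\ge0} : x\in\cI(\bfu_0)\}$ of \eqref{eq:interface-movement2}, where $\sigma$ is the first collision time of these solutions; moreover $\tau=\sigma$ under the coupling. I will define $I_t^x := \tilde I_t^x$ for $t<\sigma$ and freeze the two colliding coordinates at $\dagger$ afterwards. It then remains to check: (i) exactly two of the $\tilde I^x$ meet at $\sigma$ (no triple collisions); (ii) the coupled process satisfies $\bfu_\sigma \in \cU_{n-2}$, and the standard coloring $\hat m$ extends continuously across $\sigma$; (iii) we may restart from $\bfu_\sigma$ via the strong Markov property, and the restarted dynamics is consistent with an independent continuation of the annihilating system. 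Point (ii) follows from the alternating structure of colors around interfaces in $\cU$ (Lemma \ref{lemma:U}), Theorem \ref{thm:interfacecount-decreasing} (which prevents any extra interfaces from appearing), and right-continuity of $(\bfu_t)$; at the collision both the merging interfaces disappear because $m(\hat\bfu_{\sigma-},\cdot)$ takes the same value on both sides of the merging pair. Point (iii): by Theorem \ref{thm:main1} b) the law of $(\bfu_{\sigma+t})_{t\ge0}$ given $\cF_\sigma$ is $\mathrm{cSBM}(-1,\infty)_{\bfu_\sigma}$, and since $S_{\sigma+s}w_0 = S_s w_\sigma$, the SDE \eqref{eq:interface-movement2} in the restarted problem coincides with the original SDE restricted to $[\sigma,\infty)$. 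The inductive hypothesis applied to $\bfu_\sigma \in \cU_{n-2}$ supplies a regular annihilating system starting from $\cI(\bfu_\sigma)$, which I concatenate with $\{I^x_{\cdot\wedge\sigma}\}$ in the obvious way; the resulting system satisfies all the defining properties of a regular annihilating system, and its standard element equals $(\bfu_t)_{t\ge0}$ in law.

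The main obstacle is point (i), the absence of simultaneous triple collisions. For initial points $x_1<\cdots<x_n$ in $\cI(\bfu_0)$, the pairwise differences $\tilde I^{x_i} - \tilde I^{x_j}$ are continuous semimartingales driven by independent Brownian motions with drifts $-w'_s(\tilde I_s^{x_i})/w_s(\tilde I_s^{x_i})$ that are locally bounded on any compact subinterval of $(0,\infty)$ (with integrable singularity at $0$, cf.\ Proposition \ref{prop:sde-interface}). On any time interval $[\delta,T]$ with $\delta>0$, a Girsanov transformation removes the drifts and reduces the question to the well-known fact that three independent standard Brownian motions almost surely do not meet simultaneously; letting $\delta\downarrow 0$ via right-continuity at $0$ (noting that distinct starting points preclude a triple collision in $[0,\delta]$ for $\delta$ small) completes the argument. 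Combined with the preceding discussion, this yields $(\bfu_t)_{t\ge0} \stackrel{d}{=} (\hat\bfu_t)_{t\ge0}$ on $\cC_{[0,\infty)}(\cM_b(\R)^2)$, as desired.
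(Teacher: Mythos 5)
Your proposal is correct and follows essentially the same approach as the paper: iterate over annihilation epochs using Lemma~\ref{lemma:anniBM-single} for the first collision, then restart via the strong Markov property from Theorem~\ref{thm:main1}~b), with the non-proliferation bound from Theorem~\ref{thm:interfacecount-decreasing} ensuring the interface count can only drop. You make the induction on $n$ explicit and supply a Girsanov argument for the absence of triple collisions (a fact the paper notes is ``straightforward'' in the finite case and does not spell out), but the underlying decomposition and key lemmas are the same.
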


\begin{proof}
Define $\tau$ as in \eqref{eq:tau}, and let 
\[\tau'_0:=0,\qquad \tau'_{k+1}:=\inf\{t>\tau_k':|\cI(\bfu_t)|<|\cI(\bfu_{\tau_k'})|\},\quad k\ge1\]
denote successive jump times of the `interface counting process' $\left(|\cI(\bfu_t)|\right)_{t\ge0}$. 
Since the number of interfaces is non-increasing by Theorem \ref{thm:interfacecount-decreasing}, we have $|\cI(\bfu_t)|=|\cI(\bfu_{\tau_k'})|$ for $t\in[\tau'_k,\tau'_{k+1})$.
Moreover, since by Lemma \ref{lemma:anniBM-single} the evolution of the set $\cI(\bfu_t)$ strictly before time $\tau$ is described by a system of $n$ independent Brownian motions with drift,
we clearly have $\tau'_1\ge\tau$.
We now show that in fact $\tau'_1=\tau$ a.s.\ under $\p_{\bfu_0}$
and that the transition from $\bfu_{\tau-}$ to $\bfu_\tau$ is described in law by a regular annihilating system as in the statement of this proposition. 

Let $\sigma$ be the first annihilation time in the regular annihilating system $\{ (I_t^x )_{t\geq0}\, : \, x \in \cI(\bfu_0)\}$, which coincides in law with the first collision time of $n$ independent Brownian motions with drift \eqref{eq:interface-movement2}.
By (the proof of) Lemma \ref{lemma:anniBM-single}, we know that we can couple the processes $(\bfu_t)_t$ and $(\hat\bfu_t)_t$ on a common probability space such that
\begin{equation}\label{proof:multiple_1}
\tau=\sigma\qquad\text{and}\quad(\bfu_t)_{0\leq t<\tau} = (\hat\bfu_t)_{0\leq t<\sigma} \qquad \text{a.s.} 
\end{equation}
On the event $\{\sigma=\infty\}$, we have nothing more to show, thus suppose now that $\sigma<\infty$ with positive probability.
But on $\{\sigma<\infty\}$ we can use the continuity of the paths of both processes $(\bfu_t)_t$ and $(\hat\bfu_t)_t$ in order to see that $\bfu_\tau=\hat\bfu_\sigma$ a.s.
Since $\hat\bfu_\sigma\in\cU_{n-2}$ by the properties of the regular annihilating system, we conclude in particular that $\tau'_1=\tau$, and we have shown that
\begin{equation}\label{proof:multiple_2}
\big(\tau_1', (\bfu_t)_{0\le t\le\tau_1'}\big)\overset{d}=\big(\sigma, (\hat\bfu_t)_{0\le t\le\sigma}\big).
\end{equation}

By the strong Markov property (see Theorem\ \ref{thm:main1}),
it follows immediately that \eqref{proof:multiple_2} determines the evolution of the process $(\bfu_t)_{t\ge0}$ on any random time interval $[\tau_k',\tau_{k+1}']$, $k\ge0$.
Thus our assertion is proved.
\end{proof}

Before turning to the proof of Thm.\ \ref{thm:annihilating-BM}, we need to establish the existence of a countable regular annihilating system of Brownian motions with drift. Due to the non-monotonicity, this is not trivial.

\begin{lemma}\label{lem:existence-aBM-infinite}
Let $\bfx\subset\bR$ be without accumulation points, let $w_0$ be bounded 
and strictly positive almost everywhere, and let $w_t=S_tw_0$.
Then there exists a regular annihilating system $(\bfI_t)_{t\ge0}$ with $\bfI_0=\bfx$ such that each motion in the system independently follows the law of the SDE \eqref{eq:interface-movement2} up to its annihilation time.
\end{lemma}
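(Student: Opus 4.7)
The plan is to reduce the construction to that of a \emph{coalescing} analogue, as indicated in the remark after Theorem~\ref{thm:annihilating-BM}, and then extract the annihilating system as a measurable functional of the coalescing one.

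First I would dispose of the case $\bfx$ finite. Given $\bfx=\{x_1<\cdots<x_N\}$, take independent standard Brownian motions $B^1,\ldots,B^N$. Each $B^i$ drives a solution of \eqref{eq:interface-movement2} started at $x_i$, whose unique global weak existence is supplied by Proposition~\ref{prop:sde-interface} (applied with $w_0$ as in the statement, since that proof only uses boundedness of $w_0$ and strict positivity a.e.). Let $\tau^{(1)}$ be the first pair-meeting time among these motions; kill the colliding pair at $\tau^{(1)}$ and continue the surviving motions driven by the same BMs until the next pair-meeting, and iterate. This terminates after at most $N/2$ rounds. Almost sure absence of simultaneous triple collisions and strict positivity of each $\tau^{(k)}$ follow from the same Brownian transversality as was used in Proposition~\ref{prop:thm_M_infty_continuous}.

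For general $\bfx$, enumerate $\bfx=\{x_i\}_{i\in J}$ with $J\subseteq\Z$ and $x_i<x_{i+1}$; the no-accumulation hypothesis makes this enumeration well defined and ensures each $x_i$ has two strict nearest neighbours. On a common probability space, take independent standard Brownian motions $(B^i)_{i\in J}$ and build a countable coalescing system $\{C^i\}_{i\in J}$ with drift \eqref{eq:interface-movement2} as follows. Run each $C^i$ as a solution of \eqref{eq:interface-movement2} driven by $B^i$ and started at $x_i$, and let
\[
\sigma_i:=\inf\bigl\{t>0:C^i_t\in\{C^{i-1}_t,C^{i+1}_t\}\bigr\}.
\]
Since all $C^j$ have continuous paths and are initially strictly ordered, they cannot swap without first being equal, so $C^{i-1}_t<C^i_t<C^{i+1}_t$ for all $t<\sigma_i$, and the first meeting of $C^i$ is indeed with a neighbour. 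At time $\sigma_i$ glue $C^i$ to its colliding partner and continue the merged trajectory driven by an agreed-upon one of the two BMs. Because each merger is determined locally (by a single pair of neighbours), the construction extends measurably to infinite $J$: on any compact time interval $[0,T]$ and any bounded spatial window $[a,b]$, only finitely many indices $i$ have $C^i$ entering $[a,b]$ before time $T$ with positive probability (by the no-accumulation assumption and standard Gaussian tail bounds for the driftless part), so the merger events in $[0,T]\times[a,b]$ stabilise after finitely many local rewrites.

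Now define the annihilating system by
\[
I^{x_i}_t:=\begin{cases}C^i_t,&t<\sigma_i,\\ \dagger,&t\ge\sigma_i,\end{cases}
\]
so that at each merger time $\sigma_i=\sigma_{i\pm1}$ both colliding particles die simultaneously. Before $\sigma_i$ the trajectory $I^{x_i}$ is driven by the independent BM $B^i$ and thus satisfies \eqref{eq:interface-movement2}, as required. The regular annihilating properties are direct: distinct initial positions without accumulation are inherited from $\bfx$; continuity of each $I^{x_i}$ on $[0,\sigma_i)$ comes from continuity of $C^i$; $\sigma_i>0$ follows from $C^{i-1}_0<x_i<C^{i+1}_0$ together with path continuity; and absence of triple collisions reduces to showing that three independent Brownian motions with common deterministic drift do not meet simultaneously, which follows by the same argument as in Section~\ref{ssec:continuous}.

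The main technical obstacle will be controlling the infinite coalescing system near $t=0$, where the drift $w'_s(x)/w_s(x)$ from \eqref{eq:interface-movement2} can be singular. I would handle this by first carrying out the above construction on $[\delta,\infty)$ for each $\delta>0$, where the drift is locally bounded and standard SDE theory applies, and then passing to $\delta\downarrow0$ using the already-established global single-particle weak existence from Proposition~\ref{prop:sde-interface} to extend continuously across $t=0$. Verifying that the non-accumulation of $\bfx$ at time $0$ propagates to an a.s.\ non-accumulation of the coalescing particles on compact time-space windows is the other delicate point, but this follows from the uniform boundedness of the drift on $[\delta,\infty)$ combined with a Borel--Cantelli argument over the BMs $B^i$.
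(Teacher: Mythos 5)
Your high-level route — embed the annihilating system into a coalescing system of Brownian motions with drift, for which monotone limits work — is exactly the paper's strategy. But the way you pass from the coalescing system back to the annihilating one has a genuine gap.

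You kill $I^{x_i}$ at $\sigma_i$, the first time the coalescing motion $C^i$ meets one of its neighbouring coalescing motions. This rule does not produce a regular annihilating system, because a particle in the annihilating system should survive a collision with a neighbour that has \emph{already been annihilated}. Concretely, take $x_1<x_2<x_3$ and suppose $C^1$ and $C^2$ coalesce at time $T_{12}$, then the merged trajectory meets $C^3$ at $T_{23}>T_{12}$. In the correct annihilating system, $I^{x_1}$ and $I^{x_2}$ die at $T_{12}$ and $I^{x_3}$ survives forever. In your construction, $\sigma_3=T_{23}<\infty$ (since $C^2$ is by then the merged path, and you glue rather than remove), so $I^{x_3}$ dies at $T_{23}$ — but at that instant no other $I^{x_j}$ has $I^{x_j}_{T_{23}-}\in\R$ equal to $I^{x_3}_{T_{23}-}$, violating the defining property of a regular annihilating system. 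Your assertion ``at each merger time $\sigma_i=\sigma_{i\pm1}$ both colliding particles die simultaneously'' is simply false in this example. The paper resolves exactly this point by introducing the counting function $C_n(t,y)$, the number of initial motions that have coalesced into the path at $(t,y)$, and declaring a motion alive in the annihilating system iff this count is \emph{odd}; the annihilation time is then $\tau^{(n)}_x=\inf\{t:C_n(t,Y^x_t)\text{ even}\}$. This parity bookkeeping is the key structural observation and is absent from your argument.

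A secondary issue: your justification that ``only finitely many indices $i$ have $C^i$ entering $[a,b]$ before time $T$'' invokes ``Gaussian tail bounds for the driftless part'', but the drift $w_s'/w_s$ in \eqref{eq:interface-movement2} can blow up as $s\downarrow0$, so separating drift from Brownian motion is not obviously legitimate on $[0,T]$. The paper avoids this by using the explicit marginal law \eqref{distribution_interface_fixed_time} of the single-interface process, which gives $\bP(Y^x_t>a)\leq\|w_0\|_\infty\,\bP_a(X_t<x)/w_t(a)$ directly, with no need to control the drift pathwise; the Borel--Cantelli step then proceeds cleanly. Once the parity construction is in place, the paper's observation that $n\mapsto C_n(t,y)$ is monotone increasing, plus this tail bound, shows $C_n$ stabilises and the infinite annihilating system is well defined. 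You would need to import both of these ingredients to close your argument.
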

\begin{proof}
The existence for $|\bfx|$ finite is straightforward. However, adding more motions is a 
non-monotone process due to the annihilation mechanism, which makes the 
construction of an infinite system delicate. From now on we assume that 
$\bfx$ is neither bounded from above nor below. It will be clear from the 
argument how to modify the proof when $\bfx$ is unbounded in one 
direction only.

Let $\bfx_n:=\bfx\cap[-n,n]$, and let $\bfY_t$ and $\bfY^\sse{n}_t$ be 
the corresponding systems of \emph{coalescing} Brownian motions with drift \eqref{eq:interface-movement2}, starting from
$\bfY_0=\bfx$ and $\bfY_0^\sse{n}=\bfx_n$ respectively, and constructed on the same 
probability space so that $\bfY^\sse{1}\subset 
\bfY^\sse{2}\subset\dots\subset\bfY$. This monotonicity also directly 
implies the existence of the infinite system. 
Denote by $Y^x$ the motion in $\bfY$ started in $x\in\bfx$.
For $y\in\bfY^\sse{n}_t$, 
let $C_n(t,y)$ be the total number of motions which have coalesced in 
the path arriving in $y$ at time $t$, i.e. $C_n(t,y)=\abs{\{x\in\bfx_n: Y^x_t=y\}}$. Consider now the annihilating version $\bfI^\sse{n}$ of $\bfY^\sse{n}$, with $I^{\sse{n},x}_t=Y^x_t$ until the first collision time with another motion $I^{\sse{n},x'}$, upon which both are moved to the cemetery state $\dagger$. By simple counting, one sees that the killing time of $I^{\sse{n},x}$ is given by 
$\tau^\sse{n}_x=\inf\{t\geq 0:C_n(t,Y_t^x)\text{ is even}\}$.
Therefore, $\bfI^\sse{n}$ is given by 
$\bfY^\sse{n}$ restricted to the points where $C_n$ is odd, that is
\begin{align}
I^{\sse{n},x}_t=\begin{cases}
Y_t^x,\quad&C_n(t,Y_t^x)\text{ is odd};\\
\dagger,&C_n(t,Y_t^x)\text{ is even}.
\end{cases}
\end{align}
Note that for fixed $t$ adding more motions can change the parity of the counting, and hence $C_n(t,Y_t^x)\neq C_{n'}(t,Y_t^x)$ for many $x\in\bfx_n$, $n'>n$. However, the map $n\mapsto C_n(t,y)$ is increasing, and hence converges if and only if it is bounded.

We conclude that to obtain the infinite system of annihilating Brownian motions it suffices 
to show that the increasing map $n\mapsto C_n(t,y)$ remains bounded for 
any $t$ and $y$.

Fix $a\in\bR$, and for $x\in\bfx$ let 
$\bfu_0^\sse{x}:=(w_0\ind_{\cdot<x},w_0\ind_{\cdot>x})$. Then by \eqref{distribution_interface_fixed_time}, we have
\begin{align}\label{eq:move-far}
\bP(Y^x_t>a) = \frac{\bE_a[ w_0(X_t)\ind_{X_t<x}]}{w_t(a)} 
\leq \frac{\norm{w_0}_\infty}{w_t(a)}\,\bP_a(X_t<x),
\end{align}
with $(X_t)_{t\ge0}$ denoting a standard Brownian motion. Let $z_n \subset \bfx$ be a decreasing sequence with $\abs{z_n}\geq n$. 
By \eqref{eq:move-far},
\[ \sum_{n\in\bN} \bP(Y^{z_n}_t>a) \leq \frac{\norm{w_0}_\infty}{w_t(a)} 
\sum_{n\in\bN}\bP_a(X_t\leq -n)<\infty. \]
Hence the Borel-Cantelli-Lemma implies that almost surely, only a finite number of the 
$Y^{z_n}_t$ are to the right of $a$. In particular there is some $k$ so 
that
$Y_t^{z_k}<a$, which in turn implies that all motions started to the 
left of $z_k$ also end up to the left of $a$ by the coalescence property.

Repeating the same argument from the right for an arbitrary $b>a$ shows 
that there is also some $\tilde{z}_{\tilde{k}}$ so that motions started 
to the right of this point do not end up to the left of $b$. Together 
this implies that $C_n(y,t)$ remains bounded for any $y\in(a,b)$. Since 
$a$ and $b$ were arbitrary this then holds for any $y$, and hence $\bfI$ is 
obtained from $\bfY$ via reduction to the points with odd counting number.

The fact that the annihilating system $\bfI$ is regular follows directly from this construction and the properties of the coalescing system $\bfY$ of Brownian motions with drift.
\end{proof}

Now we can finally prove the full version of Theorem \ref{thm:annihilating-BM}, admitting initial conditions $\bfu_0\in\cU$ with infinitely many interfaces as long as they do not accumulate.
\begin{proof}[Proof of Theorem \ref{thm:annihilating-BM}]
Choose a sequence $a_n\uparrow \infty$ so that no points in $\cI(\bfu_0)$ lie on the boundary of the intervals $[-a_n,a_n]$. Let $\tilde\bfu_0^\sse{n}$ be the version of $\bfu_0$ where all interface points outside $[-a_n,a_n]$ have been removed, as in Lemma \ref{lemma:anniBM-generator-eq}. 
Note that $\tilde\bfu_0^\sse{n}\in\bigcup_{m\in\N_0}\cU_m$ for each $n\in\N$ and that $\tilde\bfu_0^\sse{n}\to \bfu_0$ as $n\to\infty$ in the topology of $\calM_\tem(\R)^2$. 
Also, choosing $K>0$ such that $\bfu_0\in\calM_K$, we have $\tilde\bfu_0^\sse{n}\in\calM_K$ for all $n\in\N$, thus $\tilde\bfu_0^\sse{n}$ converges to $\bfu_0$ in $\calM_K$. 
Fix $t>0$ and consider a function $f_{\phi,m}$ as in \eqref{defn:f_phi,m}, with $\phi(\bfx)=\prod_{i=1}^n\phi_i(x_i)$ and $\phi_i\in\calC_c(\R)$, $i=1,\ldots,n$. 
By Lemma \ref{lemma:semigroup-continuous}, the function $P_tf_{\phi,m}$ is continuous on $\calM_K$, where $(P_t)_{t\ge0}$ denotes the transition semigroup of $(\bfu_t)_{t\ge0}$. Thus we have 
\[\bE_{\tilde\bfu_0^\sse{n}}\left[f_{\phi,m}(\bfu_t)\right]=P_tf_{\phi,m}(\tilde\bfu_0^\sse{n})\to P_tf_{\phi,m}(\bfu_0)=\bE_{\bfu_0}\left[f_{\phi,m}(\bfu_t)\right],\qquad n\to\infty.\]

On the other hand,
by Proposition~\ref{thm:annihilating-BM-finite} the evolution of $(\bfu_t)_{t\ge0}$ under $\p_{\tilde\bfu_0^\sse{n}}$ is described by the standard element $(\hat\bfu_t^\sse{n})_{t\ge0}\in\cC_{[0,\infty)}(\cU)$ which is induced by $\tilde\bfu_0^\sse{n}$ and a finite regular annihilating system of Brownian motions with drift \eqref{eq:interface-movement2} starting from $\cI(\tilde\bfu_0^\sse{n})$. 
In view of (the proof of) Lemma \ref{lem:existence-aBM-infinite}, this system converges as $n\to\infty$ to a corresponding infinite system of such motions starting from $\cI(\bfu_0)$,
and $\hat\bfu_t^\sse{n}\to\hat\bfu_t$ in $\calM_K$.
Thus
\[\bE\left[f_{\phi,m}(\hat\bfu_t)\right]=\lim_{n\to\infty}\bE\left[f_{\phi,m}(\hat\bfu_t^\sse{n})\right]=\lim_{n\to\infty}\bE_{\tilde\bfu_0^\sse{n}}\left[f_{\phi,m}(\bfu_t)\right]=\bE_{\bfu_0}\left[f_{\phi,m}(\bfu_t)\right]=P_tf_{\phi,m}(\bfu_0).
\]
Since the above class of functions $f_{\phi,m}$ is measure-determining, the one-dimensional distributions of $(\bfu_t)_{t\ge0}$ and $(\hat\bfu_t)_{t\ge0}$ coincide and are given by the semigroup $(P_t)_{t\ge0}$. 
Since both are Markov processes, we conclude that $(\bfu_t)_{t\ge0}\overset{d}=(\hat\bfu_t)_{t\ge0}$,
finishing the proof.
\end{proof}

\subsection{General initial configurations}\label{ssec:overlapping}
In this section, we finally deal with general initial conditions and prove Theorem \ref{thm:overlapping}. The main point is to show that the set $\cI(\bfu_t)$ of interface points is discrete for each $t>0$.
We first need some preliminary results.
\begin{lemma}\label{lemma:interval-existence}
Assume $\bfu\in\cU$ with $u^\ssup1+u^\ssup2=1$ and let $x\in\cI(\bfu)$. In any open neighborhood $O$ around $x$ there exists an interval $A$ with 
\begin{align}\label{eq:interval}
u^\ssup1(A)=u^\ssup2(A)=\frac{\abs{A}}{2}>0.
\end{align}
\end{lemma}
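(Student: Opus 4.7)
The plan is to exploit the rigid structure imposed by the three assumptions $u^{(1)}\!+\!u^{(2)}=1$, absolute continuity, and mutual singularity. Together these force the densities to be indicator functions: letting $f_i := du^{(i)}/d\mathrm{Leb}$, mutual singularity gives a Borel set $E$ with $u^{(1)}(E)=0=u^{(2)}(E^c)$, so $f_1=0$ a.e.\ on $E$ and $f_2=0$ a.e.\ on $E^c$; combined with $f_1+f_2=1$ a.e., this yields $f_1=\mathbf{1}_{E_1}$ and $f_2=\mathbf{1}_{E_2}$ for some partition $\R = E_1 \sqcup E_2$ (up to a Lebesgue-null set), where I set $E_1:=E^c$. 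Consequently $u^{(i)}(B)=|B\cap E_i|$ for every Borel $B$.

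Next I would use that $x\in\cI(\bfu)=\supp(u^{(1)})\cap \supp(u^{(2)})$ implies $|B_\varepsilon(x)\cap E_i|>0$ for $i=1,2$ and every $\varepsilon>0$. Pick $\varepsilon>0$ with $[x-\varepsilon,x+\varepsilon]\subset O$. By the Lebesgue density theorem, almost every point of $E_i$ is a density point of $E_i$, so I can choose density points $y_1\in E_1$ and $y_2\in E_2$ both lying in $(x-\varepsilon/2, x+\varepsilon/2)$.

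The core of the argument is then an intermediate value theorem applied to the continuous function
\begin{equation*}
\phi_\delta(c) \; := \; u^{(1)}\bigl([c-\delta,c+\delta]\bigr) - \delta \; = \; \bigl|[c-\delta,c+\delta]\cap E_1\bigr|-\delta,
\end{equation*}
which is continuous in $c$ since $u^{(1)}$ is absolutely continuous. By the density point property, $\phi_\delta(y_1)/\delta \to 1$ and $\phi_\delta(y_2)/\delta \to -1$ as $\delta\downarrow 0$. Fixing $\delta>0$ sufficiently small so that $\phi_\delta(y_1)>0$, $\phi_\delta(y_2)<0$, and so that $[c-\delta,c+\delta]\subset O$ for every $c\in[x-\varepsilon/2,x+\varepsilon/2]$ (take $\delta\le\varepsilon/2$), the IVT yields a point $c^*$ between $y_1$ and $y_2$ with $\phi_\delta(c^*)=0$.

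Setting $A:=[c^*-\delta,c^*+\delta]$ gives an interval of length $2\delta>0$ contained in $O$ with $u^{(1)}(A)=\delta=|A|/2$, and then $u^{(2)}(A)=|A|-u^{(1)}(A)=|A|/2$ by the assumption $u^{(1)}+u^{(2)}=1$, proving \eqref{eq:interval}. There is no genuine obstacle here; the only thing to be slightly careful about is keeping all sliding intervals inside the open neighborhood $O$, which is automatic once $\delta$ and $\varepsilon$ are chosen small enough, so the whole argument is essentially a one-parameter IVT applied to a function whose sign at the endpoints is controlled by the Lebesgue density theorem.
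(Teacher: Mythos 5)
Your proof is correct. Both you and the paper ultimately invoke the intermediate value theorem, but the way you produce the two configurations with ratio above and below $1/2$ is genuinely different. The paper finds two intervals $A_1,A_2\subset O$ with $u^\ssup{i}(A_i)\ge\tfrac12\abs{A_i}$ by a contradiction argument: if no such $A_1$ exists, then $u^\ssup1(B)<u^\ssup2(B)$ on every interval $B\subset O$, forcing the restriction of $u^\ssup1$ to $O$ to be absolutely continuous with respect to $u^\ssup2$, which together with mutual singularity gives $u^\ssup1(O)=0$, contradicting $x\in\cI(\bfu)$. It then interpolates linearly between the two (possibly different-length) intervals and applies the IVT to the ratio $u^\ssup1(g(\cdot))/\abs{g(\cdot)}$. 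You instead make explicit from the start that $u^\ssup1+u^\ssup2=1$ plus mutual singularity forces the densities to be indicators $\mathbf 1_{E_1},\mathbf 1_{E_2}$ of a Lebesgue partition, then use the Lebesgue density theorem to pick density points $y_1\in E_1$, $y_2\in E_2$ near $x$, and apply the IVT to the one-parameter family $\phi_\delta(c)=u^\ssup1([c-\delta,c+\delta])-\delta$ of \emph{fixed-length} intervals slid along the line. Your argument is somewhat more constructive and the IVT step is cleaner (a single sliding parameter rather than a two-endpoint interpolation), at the cost of invoking the Lebesgue density theorem instead of a soft contradiction; the paper's endpoint-finding step is shorter but less explicit. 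Either route is fine; yours also gives a bit more information (you get an interval of any prescribed small length) which is not needed here but would plug directly into the paper's Lemma 5.12 as well.
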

\begin{proof}
First we show that there exist non-degenerate intervals $A_1,A_2\subset O$ with $u^\ssup{i}(A_i)\geq\frac12\abs{A_i}$, $i=1,2$. Assume this is false for $u^\ssup1$. Then for all intervals $B$ in $O$ we have $u^\ssup1(B)<u^\ssup2(B)$, which implies that the restriction of $u^\ssup1$ is absolutely continuous with respect to $u^\ssup2$. But by the definition of $\cU$, we have that $u^\ssup1$ and $u^\ssup2$ are mutually singular (``separation of types"), which implies that $u^\ssup1(O)=0$. This however is a contradiction to the assumption that $x\in O$ is an interface point.

Now that we have $A_1$ and $A_2$, let $g$ be the map defined on $[0,1]$ which interpolates linearly between the intervals $g(0):=A_1$ and $g(1):=A_2$. Then the map $g_0(x)=\frac{u^\ssup1(g(x))}{\abs{g(x)}}$ is continuous with $g_0(0)\geq \frac12$ and $g_0(1)\leq \frac12$, hence there is an $x_0$ with $g_0(x_0)=\frac12$. Then $g(x_0)$ satisfies \eqref{eq:interval}.
\end{proof}

\begin{lemma}\label{lemma:half-interval}
Assume $\bfu\in\cU$ with $u^\ssup1+u^\ssup2=1$ and let $A$ be an interval satisfying \eqref{eq:interval}. Then there exists an interval $A'\subset A$ of size $\abs{A'}=\frac12 \abs{A}$ satisfying \eqref{eq:interval}.
\end{lemma}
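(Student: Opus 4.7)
The plan is to prove this by a straightforward intermediate value theorem argument applied to the function that shifts a sub-interval of fixed length $|A|/2$ across $A$.

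First I would set $A = [a,b]$ with length $L := b-a$ and define, for $c \in [a, a + L/2]$, the function
\[ f(c) := u^\ssup{1}\!\left(\left[c, c + L/2\right]\right). \]
Since $u^\ssup{1}$ is an absolutely continuous measure (recall $\bfu \in \cU$), the function $f$ is continuous on $[a, a+L/2]$. This continuity is the only analytic input needed.

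Next, I would exploit the splitting at the midpoint: since $[a, a+L/2]$ and $[a+L/2, b]$ partition $A$ (up to a single point, which has zero $u^\ssup{1}$-mass by absolute continuity), I get
\[ f(a) + f(a + L/2) \;=\; u^\ssup{1}(A) \;=\; \frac{L}{2}, \]
where the last equality uses the hypothesis \eqref{eq:interval}. Hence the two endpoint values of $f$ average to $L/4$, so either $f(a) = f(a+L/2) = L/4$ or one of them is strictly below $L/4$ and the other strictly above. In either case, the intermediate value theorem supplies some $c^* \in [a, a+L/2]$ with $f(c^*) = L/4$.

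Finally, set $A' := [c^*, c^* + L/2] \subset A$. Then $|A'| = L/2$ and $u^\ssup{1}(A') = L/4 = |A'|/2$; the complementary equality $u^\ssup{2}(A') = |A'|/2$ follows from $u^\ssup{1} + u^\ssup{2} = 1$ (so that $u^\ssup{1}(A') + u^\ssup{2}(A') = |A'|$). There is essentially no obstacle here; the only thing to double-check is the continuity of $f$, which is immediate from absolute continuity of $u^\ssup{1}$ with respect to Lebesgue measure.
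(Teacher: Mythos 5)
Your proof is correct and follows essentially the same sliding-window / intermediate value theorem argument as the paper: the paper defines $g(x)=(a+x\frac{b-a}{2},\,a+(x+1)\frac{b-a}{2})$ and applies the intermediate value theorem to $g_0(x)=u^{\ssup1}(g(x))/|g(x)|$, which is just a rescaled version of your $f$. The only cosmetic difference is that the paper phrases the endpoint comparison as ``w.l.o.g.\ $g_0(0)\geq\tfrac12$, hence $g_0(1)\leq\tfrac12$'' whereas you derive it from $f(a)+f(a+L/2)=L/2$ directly; the two are equivalent.
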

\begin{proof}
W.l.o.g. assume $A=(a,b)$. Consider the map $[0,1]\ni x\mapsto g(x):=(a+x\frac{b-a}{2},a+(x+1)\frac{b-a}{2})$ as well as $g_0(x)=\frac{u^\ssup1(g(x))}{\abs{g(x)}}$. Since by assumption $u^\ssup1(g(0))+u^\ssup2(g(0))=\frac{b-a}{2}$ assume w.l.o.g. $g_0(0)\geq \frac12$. Since $A$ satisfies \eqref{eq:interval} this implies $g_0(1)\leq \frac12$ and by continuity there is some $x_0$ with $g_0(x_0)=\frac12$ and $g(x_0)$ is the desired interval.
\end{proof}
Denote the overlapping dyadic intervals of length $2^{-n+1}$ by
\begin{align}
A_{j,n}:=\left(j2^{-n},(j+2)2^{-n}\right),\quad j\in\bZ,n\in\bN.
\end{align}
We will be particularly interested in intervals $A_{j,n}$ which satisfy \eqref{eq:interval} in some approximate sense, defined as follows:
\begin{align}\label{eq:good}
\exists (x,y)\subset A_{j,n}: \text{$(x,y)$ satisfies \eqref{eq:interval} and }y-x\geq \frac14 \abs{A_{j,n}}=2^{-(n+1)}.
\end{align}
Note that \eqref{eq:good} implies that $A_{j,n}\cap\cI(\bfu)\neq\emptyset$. However, counting the $A_{j,n}$ which satisfy \eqref{eq:good} does not give a lower bound on the number of interface points, since we may be overcounting due to the overlap of the intervals. For technical reasons, we fix this in a slightly complicated way: We say $A_{j,n}$ is \emph{good} in $[a,b]$ if $A_{j,n}\subset[a,b]$, satisfies \eqref{eq:good} and $A_{j-1,n}$ is not good. Note that the definition is recursive, but since there is a minimal $j_0$ so that $A_{j_0,n}\subset[a,b]$ it is well defined, as $A_{j_0-1,n}$ is always not good. 
Denote by $J_n([a,b])$ the subset of $\bZ$ where $A_{j,n}$ is good in $[a,b]$.
By definition, the intervals $A_{j,n}$, $j\in J_n([a,b])$, are disjoint, and hence $\abs{J_n([a,b])}\leq \abs{\cI(\bfu)\cap[a,b]}$.
But in fact we have much more:
\begin{lemma}\label{lemma:approximate-interface}
Assume $\bfu\in\cU$ with $u^\ssup1+u^\ssup2=1$ and $a<b \in \bR$.
\begin{enumerate}
\item If $x\in \cI(\bfu)\cap(a,b)$, then for any open neighborhood $O$ around $x$ there is some $n\in\bN$ and $j\in J_{n}([a,b])$ so that $A_{j,n}\subset O$.
\item Assume $j \in J_{n}([a,b])$, and let $j'$ be the smallest integer so that $A_{j',n+1}\subset A_{j,n}$. Then $\{j'-1,j',j'+1,j'+2\}\cap J_{n+1}([a,b]) \neq \emptyset$.
\item We have $\abs{J_1([a,b])}\leq \abs{J_2([a,b])} \leq \cdots \leq \abs{\cI(\bfu)\cap(a,b)}\leq\infty$.
\item If $\abs{\cI(\bfu)\cap(a,b)}=\infty$, then $\lim_{n\to\infty}\abs{J_{n}([a,b])}=\infty$.
\end{enumerate}
\end{lemma}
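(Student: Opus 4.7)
Given $x \in \cI(\bfu) \cap (a,b)$ and an open neighbourhood $O$ of $x$, shrink to an open $O'$ with $\overline{O'} \subset O$. By Lemma~\ref{lemma:interval-existence} there is a non-degenerate interval $B \subset O'$ satisfying \eqref{eq:interval}, and iterating Lemma~\ref{lemma:half-interval} I may assume $|B| \in [2^{-(n+1)}, 2^{-n}]$ for any sufficiently large $n$. Choose $n$ so large that $3 \cdot 2^{-n} < \dist(\overline{O'}, O^c)$. Then any $j$ with $B \subset A_{j,n}$ satisfies $A_{j,n}, A_{j-1,n} \subset O \subset (a,b)$, and $A_{j,n}$ satisfies \eqref{eq:good} since $B \subset A_{j,n}$ has length $\geq 2^{-(n+1)}$. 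Either $A_{j-1,n}$ is not good, so $j \in J_n([a,b])$, or it is good, so $j-1 \in J_n([a,b])$; in both cases a good dyadic interval lies inside $O$.

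\textbf{Part (b).} Set $L_0 := 2^{-(n+1)}$ and partition $A_{j,n}$ into four consecutive quarters of length $L_0$. A direct check shows that every sub-interval of $A_{j,n}$ of length at most $L_0$ lies inside at least one of $A_{2j,n+1}$, $A_{2j+1,n+1}$, $A_{2j+2,n+1}$, since these three cover all adjacent-quarter pairs. Since $j \in J_n([a,b])$, there is $(x, y) \subset A_{j,n}$ with $y - x \geq L_0$ satisfying \eqref{eq:interval}; applying Lemma~\ref{lemma:half-interval} once (if $y - x \leq 2L_0$) or twice (otherwise) yields a sub-interval of length in $[2^{-(n+2)}, L_0]$ still satisfying \eqref{eq:interval}, which therefore fits inside some $A_{k,n+1}$ with $k \in \{2j, 2j+1, 2j+2\} = \{j', j'+1, j'+2\}$. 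Hence $A_{k,n+1} \subset [a,b]$ satisfies \eqref{eq:good}. If $A_{k,n+1}$ is itself good we are done; otherwise the recursive clause of ``good'' forces $A_{k-1,n+1}$ to be good, with $k-1 \in \{j'-1, j', j'+1\}$.

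\textbf{Part (c).} Any two indices in $J_n([a,b])$ differ by at least $2$, since the recursive clause prevents two consecutive good indices. By (b), each $j \in J_n$ yields $k(j) \in J_{n+1} \cap \{2j-1, \ldots, 2j+2\}$, and since these four-index candidate sets are pairwise disjoint for distinct $j$'s in $J_n$ (using $j_2 \geq j_1 + 2$), the map $j \mapsto k(j)$ is injective, giving $|J_n| \leq |J_{n+1}|$. For the upper bound by $|\cI(\bfu) \cap (a,b)|$, every good $A_{j,n}$ contains an interval $(x,y)$ with $u^\ssup{1}((x,y)) > 0$ and $u^\ssup{2}((x,y)) > 0$, so by Lemma~\ref{lemma:U}(a) there is an interface point in $(x,y) \subset A_{j,n}$; since the open intervals $\{A_{j,n} : j \in J_n\}$ are pairwise disjoint, this exhibits $|J_n|$ distinct interface points in $(a,b)$.

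\textbf{Part (d).} If $|\cI(\bfu) \cap (a,b)| = \infty$ and $N \in \bN$ is arbitrary, pick $N+1$ distinct interface points in $(a,b)$ with pairwise disjoint open neighbourhoods $O_i \subset (a,b)$ separated by gaps of some size $\eta > 0$. By (a) each point yields $j_i \in J_{n_i}([a,b])$ with $A_{j_i, n_i} \subset O_i$, where $n_i$ may be chosen as large as desired. Iterating (b) from scale $n_i$ up to a common scale $n^* \geq \max_i n_i$ produces $k_i \in J_{n^*}([a,b])$ with $A_{k_i, n^*}$ contained in the leftward extension of $A_{j_i, n_i}$ by $\sum_{m > n_i} 2^{-m} < 2^{-n_i}$. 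Taking each $n_i$ so large that $2^{-n_i} < \eta$ keeps these extensions pairwise disjoint, hence the $k_i$ are distinct and $|J_{n^*}| \geq N+1$. The main obstacle is part (b): beyond applying the halving lemma to produce a finer-scale sub-interval of suitable length, one needs the fourth candidate $j'-1$ precisely to absorb the recursive ``left-neighbour'' clause in the definition of good.
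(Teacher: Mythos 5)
Your proof is correct and takes essentially the same route as the paper's: part (a) via Lemmas~\ref{lemma:interval-existence} and~\ref{lemma:half-interval} to produce an interval of dyadic size inside $O$, part (b) via the halving lemma and the observation that the three dyadics $A_{2j,n+1},A_{2j+1,n+1},A_{2j+2,n+1}$ cover all adjacent-quarter pairs of $A_{j,n}$ (with the fourth candidate $j'-1$ absorbing the recursive clause), part (c) by disjointness of the four-index candidate sets from (b), and part (d) by controlling the leftward drift accumulated over iterated applications of (b). The only differences are cosmetic: in (a) you bound the stick-out via $3\cdot 2^{-n}<\mathrm{dist}(\overline{O'},O^c)$ rather than the paper's fixed nested pair $A_{j_0-1,n_0},A_{j_0,n_0}$, and in (d) you push the starting scales $n_i$ up by shrinking the neighbourhoods until $2^{-n_i}<\eta$, where the paper instead fixes a common $n_0$ and separates the points by $2^{-n_0+3}$; both devices serve exactly the same purpose. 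The final sentence of your part (d) is a meta-comment on the structure of the argument rather than a proof step and could be dropped.
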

\begin{proof}
a) Let $x\in\cI(\bfu)\cap(a,b)$ and $x\in O\subset[a,b]$ an open neighborhood. Choose $n_0,j_0$ so that $x\in A_{i,n_0}\subset O$ for $i=j_0-1,j_0$.
By Lemma \ref{lemma:interval-existence} there is an interval $A$ contained in $A_{j_0,n_0}$ satisfying \eqref{eq:interval}. 
By repeatedly applying Lemma \ref{lemma:half-interval}, we may w.l.o.g.\ assume that there exists $n\geq n_0$ so that $\abs{A}\in[2^{-n-1},2^{-n})$. Then there is some $j\in\bZ$ with $A\subset A_{j,n}\subset A_{j_0,n_0}$, hence $A_{j,n}$ satisfies \eqref{eq:good}. Thus $A_{j,n}$ or $A_{j-1,n}$ is good and either one is a subset of $A_{j_0-1,n_0}\cup A_{j_0,n_0} \subset O$. 

b) Let $j\in J_n([a,b])$. Then by \eqref{eq:good} there is an interval $A\subset A_{j,n}$ satisfying \eqref{eq:interval} and $\abs{A}\geq \frac14 \abs{A_{j,n}}$. By Lemma \ref{lemma:half-interval} we can find a subinterval $A'\subset A$ with $\abs{A'}=\frac{1}{2}\abs{A}\geq\frac{1}{8}\abs{A_{j,n}}$ and satisfying \eqref{eq:interval}. We distinguish two cases: If $\abs{A'}\le\frac{1}{4}\abs{A_{j,n}}$, then $A'\subset A_{j'+k,n+1}\subset A_{j,n}$ for some $k \in \{0,1,2\}$, hence $A_{j'+k,n+1}$ satisfies \eqref{eq:good}. Hence either $j'+k\in J_{n+1}([a,b])$ or $j'+k-1 \in J_{n+1}([a,b])$. 

In the case where $\abs{A'}>\frac{1}{4}\abs{A_{j,n}}$ we can apply Lemma \ref{lemma:half-interval} again to obtain another interval $A''\subset A'$ satisfying \eqref{eq:interval} with $\frac{1}{4}\abs{A_{j,n}}\ge\abs{A''}=\frac{1}{2}\abs{A'}\geq\frac{1}{8}\abs{A_{j,n}}$, for which the first case holds.

c) The monotonicity is an immediate consequence of b): If $j_1,j_2\in J_n([a,b])$, $j_1\neq j_2$, then there are corresponding $j_1', j_2'$ and furthermore, since $\abs{j_1-j_2}\geq 2$ we have $\abs{j_1'-j_2'}\geq 4$. This implies that the two good $A_{j_1,n}$ and $A_{j_2,n}$ induce two different good intervals in $J_{n+1}([a,b])$. The upper bound is a simple consequence of the fact that each interval satisfying \eqref{eq:interval} contains at least one interface point.

d) Suppose $\abs{\cI(\bfu)\cap(a,b)}\geq K$. Then we find $K$ interface points $x_1,\ldots,x_K \in (a,b)$. Choose $n_0$ large enough and indices $j_1,\ldots,j_K$ so that the minimal distance between two points is at least $2^{-n_0+3}$ and so that $x_i\in A_{j_i,n_0}\subset[a,b]$, $i=1,\ldots,K$. For any $i\in\{1,\ldots,K\}$, by part a) we find $(j'_i,n'_i)$ so that $j'_i\in J_{n'_i}([a,b])$ and $A_{j'_i,n'_i}\subset A_{j_i,n_0}$. Set $n'':=\max(n'_1,\ldots,n'_K)$. By repeatedly applying b) we find $j''_i \in J_{n''}([a,b])$ with $A_{j''_i,n''}\subset A_{j_i-1,n_0}\cup A_{j_i,n_0}$. Note how we cannot guarantee that $A_{j''_i,n''}\subset A_{j_i,n_0}$, since in each application of b) the interval could start slightly to the left of the previous one. But the maximal shift is bounded by $\sum_{\ell=2}^\infty 2^{-\ell} \abs{A_{j_i,n_0}}$, which proves $A_{j''_i,n''}\subset A_{j_i-1,n_0}\cup A_{j_i,n_0}$.

To complete the proof, we use that by our choice of $n_0$ the intervals $A_{j_i,n_0}$ are sufficiently far apart to guarantee that the $j''_i$ are different. Hence $\abs{J_{n''}([a,b])}\geq K$. Together with monotonicity from c), this proves the assertion.
\end{proof}

\begin{prop}\label{prop:from-infty}
Let  $\bfu_0\in\calM_b(\R)^2$, and assume that $w_0:=u_0^\ssup{1}+u_0^\ssup{2}\ne0$.
For any fixed $t>0$ and interval $[a,b]$, we have $|\cI(\bfu_t)\cap [a,b]|<\infty$ almost surely under $\p_{\bfu_0}$. In particular $\cI(\bfu_t)$ has no accumulation points.
\end{prop}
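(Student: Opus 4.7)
The plan is to use the framework of ``good'' dyadic intervals from Lemma~\ref{lemma:approximate-interface} to reduce finiteness of interfaces to a first-moment estimate on the counting variable $|J_n([a,b])|$, and then to control that first moment uniformly in $n$ via the second-moment identity \eqref{eq:mixed_second_moment} together with the standard small-gap estimate $\p_{x,y}(\tau > t) \leq C|x-y|/\sqrt{t}$ for the first collision time $\tau$ of two independent Brownian motions starting at $x,y$.

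First I would reduce to the normalized situation. Fix $t_0 > 0$ and $[a,b]$. Since $w_0 \not\equiv 0$, the function $w_{t_0} = S_{t_0} w_0$ is strictly positive and continuous on $\R$, hence bounded below by some $c = c(t_0, a, b) > 0$ on $[a,b]$. By Theorem~\ref{thm:main1}(b) we have $\bfu_{t_0} \in \cU$ almost surely, so we may define the rescaled pair
\[
\tilde u^\ssup{i}(dx) := \frac{u_{t_0}^\ssup{i}(x)}{w_{t_0}(x)}\,dx, \qquad i = 1,2,
\]
which satisfies $\tilde u^\ssup 1 + \tilde u^\ssup 2 = dx$, and $\cI(\tilde \bfu) = \cI(\bfu_{t_0})$. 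Thus the framework of ``good'' intervals $A_{j,n}$ introduced before Lemma~\ref{lemma:approximate-interface} applies directly to $\tilde\bfu$, and by Lemma~\ref{lemma:approximate-interface}(c),(d) it is enough to bound $\E_{\bfu_0}[|J_n([a,b])|]$ uniformly in $n$ in order to get $|\cI(\bfu_{t_0})\cap(a,b)| < \infty$ a.s.

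Next I would estimate $\p(j \in J_n([a,b]))$ for each dyadic interval. On the event $\{j \in J_n([a,b])\}$, the defining property \eqref{eq:good} produces a subinterval $(x,y) \subset A_{j,n}$ with $y-x \geq |A_{j,n}|/4$ and $\tilde u^\ssup 1((x,y)) = \tilde u^\ssup 2((x,y)) = (y-x)/2$, so $\tilde u^\ssup 1(A_{j,n})\tilde u^\ssup 2(A_{j,n}) \geq |A_{j,n}|^2/64$. Using $w_{t_0} \geq c$ on $[a,b]$ this gives
\[
\p(j \in J_n([a,b])) \leq \frac{64}{c^2 |A_{j,n}|^2}\,\E_{\bfu_0}\bigl[u_{t_0}^\ssup 1(A_{j,n}) u_{t_0}^\ssup 2(A_{j,n})\bigr].
\]
Applying the second-moment identity \eqref{eq:mixed_second_moment} with $\phi = \psi = \ind_{A_{j,n}}$, using that $\bfu_0$ is bounded and that $\p_{x,y}(\tau > t_0) \leq C_0 |x-y|/\sqrt{t_0}$, we obtain $\E_{\bfu_0}[u_{t_0}^\ssup 1(A_{j,n}) u_{t_0}^\ssup 2(A_{j,n})] \leq C_1 |A_{j,n}|^3/\sqrt{t_0}$, and hence $\p(j \in J_n([a,b])) \leq C_2|A_{j,n}|/\sqrt{t_0}$. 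Summing over the at most $(b-a)/|A_{j,n}|+1$ indices $j$ with $A_{j,n}\subset[a,b]$ yields $\E_{\bfu_0}[|J_n([a,b])|] \leq C_3(b-a)/\sqrt{t_0}$ uniformly in $n$.

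Finally, since $|J_n([a,b])|$ is non-decreasing in $n$ by Lemma~\ref{lemma:approximate-interface}(c), monotone convergence gives $\lim_n |J_n([a,b])| < \infty$ a.s., and Lemma~\ref{lemma:approximate-interface}(d) then yields $|\cI(\bfu_{t_0})\cap(a,b)| < \infty$ a.s. To get the claim for the closed interval $[a,b]$ I would apply this to a slightly larger open interval $(a-\eps,b+\eps)$. Taking a countable union over intervals with rational endpoints gives the absence of accumulation points a.s. The main technical step I expect to require care is the Markov-style bound: the ``good'' definition \eqref{eq:good} only guarantees an interval $(x,y)\subset A_{j,n}$ satisfying \eqref{eq:interval} of length comparable to $|A_{j,n}|$, but it is enough to get quadratic lower bounds on $\tilde u^\ssup 1(A_{j,n})\tilde u^\ssup 2(A_{j,n})$; matching these with the cubic upper bound $|A_{j,n}|^3/\sqrt{t_0}$ from the collision estimate is exactly what makes $\E[|J_n|]$ summable.
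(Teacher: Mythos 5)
Your proposal follows essentially the same route as the paper's proof: pass to the normalized density $\bfu_{t}/w_{t}$, apply the Markov-type inequality to the event that a dyadic interval is good (yielding the $|A_{j,n}|^{2}/64$ lower bound), control $\E_{\bfu_0}[u_t^{\ssup 1}(A_{j,n})u_t^{\ssup 2}(A_{j,n})]$ via the second-mixed-moment duality \eqref{eq:mixed_second_moment} together with the reflection-principle estimate $\p_{x,y}(\tau>t)\lesssim|x-y|/\sqrt t$, sum over $j$ to get a uniform bound on $\E_{\bfu_0}[|J_n([a,b])|]$, and conclude by monotone convergence and Lemma~\ref{lemma:approximate-interface}(c),(d). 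The only cosmetic difference is that you phrase the estimate per index $j$ and then sum, whereas the paper bounds $\E_{\bfu_0}[|J_n([a,b])|]$ directly as a single sum of expectations; these are the same computation.
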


\begin{proof}
Fix $t>0$. Under our assumptions, $w_t=S_tw_0$ is strictly positive and $\bfu_t\in\cU$ a.s. (recall that the separation of types holds by Thm. \ref{thm:main1} b)).
Write $\bfu_t/w_t = ( u^\ssup1_t/w_t, u^\ssup2_t/w_t )$ for the normalized measure induced by the quotient of densities. 
This new measure satisfies the assumptions of Lemma \ref{lemma:approximate-interface} and $\cI(\bfu_t) = \cI(\bfu_t/w_t)$. 
Furthermore, by the duality for second mixed moments (see \eqref{eq:mixed_second_moment}) we obtain
\begin{align}
\bE_{\bfu_0}\big[ \abs{J_n([a,b])}\big] &\leq \sum_{j:A_{j,n}\subset[a,b]} \bE_{\bfu_0}\big[\ind_{\{A_{j,n}\text{ satisfies \eqref{eq:good} w.r.t. }\bfu_t/w_t\} }\big] \\
&\leq \sum_{j:A_{j,n}\subset[a,b]} \bE_{\bfu_0}\Bigg[ \frac{ \int_{A_{j,n}^2} \frac{u_t^\ssup1}{w_t}(x_1)\frac{u_t^\ssup2}{w_t}(x_2)\;d\bfx }{\frac1{64}\abs{A_{j,n}}^2} \Bigg]\\
&\leq \frac{64}{\inf_{x\in[a,b]}w_t(x)^2} \sum_{j:A_{j,n}\subset[a,b]} \sup_{\bfx\in A_{j,n}^2}\bE_{\bfx} \left[ u_0^\ssup1(X_t^\ssup{1})u_0^\ssup2(X_t^\ssup{2}) \ind_{\tau>t} \right]	\\
&\leq 64\frac{\sup_{x\in\bR}w_0(x)^2}{\inf_{x\in[a,b]}w_t(x)^2} \sum_{j:A_{j,n}\subset[a,b]} \bP_{0,2^{-n+1}}(\tau>t ).
\end{align}
The probability that two Brownian motions do not meet up to time $t$ when started $2^{-n+1}$ apart is of order $2^{-n}$, which follows from the reflection principle. 
Hence we have proven that $\sup_{n\in\bN}\bE_{\bfu_0}\left[\abs{J_n([a,b])}\right] <\infty$. By Lemma \ref{lemma:approximate-interface} part c) and monotone convergence,
it follows that $\bE_{\bfu_0}\left[\lim_{n\to\infty}  \abs{J_n([a,b])} \right] < \infty$, whence together with part d) we conclude that $\bP_{\bfu_0}(\abs{\cI(\bfu_t)\cap(a,b)}=\infty)=0$.
\end{proof}

\begin{proof}[Proof of Theorem \ref{thm:overlapping}]
Let $\bfu_0\in\calM_b(\R)^2$ such that $w_0:=u_0^\ssup{1}+u_0^\ssup{2}\ne0$.
Fix $t_0>0$. Then by Thm.\ \ref{thm:main1} we have $\bfu_{t_0}\in\cU$, and by Prop.\ \ref{prop:from-infty} we know that $\cI(\bfu_{t_0})$ has no accumulation point. Thus the Markov property implies that the evolution of $(\bfu_t)_{t\ge t_0}$ is given by Theorem \ref{thm:annihilating-BM} when started from $\bfu_{t_0}$. In particular, we conclude that almost surely, $\cI(\bfu_t)$ has no accumulation point for any $t \ge t_0$. Since $t_0>0$ is arbitrary, it follows that almost surely, $\cI(\bfu_t)$ has no accumulation point for any $t>0$. 
\end{proof}

{\bf Acknowledgments.} 
This project received financial support
by the German Research Foundation (DFG) within
the DFG Priority Programme 1590 `Probabilistic Structures in
Evolution', grants no. BL 1105/4-1 and OR 310/1-1.

 \bibliographystyle{alpha}
 \bibliography{duality}

\def\cprime{$'$}
\begin{thebibliography}{HOV16}

\bibitem[AD11]{AD11}
F.~Aurzada and L.~D\"oring.
\newblock Intermittency and ageing for the symbiotic branching model.
\newblock {\em Ann. Inst. H. Poincar\'e Probab. Statist.}, 47(2):376--394,
  2011.

\bibitem[Arr81]{A81}
R.~Arratia.
\newblock Limiting point processes for rescalings of coalescing and
  annihilating random walks on $\mathcal{Z}^d$.
\newblock {\em Ann. Probab.}, 9(6):909--936, 1981.

\bibitem[BDE11]{BDE11}
J.~Blath, L.~D{\"o}ring, and A.~Etheridge.
\newblock On the moments and the interface of the symbiotic branching model.
\newblock {\em Ann. Probab.}, 39(1):252--290, 2011.

\bibitem[BG80]{BG80}
M.~Bramson and D.~Griffeath.
\newblock Clustering and dispersion rates for some interacting particle systems
  on $\mathbb{Z}^1$.
\newblock {\em Ann. Probab.}, 8(2):183--213, 1980.

\bibitem[BHO16]{BHO15}
J.~Blath, M.~Hammer, and M.~Ortgiese.
\newblock The scaling limit of the interface of the continuous-space symbiotic
  branching model.
\newblock {\em Ann. Probab.}, 44(2):807--866, 2016.

\bibitem[DM12]{DM12}
L.~D{\"o}ring and L.~Mytnik.
\newblock Mutually catalytic branching processes and voter processes with
  strength of opinion.
\newblock {\em ALEA Lat. Am. J. Probab. Math. Stat.}, 9:1--51, 2012.

\bibitem[DM13]{DM11a}
L.~D{\"o}ring and L.~Mytnik.
\newblock Longtime behavior for mutually catalytic branching with negative
  correlations.
\newblock In J.~Englander and B.~Rider, editors, {\em Advances in
  Superprocesses and Nonlinear PDEs}, volume~38 of {\em Springer Proceedings in
  Mathematics and Statistics}, pages 93--111. Springer US, 2013.

\bibitem[DP98]{DP98}
D.~A. Dawson and E.~A. Perkins.
\newblock Long-time behavior and coexistence in a mutually catalytic branching
  model.
\newblock {\em Ann. Probab.}, 26(3):1088--1138, 1998.

\bibitem[EF04]{EF04}
A.~M. Etheridge and K.~Fleischmann.
\newblock Compact interface property for symbiotic branching.
\newblock {\em Stochastic Process. Appl.}, 114(1):127--160, 2004.

\bibitem[EK86]{EK86}
S.~N. Ethier and T.~G. Kurtz.
\newblock {\em Markov processes. Characterization and convergence}.
\newblock Wiley Series in Probability and Mathematical Statistics: Probability
  and Mathematical Statistics. John Wiley \& Sons Inc., New York, 1986.

\bibitem[HJ91]{HornJohnson}
R.~A. Horn and C.~R. Johnson.
\newblock {\em Topics in Matrix Analysis}.
\newblock Cambridge University Press, Cambridge, 1991.

\bibitem[HO15]{HO15}
M.~Hammer and M.~Ortgiese.
\newblock The infinite rate symbiotic branching model: from discrete to
  continuous space.
\newblock {\em Preprint, arXiv:1508.07826}, 2015.

\bibitem[HOV16]{HOV16}
M.~Hammer, M.~Ortgiese, and F.~V{\"o}llering.
\newblock Entrance laws for annihilating {B}rownian motions.
\newblock {\em In preparation}, 2016.

\bibitem[KM10]{KM10a}
A.~Klenke and L.~Mytnik.
\newblock Infinite rate mutually catalytic branching.
\newblock {\em Ann. Probab.}, 38(4):1690--1716, 2010.

\bibitem[KM12a]{KM11b}
A.~Klenke and L.~Mytnik.
\newblock Infinite rate mutually catalytic branching in infinitely many
  colonies: construction, characterization and convergence.
\newblock {\em Probab. Theory Related Fields}, 154(3-4):533--584, 2012.

\bibitem[KM12b]{KM11c}
A.~Klenke and L.~Mytnik.
\newblock Infinite rate mutually catalytic branching in infinitely many
  colonies: the longtime behavior.
\newblock {\em Ann. Probab.}, 40(1):103--129, 2012.

\bibitem[KS98]{KS98}
I.~Karatzas and S.~E. Shreve.
\newblock {\em {B}rownian motion and Stochastic Calculus}, volume 113 of {\em
  Graduate Texts in Mathematics}.
\newblock Springer-Verlag, Berlin, second edition, 1998.

\bibitem[Kur91]{Kurtz91}
T.~G. Kurtz.
\newblock Random time changes and convergence in distribution under the
  {M}eyer-{Z}heng conditions.
\newblock {\em Ann. Probab.}, 19(3):1010--1034, 1991.

\bibitem[MZ84]{MZ84}
P.~A. Meyer and W.~A. Zheng.
\newblock Tightness criteria for laws of semimartingales.
\newblock {\em Ann. Inst. H. Poincar\'e Probab. Statist.}, 20(4):353--372,
  1984.

\bibitem[Shi88]{Shiga86}
T.~Shiga.
\newblock Stepping stone models in population genetics and population dynamics.
\newblock In {\em Stochastic processes in physics and engineering ({B}ielefeld,
  1986)}, volume~42 of {\em Math. Appl.}, pages 345--355. Reidel, Dordrecht,
  1988.

\bibitem[SV79]{SV79}
D.~W. Stroock and S.~R.~S. Varadhan.
\newblock {\em Multidimensional Diffusion Processes}, volume 233 of {\em
  Grundlehren der Mathematischen Wissenschaften}.
\newblock Springer-Verlag, New York, 1979.

\bibitem[Tri95]{T95}
R.~Tribe.
\newblock Large time behavior of interface solutions to the heat equation with
  {F}isher-{W}right white noise.
\newblock {\em Probab. Theory Related Fields}, 102(3):289--311, 1995.

\bibitem[TZ11]{TZ11}
R.~Tribe and O.~Zaboronski.
\newblock Pfaffian formulae for one dimensional coalescing and annihilating
  systems.
\newblock {\em Electron. J. Probab.}, 16:no. 76, 2080--2103 (electronic), 2011.

\end{thebibliography}

\end{document}